\providecommand\@dotsep{5}
\renewcommand{\listoftodos}[1][\@todonotes@todolistname]{%
  \@starttoc{tdo}{#1}}
\providecommand*{\twoheadrightarrowfill@}{%
  \arrowfill@\relbar\relbar\twoheadrightarrow
}
\providecommand*{\xtwoheadrightarrow}[2][]{%
  \ext@arrow 0579\twoheadrightarrowfill@{#1}{#2}%
}
\DeclareMathAlphabet{\mathpzc}{OT1}{pzc}{m}{it}
\def\itemNum$#1${\item $\displaystyle#1$
   \hfill\refstepcounter{equation}(\theequation)}
\def\pser#1{[\![#1]\!]} 
\newcommand{\Dim}{\text{\textnormal{\textbf{dim}}\,}}
\newcommand{\Sym}{\text{\textnormal{Sym}}}
\mathchardef\mhyphen="2D
\newcommand{\tr}{\text{\textnormal{tr}}}
\newcommand{\Hom}{\text{\textnormal{Hom}}}
\newcommand{\Ext}{\text{\textnormal{Ext}}}
\newcommand{\Aut}{\text{\textnormal{Aut}}}
\newcommand{\git}{/\!\!/}
\newcommand{\Spec}{\text{\textnormal{Spec}}}
\newcommand{\image}{\text{\textnormal{image}}}
\newtheorem{Lem}{Lemma}[section]
\newtheorem{Prop}[Lem]{Proposition}
\newtheorem*{Def}{Definition}
\theoremstyle{plain}
\newtheorem{Thm}[Lem]{Theorem}
\newtheorem{Cor}[Lem]{Corollary}
\newtheorem{Conj}[Lem]{Conjecture}
\newtheorem{thm}{Theorem}
\newtheorem{conj}{Conjecture}
\theoremstyle{definition}
\declaretheorem[numbered=no,name=Example,qed={\lower-0.3ex\hbox{$\triangleleft$}}]{Ex}
\newtheorem*{Rem}{Remark}
\newtheorem*{Rems}{Remarks}
\newcommand\numberthis{\addtocounter{equation}{1}\tag{\theequation}}
\begin{document}

\title[DT theory with classical structure groups]{Representations of cohomological Hall algebras and Donaldson-Thomas theory with classical structure groups}

\author[Matthew B. Young]{Matthew B. Young}
\address{Department of Mathematics\\
The University of Hong Kong\\
Pokfulam, Hong Kong}
\email{myoung@maths.hku.hk}

\date{\today}

\keywords{Cohomological Hall algebras. Donaldson-Thomas theory. Representation theory of quivers.  Orientifolds.}
\subjclass[2010]{Primary: 16G20; Secondary 14N35}

\begin{abstract}
We introduce a new class of representations of the cohomological Hall algebras of Kontsevich and Soibelman, which we call cohomological Hall modules, or CoHM for short. These representations are constructed from self-dual representations of a quiver with contravariant involution $\sigma$ and provide a mathematical model for the space of BPS states in orientifold string theory. We use the CoHM to define a generalization of the cohomological Donaldson-Thomas theory of quivers which allows the quiver representations to have orthogonal and symplectic structure groups. The associated invariants are called orientifold Donaldson-Thomas invariants. We prove the integrality conjecture for orientifold Donaldson-Thomas invariants of $\sigma$-symmetric quivers. We also formulate precise conjectures regarding the geometric meaning of these invariants and the freeness of the CoHM of a $\sigma$-symmetric quiver. We prove the freeness conjecture for disjoint union quivers, loop quivers and the affine Dynkin quiver of type $\widetilde{A}_1$. We also verify the geometric conjecture in a number of examples. Finally, we describe the CoHM of finite type quivers by constructing explicit Poincar\'{e}-Birkhoff-Witt type bases of these representations.
\end{abstract}

\maketitle

\setcounter{footnote}{0}

\vspace{-10pt}

\tableofcontents

\section*{Introduction}

\subsection*{Motivation}
\addtocontents{toc}{\protect\setcounter{tocdepth}{1}}

Motivated by the Donaldson-Thomas theory of three dimensional Calabi-Yau categories, Kontsevich and Soibelman introduced in \cite{kontsevich2011} the cohomological Hall algebra (CoHA) of a quiver with potential. We briefly recall the connection between Donaldson-Thomas theory and the CoHA, leaving details to Section \ref{sec:coha}. For simplicity we assume that the potential is zero and that the quiver $Q$ is symmetric. Let $\Lambda_Q^+$ be the monoid of dimension vectors of $Q$. Denote by $\mathsf{Vect}_{\mathbb{Z}}$ the category of $\mathbb{Z}$-graded rational vector spaces and by $D^{lb}(\mathsf{Vect}_{\mathbb{Z}})_{\Lambda_Q^+}$ the full subcategory of $\Lambda_Q^+$-graded objects of the unbounded derived category of $\mathsf{Vect}_{\mathbb{Z}}$ with finite dimensional $\Lambda_Q^+ \times \mathbb{Z}$-homogeneous summands. The CoHA of $Q$ is defined to be the shifted direct sum of cohomology groups of stacks of representations of $Q$,
\[
\mathcal{H}_Q = \bigoplus_{d \in \Lambda_Q^+} H^{\bullet}(\mathbf{M}_d)\{ \chi(d,d) \slash 2\} \in D^{lb}(\mathsf{Vect}_{\mathbb{Z}})_{\Lambda_Q^+}.
\]
Here $\chi$ is the Euler form of $Q$ and the $\mathbb{Z}$-grading is the Hodge theoretic weight grading. The stack of flags of representations defines correspondences between the stacks $\mathbf{M}_d$, $d \in \Lambda_Q^+$, which can be used to make $\mathcal{H}_Q$ into an associative algebra object in $D^{lb}(\mathsf{Vect}_{\mathbb{Z}})_{\Lambda_Q^+}$. There exists an object $V_Q^{\mathsf{prim}} \in D^{lb}(\mathsf{Vect}_{\mathbb{Z}})_{\Lambda_Q^+}$ such that
\begin{equation}
\label{eq:cohaDTFactorization}
[\Sym(V_Q^{\mathsf{prim}} \otimes \mathbb{Q}[u])] = [\mathcal{H}_Q] \in K_0(D^{lb}(\mathsf{Vect}_{\mathbb{Z}})_{\Lambda_Q^+})
\end{equation}
where $u$ is an indeterminant of degree $(0,2) \in \Lambda_Q^+ \times \mathbb{Z}$ and $\Sym(V)$ is the free supercommutative algebra on $V$, the $\mathbb{Z}_2$-grading induced by the $\mathbb{Z}$-grading. The degree $d \in \Lambda_Q^+$ motivic Donaldson-Thomas invariant of $Q$ is defined to be the class
\[
\Omega_{Q,d} = [V^{\mathsf{prim}}_{Q,d}] \in K_0(D^{lb}(\mathsf{Vect}_{\mathbb{Z}})).
\]
The integrality conjecture \cite{kontsevich2008} states that
\[
\Omega_{Q,d} \in \image \left( K_0(D^b(\mathsf{Vect}_{\mathbb{Z}})) \rightarrow K_0(D^{lb}(\mathsf{Vect}_{\mathbb{Z}})) \right)
\]
for each $d \in \Lambda_Q^+$. A proof of this conjecture for quivers with potential was given in \cite[Theorem 10]{kontsevich2011}. However, positivity of $\Omega_Q$ was not proved.

While the definition of $\Omega_Q$ involves only the Grothendieck class of $\mathcal{H}_Q$, it is natural to expect that understanding the algebra structure of $\mathcal{H}_Q$ will provide additional insights into Donaldson-Thomas theory. Not unrelated, the algebra $\mathcal{H}_Q$ is a model for the algebra of closed oriented BPS states of a quantum field theory or string theory with extended supersymmetry \cite{harvey1998}, \cite{kontsevich2011}. In this direction, Efimov \cite{efimov2012} constructed a subobject $V^{\mathsf{prim}}_Q \otimes \mathbb{Q}[u] \subset \mathcal{H}_Q$, with $V^{\mathsf{prim}}_Q$ having finite dimensional $\Lambda_Q^+$-homogeneous summands, such that the canonical map
\begin{equation}
\label{eq:pbwMult}
\Sym(V_Q^{\mathsf{prim}} \otimes \mathbb{Q}[u]) \rightarrow \mathcal{H}_Q
\end{equation}
is an algebra isomorphism. Passing to Grothendieck rings, this confirms the integrality and positivity conjectures. The subobject $V^{\mathsf{prim}}_Q$ is a cohomologically refined Donaldson-Thomas invariant \cite{szendroi2015}. For an arbitrary quiver with potential $W$ and generic stability $\theta$, it was proved in \cite{davison2016a} that the slope $\mu$ cohomological Donaldson-Thomas invariant $V^{\mathsf{prim}, \theta}_{Q,W, \mu}$ can also be constructed as a subobject of the semistable critical CoHA $\mathcal{H}^{\theta \mhyphen \mathsf{ss}}_{Q,W,\mu}$ and that the analogue of the map \eqref{eq:pbwMult} is an isomorphism in $D^{lb}(\mathsf{Vect}_{\mathbb{Z}})_{\Lambda_Q^+}$. Moreover, $V^{\mathsf{prim}, \theta}_{Q,W, \mu}$ satisfies the integrality conjecture. In this way $\mathcal{H}^{\theta \mhyphen \mathsf{ss}}_{Q,W,\mu}$ acquires a Poincar\'{e}-Birkhoff-Witt type basis. As an application, the structure of $\mathcal{H}^{\theta \mhyphen \mathsf{ss}}_{Q,W,\mu}$ was used in \cite{davison2013} to give a new proof of the Ka\u{c} conjecture.

While less studied, the representation theory of the CoHA is also relevant to Donaldson-Thomas theory. Physical arguments suggest that the space of open BPS states in a theory with defects is a representation of the BPS algebra \cite{gukov2011}. By the work of \cite{chuang2014} such representations are expected to be related to CoHA representations constructed from stable framed objects \cite{szendroi2012}, \cite{soibelman2014}. Framed CoHA representations of quiver categories have been studied in \cite{franzen2015}, \cite{xiao2014}, \cite{davison2016a}. A similar construction, with framed quiver moduli replaced by Nakajima quiver varieties, was given in \cite{yang2014}.

In this paper we introduce a new class of CoHA representations which are defined using orthogonal and symplectic analogues of quiver representations. While the framing construction models open BPS states, the constructions in this paper model (unoriented) BPS states in orientifold string theory. A less general approach to this problem, using finite field Hall algebras and their representations, can be found in \cite{mbyoung2015}. From a related point of view, our formalism provides an extension of Donaldson-Thomas theory from structure group $\mathsf{GL}_n(\mathbb{C})$ to the classical groups $\mathsf{O}_n(\mathbb{C})$ and $\mathsf{Sp}_{2n}(\mathbb{C})$ in the following sense. If $\mathsf{G}$ is a reductive group, then the derived moduli stack of $\mathsf{G}$-bundles on a Calabi-Yau threefold $X$ has a canonical $(-1)$-shifted symplectic structure \cite[Corollary 2.6]{pantev2013} and its truncation has a symmetric perfect obstruction theory \cite[\S 3.2]{pantev2013} which could be used to define the $\mathsf{G}$-Donaldson-Thomas invariants of $X$. The usual Donaldson-Thomas theory is related to the case $\mathsf{G} = \mathsf{GL}_n(\mathbb{C})$. For $\mathsf{G}$ an orthogonal or symplectic group, $\mathsf{G}$-bundles on $X$ are precisely the (frame bundles of) self-dual objects of the category of vector bundles on $X$. More generally, we expect that the correct setting for orientifold Donaldson-Thomas theory is a three dimensional triangulated Calabi-Yau category together with a contravariant duality functor which preserves the Calabi-Yau pairing. The CoHA representations introduced below, and the resulting orientifold Donaldson-Thomas invariants, provide a concrete realization of this theory in the quiver setting.

\subsection*{Main results}
Let $Q$ be a quiver with contravariant involution $\sigma$. Let $\Lambda_Q^{\sigma,+} \subset \Lambda_Q^+$ the submonoid of $\sigma$-invariant dimension vectors. Then $D^{lb}(\mathsf{Vect}_{\mathbb{Z}})_{\Lambda_Q^{\sigma,+}}$ is a left module category over $D^{lb}(\mathsf{Vect}_{\mathbb{Z}})_{\Lambda_Q^+}$. After fixing some combinatorial data, the involution induces a contravariant duality functor on the category $\mathsf{Rep}_{\mathbb{C}}(Q)$. Denote by $\mathbf{M}_e^{\sigma}$ the stack of representations of dimension vector $e \in \Lambda_Q^{\sigma,+}$ which are symmetrically isomorphic to their duals (henceforth, self-dual representations). Set
\[
\mathcal{M}_Q =\bigoplus_{e \in \Lambda_Q^{\sigma,+}} H^{\bullet}(\mathbf{M}^{\sigma}_e)  \{ \mathcal{E}(e) / 2 \} \in D^{lb}(\mathsf{Vect}_{\mathbb{Z}})_{\Lambda_Q^{\sigma,+}}.
\]
The function $\mathcal{E}: \Lambda_Q \rightarrow \mathbb{Z}$ is the analogue of the Euler form for self-dual representations. Write $\mathbf{M}_{d,e}^{\sigma}$ for the stack of flags of representations $U \subset M$ with $M$ self-dual, $U$ isotropic in $M$ and $\Dim U=d$, $\Dim M = d + \sigma(d) + e$. In Theorem \ref{thm:CoHM} we prove that the correspondences
\[
\begin{array}{ccccl}
\mathbf{M}_d \times \mathbf{M}^{\sigma}_{e} & \leftarrow & \mathbf{M}^{\sigma}_{d,e}  & \rightarrow & \mathbf{M}^{\sigma}_{d + \sigma(d) + e} \\ 
(U, M \git U) & \mapsfrom & U \subset M & \mapsto & M
\end{array}
\]
where $\git$ is a categorical version of symplectic reduction, give $\mathcal{M}_Q$ the structure of a left $\mathcal{H}_Q$-module object in $D^{lb}(\mathsf{Vect}_{\mathbb{Z}})_{\Lambda_Q^{\sigma,+}}$. We call $\mathcal{M}_Q$ the cohomological Hall module (CoHM). In Theorem \ref{thm:cohmLoc} we use localization in equivariant cohomology to prove that $\mathcal{M}_Q$ has a combinatorial description as a signed shuffle module, analogous to the Fe\u{\i}gin-Odesski\u{\i} shuffle algebra structure of $\mathcal{H}_Q$ \cite{kontsevich2011}. The passage from shuffle algebras to signed shuffle modules reflects the passage from Weyl groups of general linear groups to (disconnected) Weyl groups of classical groups.

Suppose that $Q$ is $\sigma$-symmetric, that is, symmetric and $\sigma^* \mathcal{E}= \mathcal{E}$. Let $W_Q^{\mathsf{prim}}$ be a minimal generating subobject of $\mathcal{M}_Q$. Define the dimension vector $e \in \Lambda_Q^{\sigma, +}$ motivic orientifold Donaldson-Thomas invariant of $Q$ by
\[
\Omega^{\sigma}_{Q,e} = [W^{\mathsf{prim}}_{Q,e}] \in K_0(D^{lb}(\mathsf{Vect}_{\mathbb{Z}})).
\]
Our first main result is the following.

\begin{thm}[Theorem \ref{thm:oriDTfinite}]
\label{thm:oriDTfiniteIntro}
The orientifold integrality conjecture holds for $\sigma$-symmetric quivers. More precisely, for each $e\in \Lambda_Q^{\sigma,+}$ we have
\[
\Omega_{Q,e}^{\sigma} \in \image \big( K_0(D^b(\mathsf{Vect}_{\mathbb{Z}})) \hookrightarrow 
K_0(D^{lb}(\mathsf{Vect}_{\mathbb{Z}})) \big).
\]
\end{thm}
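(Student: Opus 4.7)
The strategy adapts Efimov's proof \cite{efimov2012} of the integrality conjecture for the CoHA of a symmetric quiver to the module setting. The starting point is Theorem \ref{thm:cohmLoc}, which realizes $\mathcal{M}_Q$ as a signed shuffle module over the Fe\u{\i}gin-Odesski\u{\i} shuffle algebra $\mathcal{H}_Q$. In this presentation, each $\mathcal{M}_{Q,e}$ embeds into a ring of rational functions in vertex-indexed variables $\{x^{(\alpha)}_i\}$ on which the hyperoctahedral Weyl group of classical type acts, and the $\mathcal{H}_Q$-action becomes an explicit signed shuffle product that creates variables in $\sigma$-paired sets.

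Next I would introduce an exhaustive filtration on $\mathcal{M}_Q$, compatible with both the $\mathcal{H}_Q$-action and the $\Lambda_Q^{\sigma,+} \times \mathbb{Z}$-bigrading, defined by bounding the pole orders of shuffle representatives along the classical-type reflection hyperplanes $\{x_i^{(\alpha)} + x_j^{(\sigma \alpha)} = 0\}$ and, at $\sigma$-fixed vertices, along the short-root hyperplanes $\{x_i^{(\alpha)} = 0\}$. The $\sigma$-symmetry of the Euler form guarantees well-definedness of this filtration, and on the associated graded the signed shuffle action collapses to a graded module action over $\mathrm{gr}\,\mathcal{H}_Q \cong \Sym(V^{\mathsf{prim}}_Q \otimes \mathbb{Q}[u])$.

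The main structural claim is then that the augmentation quotient $\mathrm{gr}\,\mathcal{M}_Q / (\mathrm{gr}\,\mathcal{H}_Q)^+ \cdot \mathrm{gr}\,\mathcal{M}_Q$ has finite-dimensional $\mathbb{Z}$-graded components in each dimension vector $e \in \Lambda_Q^{\sigma,+}$. Granting this, one lifts the augmentation quotient to a minimal generating subobject $W^{\mathsf{prim}}_Q \subset \mathcal{M}_Q$; a Nakayama-type argument using strict compatibility of the filtration with the grading then identifies $[W^{\mathsf{prim}}_{Q,e}]$ in $K_0$ with the class of the associated graded quotient, which yields the integrality statement $\Omega^{\sigma}_{Q,e} \in \image(K_0(D^b(\mathsf{Vect}_{\mathbb{Z}})) \hookrightarrow K_0(D^{lb}(\mathsf{Vect}_{\mathbb{Z}})))$.

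The main obstacle is establishing the structural claim on the augmentation quotient. Efimov's corresponding argument for $\mathcal{H}_Q$ relies on a delicate inductive residue analysis indexed by elements of the symmetric group, with careful control of pole orders as variables are specialized. Two new phenomena must be handled here: the short-root poles at $\sigma$-fixed vertices have no analogue in the ordinary CoHA and require a genuinely new pole-order analysis; and the symplectic versus orthogonal sign choice interacts nontrivially with the Weyl-group residue sums, so cancellations that are automatic in the symmetric group case must be verified by hand. Extending Efimov's combinatorics from symmetric to hyperoctahedral Weyl groups of types $B$, $C$, and $D$ in a manner compatible with the $\sigma$-symmetric Euler form is the technical heart of the argument, and is where I expect the bulk of the work to lie.
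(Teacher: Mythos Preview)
Your overall strategy---adapt Efimov's argument via the signed shuffle presentation of Theorem \ref{thm:cohmLoc}---is correct, but the specific machinery you propose is both heavier than necessary and somewhat misdirected. The paper's proof is considerably more direct and uses no filtration, no associated graded, and no residue analysis.

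The paper proceeds as follows. Writing $\mathcal{K}^\sigma_{d',e''}$ for the explicit rational kernel appearing in Theorem \ref{thm:cohmLoc}, one first checks that the image of the action map $\bigoplus_{d' \neq 0} \mathcal{H}_{Q,d'} \boxtimes \mathcal{M}_{Q,e''} \to \mathcal{M}_{Q,e}$ equals $L_{Q,e}^{\mathfrak{W}_e}$, where $L_{Q,e}$ is the smallest $\mathfrak{W}_e$-stable $Z_{Q,e}$-submodule of a suitable localization $Z_{Q,e}^{\mathsf{loc}}$ containing all the $\mathcal{K}^\sigma_{d',e''}$. After adding a loop with $\tau=-1$ at each node (which can only shrink $L_{Q,e}$), the kernels become polynomials and $L_{Q,e}$ is an honest ideal in the polynomial ring $Z_{Q,e}$. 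Finite codimension then reduces, via the Nullstellensatz, to showing that the common vanishing locus of $L_{Q,e}$ in $\overline{\mathbb{Q}}^D$ is the origin. This is a short combinatorial argument: given a nonzero point $z$, one uses the hyperoctahedral Weyl group action (sign changes at $Q_0^\sigma$-nodes, permutations at $Q_0^+$-nodes) to split any ``$\pm$-tuples'' $(a,\dots,a,-a,\dots,-a)$ among the coordinates so that some kernel $\mathcal{K}^\sigma_{d',e''}$ is nonvanishing at the rearranged point, contradicting the assumption that $z$ lies in the vanishing locus.

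Your proposed filtration by pole orders along reflection hyperplanes, together with an inductive residue analysis, is much closer in spirit to what would be needed for the \emph{freeness} statement of Conjecture \ref{conj:freeCOHM}, and indeed to the portions of \cite{efimov2012} that establish the PBW isomorphism rather than mere integrality. For integrality alone, Efimov's argument is the elementary vanishing-locus one, and the paper's adaptation follows exactly that pattern with the hyperoctahedral Weyl group replacing the symmetric group. The short-root poles and symplectic/orthogonal sign interactions you flag as the main obstacles simply do not arise in this approach: the loop-adding trick eliminates all denominators, and the duality-structure signs enter only through the explicit polynomial form of the kernels, which one inspects directly.
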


The proof is a modification of Efimov's proof \cite{efimov2012} of the integrality conjecture for $\mathcal{H}_Q$ and relies on the explicit signed shuffle description of $\mathcal{M}_Q$.

To better understand $\Omega_Q^{\sigma}$ and its relationship with $\Omega_Q$ we study the analogue of the map \eqref{eq:pbwMult}. The situation is more complicated than that of the CoHA since $\mathcal{M}_Q$ is not a free $\mathcal{H}_Q$-module.

\begin{conj}[Conjecture   \ref{conj:freeCOHM}]
\label{conj:cohm}
Let $Q$ be $\sigma$-symmetric and assume that $\mathcal{H}_Q$ is supercommutative. For each $e \in \Lambda_Q^{\sigma,+}$ there is an explicitly defined $\Lambda_Q^{\sigma,+} \times \mathbb{Z}$-graded subalgebra $\mathcal{H}_Q(e) \subset \mathcal{H}_Q$ such that the CoHA action map
\[
\bigoplus_{e \in \Lambda_Q^{\sigma,+}} \mathcal{H}_Q(e) \boxtimes W^{\mathsf{prim}}_{Q,e} \rightarrow \mathcal{M}_Q
\]
is an isomorphism in $D^{lb}(\mathsf{Vect}_{\mathbb{Z}})_{\Lambda_Q^{\sigma,+}}$. Moreover, the restriction to the summand $\mathcal{H}_Q(e) \boxtimes W^{\mathsf{prim}}_{Q,e}$ is a $\mathcal{H}_Q(e)$-module isomorphism onto its image.
\end{conj}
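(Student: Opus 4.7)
The plan is to adapt Efimov's PBW theorem for the CoHA \cite{efimov2012} to the module setting, exploiting the explicit signed shuffle description of $\mathcal{M}_Q$ provided by Theorem \ref{thm:cohmLoc}. Since $Q$ is $\sigma$-symmetric and $\mathcal{H}_Q$ is supercommutative, Efimov supplies a minimal generating subobject $V_Q^{\mathsf{prim}}\otimes \mathbb{Q}[u]\subset \mathcal{H}_Q$ and an algebra isomorphism $\Sym(V_Q^{\mathsf{prim}}\otimes\mathbb{Q}[u])\xrightarrow{\sim}\mathcal{H}_Q$. The strategy is, first, to read off how $\sigma$ permutes a set of such PBW generators, then to define $\mathcal{H}_Q(e)$ as the supersymmetric algebra on appropriate $\sigma$-orbit representatives (so that generators of CoHA-degrees $d$ and $\sigma(d)$ are identified, with sign data dictated by $e$), and finally to verify both parts of the conjecture inside the shuffle model.

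In the shuffle picture, an element of $\mathcal{H}_Q$ of degree $d$ is a polynomial in variables $x_{i,k}$ that is (super)symmetric under $\prod_i \mathfrak{S}_{d_i}$, while an element of $\mathcal{M}_Q$ is a polynomial that is invariant under a hyperoctahedral-type signed Weyl group arising from the classical structure groups. The $\mathcal{H}_Q$-action is the signed-hyperoctahedral symmetrization of a shuffle product with its $\sigma$-dual. Within this combinatorial frame, I would define $\mathcal{H}_Q(e)$ by restricting shuffle variables to (half of) each $\sigma$-orbit of CoHA generators, with the precise choice at $\sigma$-fixed generators determined by the parity data built into $e$. Surjectivity of the action map onto $\mathcal{M}_Q$ would follow by an unshuffling argument starting from generators of $W_Q^{\mathsf{prim}}$: any signed-invariant polynomial can be written as a hyperoctahedral symmetrization of a $\mathcal{H}_Q(e)$-element applied to a $W_{Q,e}^{\mathsf{prim}}$-representative, since the remaining "other half" of the orbit is already encoded by the signed invariance.

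The main obstacle is proving injectivity of the direct sum map, and this subsumes the freeness over each $\mathcal{H}_Q(e)$. I would attack it by adapting Efimov's weight-filtration argument: filter both sides by polynomial degree in the shuffle variables and show that the associated graded of the action map is the canonical inclusion of signed-invariant polynomials into all polynomials. A key consistency check is that the Poincar\'{e} series of the left-hand side, computed from $\Omega_Q$ and $\Omega_Q^{\sigma}$, matches that of $\mathcal{M}_Q$; Theorem \ref{thm:oriDTfiniteIntro} ensures that the two series agree at the level of $K_0$, and once finite-dimensionality holds degree by degree, a rank argument upgrades surjectivity to a bijection. The delicate piece is the careful bookkeeping of signs and the difference between $\sigma$-fixed and $\sigma$-free PBW generators, which is presumably why the paper establishes the conjecture only for disjoint unions, loop quivers, and $\widetilde{A}_1$, where the relevant combinatorics collapses and the choice of $\mathcal{H}_Q(e)$ becomes transparent.
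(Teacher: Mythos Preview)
The statement you are attempting to prove is a \emph{conjecture}: the paper does not prove it in general, only for the specific families listed in Theorems \ref{thm:loopFreeCoHMIntro} and \ref{thm:symmCoHMIntro}. So there is no general proof in the paper to compare against, and your proposal must be judged on whether it actually closes the gap.

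It does not. The central error is your appeal to Theorem \ref{thm:oriDTfiniteIntro} to match Poincar\'e series. That theorem only asserts that each $W^{\mathsf{prim}}_{Q,e}$ is finite dimensional; it does \emph{not} establish the numerical factorization \eqref{eq:cohmNumericalFactorization}. In the paper that factorization is explicitly derived \emph{from} Conjecture \ref{conj:freeCOHM}, not independently of it. Your rank argument therefore assumes what is to be proved: without an a priori equality of Hilbert--Poincar\'e series, a surjection between infinite-dimensional graded spaces with finite-dimensional graded pieces need not be an isomorphism.

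The surjectivity claim is also incomplete. By definition of $W_Q^{\mathsf{prim}}$ the \emph{full} action map $\mathcal{H}_Q\boxtimes W_Q^{\mathsf{prim}}\to\mathcal{M}_Q$ is surjective, but you need surjectivity after restricting to the subalgebras $\mathcal{H}_Q(e)$. Proposition \ref{prop:moduleRelations} shows that $S_{\mathcal{H}}(f)\star g$ and $\pm f\star g$ agree, so half of each $\sigma$-orbit of generators is redundant; however, it does not show that these are the \emph{only} relations. Your ``unshuffling argument'' asserts exactly this without justification. Likewise, the proposed weight-filtration argument for injectivity is only a sketch: Efimov's method relies on a careful analysis of the vanishing locus of the shuffle kernels, and the self-dual kernels $\mathcal{K}^{\sigma}_{d,e}$ of Theorem \ref{thm:cohmLoc} have a more intricate structure (cf.\ the proof of Theorem \ref{thm:loopFreeCoHM}, which already requires substantial case analysis for loop quivers). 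You have not indicated how the associated graded of the action map could be identified with anything injective in general, and your final sentence effectively concedes this.
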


Passing to Grothendieck groups, Conjecture \ref{conj:cohm} implies the following orientifold analogue of the factorization \eqref{eq:cohaDTFactorization}:
\[
\sum_{e \in \Lambda_Q^{\sigma,+}} [\mathcal{H}_Q(e)] \cdot \Omega^{\sigma}_{Q,e} = [\mathcal{M}_Q].
\]
In general, knowing $\Omega_Q$ is insufficient to determine $[\mathcal{H}_Q(e)]$ and $\Omega_Q^{\sigma}$ cannot be computed directly from $\Omega_Q$. Instead, a $\mathbb{Z}_2$-equivariant refinement of $\Omega_Q$ is needed.

Turning to the geometry of orientifold Donaldson-Thomas invariants, let $\mathfrak{M}_e^{\sigma, \mathsf{st}}$ be the moduli scheme of stable self-dual representations of dimension vector $e$ and let $PH^{\bullet}(\mathfrak{M}_e^{\sigma, \mathsf{st}})$ be the pure part of its cohomology. In Proposition \ref{prop:hodgeBound} we construct a canonical surjection of graded vector spaces $
W^{\mathsf{prim}}_{Q,e} \twoheadrightarrow PH^{\bullet}(\mathfrak{M}_e^{\sigma, \mathsf{st}}) \{ \mathcal{E}(e) \slash 2\}$.

\begin{conj}[Conjecture \ref{conj:hodgeEqual}]
\label{conj:hodgeEqualIntro}
Let $Q$ be $\sigma$-symmetric. Then the canonical surjection $
W^{\mathsf{prim}}_{Q,e} \twoheadrightarrow PH^{\bullet}(\mathfrak{M}_e^{\sigma, \mathsf{st}}) \{ \mathcal{E}(e) \slash 2\}$ is an isomorphism.
\end{conj}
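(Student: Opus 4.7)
The strategy I would pursue is a self-dual refinement of the Meinhardt-Reineke / Davison-Meinhardt identification $V^{\mathsf{prim}}_{Q,d} \cong PH^\bullet(\mathfrak{M}_d^{\mathsf{st}})\{\chi(d,d)/2\}$ for symmetric quivers. That argument rests on two pillars: purity of the cohomology of the semistable stacks $\mathbf{M}_d^{\mathsf{ss}}$, and the PBW-type factorization \eqref{eq:pbwMult}, through which the primitive piece $V^{\mathsf{prim}}_{Q,d}$ is exhibited as the cohomology of an intersection complex on the coarse moduli scheme and, in the presence of stable objects, as the pure part of $H^\bullet(\mathfrak{M}_d^{\mathsf{st}})$. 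For the orientifold setting one needs the analogous self-dual versions of both ingredients.

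First, I would establish purity of $H^\bullet(\mathbf{M}_e^{\sigma})$ for $\sigma$-symmetric $Q$. The signed shuffle description (Theorem \ref{thm:cohmLoc}) should play the role that twisted supercommutativity plays in the CoHA case: it makes the weight filtration on $\mathcal{M}_{Q,e}$ visibly pure after the $\mathcal{E}(e)/2$ shift. Next, granting Conjecture \ref{conj:cohm}, the decomposition $\mathcal{M}_Q \cong \bigoplus_e \mathcal{H}_Q(e) \boxtimes W^{\mathsf{prim}}_{Q,e}$ realizes $W^{\mathsf{prim}}_{Q,e}$ as a canonical complement to the image of the CoHA action from smaller dimension vectors. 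I would then stratify $\mathbf{M}_e^{\sigma}$ by self-dual Jordan-H\"older type: the top (open) stratum is $\mathbf{M}_e^{\sigma,\mathsf{st}}$, and contributions from deeper strata correspond to iterated module actions through the correspondences $\mathbf{M}^{\sigma}_{d,e'}$ defined in the introduction. Pushing forward along the gerbe $\mathbf{M}_e^{\sigma,\mathsf{st}} \to \mathfrak{M}_e^{\sigma,\mathsf{st}}$ and matching weights should then identify $W^{\mathsf{prim}}_{Q,e}$ with $PH^\bullet(\mathfrak{M}_e^{\sigma,\mathsf{st}})\{\mathcal{E}(e)/2\}$.

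The main obstacle is that Conjecture \ref{conj:cohm} is itself open, so any complete proof would either need to be conditioned on it or to proceed directly via the relative decomposition theorem for $\mathbf{M}_e^{\sigma} \to \mathfrak{M}_e^{\sigma}$ together with an orientifold analogue of the BPS sheaf formalism. A further subtlety absent in the CoHA case is the nonfreeness of $\mathcal{M}_Q$ as an $\mathcal{H}_Q$-module: the subalgebras $\mathcal{H}_Q(e) \subset \mathcal{H}_Q$ depend on $e$, so there is no single augmentation ideal that uniformly cuts out the primitive subspace, and the orthogonal-versus-symplectic dichotomy introduces sign issues in the shuffle combinatorics. Tracking these through the stratification and pushforward, and ruling out spurious pure classes on $\mathfrak{M}_e^{\sigma,\mathsf{st}}$ that could inflate the right-hand side beyond the image of the surjection of Proposition \ref{prop:hodgeBound}, is where I would expect genuinely new input beyond the existing CoHA technology.
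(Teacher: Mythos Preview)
This statement is a \emph{conjecture} in the paper, not a theorem: the paper gives no general proof to compare against. What the paper does is establish the surjection (Proposition \ref{prop:hodgeBound}) and then verify the conjecture case-by-case in examples: $L_0$, $L_1$ (Propositions \ref{prop:zeroLoopCoHM} and \ref{prop:oneLoopCoHM}), disjoint union quivers, and the symmetric $\widetilde{A}_1$ (Corollary \ref{cor:oriDTAffA1}). In each case the verification proceeds by explicitly computing both $W^{\mathsf{prim}}_{Q,e}$ and $PH^\bullet(\mathfrak{M}_e^{\sigma,\mathsf{st}})$ and checking they match; there is no uniform argument.

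Your proposed strategy is reasonable as an outline, but the paper itself flags a concrete obstruction you do not address. Immediately after stating the conjecture, the paper observes that the Chen--Davison proof of the CoHA analogue (Theorem \ref{thm:geomDTInterp}) relies on an interpretation of $\Omega_Q$ via smooth Nakajima quiver varieties, and that \emph{smooth analogues of Nakajima varieties do not exist in the self-dual setting}. This is precisely the missing ingredient for injectivity: the CoHA argument does not proceed purely through PBW and stratification of $\mathbf{M}_d$, but needs an auxiliary smooth resolution whose pure cohomology one can control. Your sketch gestures at stratification and the decomposition theorem but does not supply a substitute for this step. One minor correction: the map $\mathbf{M}_e^{\sigma,\mathsf{st}} \to \mathfrak{M}_e^{\sigma,\mathsf{st}}$ is not a $\mathbb{C}^\times$-gerbe; the stabilizers are finite groups $\mathbb{Z}_2^k$, and Lemma \ref{lem:orbiHodge} already shows the induced cohomology map is an isomorphism. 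This simplification is why no factor of $\mathbb{Q}[u]$ is removed from $W_Q^{\mathsf{prim}}$, but it does not by itself resolve injectivity.
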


In Sections \ref{sec:examples} and \ref{sec:finiteTypeQuivers} we confirm Conjectures \ref{conj:cohm} and \ref{conj:hodgeEqualIntro} in a number of examples. The main results of Section \ref{sec:examples}, which focuses on the CoHM of $\sigma$-symmetric quivers, are be summarized by the following two theorems.

\begin{thm}[Theorem \ref{thm:loopFreeCoHM}]
\label{thm:loopFreeCoHMIntro}
Conjecture \ref{conj:cohm} holds if $Q$ is the $m$-loop quiver.
\end{thm}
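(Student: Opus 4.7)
My plan is to reduce the question to an explicit combinatorial statement about signed shuffle modules and then adapt Efimov's argument \cite{efimov2012} to that setting. By Theorem \ref{thm:cohmLoc}, $\mathcal{M}_Q$ may be identified with the signed shuffle module over the Feı̆gin--Odesskiı̆/Kontsevich--Soibelman shuffle algebra that realizes $\mathcal{H}_Q$. For the $m$-loop quiver the set of dimension vectors is $\mathbb{Z}_{\geq 0}$, so shuffle elements are just (anti)symmetric rational functions in one family of variables: the degree $n$ component of $\mathcal{H}_Q$ consists of rational functions in variables $x_1,\dots,x_n$ symmetric under $S_n$, while the degree $e$ component of $\mathcal{M}_Q$ consists of rational functions in variables $y_1,\dots,y_k$ (with $k$ determined by $e$) invariant under the hyperoctahedral group $B_k$, with a shuffle-type $\mathcal{H}_Q$-action that sign-changes and permutes the variables. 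Writing $\mathcal{H}_Q$ and $\mathcal{M}_Q$ in this form makes both the subalgebra $\mathcal{H}_Q(e)$ and the candidate primitive subspace $W_{Q,e}^{\mathsf{prim}}$ completely explicit.

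Next I would identify the pieces on each side. On the algebra side, the freeness theorem of Kontsevich--Soibelman and Efimov gives $\mathcal{H}_Q \cong \Sym(V_Q^{\mathsf{prim}} \otimes \mathbb{Q}[u])$, and for the $m$-loop quiver $V_Q^{\mathsf{prim}}$ admits a concrete Poincaré--Birkhoff--Witt style realization as the subspace of shuffles satisfying an explicit vanishing condition on diagonals. On the module side I would take $W_{Q,e}^{\mathsf{prim}}$ to be defined by the exactly analogous vanishing condition on collisions $y_i = \pm y_j$ and $y_i = 0$, adapted to the $B_k$ action; this is the minimal-generator subspace forced by the signed shuffle description, and it automatically surjects onto any minimal generating subobject. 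The subalgebra $\mathcal{H}_Q(e)$ prescribed by Conjecture \ref{conj:cohm} then corresponds in shuffle language to the subring of symmetric functions built from the $\sigma$-invariant part of the generators (equivalently, the subalgebra generated by the fixed classes of $\sigma^{\vee}$ acting on $\mathcal{H}_Q$).

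With these identifications in place, the two isomorphism claims become claims about signed shuffle modules that can be attacked in parallel to Efimov's proof. Surjectivity of the action map follows by induction on $e$: given $f \in \mathcal{M}_{Q,e}$, one uses the vanishing condition to split off a factor from $\mathcal{H}_Q(e)$ and reduces the degree. Injectivity, and the statement that each $\mathcal{H}_Q(e) \boxtimes W_{Q,e}^{\mathsf{prim}}$ maps isomorphically onto its image, follows from a Harder--Narasimhan type filtration on the shuffle module whose associated graded realizes the tensor product, combined with the fact that $\mathcal{H}_Q$ is supercommutative so that no further relations arise. As a consistency check one compares Poincaré--Hilbert series: the generating function identity for $[\mathcal{M}_Q]$ coming from Theorem \ref{thm:oriDTfiniteIntro} matches the expected product $\sum_e [\mathcal{H}_Q(e)] \cdot [W_{Q,e}^{\mathsf{prim}}]$, which verifies the count.

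The main obstacle I anticipate is the identification of $\mathcal{H}_Q(e)$ and the correct vanishing conditions defining $W_{Q,e}^{\mathsf{prim}}$ in the signed setting: Efimov's argument is sensitive to the precise choice of primitive subspace, and the hyperoctahedral symmetry (rather than ordinary symmetric group symmetry) forces additional vanishing at the hyperplanes $y_i = 0$ and $y_i + y_j = 0$. Getting these conditions right, and verifying that they are preserved by the shuffle action of $\mathcal{H}_Q(e)$ so that the PBW-type argument closes, is the delicate step. Once this combinatorial input is in place the remainder of the argument is parallel to the CoHA case.
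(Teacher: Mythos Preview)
Your overall strategy---use the signed shuffle description of Theorem \ref{thm:cohmLoc} and adapt Efimov's argument---is the same as the paper's, and you correctly flag the hyperoctahedral symmetry as the new ingredient. But the sketch contains a genuine misconception that would prevent it from closing.

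The problem is your proposed characterization of $V_Q^{\mathsf{prim}}$ and $W_{Q,e}^{\mathsf{prim}}$ by ``explicit vanishing conditions on diagonals'' (and analogously on $y_i=\pm y_j$, $y_i=0$). Neither Efimov nor the paper works this way: $V_{Q,d}^{\mathsf{prim}}$ is a \emph{quotient} of $\mathcal{H}_{Q,d}$ by the image of multiplication (equivalently, $X_d^{\mathsf{prim},\mathfrak{S}_d}/J_d^{\mathfrak{S}_d}$ in Efimov's notation), and $W_{Q,e}^{\mathsf{prim}}$ is defined the same way as $\mathcal{M}_{Q,e}/(\mathcal{H}_{Q,+}\star\mathcal{M}_Q)_e$. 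There is no simple subspace of shuffles singled out by pole or zero conditions that realizes these primitives. Consequently your surjectivity step (``split off a factor and reduce degree'') is not a step at all---surjectivity of the action map onto $\mathcal{M}_Q$ is automatic from the definition of $W^{\mathsf{prim}}$ as the cokernel. The entire content of the theorem is \emph{injectivity}.

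For injectivity the paper does not use a Harder--Narasimhan filtration. Instead it fixes homogeneous bases $\{v_{d,\beta}\}$ of $V_Q^{\mathsf{prim}}$ and $\{w_{e,\beta}\}$ of $W_Q^{\mathsf{prim}}$, forms the ordered PBW-type products $M_t$ indexed by sequences $t\in\mathsf{Seq}_e^\sigma$, and shows that any nontrivial linear combination $\sum\lambda_i M_{t_i}$ is nonzero by passing to a quotient $\mathcal{M}_{d^\bullet,e^\infty}/L_{d^\bullet,e^\infty}^{\mathfrak{W}}$ in which only the lexicographically maximal term survives. The key point you are missing, and the reason loop quivers are tractable, is that each $\mathcal{H}_{L_m,(d,k)}$ is \emph{isotypical} for the twisted $\mathbb{Z}_2$-action, so the subalgebra $\mathcal{H}_Q(e)$ is determined by a simple parity condition on $(d,k)$; this is encoded as condition (5) in the definition of $\mathsf{Seq}_e^\sigma$ and is what makes the sign-change sum $\sum_{\tilde\pi\in\mathbb{Z}_2^{d^p}}\tilde\pi(f_p\mathcal{K}_{d^p}^\sigma)$ collapse to $2^l$ times a single term rather than vanish. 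Without this observation you cannot identify $\mathcal{H}_Q(e)$ concretely, and without the explicit quotient by $L_{d^\bullet,e^\infty}$ you cannot isolate the leading term. There is also a case (type $C$ with $\tau_-=0$) where the CoHM kernel has a denominator and one must localize before running the argument; your sketch does not anticipate this.
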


When $m=0,1$ we compute $\Omega^{\sigma}_Q$ in closed form and verify Conjecture \ref{conj:hodgeEqualIntro}. In contrast to the ordinary case, there are already infinitely many orientifold Donaldson-Thomas invariants in some of these examples. Loop quivers have the special property that $\Omega_Q$ determines the $\mathbb{Z}_2$-equivariant Donaldson-Thomas invariants. In particular, in this case Theorem \ref{thm:loopFreeCoHMIntro} can be used to compute $\Omega_Q^{\sigma}$ from $\Omega_Q$.

\begin{thm}[Theorems \ref{thm:disjointCoHM} and \ref{thm:symmA1CoHM}]
\label{thm:symmCoHMIntro}
Conjectures \ref{conj:cohm} and \ref{conj:hodgeEqualIntro} hold for disjoint union quivers and for the symmetric orientation of the affine Dynkin quiver of type $\widetilde{A}_1$.
\end{thm}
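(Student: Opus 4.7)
I would treat the two families separately, verifying Conjecture \ref{conj:cohm} and then deducing Conjecture \ref{conj:hodgeEqualIntro} in each case. For a disjoint union quiver the involution either fixes a connected component of $Q$ setwise or pairs it with another, so one can split
\[
Q \;=\; \bigsqcup_i \bigl( Q_i \sqcup \sigma(Q_i) \bigr) \,\sqcup\, \bigsqcup_j Q_j'
\]
with $\sigma$ exchanging the summands of each pair and preserving each $Q_j'$. Since the CoHA, the CoHM and stability all factor through disjoint unions, one reduces to the two extreme cases. On a pair $Q_0 \sqcup \sigma(Q_0)$ with swapping $\sigma$, a self-dual representation of dimension $(d,d^\vee)$ is simply a representation $M$ of $Q_0$ of dimension $d$ equipped with the tautological hyperbolic form on $M \oplus M^\vee$, yielding an equivalence of stacks $\mathbf{M}^\sigma_{(d,d^\vee)} \simeq \mathbf{M}_d(Q_0)$. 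Tracking the CoHM definition through this equivalence identifies $\mathcal{M}_Q$ with $\mathcal{H}_{Q_0}$ viewed as a bimodule over itself; the subalgebra $\mathcal{H}_Q(e) \subset \mathcal{H}_Q$ of Conjecture \ref{conj:cohm} should correspond to the diagonal copy of $\mathcal{H}_{Q_0}$, in which case the conjecture reduces to Efimov's PBW theorem. On the geometric side $\mathfrak{M}^{\sigma,\mathsf{st}}_{(d,d^\vee)} \simeq \mathfrak{M}^{\mathsf{st}}_d(Q_0)$ and $W^{\mathsf{prim}}_{Q,(d,d^\vee)} \simeq V^{\mathsf{prim}}_{Q_0,d}$, so Conjecture \ref{conj:hodgeEqualIntro} reduces to the analogous identification of $V^{\mathsf{prim}}_{Q_0,d}$ with pure cohomology of $\mathfrak{M}^{\mathsf{st}}_d(Q_0)$ for ordinary quiver moduli.

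For the symmetric orientation of $\widetilde{A}_1$, the two vertices are exchanged by $\sigma$ and all $\sigma$-invariant dimension vectors have the form $(n,n)$. Here I would exploit the signed shuffle description of $\mathcal{M}_Q$ provided by Theorem \ref{thm:cohmLoc}, which reduces to an explicit space of Laurent polynomials in $n$ variables antisymmetrized over the hyperoctahedral Weyl group, acted on by the Fe\u{\i}gin--Odesski\u{\i} shuffle algebra $\mathcal{H}_Q$ whose PBW basis is known explicitly. The strategy is to (i) define the candidate subalgebra $\mathcal{H}_Q(e) \subset \mathcal{H}_Q$ from the combinatorics of $e$, (ii) verify the Poincar\'{e} series identity implied by Conjecture \ref{conj:cohm} by direct computation, and (iii) produce explicit signed-shuffle elements generating $W^{\mathsf{prim}}_Q$ freely over $\mathcal{H}_Q(e)$, with freeness checked by a residue argument on the shuffle module. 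For Conjecture \ref{conj:hodgeEqualIntro} I would identify $\mathfrak{M}^{\sigma,\mathsf{st}}_{(n,n)}$ with a concrete variety --- a natural guess is the fixed locus of the duality involution on the ordinary $\widetilde{A}_1$ moduli space, or a symmetric/symplectic analogue of a cotangent Grassmannian --- compute its pure cohomology by hand, and match it with $W^{\mathsf{prim}}$ through the canonical surjection of Proposition \ref{prop:hodgeBound}.

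The main obstacle is the $\widetilde{A}_1$ calculation. The disjoint union case becomes essentially formal once the stack equivalence above is set up carefully and known CoHA results are invoked, but the $\widetilde{A}_1$ analysis forces one to work directly with the signed shuffle module, where the disconnected hyperoctahedral symmetry produces sign cancellations absent from the ordinary shuffle algebra. Identifying minimal generators of $W^{\mathsf{prim}}_Q$ and matching them against the pure cohomology of $\mathfrak{M}^{\sigma,\mathsf{st}}_{(n,n)}$ will require combining the combinatorics of the signed shuffle formula with an explicit geometric model of the self-dual moduli scheme.
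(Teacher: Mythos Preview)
Your disjoint union setup is on track---identifying $\mathcal{M}_{Q^{\sqcup}}$ with $\mathcal{H}_{Q_0}$ as a bimodule is exactly what the paper does---but your computation of $W^{\mathsf{prim}}$ and of the stable self-dual moduli is wrong. Since $\mathcal{H}_{Q^{\sqcup},+} \simeq (\mathcal{H}_{Q_0} \otimes \mathcal{H}_{Q_0}^{\mathsf{op}})_+$ acts on $\mathcal{H}_{Q_0}$ by the full two-sided action, the quotient is $W^{\mathsf{prim}}_{Q^{\sqcup}} = \mathcal{H}_{Q_0}/\mathcal{H}_{Q_0,+} = \mathbb{Q}$, concentrated entirely at $e=0$; it is not $V^{\mathsf{prim}}_{Q_0,d}$ in degree $H(d)$. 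Correspondingly, $\mathfrak{M}^{\sigma,\mathsf{st}}_{H(d)}(Q^{\sqcup}) = \varnothing$ for $d \neq 0$: every hyperbolic $H(M)$ with $M \neq 0$ contains $M$ itself as a nonzero isotropic subrepresentation of the same slope, hence is never $\sigma$-stable. So Conjecture~\ref{conj:hodgeEqualIntro} is vacuous in positive degree rather than a reduction to the ordinary DT/Hodge identification (which in any case is only proved for doubled quivers). Conjecture~\ref{conj:cohm} then reads: $\mathcal{M}_{Q^{\sqcup}}$ is a rank-one free module over the ``diagonal'' subalgebra generated by $v + S_{\mathcal{H}}(v)$, $v \in V_{Q_0}$; this does follow from Efimov's theorem once the bimodule identification is in place.

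For $\widetilde{A}_1$ the paper's route is quite different from your proposed direct shuffle computation, and considerably cleaner. The key move is to introduce the stability $\theta=(1,-1)$ and observe that the stack of $\sigma$-semistable self-dual representations of $\widetilde{A}_1$ is isomorphic to the stack of self-dual representations of the one-loop quiver $L_1$ (with an appropriate duality structure). The semistable CoHM $\mathcal{M}_Q^{\theta\mhyphen\mathsf{ss}}$ is therefore already known from the $L_1$ case. The full CoHM is then recovered from it via the $\sigma$-Harder--Narasimhan stratification: a lemma identifies the kernel of the restriction $\mathcal{M}_Q \to \mathcal{M}_Q^{\theta\mhyphen\mathsf{ss}}$ with the image of $\bigoplus_d \mathcal{H}_{Q,(d,0)} \boxtimes \mathcal{M}_{Q,(e-d,e-d)} \to \mathcal{M}_{Q,(e,e)}$, giving surjectivity of the action map $\mathcal{H}_{Q,\mu=1}^{\theta\mhyphen\mathsf{ss}} \boxtimes \mathcal{M}_Q^{\theta\mhyphen\mathsf{ss}} \to \mathcal{M}_Q$, while injectivity follows by matching Hilbert--Poincar\'{e} series through the numerical wall-crossing formula. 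No residue argument in the signed shuffle module is needed. On the geometric side, your guesses are off: the regularly $\sigma$-stable locus in dimension $(1,1)$ is $\mathbb{C}^{\times}$, and hence $\mathfrak{M}^{\sigma,\mathsf{st}}_{(e,e)} \simeq \Sym^e\,\mathbb{C}^{\times} \setminus \Delta$; its pure cohomology is computed via the smooth compactification $\Sym^e\,\mathbb{P}^1$.
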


In Section \ref{sec:finiteTypeQuivers} we study the CoHM of finite type quivers with involution, which except for trivial cases are not $\sigma$-symmetric. The non-trivial task is to describe the CoHM of Dynkin type $A$ quivers.

\begin{thm}[Theorem \ref{thm:finiteTypeCoHM}]
\label{thm:finiteCoHMIntro}
The CoHM $\mathcal{M}_Q$ of a Dynkin quiver of type $A$ admits two Poincar\'{e}-Birkhoff-Witt type bases, each of which is determined by a simple/indecomposable Poincar\'{e}-Birkhoff-Witt type basis of $\mathcal{H}_Q$ and the set of simple/indecomposable self-dual representations of $Q$.
\end{thm}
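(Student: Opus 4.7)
The plan is to mimic, on the module side, the two PBW-style constructions available for $\mathcal{H}_Q$ of a Dynkin quiver of type $A$: the ``simple'' PBW basis, built from the simple representations using the total order on positive roots induced by the chosen orientation, and the ``indecomposable'' PBW basis, built from the indecomposables using a slope on their Auslander--Reiten orbits. On the CoHM side the role of generators of $\mathcal{H}_Q$ is replaced by the role of module generators, and the role of simple/indecomposable representations of $Q$ is replaced by the role of simple/indecomposable self-dual representations. Concretely, one first classifies the indecomposable self-dual representations of $(Q,\sigma)$: by Gabriel's theorem applied to $Q$ together with the $\sigma$-action on the set of indecomposables, each indecomposable self-dual representation is either hyperbolic, of the form $N \oplus \sigma(N)$ with $N$ an indecomposable not fixed by $\sigma$, or genuinely self-dual, obtained from a $\sigma$-fixed indecomposable by choosing a self-duality. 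Simple self-dual representations are then precisely the genuinely self-dual ones arising from $\sigma$-fixed simple $Q$-representations; in type $A$ these are understood explicitly.

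The core of the argument is a stratification of the stack $\mathbf{M}_e^\sigma$ by isotropic Krull--Schmidt (or Harder--Narasimhan) type and a matching stratification of the module under the Hall action. For the simple basis, one orders the simples compatibly with a generic orientation-induced stability $\theta$ and filters $\mathbf{M}_e^\sigma$ by isotropic HN types $(d_1,\dots,d_k;e_0)$ with $d_i$ of descending $\theta$-slope and $e_0$ a simple self-dual core. The associated graded of the induced cohomological filtration is, by iterated use of the isotropic flag correspondences defining the $\mathcal{H}_Q$-action, a tensor product of the cohomologies of the strata of simple type for $\mathcal{H}_Q$ and the cohomology of the self-dual simple core. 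Passing to the shuffle and signed shuffle picture of Theorem \ref{thm:cohmLoc} converts this filtration into an explicit triangular system: the action of a PBW monomial of simple generators on the vacuum class of a simple self-dual representation has a prescribed leading shuffle term indexed by its Krull--Schmidt type, with corrections lying in strictly smaller strata. This yields the simple PBW basis of $\mathcal{M}_Q$.

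For the indecomposable PBW basis one repeats this scheme with the AR-slope replacing $\theta$-slope and indecomposable $Q$-representations replacing simples. The ordering is chosen so that any self-dual representation admits a canonical isotropic filtration whose subquotients are a sequence of indecomposables of decreasing AR-slope together with an indecomposable self-dual ``core'' of smallest slope; in type $A$ this is precisely the filtration produced by the AR translation restricted to $\sigma$-equivariant setup. The same triangularity argument, now in the indecomposable shuffle basis of $\mathcal{H}_Q$, produces the indecomposable PBW basis of $\mathcal{M}_Q$.

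The main obstacle is the triangularity step: the isotropic flag correspondence is more subtle than the ordinary flag correspondence because in the signed shuffle model the action involves a nontrivial kernel reflecting the Weyl group of a classical group, not just a symmetric group. The estimate requires showing that, after passing to the graded of the Krull--Schmidt (or HN) filtration, each of these extra ``sign'' terms either supports the expected leading class or is cohomologically lower-dimensional; in type $A$ this follows from a direct residue computation on the equivariant shuffle integrals of Theorem \ref{thm:cohmLoc}, exploiting that all the relevant endomorphism and auto-duality groups are products of general linear, orthogonal, and symplectic groups on one-dimensional pieces. Once triangularity is established, linear independence and spanning follow from the fact that the indexing set of PBW monomials on self-dual vacua bijects with the set of isomorphism classes of self-dual representations in each dimension vector.
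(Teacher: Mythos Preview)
There is a genuine gap in the triangularity step for the indecomposable basis \eqref{eq:indecompProd}. You claim it ``follows from a direct residue computation on the equivariant shuffle integrals of Theorem~\ref{thm:cohmLoc}'', but no such computation appears in the paper, and it is unclear one could be made to work directly in the signed shuffle model. Instead, the paper's argument is geometric: the uniqueness of the isotropic filtration in Lemma~\ref{lem:sdFilt} makes the desingularisation $\pi_M^\sigma: \Sigma^\sigma \to \overline{\eta}^\sigma_M$ of Theorem~\ref{thm:sdResolution} a bijection over the open orbit, with a \emph{single} torus fixed point above each point of $\eta^\sigma_M$. Restricting the iterated action map via $\rho_M^\sigma: \mathcal{M}_{Q,e} \to H^\bullet(B\Aut_S(M))$ then collapses the localisation sum (Lemma~\ref{lem:geomMult}, Corollary~\ref{cor:fundClassStructConst}) to that single term---the equivariant Euler class of the normal bundle, whose nonvanishing follows from the rank-one structure of $\mathcal{M}_{L_0}$ over the even/odd subalgebras $\mathcal{H}_{L_0}^{\mathsf{even}}$, $\mathcal{H}_{L_0}^{\mathsf{odd}}$ (Proposition~\ref{prop:zeroLoopCoHM}). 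Linear independence of PBW monomials is then a triangularity argument against the orbit-closure order (not a slope order), and surjectivity follows by matching Hilbert--Poincar\'e series via Theorem~\ref{thm:oriDilog}.

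For the simple basis \eqref{eq:simpleProd}, your Harder--Narasimhan stratification is unnecessarily elaborate and does not match the paper. The paper's proof is a direct shuffle computation: since each simple root $\alpha \in \Pi^+$ has $\mathcal{H}_Q^{(\alpha)}$ concentrated at the single node $i(\alpha)$, and the module generator $g \in \mathcal{M}_Q^{(\pi),\mathsf{simp}}$ depends only on the variables at the unique $\sigma$-fixed node, Theorem~\ref{thm:cohmLoc} shows the iterated action reduces to polynomial multiplication on the nose. No stability or filtration argument is required. The ordering on roots used throughout the paper is the Hom/Ext order from the acyclic Auslander--Reiten quiver, so that $\Hom(I_{\beta_i}, I_{\beta_j}) = 0 = \Ext^1(I_{\beta_j}, I_{\beta_i})$ for $i < j$---you should check that for the specific type~$A$ orientations with involution, a $\sigma$-compatible stability recovering this order actually exists (it does, but this is not automatic and your sketch assumes it).
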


Theorem \ref{thm:finiteCoHMIntro} generalizes and categorifies the orientifold quantum dilogarithm identities found in \cite{mbyoung2015} using finite field methods. To prove Theorem \ref{thm:finiteCoHMIntro} we modify Rim\'{a}nyi's approach to the study of the CoHA of a finite type quiver \cite{rimanyi2013}. Along the way we prove a number of results that are of independent interest. For example, Corollary \ref{cor:fundClassStructConst} states that Thom polynomials of orbit closures of self-dual quiver representations appear as structure constants of the CoHM.


\subsection*{Notation}
All cohomology groups have coefficient ring $\mathbb{Q}$ and all tensor products are over $\mathbb{Q}$.

\subsection*{Acknowledgements}
The author would like to thank Ben Davison and Sven Meinhardt for helpful discussions. Parts of this work were completed while the author was visiting the National Center for Theoretical Sciences at National Taiwan University and the Korea Institute for Advanced Study during the Winter School on Derived Categories and Wall-Crossing. The author would like to thank Wu-yen Chuang, Michel van Garrel and Bumsig Kim for the invitations. The author was partially supported by the Research Grants Council of the Hong Kong SAR, China (GRF HKU 703712).

\section{Background material}
\label{sec:background}

\subsection{Classical groups}
\label{sec:rootSys}
\addtocontents{toc}{\protect\setcounter{tocdepth}{2}}

We fix notation regarding the classical groups. Each such group $\mathsf{G}_n$ is the automorphism group of a pair $(V_n, \langle \cdot, \cdot \rangle)$ consisting of a finite dimensional complex vector space with a nondegenerate bilinear form.

\begin{enumerate}
\item Types $B_n$ and $D_n$. Let $V_n = \mathbb{C}^{2n+1}$ with basis $x_1, \dots, x_n, w, y_1, \dots, y_n$ in type $B_n$ and $V_n = \mathbb{C}^{2n}$ with basis $x_1, \dots, x_n, y_1, \dots, y_n$ in type $D_n$. Define a symmetric bilinear form on $V_n$ by $\langle x_i, y_j \rangle = \delta_{i,j}$ and, in type $B_n$, $\langle w, w \rangle =1$, all other pairings between basis vectors being zero. Then $\mathsf{G}_n$ is the orthogonal group $\mathsf{O}_{2n+1}(\mathbb{C})$ or $\mathsf{O}_{2n}(\mathbb{C})$. It is important in what follows that we use the full orthogonal group and not the special orthogonal group. 

\item Type $C_n$. Let $V_n = \mathbb{C}^{2n}$ with basis $x_1, \dots, x_n, y_1, \dots, y_n$. Define a skewsymmetric bilinear form on $V_n$ by $\langle x_i, y_j \rangle = \delta_{i,j}$, all other pairings between basis vectors being zero. Then $\mathsf{G}_n$ is the symplectic group $\mathsf{Sp}_{2n}(\mathbb{C})$.
\end{enumerate}

Define a (connected) maximal torus
\[
\mathsf{T}_n = \{ \mbox{diag}(t_1, \dots, t_n, (1), t_1^{-1}, \dots, t_n^{-1} ) \mid t_i \in \mathbb{C}^{\times} \} \subset \mathsf{G}_n,
\]
omitting the middle $1$ except in type $B_n$. For each $1 \leq i \leq n$ there is a character $e_i: \mathsf{T}_n \rightarrow \mathbb{C}^{\times}$, $t \mapsto t_i$. The positive roots in each type are
\[
\begin{array}{ll}
\mbox{Type } B_n: & \Delta = \{ e_i \pm e_j \mid 1 \leq i < j \leq n\} \sqcup \{ e_i \mid 1 \leq i \leq n\}  \\
\mbox{Type } C_n: & \Delta = \{ e_i \pm e_j \mid 1 \leq i < j \leq n\} \sqcup \{ 2e_i \mid 1 \leq i \leq n\}  \\
\mbox{Type } D_n: & \Delta = \{ e_i \pm e_j \mid 1 \leq i < j \leq n\}.
\end{array}
\]
The Weyl groups $\mathfrak{W}_{\mathsf{G}_n} = N_{\mathsf{G}_n}(\mathsf{T}_n) \slash \mathsf{T}_n$ are
\[
\mathfrak{W}_{\mathsf{O}_{2n+1}} \simeq (\mathbb{Z}_2^n \rtimes \mathfrak{S}_n) \times \mathbb{Z}_2 , \qquad \mathfrak{W}_{\mathsf{Sp}_{2n}} \simeq \mathbb{Z}_2^n \rtimes \mathfrak{S}_n, \qquad \mathfrak{W}_{\mathsf{O}_{2n}} \simeq \mathbb{Z}_2^n \rtimes \mathfrak{S}_n
\]
where $\mathfrak{S}_n$ is the symmetric group on $n$ letters. Note that $\mathfrak{W}_{\mathsf{O}_n}$ is an extension of $\mathfrak{W}_{\mathsf{SO}_n}$ by $\mathbb{Z}_2 \simeq \pi_0(\mathsf{O}_n(\mathbb{C}))$.

\subsection{Equivariant cohomology}
\label{sec:equivCohom}

Fix an integer $n > 0$. If $N> n$, then the variety $\mathsf{Mat}_{N \times n}^*$ of complex $N \times n$ matrices of rank $n$ is $2(N-n)$-connected and carries a free action of $\mathsf{GL}_n$. The quotients $\mathsf{Mat}_{N \times n}^* \rightarrow \mathsf{Mat}_{N \times n}^*\slash \mathsf{GL}_n$ form an injective system $\{E_N \rightarrow B_N\}_{N >n}$ of finite dimensional approximations by varieties to the universal bundle $E\mathsf{GL}_n \rightarrow B \mathsf{GL}_n$. More generally, if $\mathsf{G}$ is a linear algebraic group with a closed embedding $\mathsf{G} \hookrightarrow \mathsf{GL}_n$, then $\{E_N \rightarrow E_N \slash \mathsf{G} \}_{N >n}$ approximates $E\mathsf{G} \rightarrow B\mathsf{G}$. If $\mathsf{H} \subset \mathsf{G}$ is a closed subgroup, then the canonical morphism $B\mathsf{H}\rightarrow B\mathsf{G}$ is a fibration with fibre $\mathsf{G} \slash \mathsf{H}$.

Let $\mathsf{G}$ act on a variety $X$. The $\mathsf{G}$-equivariant cohomology of $X$ is
\[
H_{\mathsf{G}}^{\bullet}(X) = \lim_{\longleftarrow} H^{\bullet}(X \times_{\mathsf{G}} E_N; \mathbb{Q}).
\]
Here $H^{\bullet}(-; \mathbb{Q})$ denotes singular cohomology with rational coefficients. We write $H^{\bullet}_{\mathsf{G}}$ for $H^{\bullet}_{\mathsf{G}}(\Spec (\mathbb{C}))$. If $\mathsf{T}_{\mathsf{GL}_n} \subset \mathsf{GL}_n$ is a maximal torus, then there are ring isomorphisms
\[
H^{\bullet}_{\mathsf{GL}_n} \simeq H^{\bullet}(B\mathsf{T}_{\mathsf{GL}_n})^{\mathfrak{W}_{\mathsf{GL}_n}} \simeq \mathbb{Q}[x_1, \dots, x_n]^{\mathfrak{S}_n}.
\]
Similarly, if $\mathsf{G}_n$ is a classical group of type $B_n$, $C_n$ or $D_n$, then the inclusion $\mathsf{T}_n \hookrightarrow \mathsf{G}_n$ induces ring isomorphisms
\begin{equation}
\label{eq:classicalEquivCohom}
H_{\mathsf{G}_n}^{\bullet} \simeq H^{\bullet}(B \mathsf{T}_n)^{\mathfrak{W}_{\mathsf{G}_n}} \simeq \mathbb{Q}[z_1^2, \dots, z_n^2]^{\mathfrak{S}_n}.
\end{equation}
Here it is essential that $\mathsf{G}_n$ is the full orthogonal group in type $D_n$. The generators $x_i$, $z_i$ have cohomological degree two.

We record the following results for later use.

\begin{Lem}
\label{lem:equivCoRestr}
\leavevmode
\begin{enumerate}
\item Let $\phi: \mathsf{GL}_n \rightarrow \mathsf{GL}_n$ be the automorphism $\phi(g) = (g^{-1})^t$. The induced map $(B\phi)^*: H^{\bullet}_{\mathsf{GL}_n}  \rightarrow H^{\bullet}_{\mathsf{GL}_n}$ is given by $(B\phi)^* f(x_1, \dots, x_n) = f(-x_1, \dots, -x_n)$.

\item Let $h : \mathsf{GL}_n \hookrightarrow \mathsf{G}_n$ be the hyperbolic embedding. The induced map $(Bh)^*: H^{\bullet}_{\mathsf{G}_n} \rightarrow H^{\bullet}_{\mathsf{GL}_n}$ is given by $(Bh)^*z_i = x_i$.

\item Let $\iota :\mathsf{G}_n \hookrightarrow \mathsf{GL}_{2n + \epsilon}$ be the embedding arising from the description of $\mathsf{G}_n$ given in Section \ref{sec:rootSys}, where $\epsilon=1$ in type $B_n$ and $\epsilon=0$ otherwise. Under the identification $H^{\bullet}_{\mathsf{GL}_{2n + \epsilon}} \simeq \mathbb{Q}[x_1, \dots, x_n, (w), y_1, \dots, y_n]^{\mathfrak{S}_{2n+\epsilon}}$ the induced map $(B\iota)^*: H^{\bullet}_{\mathsf{GL}_{2n+\epsilon}}  \rightarrow H^{\bullet}_{\mathsf{G}_n}$ is given by
\[
(B\iota)^* x_i = z_i, \qquad (B\iota)^* w=0, \qquad (B\iota)^* y_i = -z_i.
\]
\end{enumerate}
\end{Lem}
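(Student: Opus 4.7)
The plan for all three parts is the same: reduce from the Weyl-invariant description of $H^\bullet_{\mathsf{G}}$ to an honest computation on the maximal torus, where everything becomes a computation with characters. More precisely, for a morphism $f: \mathsf{G}' \to \mathsf{G}$ of reductive groups that sends a chosen maximal torus $\mathsf{T}' \subset \mathsf{G}'$ into a chosen maximal torus $\mathsf{T} \subset \mathsf{G}$, the induced map $(Bf)^*: H^\bullet_{\mathsf{G}} \to H^\bullet_{\mathsf{G}'}$ fits into a commutative diagram with the map $H^\bullet(B\mathsf{T}) \to H^\bullet(B\mathsf{T}')$ obtained from $f\vert_{\mathsf{T}'}$. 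Under the standard isomorphism between $H^2(B\mathsf{T})$ and the character lattice $\mathrm{Hom}(\mathsf{T}, \mathbb{C}^\times)$, pulling back on cohomology corresponds to precomposing characters with $f\vert_{\mathsf{T}'}$. Once each map has been restricted to the torus and tracked on characters, the result is forced by Weyl invariance and the identifications stated in Section \ref{sec:rootSys} and Section \ref{sec:equivCohom}.

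For part (1), the automorphism $\phi(g)=(g^{-1})^t$ preserves $\mathsf{T}_{\mathsf{GL}_n}$ and acts on it by inversion, $\mathrm{diag}(t_1,\dots,t_n) \mapsto \mathrm{diag}(t_1^{-1},\dots,t_n^{-1})$. The character $e_i:\mathsf{T}_{\mathsf{GL}_n}\to\mathbb{C}^\times$ corresponding to $x_i$ therefore pulls back to $-e_i$ in additive notation, and hence $(B\phi)^* x_i = -x_i$; extending to polynomials gives the claimed formula. For part (2), the hyperbolic embedding sends $g \in \mathsf{GL}_n$ to the block-diagonal element of $\mathsf{G}_n$ preserving the splitting $V_n = \langle x_1,\dots,x_n\rangle \oplus \langle (w)\rangle \oplus \langle y_1,\dots,y_n\rangle$ on which $g$ acts in the first factor and $(g^{-1})^t$ acts in the last. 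Restricted to the torus it sends $\mathrm{diag}(t_1,\dots,t_n)$ to $\mathrm{diag}(t_1,\dots,t_n,(1),t_1^{-1},\dots,t_n^{-1})$; the character $z_i$ of $\mathsf{T}_n$ thus pulls back to $x_i$, as claimed. For part (3), the embedding $\iota$ identifies $\mathsf{T}_n$ with a subtorus of $\mathsf{T}_{\mathsf{GL}_{2n+\epsilon}}$ via the same formula, so evaluation of the character $x_i$ of $\mathsf{T}_{\mathsf{GL}_{2n+\epsilon}}$ at $\mathrm{diag}(t_1,\dots,t_n,(1),t_1^{-1},\dots,t_n^{-1})$ returns $t_i$, evaluation of $y_i$ returns $t_i^{-1}$, and, in type $B_n$, evaluation of $w$ returns the constant $1$. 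Passing to $H^2$ these translate respectively to $z_i$, $-z_i$ and $0$.

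The only genuinely delicate point is keeping bookkeeping consistent: the identification of $H^2(B\mathsf{T})$ with the character lattice converts the multiplicative inverse $t_i \mapsto t_i^{-1}$ into the additive negation $x_i \mapsto -x_i$, and this is what produces the sign in part (1) and the sign on $y_i$ in part (3). Once this is pinned down, each statement is essentially a diagonal-matrix calculation, and there is no remaining obstacle; in particular no nontrivial Weyl-invariant averaging is needed because every formula computed on the torus is manifestly $\mathfrak{W}_{\mathsf{G}'}$-equivariant with respect to the Weyl group of the target of $(Bf)^*$.
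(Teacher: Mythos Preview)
Your proof is correct and follows exactly the approach the paper indicates: pick compatible maximal tori for the domain and codomain of each homomorphism and compute the induced map on characters. You have simply spelled out the torus calculations that the paper leaves to the reader.
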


\begin{proof}
The statements can be proved by picking compatible maximal tori for the domain and codomain of each group homomorphism.
\end{proof}

Finally, recall that $H^{\bullet}_{\mathsf{G}}(X)$ and its compactly supported variant $H_{c,\mathsf{G}}^{\bullet}(X)$ each have a canonical mixed Hodge structure \cite{deligne1974}. The pure part of, say, $H^{\bullet}_{\mathsf{G}}(X)$ is
\[
PH^{\bullet}_{\mathsf{G}}(X) = \bigoplus_{k \geq 0} W_k H_{\mathsf{G}}^k(X)
\]
where $0=W_{-1} \subset W_0 \subset \cdots \subset W_{2k} = H_{\mathsf{G}}^k(X)$ is the weight filtration.

\subsection{Quiver representations}
Let $Q$ be a quiver with finite sets of nodes $Q_0$ and arrows $Q_1$. Write $\alpha: i \rightarrow j$ for an arrow $\alpha$ with tail $i$ and head $j$. Let $\mathsf{Rep}_{\mathbb{C}}(Q)$ be the hereditary abelian category of finite dimensional complex representations of $Q$. Objects of $\mathsf{Rep}_{\mathbb{C}}(Q)$ are pairs $(U,u)$, often abbreviated to $U$, where $U = \bigoplus_{i \in Q_0} U_i$ is a finite dimensional $Q_0$-graded complex vector space and $u = \{U_i \xrightarrow[]{u_{\alpha}} U_j \}_{i \xrightarrow[]{\alpha} j \in Q_1}$ is a collection of linear maps. Let $\Lambda_Q^+ = \mathbb{Z}_{\geq 0} Q_0$ be the monoid of dimension vectors. We write $\Dim U \in \Lambda_Q^+$ for the dimension vector of $U$. Set also $\Lambda_Q = \mathbb{Z} Q_0$.

The Euler form of $\mathsf{Rep}_{\mathbb{C}}(Q)$ is
\[
\chi(U,V)= \dim_{\mathbb{C}} \Hom(U,V) - \dim_{\mathbb{C}} \Ext^1(U,V).
\]
It descends to the following bilinear form on $\Lambda_Q$:
\[
\chi ( d, d^{\prime} ) = \sum_{i \in Q_0} d_i d_i^{\prime} - \sum_{i \xrightarrow[]{\alpha} j \in Q_1 } d_i d_j^{\prime}.
\]

For each $d \in \Lambda_Q^+$ let $R_d= \bigoplus_{i \xrightarrow[]{\alpha} j} \Hom_{\mathbb{C}} (\mathbb{C}^{d_i}, \mathbb{C}^{d_j} )$. The algebraic group $\mathsf{GL}_d= \prod_{i \in Q_0} \mathsf{GL}_{d_i}(\mathbb{C})$ acts on $R_d$ by change of basis. Its orbits are in bijection with the isomorphism classes of representations of dimension vector $d$.

\subsection{Self-dual quiver representations}
\label{sec:quiverReps}

For a detailed discussion of self-dual quiver representations the reader is referred to \cite{derksen2002}.

An involution of a quiver $Q$ is a pair of involutions $\sigma: Q_0 \rightarrow Q_0$ and $\sigma: Q_1 \rightarrow Q_1$ such that
\begin{enumerate}[label=(\roman*)]
\item if $i \xrightarrow[]{\alpha} j \in Q_1$, then $\sigma(j) \xrightarrow[]{\sigma(\alpha)} \sigma(i)  \in Q_1$, and
\item if $i \xrightarrow{\alpha} \sigma(i)  \in Q_1$, then $\alpha = \sigma(\alpha)$. 
\end{enumerate}
Given an involution, let $\Lambda_Q^{\sigma}$ be the subgroup of fixed points of the induced involution $\sigma: \Lambda_Q \rightarrow \Lambda_Q$. Set also $\Lambda_Q^{\sigma,+} = \Lambda_Q^+ \cap \Lambda_Q^{\sigma}$. The group homomorphism
\[
H: \Lambda_Q \rightarrow \Lambda_Q^{\sigma}, \qquad d \mapsto d + \sigma(d)
\]
makes $\Lambda_Q^{\sigma}$ into a $\Lambda_Q$-module.

A duality structure on $(Q,\sigma)$ is a pair of functions $s: Q_0 \rightarrow \{ \pm 1 \}$ and $\tau: Q_1 \rightarrow \{ \pm 1 \}$ such that $s$ is $\sigma$-invariant and $\tau_{\alpha} \tau_{\sigma(\alpha)} = s_i s_j$ for every arrow $i \xrightarrow[]{\alpha} j$. A duality structure defines an exact contravariant functor $S: \mathsf{Rep}_{\mathbb{C}} (Q) \rightarrow \mathsf{Rep}_{\mathbb{C}} (Q)$ as follows. At the level of objects
\[
S(U)_i =U_{\sigma(i)}^{\vee}, \qquad S(u)_{\alpha} = \tau_{\alpha} u_{\sigma(\alpha)}^{\vee}.
\]
Here $(-)^{\vee} = \Hom_{\mathbb{C}}(-, \mathbb{C})$ is the linear duality functor. If $\phi: U \rightarrow U^{\prime}$ is a morphism with components $\phi_i: U_i \rightarrow U_i^{\prime}$, then $S(\phi) : S(U^{\prime}) \rightarrow S(U)$ has components $S(\phi)_i = \phi^{\vee}_{\sigma(i)}$. Let $\mbox{ev}_{U_i} : U_i \rightarrow U_i^{\vee \vee}$ be the evaluation isomorphism. Then $\Theta_U= \oplus_{i \in Q_0} s_i \cdot \mbox{ev}_{U_i}$ defines an isomorphism of functors $\Theta: \mathbf{1}_{\mathsf{Rep}(Q)} \xrightarrow[]{\sim} S \circ S$ which satisfies $S(\Theta_U) \Theta_{S(U)} = \mathbf{1}_{S(U)}$. The triple $(\mathsf{Rep}_{\mathbb{C}} (Q), S, \Theta)$ is therefore an abelian category with duality in the sense of \cite{balmer2005}.

A self-dual representation is a pair $(M, \psi_M)$ consisting of a representation $M$ and an isomorphism $\psi_M: M \xrightarrow[]{\sim} S(M)$ which satisfies $S(\psi_M) \Theta_M = \psi_M$. More geometrically, a self-dual representation is a representation $M$ with a nondegenerate bilinear form $\langle \cdot, \cdot \rangle$ such that
\begin{enumerate}[label=(\roman*)]
\item $M_i$ and $M_j$ are orthogonal unless $i =\sigma(j)$,

\item the restriction of $\langle \cdot, \cdot \rangle$ to $M_i + M_{\sigma(i)}$ satisfies $\langle x,x^{\prime} \rangle = s_i \langle x^{\prime}, x \rangle$, and

\item for each arrow $i \xrightarrow[]{\alpha} j$ the structure maps of $M$ satisfy
\begin{equation}
\label{eq:strSymm}
\langle m_{\alpha} x,x^{\prime} \rangle - \tau_{\alpha}  \langle x, m_{\sigma(\alpha)} x^{\prime} \rangle =0, \qquad x \in M_i, \; x^{\prime} \in M_{\sigma(j)}.
\end{equation}
\end{enumerate}

As a basic example, let $U \in \mathsf{Rep}_{\mathbb{C}}(Q)$. Then the hyperbolic representation $H(U)$ is the self-dual representation $(U \oplus S(U), \psi_{H(U)} = \left( \begin{smallmatrix} 0 & \mathbf{1}_{S(U)} \\ \Theta_U & 0 \end{smallmatrix} \right))$.

Fix once and for all a partition $Q_0 = Q_0^- \sqcup Q_0^{\sigma} \sqcup Q_0^+$ such that $Q_0^{\sigma}$ consists of the nodes fixed by $\sigma$ and $\sigma(Q_0^-) = Q_0^+$. Similarly, fix a partition $Q_1 = Q_1^- \sqcup Q_1^{\sigma} \sqcup Q_1^+$.

Let $e \in \Lambda_Q^{\sigma, +}$. We will always assume that $e_i$ is even if $i \in Q_0^{\sigma}$ and $s_i =-1$. The trivial representation $\mathbb{C}^e$ then admits a self-dual structure $\langle \cdot, \cdot \rangle$ which is unique up to $Q_0$-graded isometry. Denote by $R_e^{\sigma} \subset R_e$ the linear subspace of representations whose structure maps satisfy equation \eqref{eq:strSymm} with respect to $\langle \cdot, \cdot \rangle$. There is an isomorphism
\[
R_e^{\sigma} \simeq \bigoplus_{ i \xrightarrow[]{\alpha} j  \in Q_1^+} \Hom_{\mathbb{C}} (\mathbb{C}^{e_i}, \mathbb{C}^{e_j} ) \oplus \bigoplus_{ i \xrightarrow[]{\alpha} \sigma(i)  \in Q_1^{\sigma}} \mbox{Bil}^{s_i \tau_{\alpha}}(\mathbb{C}^{e_i})
\]
where $\mbox{Bil}^{\epsilon}(\mathbb{C}^{e_i})$ is the vector space of symmetric ($\epsilon=1$) or skew-symmetric ($\epsilon=-1$) bilinear forms on $\mathbb{C}^{e_i}$. The subgroup $\mathsf{G}_e^{\sigma} \subset \mathsf{GL}_e$ which preserves $\langle \cdot, \cdot \rangle$ is 
\[
\mathsf{G}_e^{\sigma} \simeq  \prod_{i \in Q_0^+} \mathsf{GL}_{e_i} (\mathbb{C}) \times \prod_{i \in Q_0^{\sigma}} \mathsf{G}_{e_i}^{s_i}
\]
where
\[
\mathsf{G}_{e_i}^{s_i} = \begin{cases} \mathsf{Sp}_{e_i}(\mathbb{C}) & \mbox{ if } s_i=-1, \\ \mathsf{O}_{e_i}(\mathbb{C}) & \mbox{ if } s_i=1. \end{cases}
\]
The group $\mathsf{G}_e^{\sigma}$ acts linearly on $R_e^{\sigma}$. Its orbits are in bijection with the set of isometry classes of self-dual representations of dimension vector $e$.

Let $U \in \mathsf{Rep}_{\mathbb{C}}(Q)$. The pair $(S, \Theta_U)$ defines a linear $\mathbb{Z}_2$-action on $\Ext^i(S(U),U)$. Write $\Ext^i(S(U),U)^{\pm S}$ for the subspace of (anti-)invariants and define
\[
\mathcal{E}(U) =\dim_{\mathbb{C}} \Hom(S(U),U)^{-S} - \dim_{\mathbb{C}} \Ext^1(S(U),U)^S.
\]
It was proved in \cite[Proposition 3.3]{mbyoung2016} that $\mathcal{E}(U)$ 
depends only on the dimension vector of $U$ and that the resulting function $\mathcal{E}: \Lambda_Q \rightarrow \mathbb{Z}$ is given by
\begin{align*}
\mathcal{E}(d) =\sum_{i \in Q_0^{\sigma}} \frac{d_i(d_i -s_i)}{2}  + &   \sum_{i \in Q_0^+}  d_{\sigma(i)} d_i -  \\
  &  \sum_{\sigma(i) \xrightarrow[]{\alpha} i \in Q_1^{\sigma}} \frac{d_i(d_i + \tau_{\alpha} s_i)}{2} -\sum_{i \xrightarrow[]{\alpha} j  \in Q_1^+} d_{\sigma(i)} d_j.  \numberthis \label{eq:sdEulerForm}
\end{align*}
We will also use the identity
\begin{equation}
\label{eq:sdEulerIdentity}
\mathcal{E}(d + d^{\prime}) = \mathcal{E}(d) + \mathcal{E}(d^{\prime}) + \chi( \sigma(d) , d^{\prime} ).
\end{equation}

Self-dual representations admit reductions along isotropic subrepresentations. More precisely, if $M$ is a self-dual representation with isotropic subrepresentation $U$, then the orthogonal complement $U^{\perp} \subset M$ is a subrepresentation which contains $U$ and the quotient $M \git U =U^{\perp} \slash U$ inherits a canonical self-dual structure.

Following \cite{kontsevich2008}, to a quiver $Q$ we associate the quantum torus $\hat{\mathbb{T}}_Q =\mathbb{Q}(q^{\frac{1}{2}}) \pser{ \Lambda_Q^+}$. This is the $\mathbb{Q}(q^{\frac{1}{2}})$-vector space with topological basis $\{ t^d  \mid d \in \Lambda_Q^+\}$ and multiplication
\[
t^d \cdot t^{d^{\prime}} = q^{\frac{1}{2}( \chi(d,d^{\prime}) - \chi(d^{\prime}, d))} t^{d+d^{\prime}}.
\]
As in \cite{mbyoung2015}, given a duality structure, we also consider the vector space $\hat{\mathbb{S}}_Q =\mathbb{Q}(q^{\frac{1}{2}}) \pser{ \Lambda_Q^{\sigma,+}}$ with topological basis $\{ \xi^e \mid e \in \Lambda_Q^{\sigma,+} \}$. The formula
\[
t^d \star \xi^e = q^{\frac{1}{2} ( \chi(d,e) - \chi(e,d) + \mathcal{E}(\sigma(d)) - \mathcal{E}(d)) } \xi^{H(d) + e}
\]
gives $\hat{\mathbb{S}}_Q$ the structure of a left $\hat{\mathbb{T}}_Q$-module.

Finally, we recall how the standard theory of stability of quiver representations \cite{king1994} can be adapted to the self-dual setting. For details see \cite[\S 3] {mbyoung2015}. A stability $\theta \in \Hom_{\mathbb{Z}}(\Lambda_Q, \mathbb{Z})$ is called $\sigma$-compatible if it satisfies $\sigma^* \theta = - \theta$. Fix a $\sigma$-compatible stability $\theta$. A self-dual representation $M$ is called $\sigma$-semistable if $\mu(U) \leq \mu(M)$ for all non-zero isotropic subrepresentations $U \subset M$. If the previous inequality is strict, then $M$ is called $\sigma$-stable. Here $\mu(U) = \frac{\theta(\mathbf{dim}\, U)}{\dim U}$ is the slope of $U$. Note that the slope of a self-dual representation is necessarily zero. The moduli scheme of $\sigma$-semistable self-dual representations of dimension vector $e$ is the $\theta$-linearized geometric invariant theory quotient $\mathfrak{M}_e^{\sigma, \theta \mhyphen \mathsf{ss}} = R_e^{\sigma} \git_{\theta} \mathsf{G}_e^{\sigma}$. It parameterizes $S$-equivalence classes of $\sigma$-semistable representations. There is an open subscheme $\mathfrak{M}_e^{\sigma, \theta \mhyphen \mathsf{st}} \subset \mathfrak{M}_e^{\sigma, \theta \mhyphen \mathsf{ss}}$ which parameterizes isometry classes of $\sigma$-stable representations and has at worst orbifold singularities. A $\sigma$-stable representation can be written uniquely as an orthogonal direct sum $M=\bigoplus_{i=1}^k M_i$ with $M_i$ pairwise non-isometric self-dual representations which are stable as ordinary representations \cite[Proposition 3.5]{mbyoung2015}. In this case the isometry group of $M$ is $\Aut_S(M) \simeq \mathbb{Z}_2^k$. If $k=1$, then $M$ is called regularly $\sigma$-stable. By convention $\mathfrak{M}_0^{\sigma, \theta \mhyphen \mathsf{st}}=\Spec(\mathbb{C})$. In contrast, as usual for ordinary quiver moduli spaces, we set $\mathfrak{M}_0^{\theta \mhyphen \mathsf{st}}=\varnothing$.

\begin{Rem}
The bounded derived category $D^b(\Gamma_Q\mhyphen \mathsf{mod})$ of the Ginzburg differential graded algebra associated to $Q$ is a three dimensional Calabi-Yau category for which $\mathsf{Rep}_{\mathbb{C}}(Q)$ is the heart of a bounded $t$-structure \cite{ginzburg2006}. A duality structure on $Q$ induces a triangulated duality structure on $D^b(\Gamma_Q\mhyphen \mathsf{mod})$ which, up to a sign, preserves the Calabi-Yau pairing. This gives an abstract version of the three dimensional Calabi-Yau orientifolds considered in the string theory literature \cite{diaconescu2007}, \cite{hori2008}.
\end{Rem}

\section{Cohomological Hall algebras} 
\label{sec:coha}

\subsection{Definition of the CoHA}
\label{sec:cohaDef}
We recall some material from \cite[\S 2]{kontsevich2011}.

Fix a quiver $Q$. Let $\mathsf{Vect}_{\mathbb{Z}}$ be the abelian category of finite dimensional $\mathbb{Z}$-graded rational vector spaces and let $D^{lb}(\mathsf{Vect}_{\mathbb{Z}}) \subset D(\mathsf{Vect}_{\mathbb{Z}})$ be the full subcategory of objects whose cohomological and $\mathbb{Z}$ degrees are bounded from below. Let also $D^{lb}(\mathsf{Vect}_{\mathbb{Z}})_{\Lambda^+_Q}$ be the category whose objects are $\Lambda^+_Q$-graded objects of $D^{lb}(\mathsf{Vect}_{\mathbb{Z}})$ with finite dimensional $\Lambda_Q^+ \times \mathbb{Z}$-homogeneous summands and whose morphisms preserve the $\Lambda_Q^+ \times \mathbb{Z}$-grading. Define a monoidal product $\boxtimes^{\mathsf{tw}}$ on $D^{lb}(\mathsf{Vect}_{\mathbb{Z}})_{\Lambda^+_Q}$ by
\[
\bigoplus_{d \in \Lambda^+_Q} \mathcal{U}_d  \boxtimes^{\mathsf{tw}}  \bigoplus_{d \in \Lambda^+_Q} \mathcal{V}_{d}   = \bigoplus_{d \in \Lambda^+_Q}  \Big(\bigoplus_{\substack{ (d^{\prime},d^{\prime \prime}) \in \Lambda_Q^+ \times \Lambda_Q^+ \\ d= d^{\prime} + d^{\prime \prime}}} \mathcal{U}_{d^{\prime}} \otimes \mathcal{V}_{d^{\prime \prime}} \{(\chi(d^{\prime} , d^{\prime \prime}) - \chi(d^{\prime \prime} , d^{\prime}) )\slash 2 \} \Big).
\]
Here $\{ \frac{1}{2} \}$ denotes tensor product with the one dimensional vector space of cohomological and $\mathbb{Z}$ degree $-1$.

Fix $d^{\prime}, d^{\prime \prime} \in \Lambda_Q^+$ and put $d = d^{\prime} + d^{\prime \prime}$. Let $\mathbb{C}^{d^{\prime}} \subset \mathbb{C}^d$ be the $Q_0$-graded subspace spanned by the first $d^{\prime}$ coordinate directions. Let $R_{d^{\prime},d^{\prime \prime}} \subset R_d$ be the subspace of representations which preserve $\mathbb{C}^{d^{\prime}}$ and let $\mathsf{GL}_{d^{\prime},d^{\prime \prime}} \subset \mathsf{GL}_d$ be the subgroup which preserves $\mathbb{C}^{d^{\prime}}$. The cohomological Hall algebra (henceforth CoHA) of $Q$ is
\[
\mathcal{H}_Q = \bigoplus_{d \in \Lambda_Q^+} H^{\bullet}_{\mathsf{GL}_d}(R_d) \{ \chi(d,d) / 2 \} \in D^{lb}(\mathsf{Vect}_{\mathbb{Z}})_{\Lambda_Q^+}.
\]
The $\mathbb{Z}$-grading is the Hodge theoretic weight grading, which by purity coincides with the cohomological grading. The product $\mathcal{H}_Q \boxtimes^{\mathsf{tw}} \mathcal{H}_Q \rightarrow \mathcal{H}_Q$ is defined so that its restriction to $\mathcal{H}_{Q,d^{\prime}}\boxtimes^{\mathsf{tw}} \mathcal{H}_{Q,d^{\prime \prime}}$ is the composition
\begin{multline*}
H_{\mathsf{GL}_{d^{\prime}}}^{\bullet}(R_{d^{\prime}})  \otimes H_{\mathsf{GL}_{d^{\prime \prime}}}^{\bullet}(R_{d^{\prime \prime}}) \xrightarrow[]{\sim}  H_{\mathsf{GL}_{d^{\prime}} \times \mathsf{GL}_{d^{\prime \prime }}}^{\bullet} (R_{d^{\prime}}  \times R_{d^{\prime \prime}}) \xrightarrow[]{\sim} \\
H_{\mathsf{GL}_{d^{\prime}, d^{\prime \prime}}}^{\bullet} (R_{d^{\prime}, d^{\prime \prime}})  \rightarrow H_{\mathsf{GL}_{d^{\prime}, d^{\prime \prime}}}^{\bullet}(R_d) \{ (2 \Delta_1) \slash 2 \} \rightarrow H_{\mathsf{GL}_d}^{\bullet}(R_d) \{ (2 \Delta_1 + 2 \Delta_2) \slash 2 \}.
\end{multline*}
For ease of notation the degree shifts in $\mathcal{H}_Q$ and $\boxtimes^{\mathsf{tw}}$ have been omitted. The maps in the composition are constructed from the morphisms
\begin{equation}
\label{eq:diagMaps}
R_{d^{\prime}} \times R_{d^{\prime \prime}}
\overset{\pi}{\twoheadleftarrow} R_{d^{\prime},d^{\prime \prime}} \overset{i}{\hookrightarrow} R_d,  \qquad \mathsf{GL}_{d^{\prime}} \times \mathsf{GL}_{d^{\prime \prime}} \overset{p}{\twoheadleftarrow}\mathsf{GL}_{d^{\prime},d^{\prime \prime}} \overset{j}{\hookrightarrow} \mathsf{GL}_d.
\end{equation}
The first map in the CoHA multiplication is the K\"{u}nneth map, the second is induced by the homotopy equivalences $\pi$ and $p$, the third is pushforward along the $\mathsf{GL}_{d^{\prime}, d^{\prime \prime}}$-equivariant closed inclusion $i$ and the final is pushforward along $\mathsf{GL}_d \slash \mathsf{GL}_{d^{\prime},d^{\prime \prime}}$, the fibre of $B\mathsf{GL}_{d^{\prime},d^{\prime \prime}} \rightarrow \mathsf{B}\mathsf{GL}_d$. The degree shift is $\Delta_1 + \Delta_2 = - \chi(d^{\prime}, d^{\prime \prime})$. It is shown in \cite[Theorem 1]{kontsevich2011} that this multiplication gives $\mathcal{H}_Q$ the structure of an associative algebra object of $D^{lb}(\mathsf{Vect}_{\mathbb{Z}})_{\Lambda^+_Q}$.

The CoHA product can be written explicitly using localization in equivariant cohomology. To do so, identify $\mathcal{H}_{Q,d}$ with the vector space of polynomials in $\{x_{i,1}, \dots, x_{i, d_i} \}_{i \in Q_0}$ which are invariant under the Weyl group $\mathfrak{S}_d = \prod_{i\in Q_0} \mathfrak{S}_{d_i}$ of $\mathsf{GL}_d$. The product of $f_1 \in \mathcal{H}_{Q,d^{\prime}}$ and $f_2 \in \mathcal{H}_{Q,d^{\prime \prime}}$ will be viewed as a polynomial in $\{x_{i, 1}, \dots, x_{i, d_i} \}_{i \in Q_0}$ by identifying $x_{i,k}^{\prime}$ and $x_{i,k}^{\prime \prime}$ with $x_{i,k}$ and $x_{i,d_i^{\prime} + k}$, respectively. Let $\mathfrak{sh}_{d^{\prime}, d^{\prime \prime}} \subset \mathfrak{S}_d$ be the set of $2$-shuffles of type $(d^{\prime}, d^{\prime \prime})$, that is, the set of elements $\{ \pi_i \}_{i \in Q_0} \in \mathfrak{S}_d$ which satisfy
\[
\pi_i (1) < \cdots < \pi_i (d^{\prime}_i), \qquad  \pi_i (d^{\prime}_i + 1) < \cdots < \pi_i (d_i), \qquad \qquad i \in Q_0.
\]
Then $\mathfrak{sh}_{d^{\prime}, d^{\prime \prime}}$ acts on polynomials in $\{x_{i, 1}, \dots, x_{i, d_i} \}_{i \in Q_0}$ via the action of $\mathfrak{S}_d$.

\begin{Thm}[{\cite[Theorem 2]{kontsevich2011}}]
\label{thm:cohaLoc}
The product of $f_1 \in \mathcal{H}_{Q,d^{\prime}}$ and $f_2 \in \mathcal{H}_{Q,d^{\prime \prime}}$ is
\[
f_1 \cdot f_2= \sum_{\pi \in \mathfrak{sh}_{d^{\prime}, d^{\prime \prime}}}  \pi  \left( f_1(x^{\prime}) f_2(x^{\prime \prime})   \frac{  \prod_{i \xrightarrow[]{\alpha} j \in Q_1}  \prod_{b=1}^{d^{\prime \prime}_j} \prod_{a=1}^{d^{\prime}_i} \left(x_{j, b}^{\prime \prime} - x_{i, a}^{\prime} \right) }{\prod_{i \in Q_0}  \prod_{b=1}^{d^{\prime \prime}_i} \prod_{a=1}^{d^{\prime}_i} \left(x_{i, b}^{\prime \prime} - x_{i,  a}^{\prime} \right) } \right).
\]
\end{Thm}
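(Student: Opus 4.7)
The plan is to unwind the four-step definition of the CoHA product (Künneth, restriction via $\pi$ and $p$, Gysin along $i$, pushforward along the fibre $\mathsf{GL}_d \slash \mathsf{GL}_{d^{\prime},d^{\prime \prime}}$) and translate each step into an explicit operation on the polynomial model $\mathcal{H}_{Q,d} \simeq \mathbb{Q}[x_{i,a}]^{\mathfrak{S}_d}$. The Künneth map and the pullbacks along the homotopy equivalences $\pi$ and $p$ are tautological under this identification: the class $f_1 \otimes f_2$ is sent to the polynomial $f_1(x^{\prime}) f_2(x^{\prime \prime})$ in separate blocks of variables, symmetric in each block under $\mathfrak{S}_{d^{\prime}} \times \mathfrak{S}_{d^{\prime \prime}}$.

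The next step is to compute the Gysin pushforward along the $\mathsf{GL}_{d^{\prime},d^{\prime \prime}}$-equivariant closed embedding $i: R_{d^{\prime},d^{\prime \prime}} \hookrightarrow R_d$. Since $R_d$ is a linear representation of $\mathsf{GL}_d$ and $R_{d^{\prime},d^{\prime \prime}}$ is a linear subspace, the normal bundle is the trivial $\mathsf{GL}_{d^{\prime},d^{\prime \prime}}$-representation
\[
N = R_d \slash R_{d^{\prime},d^{\prime \prime}} \simeq \bigoplus_{i \xrightarrow[]{\alpha} j \in Q_1} \Hom_{\mathbb{C}}(\mathbb{C}^{d^{\prime}_i}, \mathbb{C}^{d^{\prime \prime}_j}).
\]
Applying the Thom isomorphism, $i_*$ acts on cohomology as multiplication by the equivariant Euler class $e_{\mathsf{T}}(N)$. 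Restricted to the maximal torus with coordinates $x^{\prime}_{i,a}, x^{\prime \prime}_{j,b}$, the weights of $N$ are $x^{\prime \prime}_{j,b} - x^{\prime}_{i,a}$, so
\[
e_{\mathsf{T}}(N) = \prod_{i \xrightarrow[]{\alpha} j \in Q_1}\prod_{a=1}^{d^{\prime}_i} \prod_{b=1}^{d^{\prime \prime}_j}\bigl(x^{\prime \prime}_{j,b} - x^{\prime}_{i,a}\bigr),
\]
yielding the numerator in the claimed formula.

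The heart of the argument is the fourth step, the pushforward along the fibration $B\mathsf{GL}_{d^{\prime},d^{\prime \prime}} \to B\mathsf{GL}_d$ whose fibre is the product of Grassmannians $\prod_{i \in Q_0} \mathsf{Gr}(d^{\prime}_i, d_i)$. Using the restriction $H^{\bullet}_{\mathsf{GL}_d} \hookrightarrow H^{\bullet}_{\mathsf{T}_d}$ and the Atiyah–Bott localization theorem on $\mathsf{Gr}(d^{\prime}_i, d_i)$, the $\mathsf{T}_d$-fixed points of the Grassmannian fibre are indexed, after choosing a splitting, by the coset space $\mathfrak{S}_d \slash (\mathfrak{S}_{d^{\prime}} \times \mathfrak{S}_{d^{\prime \prime}})$, which is exactly the set $\mathfrak{sh}_{d^{\prime},d^{\prime \prime}}$ of shuffles. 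The tangent weights at the fixed point indexed by $\pi = \mathrm{id}$ are the $x_{i,b} - x_{i,a}$ with $1 \leq a \leq d^{\prime}_i$ and $d^{\prime}_i + 1 \leq b \leq d_i$; the equivariant Euler class of the tangent space is therefore $\prod_{i\in Q_0}\prod_{a=1}^{d^{\prime}_i}\prod_{b=1}^{d^{\prime \prime}_i}(x^{\prime \prime}_{i,b} - x^{\prime}_{i,a})$, and the remaining fixed-point contributions are obtained by applying the shuffles $\pi \in \mathfrak{sh}_{d^{\prime},d^{\prime \prime}}$. The localization formula then gives
\[
f \mapsto \sum_{\pi \in \mathfrak{sh}_{d^{\prime},d^{\prime \prime}}} \pi\!\left( \frac{f(x^{\prime},x^{\prime \prime})}{\prod_{i \in Q_0}\prod_{a,b}\bigl(x^{\prime \prime}_{i,b} - x^{\prime}_{i,a}\bigr)} \right),
\]
which supplies both the denominator and the shuffle sum. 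Composing with the Gysin step and checking that the degree shifts $\{\chi(d,d)\slash 2\}$ in the definition of $\mathcal{H}_Q$, together with the twist in $\boxtimes^{\mathsf{tw}}$, exactly absorb the cohomological shift $\Delta_1+\Delta_2 = -\chi(d^{\prime},d^{\prime \prime})$ produced by the two pushforwards, yields the stated formula. The main technical obstacle is the second pushforward: one must carefully match the localization-theoretic bookkeeping (choice of fixed points, signs of tangent weights, Weyl group conventions) with the combinatorial notion of shuffle so that the ambiguity between $\mathfrak{sh}_{d^{\prime},d^{\prime \prime}}$ and $\mathfrak{S}_d \slash (\mathfrak{S}_{d^{\prime}} \times \mathfrak{S}_{d^{\prime \prime}})$ is resolved consistently with the sign of $e_{\mathsf{T}}(N)$.
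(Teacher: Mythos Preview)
Your proposal is correct and follows essentially the same route as Kontsevich--Soibelman (the paper itself only cites this result without proof, but its proof of the self-dual analogue, Theorem~\ref{thm:cohmLoc}, proceeds identically): one writes the product as $\int_{[\mathsf{GL}_d/\mathsf{GL}_{d',d''}]} f_1 f_2 \cdot \mathsf{Eu}_{\mathsf{GL}_{d',d''}}(N_{R_d/R_{d',d''}})$ and evaluates the integral by torus localization on the Grassmannian fibre, with the fixed points giving the shuffle sum, their tangent weights the denominator, and the normal-bundle Euler class the numerator.
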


The motivic Donaldson-Thomas (DT) series of $Q$ is the class of $\mathcal{H}_Q$ in the Grothendieck ring of $D^{lb}(\mathsf{Vect}_{\mathbb{Z}})_{\Lambda^+_Q}$,
\[
A_Q(q^{\frac{1}{2}}, t) = \sum_{(d,k) \in \Lambda_Q^+ \times \mathbb{Z}}\dim_{\mathbb{Q}} \mathcal{H}_{Q, (d,k)} (-q^{\frac{1}{2}})^k t^d \in \mathbb{Z}(q^{\frac{1}{2}}) \pser{ \Lambda_Q^+ }.
\]
It can be written explicitly as
\[
A_Q(q^{\frac{1}{2}}, t) = \sum_{d \in \Lambda_Q^+}   \frac{(-q^{\frac{1}{2}})^{\chi ( d, d )}}{ \prod_{i \in Q_0}  \prod_{j = 1}^{d_i} (1- q^j)} t^{d}.
\]
It is natural to view $A_Q$ as an element of the quantum torus $\hat{\mathbb{T}}_Q$ since the product in the latter agrees with the product induced by $\boxtimes^{\mathsf{tw}}$. Passing from motivic DT series to motivic DT invariants is most easily explained for symmetric quivers. We do this in the next section.

\subsection{The CoHA of a symmetric quiver}
\label{sec:symmQuiv}

In this section we assume that $Q$ is symmetric, that is, its Euler form is symmetric. In this case $\boxtimes^{\mathsf{tw}}$ reduces to a symmetric monoidal product $\boxtimes$ and $\mathcal{H}_Q$ is a $\Lambda_Q^+ \times \mathbb{Z}$-graded algebra.

Define a $\mathbb{Z}_2$-grading on $\mathcal{H}_Q$ as the reduction modulo two of its $\mathbb{Z}$-grading. If the Euler form satisfies
\begin{equation}
\label{eq:untwistSupercomm}
\chi(d, d^{\prime}) \equiv \chi(d,d) \chi(d^{\prime}, d^{\prime}) \mod 2
\end{equation}
for all $d,d^{\prime} \in \Lambda_Q^+$, then $\mathcal{H}_Q$ is a supercommutative algebra. Explicitly, writing $a_{ij}$ for the number of arrows from $i$ to $j$, equation \eqref{eq:untwistSupercomm} holds if and only if
\[
a_{ij} \equiv (1+ a_{ii})(1+a_{jj}) \mod 2
\]
for all distinct $i,j \in Q_0$. If the Euler form does not satisfy equation \eqref{eq:untwistSupercomm}, then the CoHA multiplication can be twisted by a sign so as to make $\mathcal{H}_Q$ supercommutative \cite[\S 2.6]{kontsevich2011}. Since all (connected) symmetric quivers studied in this paper satisfy equation \eqref{eq:untwistSupercomm}, we do not recall this twist here.

Write $\Sym(V)$ for the free supercommutative algebra generated by a $\Lambda_Q^+ \times \mathbb{Z}$-graded vector space $V$. The following result was conjectured by Kontsevich and Soibelman \cite[Conjecture 1]{kontsevich2011}. We consider $\mathcal{H}_Q$ as a supercommutative algebra.

\begin{Thm}[{\cite[Theorem 1.1]{efimov2012}}]
\label{thm:freeCoHA}
Let $Q$ be a symmetric quiver and let $u$ be a formal variable of degree $(0,2)$. There exists a $\Lambda_Q^+ \times \mathbb{Z}$-graded rational vector space of the form $
V_Q= V^{\mathsf{prim}}_Q \otimes \mathbb{Q}[u]$ such that $\Sym(V_Q) \simeq \mathcal{H}_Q$ as algebras. Moreover, each $\Lambda_Q^+$-homogeneous summand $V^{\mathsf{prim}}_{Q,d} \subset V^{\mathsf{prim}}_Q$ is finite dimensional.
\end{Thm}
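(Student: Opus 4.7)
My plan is to construct $V_Q^{\mathsf{prim}}$ as a minimal generating bigraded subspace of $\mathcal{H}_Q$, form the canonical symmetric-algebra map, and show it is an isomorphism by matching bigraded Hilbert series against the Kontsevich-Soibelman integrality theorem while ruling out hidden algebraic relations via the shuffle description of Theorem \ref{thm:cohaLoc}.

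The variable $u$ is realised concretely as follows: for each $d \in \Lambda_Q^+$, pick a degree-$2$ generator $c_d \in H^2_{\mathsf{GL}_d}$, for instance the first Chern class of the tautological determinant line bundle on $B\mathsf{GL}_d$, whose powers yield a tower $v, c_d v, c_d^2 v, \ldots$ of elements of bidegree $(d, k+2n)$ inside $\mathcal{H}_{Q,d}$ for each $v \in \mathcal{H}_{Q,d,k}$. Next, construct $V_Q^{\mathsf{prim}}$ by induction on dimension vector: set $V_{Q,0}^{\mathsf{prim}} = 0$ and, having chosen $V_{Q,d^{\prime}}^{\mathsf{prim}} \subset \mathcal{H}_{Q,d^{\prime}}$ for all $d^{\prime} < d$, pick a bigraded complement $V_{Q,d}^{\mathsf{prim}}$ inside $\mathcal{H}_{Q,d}$ to the decomposable subspace
\[
\mathcal{H}_{Q,d}^{\mathsf{dec}} = \sum_{\substack{d^{\prime} + d^{\prime \prime} = d \\ d^{\prime}, d^{\prime \prime} \ne 0}} \mathcal{H}_{Q,d^{\prime}} \cdot \mathcal{H}_{Q,d^{\prime \prime}} \;+\; c_d \cdot \mathcal{H}_{Q,d}.
\]
Assembling these subspaces produces a canonical algebra map
\[
\Phi : \Sym(V_Q^{\mathsf{prim}} \otimes \mathbb{Q}[u]) \longrightarrow \mathcal{H}_Q
\]
which is surjective by induction on $d$.

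For the Hilbert series comparison I would invoke the Kontsevich-Soibelman integrality theorem \cite[Theorem 10]{kontsevich2011}, which supplies a $\Lambda_Q^+ \times \mathbb{Z}$-graded vector space $V$ with finite-dimensional $\Lambda_Q^+$-homogeneous summands satisfying $[\mathcal{H}_Q] = [\Sym(V \otimes \mathbb{Q}[u])]$ in $K_0(D^{lb}(\mathsf{Vect}_{\mathbb{Z}})_{\Lambda_Q^+})$. Combined with surjectivity of $\Phi$, this immediately yields the coefficient-wise bound
\[
[V_Q^{\mathsf{prim}} \otimes \mathbb{Q}[u]] \;\geq\; [V \otimes \mathbb{Q}[u]]
\]
on bigraded Hilbert series; equality would simultaneously imply injectivity of $\Phi$ and finiteness of the $\Lambda_Q^+$-homogeneous summands of $V_Q^{\mathsf{prim}}$.

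The hard part is the reverse inequality, equivalently the injectivity of $\Phi$, which amounts to ruling out non-trivial algebraic relations among the chosen primitive generators. Following Efimov, the strategy is to use the explicit shuffle formula of Theorem \ref{thm:cohaLoc} together with purity of $\mathcal{H}_Q$ for symmetric $Q$ to equip the shuffle algebra with a perverse-type filtration and to compute the leading term of a shuffle product of primitives as a symmetrization of the product of their individual leading terms. This shows that distinct supercommutative monomials in the generators $\{v \cdot c_d^n\}$ produce linearly independent leading terms, forbidding any algebraic relation. Controlling this leading-term analysis coherently across all bidegrees and arbitrary $c_d$-powers, and meshing it with the sign twist that enforces supercommutativity, is the principal technical obstacle.
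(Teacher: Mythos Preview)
The paper does not prove this statement; it is quoted verbatim from Efimov \cite{efimov2012} and used as input. So there is no ``paper's own proof'' to compare against directly. However, the paper does reveal the shape of Efimov's argument in two places: the proof of Theorem~\ref{thm:oriDTfinite} explicitly says it adapts \cite[\S 3]{efimov2012}, and the proof of Theorem~\ref{thm:loopFreeCoHM} says its structure is similar to \cite[\S 3]{efimov2012}. Reading those proofs shows what Efimov actually does, and your sketch diverges from it in the crucial step.

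Your construction of $V_Q^{\mathsf{prim}}$ and the surjectivity of $\Phi$ are fine and match the standard setup. The gap is in your injectivity/finiteness argument. You invoke a ``perverse-type filtration'' and a vague ``leading-term analysis'', but Efimov's method is different and more concrete. He works entirely in the shuffle model: for each $d$ he forms the subalgebra $X_d^{\mathsf{prim}} = \mathbb{Q}[(x_j - x_k)]$ and the minimal $\mathfrak{S}_d$-stable $X_d^{\mathsf{prim}}$-submodule $J_d \subset X_d$ containing all CoHA kernels $\mathcal{K}_{d',d''}$ for nontrivial decompositions $d = d' + d''$. Finite-dimensionality of $V_{Q,d}^{\mathsf{prim}}$ then reduces to finite codimension of $J_d^{\mathfrak{S}_d}$ in $\mathcal{H}_{Q,d}$, which Efimov proves by a Nullstellensatz argument: any common zero of the kernels must be the origin (this is exactly the template the paper follows in Theorem~\ref{thm:oriDTfinite}). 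Injectivity of $\Phi$ is handled separately by a lexicographic filtration on sequences of dimension vectors, passing to associated graded pieces where products of primitives become visibly independent (this is the template in Theorem~\ref{thm:loopFreeCoHM}). Your proposal to deduce finiteness from Kontsevich--Soibelman integrality plus a Hilbert-series squeeze is circular in spirit: Efimov's theorem is itself a refinement of that integrality statement, and the hard content is precisely the direct codimension bound, not a comparison with a pre-existing $V$.
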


Without the supercommutative twist, the isomorphism $\Sym(V_Q) \simeq \mathcal{H}_Q$ is as objects of $D^{lb}(\mathsf{Vect}_{\mathbb{Z}})_{\Lambda_Q^+}$. The second part of Theorem \ref{thm:freeCoHA}, known as the integrality conjecture \cite{kontsevich2008}, asserts that $V_Q^{\mathsf{prim}}$ is an element of $D^b(\mathsf{Vect}_{\mathbb{Z}})_{\Lambda_Q^+} \subset D^{lb}(\mathsf{Vect}_{\mathbb{Z}})_{\Lambda_Q^+}$, the full subcategory of objects whose $\Lambda_Q^+$-homogeneous components lie in $D^b(\mathsf{Vect}_{\mathbb{Z}})$.

\begin{Def}
The motivic Donaldson-Thomas invariant of a symmetric quiver $Q$ is the class of $V_Q^{\mathsf{prim}}$ in the Grothendieck ring of $D^b(\mathsf{Vect}_{\mathbb{Z}})_{\Lambda^+_Q}$,
\[
\Omega_Q (q^{\frac{1}{2}}, t) = \sum_{(d, k) \in \Lambda_Q^+ \times \mathbb{Z}} \dim_{\mathbb{Q}} V^{\mathsf{prim}}_{Q,(d, k)} (-q^{\frac{1}{2}})^k t^d \in \mathbb{Z}[q^{\frac{1}{2}}, q^{-\frac{1}{2}}]  \pser{ \Lambda_Q^+}.
\]
\end{Def}

Since $Q$ is symmetric, the parity-twisted Hilbert-Poincar\'{e} series of $\mathcal{H}_Q$ coincides with $A_Q$. Using this observation, Theorem \ref{thm:freeCoHA} implies that $A_Q$ can be written as a product of $q$-Pochhammer symbols $(t;q)_{\infty} = \prod_{i \geq 0}(1-q^i t)$.

\begin{Cor}[{\cite[Corollary 4.1]{efimov2012}}]
\label{cor:factorization}
Let $Q$ be a symmetric quiver. Then
\[
A_Q (q^{\frac{1}{2}}, t) = \prod_{(d, k) \in \Lambda_Q^+  \times \mathbb{Z} } (q^{\frac{k}{2}} t^d; q)_{\infty}^{- \Omega_{Q,(d,k)}}
\]
where $\Omega_{Q,(d,k)}$ is the coefficient of $q^{\frac{k}{2}}t^d$ in $\Omega_Q$.
\end{Cor}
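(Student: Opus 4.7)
The plan is to extract the corollary from Theorem \ref{thm:freeCoHA} by taking suitably twisted Hilbert-Poincar\'{e} series of both sides of the algebra isomorphism $\mathcal{H}_Q \simeq \Sym(V_Q)$ and recognizing the resulting product as one of $q$-Pochhammer symbols. Since $Q$ is symmetric, purity (as already used in defining $A_Q$) forces the cohomological and weight gradings on $\mathcal{H}_Q$ to coincide, so the parity twist by $(-q^{\frac{1}{2}})^k$ appearing in the definition of $A_Q$ agrees with the super-sign induced by the $\mathbb{Z}_2$-grading. In particular, the Hilbert series of $\mathcal{H}_Q$ computed with respect to the super structure coincides with $A_Q$.

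First I would compute the super Hilbert series of $\Sym(V)$ for a $\Lambda_Q^+ \times \mathbb{Z}$-graded vector space $V$: a one-dimensional generator in bidegree $(d,k)$ contributes the factor $(1-q^{\frac{k}{2}}t^d)^{-\epsilon_k}$, where $\epsilon_k = (-1)^k$, since $\Sym$ of an even generator is a polynomial ring and $\Sym$ of an odd generator is $\mathbb{Q} \oplus \mathbb{Q} v$; in both cases the factor combines with the parity-twisted variable $(-q^{\frac{1}{2}})^k t^d = (-1)^k q^{\frac{k}{2}}t^d$ into the stated expression. Next I would apply this to $V_Q = V^{\mathsf{prim}}_Q \otimes \mathbb{Q}[u]$ with $u$ of bidegree $(0,2)$. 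For each $(d,k)$ a generator of $V^{\mathsf{prim}}_{Q,(d,k)}$ yields one generator of $V_Q$ in bidegree $(d, k + 2j)$ for every $j \geq 0$, all of the same parity as $k$; multiplying the corresponding contributions gives
\[
\prod_{j \geq 0} \bigl(1 - q^{\frac{k}{2}+j} t^d \bigr)^{-\epsilon_k} = (q^{\frac{k}{2}} t^d;q)_\infty^{-\epsilon_k}.
\]

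Finally I would match the exponents. By the definition of $\Omega_{Q,(d,k)}$ as the coefficient of $q^{\frac{k}{2}}t^d$ in $\Omega_Q$, one has $\Omega_{Q,(d,k)} = (-1)^k \dim_{\mathbb{Q}} V^{\mathsf{prim}}_{Q,(d,k)} = \epsilon_k \dim_{\mathbb{Q}} V^{\mathsf{prim}}_{Q,(d,k)}$, so raising the previous factor to the power $\dim_{\mathbb{Q}} V^{\mathsf{prim}}_{Q,(d,k)}$ and taking the product over $(d,k) \in \Lambda_Q^+ \times \mathbb{Z}$ produces exactly $\prod (q^{\frac{k}{2}}t^d;q)_\infty^{-\Omega_{Q,(d,k)}}$. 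The finite dimensionality of each $V^{\mathsf{prim}}_{Q,(d,k)}$ guaranteed by Theorem \ref{thm:freeCoHA} ensures the product is well defined in $\mathbb{Z}(q^{\frac{1}{2}}) \pser{\Lambda_Q^+}$ and represents the same series as the LHS, which is $A_Q$.

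There is no real obstacle here beyond careful sign bookkeeping; the content of the corollary is entirely contained in Theorem \ref{thm:freeCoHA}, and the only subtlety is tracking the interplay between the super structure on $\Sym(V_Q)$, the parity twist $(-q^{\frac{1}{2}})^k$ built into $A_Q$ and $\Omega_Q$, and the $\mathbb{Q}[u]$-factor responsible for converting each elementary factor $(1-q^{\frac{k}{2}}t^d)^{\pm 1}$ into the infinite $q$-Pochhammer symbol $(q^{\frac{k}{2}}t^d;q)_\infty^{\pm 1}$.
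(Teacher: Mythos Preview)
Your proposal is correct and follows exactly the approach indicated in the paper: the corollary is derived from Theorem~\ref{thm:freeCoHA} by taking the parity-twisted Hilbert--Poincar\'{e} series of both sides of $\mathcal{H}_Q \simeq \Sym(V_Q^{\mathsf{prim}} \otimes \mathbb{Q}[u])$, with the $\mathbb{Q}[u]$-factor producing the infinite $q$-Pochhammer symbols. Your sign bookkeeping is accurate, and the paper offers no further detail beyond the sentence preceding the corollary.
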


To finish this section we recall a geometric interpretation of $\Omega_Q$. Let $\mathbf{M}_d^{\mathsf{st}}$ be the stack of stable representations of dimension vector $d$ with respect to the trivial stability, $\theta =0$. The map to the coarse moduli scheme $\mathbf{M}_d^{\mathsf{st}} \rightarrow \mathfrak{M}_d^{\mathsf{st}}$ is a $\mathbb{C}^{\times}$-gerbe and induces an isomorphism of mixed Hodge structures $H^{\bullet}(\mathbf{M}_d^{\mathsf{st}}) \simeq H^{\bullet}(\mathfrak{M}_d^{\mathsf{st}}) \otimes \mathbb{Q}[u]$.

\begin{Thm}[{\cite[Theorem 2.2]{chen2014}}]
\label{thm:geomDTInterp}
Let $Q$ be the double of a quiver. For each $d \in \Lambda_Q^+$ the restriction $H^{\bullet}_{\mathsf{GL}_d}(R_d) \rightarrow H^{\bullet}(\mathbf{M}_d^{\mathsf{st}})$ induces a $\mathbb{Z}$-graded vector space isomorphism $V^{\mathsf{prim}}_{Q,d} \xrightarrow[]{\sim} PH^{\bullet-\chi(d,d)}(\mathfrak{M}_d^{\mathsf{st}})$.
\end{Thm}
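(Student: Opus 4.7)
The plan is to combine the cohomological integrality theorem (Theorem \ref{thm:freeCoHA}) with a careful analysis of how the restriction map $\rho_d : H^{\bullet}_{\mathsf{GL}_d}(R_d) \to H^{\bullet}(\mathbf{M}_d^{\mathsf{st}})$ interacts with the CoHA product. For $\theta=0$, the stable locus consists of the simple representations, so every strictly filtered representation is unstable.

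First, I would show that $\rho_d$ annihilates decomposable elements. The CoHA product $\mathcal{H}_{Q,d'} \boxtimes^{\mathsf{tw}} \mathcal{H}_{Q,d''} \to \mathcal{H}_{Q,d}$ factors through the pushforward along the closed inclusion $i: R_{d',d''} \hookrightarrow R_d$, whose image in $\mathbf{M}_d$ consists of the non-simple (hence, for $\theta=0$, non-stable) representations. Proper base change along the open inclusion $\mathbf{M}_d^{\mathsf{st}} \hookrightarrow \mathbf{M}_d$ then kills these products, so $\rho_d$ descends to the quotient $\mathcal{H}_{Q,d}/\mathcal{H}_{Q,d}^{\mathsf{dec}}$. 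Combined with the PBW isomorphism $\mathcal{H}_Q \simeq \Sym(V_Q^{\mathsf{prim}} \otimes \mathbb{Q}[u])$, this quotient is canonically identified with $V_{Q,d}^{\mathsf{prim}} \otimes \mathbb{Q}[u]$. Using the $\mathbb{C}^{\times}$-gerbe decomposition $H^{\bullet}(\mathbf{M}_d^{\mathsf{st}}) \simeq H^{\bullet}(\mathfrak{M}_d^{\mathsf{st}}) \otimes \mathbb{Q}[u]$ noted just before the theorem and stripping the $u$-tower gives a canonical map $\Phi_d : V_{Q,d}^{\mathsf{prim}} \to H^{\bullet-\chi(d,d)}(\mathfrak{M}_d^{\mathsf{st}})$, where the degree shift $\chi(d,d)$ comes from the normalization in the definition of $\mathcal{H}_Q$.

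Next, I would show that $\Phi_d$ lands in the pure part. The source $\mathcal{H}_{Q,d} = H^{\bullet}_{\mathsf{GL}_d}(R_d)$ is pure, being $H^{\bullet}(B\mathsf{GL}_d)$ up to an affine fibre (the underlying vector space $R_d$ retracts $\mathsf{GL}_d$-equivariantly onto a point). Since $\rho_d$ is a morphism of mixed Hodge structures, its image lies in the pure part $PH^{\bullet}(\mathbf{M}_d^{\mathsf{st}})$. Hence $\Phi_d$ factors through $PH^{\bullet-\chi(d,d)}(\mathfrak{M}_d^{\mathsf{st}})$.

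Finally, I would establish that $\Phi_d$ is an isomorphism by a generating series comparison. For double quivers the moduli space $\mathfrak{M}_d^{\mathsf{st}}$ is smooth (or at worst an orbifold) of dimension $-\chi(d,d)$ and carries a natural symplectic structure, and Kirwan-type surjectivity for the associated symplectic/affine GIT quotient shows that $\rho_d$ surjects onto $PH^{\bullet}(\mathbf{M}_d^{\mathsf{st}})$. The equality of dimensions then reduces to the product formula of Corollary \ref{cor:factorization}: writing $A_Q$ as an infinite product of $q$-Pochhammer symbols with exponents $-\Omega_{Q,(d,k)}$ and equating the result with the pure Poincar\'{e}--Hilbert series of $\mathfrak{M}_d^{\mathsf{st}}$ — which for a doubled quiver can be computed by a counting argument over finite fields in the spirit of Hausel--Letellier--Rodriguez Villegas — gives the numerical match and hence bijectivity of $\Phi_d$. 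The main obstacle is this last step: producing a geometric identification of the pure Poincar\'{e} series of $\mathfrak{M}_d^{\mathsf{st}}$ with the DT generating function coefficient $\Omega_{Q,d}$, which genuinely uses the double-quiver hypothesis.
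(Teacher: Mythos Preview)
The paper does not give its own proof of this theorem; it is quoted from \cite{chen2014}. However, the paper does indicate the shape of the argument when proving the orientifold analogue (Proposition \ref{prop:hodgeBound}) and in the paragraph following it, so one can compare your outline against that.

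Your first two steps match the paper's account closely. The vanishing of $\rho_d$ on decomposable classes is exactly what the paper calls ``a straightforward modification of \cite[Lemma 2.1]{chen2014}'', and the argument that the image lands in the pure part is the same Hodge-theoretic input (purity of $H^{\bullet}_{\mathsf{GL}_d}(R_d)$ via \cite{deligne1974}). One difference: for surjectivity onto $PH^{\bullet}$ the paper (following \cite{chen2014}) uses the long exact sequence in compactly supported cohomology for the pair $(R_d^{\mathsf{st}}, R_d \setminus R_d^{\mathsf{st}})$ together with weight bounds and Poincar\'{e} duality, rather than invoking a Kirwan-type theorem. This is more robust here, since Kirwan surjectivity in the usual form is a projective-GIT statement and does not directly apply to these affine quotients; the weight-filtration argument is the clean substitute and is what you actually need.

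For the final step you correctly identify the crux. The paper says explicitly that ``the proof of injectivity in Theorem \ref{thm:geomDTInterp} relies on an interpretation of $\Omega_Q$ in terms of the cohomology of (smooth) Nakajima quiver varieties \cite{hausel2013}''. Your proposed route, matching the pure Poincar\'{e} series of $\mathfrak{M}_d^{\mathsf{st}}$ to $\Omega_{Q,d}$ via a finite-field point count in the style of Hausel--Letellier--Rodriguez-Villegas, is essentially the same idea (indeed \cite{hausel2013} is Hausel's paper carrying out precisely such a computation for Nakajima varieties). So your outline is sound; just replace the Kirwan reference by the weight-filtration argument, and be aware that the dimension comparison is not a soft consequence of Corollary \ref{cor:factorization} alone but genuinely requires the external input from \cite{hausel2013}.
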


For other geometric interpretations of $\Omega_Q$ see \cite{hausel2013}, \cite{meinhardt2014}, \cite{davison2016a}.

\section{Cohomological Hall modules}
\label{sec:CoHM}

We introduce the cohomological Hall module of a quiver, establish its basic properties and formulate the main conjectures regarding its structure.

\subsection{Definition of the CoHM}
\label{sec:cohmDef}

Fix a quiver with involution $(Q, \sigma)$ and duality structure $(s, \tau)$. Using equation \eqref{eq:sdEulerIdentity} we verify that $D^{lb}(\mathsf{Vect}_{\mathbb{Z}})_{\Lambda_Q^{\sigma,+}}$ becomes a left module category over $(D^{lb}(\mathsf{Vect}_{\mathbb{Z}})_{\Lambda^+_Q}, \boxtimes^{\mathsf{tw}})$ via
\[
\bigoplus_{d \in \Lambda^+_Q} \mathcal{U}_d \boxtimes^{S \mhyphen \mathsf{tw}} \bigoplus_{e \in \Lambda_Q^{\sigma,+}} \mathcal{X}_e = \bigoplus_{e \in \Lambda_Q^{\sigma,+}} \Big( \bigoplus_{\substack{ (d^{\prime},e^{\prime \prime}) \in \Lambda_Q^+ \times \Lambda_Q^{\sigma,+} \\ e =H(d^{\prime}) + e^{\prime \prime}}} \mathcal{U}_{d^{\prime}} \otimes \mathcal{X}_{e^{\prime \prime}}  \{ \gamma(d^{\prime}, e^{\prime \prime}) \slash 2 \} \Big)
\]
where
\[
\gamma(d, e) = \chi( d, e ) - \chi(e , d) + \mathcal{E}(\sigma(d)) - \mathcal{E}(d).
\]

Fix $d \in \Lambda_Q^+$ and $e \in \Lambda_Q^{\sigma, +}$.  The subspace $R^{\sigma}_{d, e} \subset R^{\sigma}_{H(d) + e}$ of structure maps on the orthogonal direct sum $H(\mathbb{C}^d) \oplus \mathbb{C}^e$ which preserve the canonical $Q_0$-graded isotropic subspace $\mathbb{C}^d$ can be identified with the subspace of
\[
R_d \oplus R^{\sigma}_e \oplus \bigoplus_{i \xrightarrow[]{\alpha} j} \Hom_{\mathbb{C}}(\mathbb{C}^{e_i}, \mathbb{C}^{d_j}) \oplus \bigoplus_{i \xrightarrow[]{\alpha} j} \Hom_{\mathbb{C}}((\mathbb{C}^{d_{\sigma(i)}})^{\vee}, \mathbb{C}^{d_j})
\]
whose component $\{u_{\alpha}\} \in \bigoplus_{i \xrightarrow[]{\alpha} j} \Hom_{\mathbb{C}}((\mathbb{C}^{d_{\sigma(i)}})^{\vee}, \mathbb{C}^{d_j})$ satisfies $\Theta_{\mathbb{C}^{d_j}} u_{\alpha} = - \tau_{\alpha} u_{\sigma(\alpha)}^{\vee}$. Let also $\mathsf{G}_{d, e}^{\sigma} \subset \mathsf{G}_{H(d) + e}^{\sigma}$ be the subgroup of isometries which preserve $\mathbb{C}^d$.

The cohomological Hall module (henceforth CoHM) is
\[
\mathcal{M}_Q = \bigoplus_{e \in \Lambda_Q^{\sigma,+}}H^{\bullet}_{\mathsf{G}^{\sigma}_e}(R_{e}^{\sigma}) \{ \mathcal{E}(e) / 2 \} \in D^{lb}(\mathsf{Vect}_{\mathbb{Z}})_{\Lambda_Q^{\sigma,+}}.
\]
Define $\star: \mathcal{H}_Q \boxtimes^{S \mhyphen \mathsf{tw}} \mathcal{M}_Q \rightarrow \mathcal{M}_Q$ so that its restriction to $\mathcal{H}_{Q,d} \boxtimes^{S \mhyphen \mathsf{tw}} \mathcal{M}_{Q,e}$ is
\begin{multline*}
H^{\bullet}_{\mathsf{GL}_{d}}(R_{d}) \otimes H^{\bullet}_{\mathsf{G}_{e}^{\sigma}} (R_{e}^{\sigma}) \xrightarrow[]{\sim} H^{\bullet}_{\mathsf{GL}_{d} \times \mathsf{G}_{e}^{\sigma}}(R_{d} \times R_{e}^{\sigma}) \rightarrow H^{\bullet}_{\mathsf{G}_{d,e}^{\sigma}} (R_{d, e}^{\sigma})  \rightarrow \\
 H^{\bullet}_{\mathsf{G}_{d,e}^{\sigma}}(R_{H(d)+e}^{\sigma}) \{2 \delta_1 / 2\}  \rightarrow H^{\bullet}_{\mathsf{G}_{H(d)+e}^{\sigma}}(R_{H(d)+e}^{\sigma})\{ (2 \delta_1 + 2 \delta_2) / 2 \}.
\end{multline*}
Again, the degree shifts in $\mathcal{H}_{Q,d}$, $\mathcal{M}_{Q,e}$ and $\boxtimes^{S \mhyphen \mathsf{tw}}$ have been omitted. The maps in the composition are defined analogously to those appearing in the CoHA multiplication, where the maps \eqref{eq:diagMaps} have been replaced by
\[
R_d \times R^{\sigma}_e
\overset{\pi}{\twoheadleftarrow} R^{\sigma}_{d,e} \overset{i}{\hookrightarrow} R^{\sigma}_{H(d) + e},  \qquad \mathsf{GL}_d \times \mathsf{G}^{\sigma}_e \overset{p}{\twoheadleftarrow} \mathsf{G}^{\sigma}_{d,e} \overset{j}{\hookrightarrow} \mathsf{G}^{\sigma}_{H(d) + e}.
\]
The degree shifts are
\[
\delta_1 = \dim_{\mathbb{C}} R_{H(d)+e}^{\sigma} - \dim_{\mathbb{C}}  R_{d,e}^{\sigma}, \;\;\;\; \delta_2 = -\dim_{\mathbb{C}}   \mathsf{G}_{H(d)+e}^{\sigma} + \dim_{\mathbb{C}} \mathsf{G}_{d,e}^{\sigma}.
\]
A direct calculation shows that $\delta_1 + \delta_2 = -\chi( d, e ) - \mathcal{E}(\sigma(d))$.

\begin{Thm}
\label{thm:CoHM}
The map $\star$ gives $\mathcal{M}_Q$ the structure of a left $\mathcal{H}_Q$-module object of $D^{lb}(\mathsf{Vect}_{\mathbb{Z}})_{\Lambda_Q^{\sigma,+}}$.
\end{Thm}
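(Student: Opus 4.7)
The plan is to follow the strategy Kontsevich and Soibelman use for the algebra structure on $\mathcal{H}_Q$ \cite[Theorem 1]{kontsevich2011}, adapted to incorporate the symplectic/orthogonal reduction $M \git U = U^{\perp}/U$ which replaces the plain subquotient of ordinary quiver representations. Two items must be checked: first, that $\star$ is a well-defined morphism in $D^{lb}(\mathsf{Vect}_{\mathbb{Z}})_{\Lambda_Q^{\sigma,+}}$, i.e.\ that the shift produced by the geometric composition cancels against the shifts built into the objects $\mathcal{H}_Q$, $\mathcal{M}_Q$ and the twisted module product $\boxtimes^{S \mhyphen \mathsf{tw}}$; and second, associativity of the action, namely
\[
\star \circ (\mu \boxtimes^{S \mhyphen \mathsf{tw}} \mathbf{1}_{\mathcal{M}_Q}) = \star \circ (\mathbf{1}_{\mathcal{H}_Q} \boxtimes^{S \mhyphen \mathsf{tw}} \star),
\]
where $\mu$ denotes the CoHA product.

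The first point reduces to the numerical identity $\mathcal{E}(H(d)+e) - \chi(d,d) - \mathcal{E}(e) - \gamma(d,e) = 2(\delta_1 + \delta_2)$, which follows from two applications of \eqref{eq:sdEulerIdentity} to $\mathcal{E}(H(d)+e) = \mathcal{E}(d + \sigma(d) + e)$ together with the formula $\delta_1 + \delta_2 = -\chi(d,e) - \mathcal{E}(\sigma(d))$ recorded immediately after the definition of $\star$ and the explicit formula for $\gamma(d,e)$. Purity of the equivariant cohomology of the affine space $R_e^{\sigma}$ guarantees that Hodge weights and cohomological degrees coincide, so no further bookkeeping between the two gradings is required.

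The substantive work is associativity. For each $d_1, d_2 \in \Lambda_Q^+$ and $e \in \Lambda_Q^{\sigma,+}$ I would introduce the two-step isotropic flag space $R^{\sigma}_{d_1, d_2, e} \subset R^{\sigma}_{H(d_1+d_2)+e}$ of structure maps on $H(\mathbb{C}^{d_1+d_2}) \oplus \mathbb{C}^e$ preserving the canonical $Q_0$-graded isotropic flag $\mathbb{C}^{d_1} \subset \mathbb{C}^{d_1+d_2}$, together with its stabilizer $\mathsf{G}^{\sigma}_{d_1, d_2, e} \subset \mathsf{G}^{\sigma}_{H(d_1+d_2)+e}$. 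The geometric content of associativity is that a two-step isotropic flag $U_1 \subset U_2 \subset M$ in a self-dual representation $M$ admits two equivalent descriptions: either an isotropic $U_2 \subset M$ together with an arbitrary subrepresentation $U_1 \subset U_2$ (the side $(f_1 \cdot f_2) \star m$), or an isotropic $U_1 \subset M$ together with an isotropic subrepresentation $U_2/U_1 \subset M \git U_1$ (the side $f_1 \star (f_2 \star m)$). These two descriptions are exchanged via the transitivity identity $(M \git U_1) \git (U_2/U_1) = M \git U_2$ for symplectic/orthogonal reduction, which is the structural input peculiar to the self-dual setting.

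Concretely, I would rewrite each composite as a single sequence consisting of the K\"unneth isomorphism, pullbacks along the homotopy equivalences that collapse parabolic factors, a closed pushforward along $R^{\sigma}_{d_1, d_2, e} \hookrightarrow R^{\sigma}_{H(d_1+d_2)+e}$, and a fibration pushforward along $B\mathsf{G}^{\sigma}_{d_1, d_2, e} \to B\mathsf{G}^{\sigma}_{H(d_1+d_2)+e}$. The equality of the two orderings then follows by a routine base change and functoriality argument applied to the commuting squares which relate the one-step flag spaces on each side of the associativity equation to the common two-step flag space. The main obstacle I anticipate is combinatorial rather than conceptual: one must verify that the intermediate degree shifts accumulated through either order of iteration --- once from $\mu$ (via $\boxtimes^{\mathsf{tw}}$) and once from $\star$ (via $\boxtimes^{S \mhyphen \mathsf{tw}}$), or vice versa --- reassemble into the single shift produced by the direct passage through $R^{\sigma}_{d_1, d_2, e}$. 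This is a finite identity in $\chi$ and $\mathcal{E}$, and once the definition of $\gamma$ is unwound it is again a consequence of \eqref{eq:sdEulerIdentity} applied in two different orders of splitting.
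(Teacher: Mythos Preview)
Your proposal is correct and follows essentially the same approach as the paper's proof, which simply states that the Kontsevich--Soibelman associativity diagram has a natural self-dual modification obtained by replacing flags by multi-step isotropic flags. You have spelled out the details the paper leaves implicit---the two-step isotropic flag space $R^{\sigma}_{d_1,d_2,e}$ with its stabilizer, the transitivity $(M\git U_1)\git(U_2/U_1)=M\git U_2$, and the numerical shift check via \eqref{eq:sdEulerIdentity}---but the underlying argument is the same.
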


\begin{proof}
The commutative diagram used to prove associativity of the CoHA in \cite[\S 2.3]{kontsevich2011} has a natural modification in the self-dual setting, obtained by requiring that the structure maps and isometry groups preserve multi-step isotropic flags. This modified commutative diagram establishes the $\mathcal{H}_Q$-module structure of $\mathcal{M}_Q$.
\end{proof}

Define an abelian group $\mathsf{W}(Q)$, the numerical Witt group, by the exact sequence
\[
\Lambda_Q \xrightarrow[]{H} \Lambda_Q^{\sigma} \xrightarrow[]{\nu} \mathsf{W}(Q) \rightarrow 0.
\]
Explicitly, $\mathsf{W}(Q) \simeq \prod_{i \in Q_0^{\sigma}} \mathbb{Z}_2$ with $\nu$ sending a dimension vector to its parities at $Q_0^{\sigma}$. The following result is immediate.

\begin{Prop}
\label{prop:cohmDecomp}
For each $w \in \mathsf{W}(Q)$ the subspace
\[
\mathcal{M}_Q^w = \bigoplus_{\{ e \in \Lambda_Q^{\sigma,+} \mid \nu(e) =w \} } \mathcal{M}_{Q,e} \subset \mathcal{M}_Q
\]
is a $\mathcal{H}_Q$-submodule which is trivial unless $s_i=1$ for all $i \in Q_0^{\sigma}$ with $w_i =1$. Moreover, $\mathcal{M}_Q = \bigoplus_{w \in \mathsf{W}(Q)} \mathcal{M}_Q^w$ as $\mathcal{H}_Q$-modules.
\end{Prop}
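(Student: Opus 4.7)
The plan is to verify the three claims in turn; all are essentially direct consequences of definitions once one unpacks how the CoHA action interacts with the map $H:\Lambda_Q\to\Lambda_Q^\sigma$.

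First, the vector space identity $\mathcal{M}_Q=\bigoplus_{w\in\mathsf{W}(Q)}\mathcal{M}_Q^w$ is immediate from the definition: $\mathcal{M}_Q$ is by construction $\Lambda_Q^{\sigma,+}$-graded, and the fibres $\nu^{-1}(w)\cap\Lambda_Q^{\sigma,+}$ for $w\in\mathsf{W}(Q)$ partition $\Lambda_Q^{\sigma,+}$. Thus one only needs to check that the decomposition respects the $\mathcal{H}_Q$-action and to identify when a summand vanishes.

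Next, to prove that $\mathcal{M}_Q^w$ is an $\mathcal{H}_Q$-submodule for each $w$, I would inspect the bigrading of the action $\star$. By the definition in Section \ref{sec:cohmDef}, the restriction $\star:\mathcal{H}_{Q,d}\boxtimes^{S\mhyphen\mathsf{tw}}\mathcal{M}_{Q,e}\to\mathcal{M}_{Q,H(d)+e}$ shifts the $\Lambda_Q^{\sigma,+}$-grading by $H(d)$. From the defining exact sequence $\Lambda_Q\xrightarrow{H}\Lambda_Q^{\sigma}\xrightarrow{\nu}\mathsf{W}(Q)\to0$, we have $\nu(H(d))=0$, so $\nu(H(d)+e)=\nu(e)$. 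Hence the action preserves the Witt class, which is exactly the statement that each $\mathcal{M}_Q^w\subset\mathcal{M}_Q$ is closed under $\star$.

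Finally, for the vanishing claim, suppose there exists $i\in Q_0^\sigma$ with $w_i=1$ and $s_i=-1$. For any $e\in\Lambda_Q^{\sigma,+}$ with $\nu(e)=w$, the $i$-th component $e_i$ is odd while $s_i=-1$. By the convention fixed in Section \ref{sec:quiverReps}, such dimension vectors are excluded from consideration, so there is no $\mathbb{C}^e$ carrying a symplectic-type self-dual structure at $i$, and consequently $\mathcal{M}_{Q,e}=0$ for every $e$ in the fibre $\nu^{-1}(w)$. Therefore $\mathcal{M}_Q^w=0$.

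There is no real obstacle here; the statement is effectively a bookkeeping observation recording that the action map lands in a single Witt class and that the parity restriction on $\sigma$-fixed nodes forces certain graded pieces to vanish. The proof is thus short, and I would write it as a single paragraph appealing to the exact sequence defining $\mathsf{W}(Q)$ and to the convention on self-dual dimension vectors.
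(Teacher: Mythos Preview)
Your proposal is correct and matches the paper's own treatment: the paper simply declares the result ``immediate'' without writing a proof, and your argument unpacks exactly the two observations that make it so---that the action shifts the $\Lambda_Q^{\sigma,+}$-grading by $H(d)$, which lies in $\ker\nu$, and that the parity convention at symplectic $\sigma$-fixed nodes forces the relevant summands to vanish.
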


The motivic orientifold DT series of $Q$ is the class of $\mathcal{M}_Q$ in the Grothendieck group of $D^{lb}(\mathsf{Vect}_{\mathbb{Z}})_{\Lambda_Q^{\sigma,+}}$,
\[
A_Q^{\sigma}(q^{\frac{1}{2}}, \xi) = \sum_{(e,l) \in  \Lambda_Q^{\sigma, +} \times \mathbb{Z}}  \dim_{\mathbb{Q}} \mathcal{M}_{Q,(e,l)} (-q^{\frac{1}{2}})^l \xi^e \in \mathbb{Z}(q^{\frac{1}{2}}) \pser{ \Lambda_Q^{\sigma,+} }.
\]
Using the equivariant contractibility of $R_e^{\sigma}$ and the isomorphisms \eqref{eq:classicalEquivCohom} we find\begin{equation}
\label{eq:motivicOriDTSeries}
A_Q^{\sigma} =  \sum_{e \in \Lambda_Q^{\sigma, +}} \frac{(-q^{\frac{1}{2}})^{\mathcal{E}(e)}}{\prod_{i \in Q_0^+} \prod_{j=1}^{e_i} (1-q^j) \prod_{i \in Q_0^{\sigma}} \prod_{j=1}^{\lfloor \frac{e_i}{2} \rfloor} (1-q^{2j})} \xi^e.
\end{equation}
In what follows we will view $A_Q^{\sigma}$ as an element of the $\hat{\mathbb{T}}_Q$-module $\hat{\mathbb{S}}_Q$.

Also inspired by orientifold DT theory, in \cite{mbyoung2015} a different series was associated to a quiver with duality structure. Given a finite field $\mathbb{F}_q$ of odd characteristic, the $\mathcal{E}$-weighted number of $\mathbb{F}_q$-rational points of the stack of self-dual representations is
\[
\mathfrak{A}_{Q,\mathbb{F}_q}^{\sigma}(\xi)= \sum_M  \frac{(-q^{\frac{1}{2}})^{\mathcal{E}(\Dim M )}}{\# \Aut_S(M) } \xi^{\Dim M}.
\]
The sum is over isometry classes of self-dual representations. Comparing equation \eqref{eq:motivicOriDTSeries} and a renormalized version of \cite[Proposition 4.2]{mbyoung2015} shows $A_Q^{\sigma}(q^{-\frac{1}{2}},\xi) = \mathfrak{A}_{Q,\mathbb{F}_q}(\xi)$. It follows that the cohomological approach to orientifold DT theory developed in this paper is consistent with the finite field approach of \cite{mbyoung2015}.

\subsection{The CoHM as a signed shuffle module}
\label{sec:shuffMod}

We give a combinatorial description of $\mathcal{M}_Q$. The analogous result for $\mathcal{H}_Q$ can be found in \cite[\S 2.4]{kontsevich2011}.

Using the isomorphism \eqref{eq:classicalEquivCohom}, for each $e \in \Lambda_Q^{\sigma, +}$ identify $\mathcal{M}_{Q,e}$ with the vector space of $\prod_{i \in Q_0^+} \mathfrak{S}_{e_i} \times \prod_{i\in Q_0^{\sigma}} \mathfrak{S}_{\lfloor \frac{e_i}{2} \rfloor}$-invariant polynomials in the variables
\[
\{z_{i,1}, \dots, z_{i,e_i}\}_{i \in Q_0^+}, \qquad \{z^2_{i, 1}, \dots, z^2_{i, \lfloor \frac{e_i}{2} \rfloor}\}_{i \in Q_0^{\sigma} }.
\]
We also identify polynomials in
\[
\{ x^{\prime}_{i,1}, \dots, x^{\prime}_{i,d_i} \}_{i \in Q_0}, \;\; \mbox{ and } \;\; \{ z^{\prime \prime}_{i,1}, \dots,  z^{\prime \prime}_{i,e_i} \}_{i \in Q^+_0}, \;\; \{ z^{\prime \prime}_{i,1}, \dots,  z^{\prime \prime}_{i,\lfloor \frac{e_i}{2} \rfloor} \}_{i \in Q^{\sigma}_0}
\]
with polynomials in
\begin{equation}
\label{eq:polyVars}
\{ z_{i,1}, \dots,  z_{i,d_i + e_i + d_{\sigma(i)}} \}_{i \in Q^+_0}, \qquad  \{ z_{i,1}, \dots,  z_{i,d_i + \lfloor \frac{e_i}{2} \rfloor} \}_{i \in Q^{\sigma}_0}
\end{equation}
via
\[
x^{\prime}_{i,j} \mapsto z_{i,j}, \qquad z^{\prime \prime}_{i,j} \mapsto z_{i, d_i + j}, \qquad x^{\prime}_{\sigma(i),j} \mapsto - z_{i, d_i + e_i + j}, \qquad i \in Q_0^+
\]
and
\[
x^{\prime}_{i,j} \mapsto z_{i,j}, \qquad z^{\prime \prime}_{i,j} \mapsto z_{i,d_i+j}, \qquad i \in Q_0^{\sigma}.
\]
The signs arise from the first part of Lemma \ref{lem:equivCoRestr}.

Given $m,n, p \in \mathbb{Z}_{\geq 0}$ let $\mathfrak{sh}_{m,n,p} \subset \mathfrak{S}_{m+n+p}$ be the set of $3$-shuffles of type $(m,n,p)$. The set of $\sigma$-shuffles of type $(d,e) \in \Lambda_Q^+ \times \Lambda_Q^{\sigma, +}$ is then defined to be
\[
\mathfrak{sh}^{\sigma}_{d,e} = \prod_{i \in Q_0^+} \mathfrak{sh}_{d_i, e_i, d_{\sigma(i)}} \times \prod_{i \in Q_0^{\sigma}} \left( \mathbb{Z}_2^{d_i} \times \mathfrak{sh}_{d_i, \lfloor \frac{e_i}{2} \rfloor} \right).
\]
There is a natural action of $\mathfrak{sh}^{\sigma}_{d,e}$ on the space of polynomials in the variables \eqref{eq:polyVars}, the shuffle factors acting as usual and the $\mathbb{Z}_2$ factors acting by multiplication by $-1$ on the first $d_i$ elements of $\{ z_{i,1}, \dots,  z_{i,d_i + \lfloor \frac{e_i}{2} \rfloor} \}_{i \in Q^{\sigma}_0}$.

For each $i \in Q_0$ define $\varepsilon_i : \Lambda_Q \rightarrow \{ 0,1\}$ by $e \mapsto e_i \mod 2$. Write  $\leq_t$ for $<$ if $t=-1$ and $\leq$ if $t=+1$.

\begin{Thm}
\label{thm:cohmLoc}
Let $f \in \mathcal{H}_{Q,d}$ and $g \in \mathcal{M}_{Q,e}$. Then
\[
f \star g   = \sum_{\pi \in \mathfrak{sh}^{\sigma}_{d,e}}  \pi \left( f(x^{\prime}) g (z^{\prime \prime})  \frac{ \prod_{\alpha \in Q_1^+ \sqcup Q_1^{\sigma}} V_{\alpha}(x^{\prime}, z^{\prime \prime}) }{ \prod_{i \in Q_0^+ \sqcup Q_0^{\sigma}} D_i (x^{\prime}, z^{\prime \prime}) } \right)
\]
where the numerators and denominators are defined as follows. If $i \in Q_0^+$, then
\[
D_i= \prod_{k=1}^{e_i} \prod_{l=1}^{d_i}  (z^{\prime \prime}_{i,k} - x^{\prime}_{i, l}) \prod_{m=1}^{d_{\sigma(i)}} \prod_{l=1}^{d_i}  (- x^{\prime}_{\sigma(i), m} - x^{\prime}_{i, l}) \prod_{m=1}^{d_{\sigma(i)}} \prod_{k=1}^{e_i}  (- x^{\prime}_{\sigma(i), m} - z^{\prime \prime}_{i, k}).
\]
If $i \in Q_0^{\sigma}$, then
\[
D_i = g_i(x^{\prime}_{i,1}, \dots, x^{\prime}_{i,d_i}) \prod_{1 \leq k < l \leq d_i} (-x^{\prime}_{i, k} - x^{\prime}_{i,l}) \prod_{l=1}^{d_i} \prod_{k=1}^{\lfloor \frac{e_i}{2} \rfloor }  (x_{i,l}^{\prime 2} -z_{i,k}^{\prime \prime 2})
\]
with
\[
g_i(x_{i,1}, \dots, x_{i,d_i})
= \begin{cases}
(-1)^{d_i}\prod_{l=1}^{d_i} x_{i,l} & \hbox{ if } \mathsf{G}_{2d_i + e_i}^{s_i} \hbox{ is of type } B, \\ (-2)^{d_i} \prod_{l=1}^{d_i} x_{i,l} & \hbox{ if } \mathsf{G}_{2d_i + e_i}^{s_i} \hbox{ is of type } C, \\ 1 & \hbox{ if } \mathsf{G}_{2d_i + e_i}^{s_i} \hbox{ is of type } D. \end{cases}
\]
If $i \xrightarrow[]{\alpha} j \in Q_1^+$, then $\displaystyle V_{\alpha} = \widetilde{V}_{\alpha}^{(i)} \widetilde{V}_{\alpha}^{(j)} \prod_{m=1}^{d_{\sigma(j)}} \prod_{l=1}^{d_i} (-x^{\prime}_{\sigma(j), m} - x^{\prime}_{i,l})$ where
\[
\widetilde{V}_{\alpha}^{(i)}=
\begin{cases}
\displaystyle \prod_{m=1}^{d_{\sigma(j)}}  \prod_{k=1}^{e_i}  (-x_{\sigma(j), m}^{\prime} - z^{\prime \prime}_{i,k}) & \mbox{ if } i \not \in Q_0^{\sigma},\\

\displaystyle \prod_{m=1}^{d_{\sigma(j)}} \prod_{k=1}^{\lfloor \frac{e_i}{2} \rfloor}  (x_{\sigma(j), m}^{\prime 2} - z^{\prime \prime 2}_{i,k}) \prod_{m=1}^{d_{\sigma(j)}}(-x^{\prime}_{\sigma(j),m})^{\varepsilon_i(e)} & \mbox{ if } i \in Q_0^{\sigma}
\end{cases}
\]
and
\[
\widetilde{V}_{\alpha}^{(j)}=
\begin{cases}
\displaystyle  \prod_{k=1}^{e_j} \prod_{l=1}^{d_i} (z^{\prime \prime}_{j,k} - x^{\prime}_{i,l}) & \mbox{ if } j \not \in Q_0^{\sigma}, \\

\displaystyle \prod_{l=1}^{d_i} \prod_{k=1}^{\lfloor \frac{e_j}{2} \rfloor} (x^{\prime 2}_{i,l} -z^{\prime \prime 2}_{j,k}) \prod_{l=1}^{d_i}(-x^{\prime}_{i,l})^{\varepsilon_j(e)} & \mbox{ if } j \in Q_0^{\sigma}.
\end{cases}
\]
If $\sigma(i) \xrightarrow[]{\alpha} i \in Q_1^{\sigma}$, then $V_{\alpha} = \widetilde{V}_{\alpha} \prod_{1 \leq j \leq_{s_i \tau_{\alpha}} k \leq d_{\sigma(i)}} (- x^{\prime}_{\sigma(i), j} - x^{\prime}_{\sigma(i), k})$ where
\[
\widetilde{V}_{\alpha} = \begin{cases}

\displaystyle  \prod_{k=1}^{e_i} \prod_{l=1}^{d_{\sigma(i)}} (z^{\prime \prime}_{i,k} - x^{\prime}_{\sigma(i),l}) & \mbox{ if } i \not\in Q_0^{\sigma},   \\

\displaystyle \prod_{l=1}^{d_{\sigma(i)}} \prod_{k=1}^{\lfloor \frac{e_i}{2} \rfloor} (x^{\prime 2}_{\sigma(i),l} - z^{\prime \prime 2 }_{i,k} ) 
\prod_{l=1}^{d_{\sigma(i)}}(-x^{\prime}_{\sigma(i),l})^{\varepsilon_i(e)} & \mbox{ if } i \in Q_0^{\sigma}.
\end{cases}
\]
\end{Thm}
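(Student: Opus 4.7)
The plan is to compute $\star$ at the level of equivariant cohomology by restricting to maximal tori and applying Atiyah-Bott localization, exactly as in the proof of Theorem \ref{thm:cohaLoc}. First I would fix a maximal torus $\mathsf{T}^{\sigma}_e \subset \mathsf{G}^{\sigma}_e$ as the product of tori of the factors, where the torus of a classical group $\mathsf{G}^{s_i}_{e_i}$ is the one described in Section \ref{sec:rootSys}. Then $\mathsf{T}^{\sigma}_{d,e} = \mathsf{T}_d \times \mathsf{T}^{\sigma}_e$ is a maximal torus of $\mathsf{G}^{\sigma}_{d,e}$, compatible with the inclusion $\mathsf{G}^{\sigma}_{d,e} \hookrightarrow \mathsf{G}^{\sigma}_{H(d)+e}$ via the obvious embedding of tori. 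Under Lemma \ref{lem:equivCoRestr} the induced map on generators produces exactly the substitutions $x^{\prime}_{i,j} \mapsto z_{i,j}$ for $i \in Q_0^+ \sqcup Q_0^{\sigma}$, $z^{\prime \prime}_{i,j} \mapsto z_{i, d_i + j}$, and $x^{\prime}_{\sigma(i), j} \mapsto -z_{i, d_i + e_i + j}$ for $i \in Q_0^+$; the sign in the last case is the key point and comes from part (3) of Lemma \ref{lem:equivCoRestr} applied to the hyperbolic embedding of $\mathsf{GL}_{d_{\sigma(i)}}$ into the classical group, since the isotropic subspace $\mathbb{C}^d$ pairs with its orthogonal complement via the form.

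Next I would identify the Weyl group coset space. A direct check shows that $\mathfrak{W}_{\mathsf{G}^{\sigma}_{H(d)+e}} \slash \mathfrak{W}_{\mathsf{G}^{\sigma}_{d,e}}$ is in canonical bijection with $\mathfrak{sh}^{\sigma}_{d,e}$: at a node $i \in Q_0^+$ we get the $3$-shuffles of type $(d_i, e_i, d_{\sigma(i)})$ from $\mathfrak{S}_{d_i + e_i + d_{\sigma(i)}}$, while at a node $i \in Q_0^{\sigma}$ the hyperoctahedral Weyl group quotient by the Weyl group of $\mathsf{GL}_{d_i} \times \mathsf{G}^{s_i}_{e_i}$ yields $\mathbb{Z}_2^{d_i} \times \mathfrak{sh}_{d_i, \lfloor e_i / 2 \rfloor}$, with the $\mathbb{Z}_2^{d_i}$ accounting for the sign changes on the first $d_i$ variables of $z_{i, \bullet}$. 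The pushforward in cohomology along the fibration $B\mathsf{G}^{\sigma}_{d,e} \to B\mathsf{G}^{\sigma}_{H(d)+e}$ is then given, after inverting the Euler class of the tangent bundle to the flag variety, by summation over this coset space of the pulled-back cohomology class divided by the equivariant Euler class of the tangent space at the fixed point. Concretely, the denominator $D_i$ is the product of weights of the torus action on $\mathfrak{g}^{\sigma}_{H(d)+e} / \mathfrak{g}^{\sigma}_{d,e}$: a $\mathsf{GL}$-type contribution $\prod (z^{\prime \prime}_{i,k} - x^{\prime}_{i,l})$, a $\mathsf{GL}$-$\mathsf{GL}^{\vee}$ contribution $\prod(-x^{\prime}_{\sigma(i),m} - x^{\prime}_{i,l})$, a $\mathsf{GL}$-$\mathsf{GL}^{\vee}$-form contribution, and at $i \in Q_0^{\sigma}$ the additional short/long root factors from the type $B_n$, $C_n$ or $D_n$ root systems, which produces the case distinction in $g_i$: in type $B$ one has the short roots $\pm e_k$, in type $C$ the long roots $\pm 2 e_k$ (yielding the factor $(-2)^{d_i}$), and in type $D$ there are no such extra roots.

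Then I would compute the pushforward along the closed inclusion $i: R^{\sigma}_{d,e} \hookrightarrow R^{\sigma}_{H(d)+e}$ as multiplication by the equivariant Euler class of the normal bundle $R^{\sigma}_{H(d)+e} / R^{\sigma}_{d,e}$. Decomposing this quotient according to whether an arrow lies in $Q_1^+$ or $Q_1^{\sigma}$ yields precisely the numerators $V_{\alpha}$ in the statement. For an arrow $\alpha \in Q_1^+$ with $i \in Q_0^+$ one gets the ordinary $\mathsf{GL}$-weight product; for $i \in Q_0^{\sigma}$ the normal weights pair $x^{\prime}$-variables with $\pm z^{\prime \prime}$-variables and yield the factors $(x^{\prime 2}_{\sigma(j),m} - z^{\prime \prime 2}_{i,k})$ and the extra $(-x^{\prime}_{\sigma(j),m})^{\varepsilon_i(e)}$ coming from the odd-dimensional middle eigenvalue zero in type $B$. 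For $\alpha \in Q_1^{\sigma}$ the relation $\tau_{\alpha} \tau_{\sigma(\alpha)} = s_i s_j$ forces the short/long root convention $\leq_{s_i \tau_{\alpha}}$ on the product indices; the case $s_i \tau_{\alpha} = -1$ excludes the diagonal (symmetric form on a symmetric arrow), while $s_i \tau_{\alpha} = +1$ includes it.

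The main obstacle is bookkeeping of signs and of the special contributions from $Q_0^{\sigma}$ nodes, in particular checking that the pullback isomorphism on equivariant cohomology, when combined with the Euler classes of both the normal bundle and the flag variety tangent bundle, produces the stated factorization into $V_{\alpha}$ and $D_i$. This amounts to a careful case analysis by type of node and arrow together with an application of the identity \eqref{eq:sdEulerIdentity}, which matches the degree shift $\mathcal{E}(H(d)+e) \slash 2 - \mathcal{E}(e) \slash 2 - \gamma(d,e) \slash 2$ with the total cohomological degree of the rational function in the shuffle sum. Once all numerators and denominators are computed in terms of the $z$-variables and one translates back to the $(x^{\prime}, z^{\prime \prime})$-variables using the identifications fixed above, the formula falls out directly; the action of $\mathfrak{sh}^{\sigma}_{d,e}$ is then simply the permutation/sign action on the localized expression arising from the different fixed points.
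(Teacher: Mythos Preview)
Your proposal is correct and follows essentially the same approach as the paper's proof: both compute the pushforward by equivariant localization with respect to the maximal torus of $\mathsf{G}^{\sigma}_{H(d)+e}$, identify the fixed points of $\mathsf{G}^{\sigma}_{H(d)+e}/\mathsf{G}^{\sigma}_{d,e}$ with $\mathfrak{sh}^{\sigma}_{d,e}$, and then read off the denominators $D_i$ from the tangent weights of the isotropic flag variety and the numerators $V_\alpha$ from the Euler class of the normal bundle $R^{\sigma}_{H(d)+e}/R^{\sigma}_{d,e}$. The only cosmetic difference is that the paper describes the fixed points geometrically via $\mathsf{Fl}(d_i,e_i,d_{\sigma(i)})$ and $\mathsf{IGr}^{s_i}(d_i,2d_i+e_i)$ rather than as Weyl group cosets, and attributes the sign in $x'_{\sigma(i),j}\mapsto -z_{i,d_i+e_i+j}$ to part (1) of Lemma~\ref{lem:equivCoRestr} rather than part (3); these are equivalent viewpoints.
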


\begin{proof}
Regard $f$ and $g$ as classes in $H^{\bullet}(B\mathsf{GL}_d \times B\mathsf{G}_e^{\sigma})$. Let $\mathsf{Eu}_{\mathsf{G}^{\sigma}_{d,e}}(N_{R^{\sigma}_{H(d)+e} \slash R_{d,e}^{\sigma}})$ be the $\mathsf{G}^{\sigma}_{d,e}$-equivariant Euler class of the fibre of the normal bundle to $R_{d, e}^{\sigma} \subset R_{H(d) + e}^{\sigma}$ at the origin. Then
\[
f \star g = \int_{[\mathsf{G}^{\sigma}_{H(d) + e} \slash \mathsf{G}^{\sigma}_{d, e}]} f \cdot g \cdot \mathsf{Eu}_{\mathsf{G}^{\sigma}_{d,e}}(N_{R^{\sigma}_{H(d)+e} \slash R_{d,e}^{\sigma}})
\]
where $[\mathsf{G}^{\sigma}_{H(d) + e} \slash \mathsf{G}^{\sigma}_{d, e}]$ is the $\mathsf{G}_{H(d)+e}^{\sigma}$-equivariant fundamental class of $\mathsf{G}^{\sigma}_{H(d) + e} \slash \mathsf{G}^{\sigma}_{d, e}$. As in \cite[\S 2.4]{kontsevich2011}, this integral can be computed by equivariant localization with respect to the maximal torus $\mathsf{T} = \mathsf{T}_{H(d) +e} \subset \mathsf{G}^{\sigma}_{H(d) + e}$.

Let $U \in R_d$ and $N \in R_{H(d)+e}^{\sigma}$. An inclusion $U \hookrightarrow N$ is isotropic if and only if for each arrow $\alpha: i \rightarrow j$ we have a commutative diagram
\begin{equation}
\label{eq:isoSubDiag}
\begin{tikzpicture}[every node/.style={on grid},  baseline=(current bounding box.center),node distance=1.5]
  \node (A) {$U_i$};
  \node (B) [right=of A]{$(U^{\perp})_i$};
  \node (C) [right=of B]{$N_i$};
  \node (D) [below=of A] {$U_j$};
  \node (E) [right=of D]{$(U^{\perp})_j$};
  \node (F) [right=of E]{$N_j$};
  \draw[right hook->] (A)-- (B);
  \draw[->] (A)-- node[left] {$u_{\alpha}$}(D);
  \draw[->] (B)-- (E);
  \draw[->] (C)-- node[right] {$n_{\alpha}$} (F);
  \draw[right hook->] (B)--(C);
  \draw[right hook->] (D)-- (E);
  \draw[right hook->] (E)-- (F);
\end{tikzpicture}
\end{equation}

We start by computing the equivariant Euler class of the tangent space at a $\mathsf{T}$-fixed point of $\mathsf{G}^{\sigma}_{H(d)+e} \slash \mathsf{G}^{\sigma}_{d, e}$. The inclusions of diagram \eqref{eq:isoSubDiag} lead to an isomorphism
\[
\mathsf{G}^{\sigma}_{H(d)+e} \slash \mathsf{G}^{\sigma}_{d, e} \simeq \prod_{i \in Q_0^+} \mathsf{Fl}(d_i, e_i, d_{\sigma(i)}) \times \prod_{i \in Q_0^{\sigma}} \mathsf{IGr}^{s_i}(d_i, 2d_i + e_i)
\]
where $\mathsf{Fl}(a_1,a_2,a_3)$ is the variety of flags $0=V_0 \subset V_1 \subset V_2 \subset \mathbb{C}^{a_1+a_2+a_3}$ with $\dim V_i \slash V_{i-1} = a_i$ and $\mathsf{IGr}^s(a,b)$ is the variety of $a$-dimensional isotropic subspaces of a $b$-dimensional orthogonal ($s=1$) or symplectic ($s=-1$) vector space. The $\mathsf{T}$-fixed points of $\mathsf{Fl}(d_i, e_i, d_{\sigma(i)})$ are two-step coordinate flags and can be labelled by disjoint pairs of increasing sequences in $\{1, \dots, d_i+ e_i + d_{\sigma(i)} \}$ of the form $\pi = \{ a_1 < \cdots < a_{d_i} ; \; b_1 < \cdots < b_{e_i} \}$. Such pairs are in bijection with $\mathfrak{sh}_{d_i, e_i, d_{\sigma(i)}}$. The character of the tangent space to the flag $U_i \subset (U^{\perp})_i \subset N_i$ corresponding to the trivial shuffle is the product of the weights:
\begin{eqnarray*}
\Hom_{\mathbb{C}}(U_i , (N \git U)_i)  & \rightsquigarrow & \prod_{k=1}^{e_i} \prod_{l=1}^{d_i} (z^{\prime \prime}_{i,k} - x^{\prime}_{i,l}) \\
\Hom_{\mathbb{C}}(U_i , U_{\sigma(i)}^{\vee}) & \rightsquigarrow & \prod_{m=1}^{d_{\sigma(i)}} \prod_{l=1}^{d_i} (-x^{\prime}_{\sigma(i), m} - x^{\prime}_{i,l}) \\
\Hom_{\mathbb{C}}((N \git U)_i, U_{\sigma(i)}^{\vee}) & \rightsquigarrow &  \prod_{m=1}^{d_{\sigma(i)}} \prod_{k=1}^{e_i} (-x^{\prime}_{\sigma(i), m} - z^{\prime \prime}_{i,k}).
\end{eqnarray*}
Similarly, the $\mathsf{T}$-fixed points of $\mathsf{IGr}^{s_i}(d_i, 2d_i + e_i)$ are isotropic coordinate planes and are in bijection with $\mathbb{Z}_2^{d_i} \times \mathfrak{sh}_{d_i,  d_i + \lfloor \frac{e_i}{2} \rfloor}$ via
\[
\mathbb{Z}_2^{d_i} \times \mathfrak{sh}_{d_i,d_i + \lfloor \frac{e_i}{2} \rfloor} \owns (p, \pi)  \mapsto \mbox{span}_{\mathbb{C}} \{v_{\pi(1), p_1}, \dots, v_{\pi(d_i), p_{d_i}} \}
\]
where, in the notation of Section \ref{sec:rootSys}, $v_{i,p}$ is $x_i$ or $y_i$ if $p=1$ or $p=-1$, respectively. The character of the tangent space at such a fixed point is the product of the positive roots of $\mathsf{G}_{2d_i + e_i}^{s_i}$ which are not in the corresponding parabolic Lie subalgebra. Hence the denominators $D_i$ are as stated.

Next, we compute the restriction of $\mathsf{Eu}_{\mathsf{G}^{\sigma}_{d,e}}(N_{R^{\sigma}_{H(d)+e} \slash R_{d,e}^{\sigma}})$ to a $\mathsf{T}$-fixed point. From the vertical arrows of diagram \eqref{eq:isoSubDiag}, the contribution $V_{\alpha}$ of $\alpha \in Q_1^+$ to $\mathsf{Eu}_{\mathsf{G}^{\sigma}_{d,e}}(N_{R^{\sigma}_{H(d)+e} \slash R_{d,e}^{\sigma}})$ is the product of the weights
\[
\Hom_{\mathbb{C}}( U_i , (N \git U)_j )\rightsquigarrow
\begin{cases}
\displaystyle \prod_{k=1}^{e_j} \prod_{l=1}^{d_i}  (z^{\prime \prime}_{j,k} - x^{\prime}_{i,l}) & \mbox{ if } j \not\in Q_0^{\sigma}, \\

\displaystyle \prod_{k=1}^{\lfloor \frac{e_j}{2} \rfloor} \prod_{l=1}^{d_i}  (x^{\prime 2}_{i,l} - z^{\prime \prime 2}_{j,k}) \prod_{l=1}^{d_i}(-x^{\prime}_{i,l})^{\varepsilon_j(e)} & \mbox{ if } j \in Q_0^{\sigma}
\end{cases}
\]
and
\[
\Hom_{\mathbb{C}}(U_i, U_{\sigma(j)}^{\vee}) \rightsquigarrow \prod_{m=1}^{d_{\sigma(j)}} \prod_{l=1}^{d_i} (-x^{\prime}_{\sigma(j), m} - x^{\prime}_{i,l})
\]
and
\[
\Hom_{\mathbb{C}}( (N \git U)_i ,  U_{\sigma(j)}^{\vee}) \rightsquigarrow
\begin{cases}
\displaystyle \prod_{m=1}^{d_{\sigma(j)}}  \prod_{k=1}^{e_i}  (-x_{\sigma(j), m}^{\prime} - z^{\prime \prime}_{i,k}) & \mbox{ if } i \not\in Q_0^{\sigma}, \\

\displaystyle \prod_{m=1}^{d_{\sigma(j)}} \prod_{k=1}^{\lfloor \frac{e_i}{2} \rfloor}  (x_{\sigma(j), m}^{\prime 2} - z^{\prime \prime 2}_{i,k}) \prod_{m=1}^{d_{\sigma(j)}}(-x^{\prime}_{\sigma(j),m})^{\varepsilon_i(e)} & \mbox{ if } i \in Q_0^{\sigma}.
\end{cases}
\]
The contribution of an arrow $\sigma(i) \xrightarrow[]{\alpha} i \in Q_1^{\sigma}$ is computed similarly. Putting together the above calculations completes the proof.
\end{proof}

\subsection{The CoHM of a \texorpdfstring{$\sigma$}{}-symmetric quiver}
\label{sec:symCOHM}

In general, we do not know if the supercommutative twist of the CoHA of a symmetric quiver $Q$ can be lifted to $\mathcal{M}_Q$. In this section we therefore consider $\mathcal{H}_Q$ with its standard multiplication. In the self-dual setting it is natural to impose the following stronger notion of symmetry.

\begin{Def}
A quiver with involution and duality structure is called $\sigma$-symmetric if it is symmetric and the equality $\mathcal{E}(d) = \mathcal{E}(\sigma(d))$ holds for all $d \in \Lambda_Q$.
\end{Def}

Concretely, using equation \eqref{eq:sdEulerForm}, a symmetric quiver is $\sigma$-symmetric if and only if
\[
\sum_{\sigma(i) \xrightarrow[]{\alpha} i \in Q_1^{\sigma}} \tau_{\alpha} = \sum_{i \xrightarrow[]{\alpha} \sigma(i) \in Q_1^{\sigma}} \tau_{\alpha}
\]
for all $i \in Q_0$. Unlike all other places in the paper, the sums run over arrows with fixed initial and final nodes. If $Q$ is $\sigma$-symmetric, then $\boxtimes^{S \mhyphen \mathsf{tw}}$ reduces to the untwisted $D^{lb}(\mathsf{Vect}_{\mathbb{Z}})_{\Lambda_Q^+}$-module structure of $D^{lb}(\mathsf{Vect}_{\mathbb{Z}})_{\Lambda_Q^{\sigma,+}}$ defined using only the $\Lambda_Q$-module $\Lambda_Q^{\sigma}$, which we denote by $\boxtimes$. In particular, the CoHM of a $\sigma$-symmetric quiver is a $\Lambda_Q^{\sigma, +} \times \mathbb{Z}$-graded $\mathcal{H}_Q$-module.

Let $\mathcal{H}_{Q,+}$ be the augmentation ideal of $\mathcal{H}_Q$. 

\begin{Def}
The cohomological orientifold Donaldson-Thomas invariant of a $\sigma$-symmetric quiver $Q$ is
\[
W_Q^{\mathsf{prim}} = \mathcal{M}_Q \slash (\mathcal{H}_{Q,+} \star \mathcal{M}_Q) \in D^{lb}(\mathsf{Vect}_{\mathbb{Z}})_{\Lambda_Q^{\sigma,+}}.
\]
\end{Def}

By picking a vector space splitting we will often view $W_Q^{\mathsf{prim}}$ as a subobject of $\mathcal{M}_Q$. The next result asserts that the orientifold integrality conjecture holds.

\begin{Thm}
\label{thm:oriDTfinite}
Let $Q$ be a $\sigma$-symmetric quiver. Then each $\Lambda_Q^{\sigma,+}$-homogeneous summand $W^{\mathsf{prim}}_{Q,e} \subset W^{\mathsf{prim}}_Q$ is finite dimensional.
\end{Thm}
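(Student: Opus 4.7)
The plan is to adapt Efimov's proof of the integrality conjecture for the CoHA (Theorem \ref{thm:freeCoHA}) to the signed shuffle module description of $\mathcal{M}_Q$ established in Theorem \ref{thm:cohmLoc}. Fix $e \in \Lambda_Q^{\sigma,+}$. Because $R_e^{\sigma}$ is $\mathsf{G}_e^{\sigma}$-equivariantly contractible and each cohomological degree of $B\mathsf{G}_e^{\sigma}$ is finite dimensional by \eqref{eq:classicalEquivCohom}, every $\mathbb{Z}$-graded piece $\mathcal{M}_{Q,(e,l)}$ is finite dimensional, and purity forces $l \geq 0$. The content of the theorem is therefore to exhibit an integer $N_e$ such that $W^{\mathsf{prim}}_{Q,(e,l)} = 0$ for all $l > N_e$, equivalently, such that every element of $\mathcal{M}_{Q,(e,l)}$ with $l > N_e$ lies in the image of
\[
\bigoplus_{\substack{d \in \Lambda_Q^+ \setminus \{0\} \\ H(d) \leq e}} \mathcal{H}_{Q,d} \boxtimes \mathcal{M}_{Q, e - H(d)} \xrightarrow[]{\star} \mathcal{M}_{Q,e}.
\]

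Under Theorem \ref{thm:cohmLoc}, $\mathcal{M}_{Q,e}$ becomes a space of Weyl-invariant polynomials in the variables $z_{i,k}$, and the $\mathbb{Z}$-weight is twice the polynomial degree, up to the fixed shift $\mathcal{E}(e)/2$. The combinatorial core of Efimov's argument is to order monomials lexicographically and show that any monomial of sufficiently large degree in some single variable $z_{i,k}$ appears as the leading term of a signed shuffle product $f \star g'$, with $0 \neq f \in \mathcal{H}_{Q,d}$ supported at a single vertex $i$ and $g' \in \mathcal{M}_{Q, e - H(d)}$ of strictly smaller polynomial degree. Iterating this replacement a controlled number of times reduces any class of weight $> N_e$ to a sum of products $f \star g'$, where $N_e$ can be read off from the explicit numerators $V_\alpha$ and denominators $D_i$ appearing in Theorem \ref{thm:cohmLoc}, which together govern the minimal polynomial degree with which a nonzero $f$ can act and the leading coefficient it produces.

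The hard part will be the bookkeeping in the signed setting. The $\mathbb{Z}_2^{d_i}$ factors in $\mathfrak{sh}^{\sigma}_{d,e}$ for $i \in Q_0^{\sigma}$ introduce sign cancellations, so the test elements $f$ must be chosen to respect the symmetry $z_{i,k} \mapsto -z_{i,k}$; in effect one is forced to work with the even subalgebra generated by $z_{i,k}^2$ at fixed vertices, consistently with \eqref{eq:classicalEquivCohom}. In addition, the extra denominator factors $g_i(x')$ in types $B_n$ and $C_n$, the mixed terms $(x_{i,l}'^{\,2} - z_{i,k}''^{\,2})$, and the parity-dependent terms $(-x'_{*,m})^{\varepsilon_i(e)}$ must be tracked to ensure that the leading monomial of the signed shuffle product is the targeted one rather than being killed by a sign. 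The decomposition $\mathcal{M}_Q = \bigoplus_{w} \mathcal{M}_Q^w$ of Proposition \ref{prop:cohmDecomp} lets us fix $w = \nu(e)$ throughout, which pins down the parities one must monitor. Once the signed analogue of Efimov's monomial elimination lemma is in place, the induction on polynomial degree proceeds exactly as in \cite{efimov2012} and yields the claimed finite dimensionality of $W^{\mathsf{prim}}_{Q,e}$.
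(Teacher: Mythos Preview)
Your overall strategy---adapt Efimov's argument to the signed shuffle module description of $\mathcal{M}_Q$---is exactly right, and the paper does the same. But you have misidentified the core mechanism of Efimov's proof. Efimov does not argue by lexicographic monomial elimination; he uses a Nullstellensatz-type argument, and so does the paper.

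Concretely, the paper lets $Z_{Q,e}$ be the full polynomial algebra (before Weyl symmetrization) and defines $L_{Q,e}$ to be the smallest $\mathfrak{W}_e$-stable $Z_{Q,e}$-submodule of its localization containing the shuffle kernels $\mathcal{K}^{\sigma}_{d',e''}$ for all decompositions $e = H(d') + e''$ with $d' \neq 0$. One checks that $L_{Q,e}^{\mathfrak{W}_e}$ is precisely the image of the CoHA action map, so the theorem reduces to finite codimensionality of $L_{Q,e}^{\mathfrak{W}_e} \subset \mathcal{M}_{Q,e}$. After adding a loop at every node (with $\tau = -1$ at vertices of $Q_0^{\sigma}$), which can only shrink $L_{Q,e}$, the localization becomes unnecessary and it suffices to show that $L_{Q,e} \subset Z_{Q,e}$ has finite codimension. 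This is done by proving that the only common zero of $L_{Q,e}$ in $\overline{\mathbb{Q}}^D$ is the origin: given a nonzero point $z$, one uses the $\mathfrak{W}_e$-action---the sign-change factors at $Q_0^{\sigma}$ and the symmetric-group factors at $Q_0^+$---to rearrange the coordinates so that some explicit kernel $\mathcal{K}^{\sigma}_{d',e''}$ does not vanish at $z$. The case analysis uses exactly the numerators $V_{\alpha}$ and denominators $D_i$ you identified, but at the level of their zero sets rather than their leading monomials.

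Your monomial-elimination route may be feasible in principle, but the sign cancellations you flag are a genuine obstruction to controlling leading terms under $\mathbb{Z}_2^{d_i}$-symmetrization, and you have not supplied the lemma that would overcome them. The Nullstellensatz argument sidesteps this entirely: one only needs a single kernel to be nonzero at the given point, not to control leading coefficients after signed symmetrization.
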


\begin{proof}
We adapt the method of proof of the integrality conjecture \cite[\S 3]{efimov2012}. Define
\[
X_{Q,d} = \mathbb{Q}[x_{i,j} \mid i \in Q_0, \; 1 \leq j \leq d_i], \qquad d \in \Lambda_Q^+
\]
and
\[
Z_{Q,e}= \mathbb{Q}[z_{i,j} \mid i \in Q_0^+, \;  1 \leq j \leq e_i] \otimes \mathbb{Q}[z_{i,j} \mid i \in Q_0^{\sigma}, \;  1 \leq j \leq \lfloor \frac{e_i}{2} \rfloor], \qquad e \in \Lambda_Q^{\sigma,+}
\]
considered as $\mathbb{Z}$-graded algebras with generators in degree two. The Weyl groups $\mathfrak{S}_d$ and $\mathfrak{W}_e$ of $\mathsf{GL}_d$ and $\mathsf{G}_e^{\sigma}$ act on $X_{Q,d}$ and $Z_{Q,e}$, respectively. Up to degree shifts we have $
\mathcal{H}_{Q,d} = X_{Q,d}^{\mathfrak{S}_d}$ and $\mathcal{M}_{Q,e} = Z_{Q,e}^{\mathfrak{W}_e}$ as $\mathbb{Z}$-graded vector spaces.

Keeping the notation of Theorem \ref{thm:cohmLoc}, define
\[
\mathcal{K}_{d^{\prime},e^{\prime \prime}}^{\sigma}(x^{\prime}, z^{\prime \prime}) =\frac{ \prod_{\alpha \in Q_1^+ \sqcup Q_1^{\sigma}} V_{\alpha}(x^{\prime}, z^{\prime \prime}) }{ \prod_{i \in Q_0^+ \sqcup Q_0^{\sigma}} D_i (x^{\prime}, z^{\prime \prime}) }.
\]
Let $Z_{Q,e}^{\mathsf{loc}}$ be the localization of $Z_{Q,e}$ at all factors of the denominators $D_i$ of $\mathcal{K}_{d^{\prime},e^{\prime \prime}}^{\sigma}$, for all $(d^{\prime}, e^{\prime \prime}) \in \Lambda_Q^+ \times \Lambda_Q^{\sigma,+}$ with $H(d^{\prime}) + e^{\prime \prime}=e$ and $d^{\prime} \neq 0$. Let $L_{Q,e} \subset Z^{\mathsf{loc}}_{Q,e}$ be the smallest $\mathfrak{W}_e$-stable $Z_{Q,e}$-submodule which contains $\mathcal{K}^{\sigma}_{d^{\prime}, e^{\prime \prime}}$ for all $(d^{\prime}, e^{\prime \prime})$ as above. We claim that $L_{Q,e}^{\mathfrak{W}_e}$ is the image of the CoHA action map
\[
\bigoplus_{\substack{(d^{\prime}, e^{\prime \prime}) \in \Lambda_Q^+ \times \Lambda_Q^{\sigma,+} \\ H(d^{\prime}) + e^{\prime \prime} = e, \; d^{\prime} \neq 0}} \mathcal{H}_{Q,d^{\prime}} \boxtimes \mathcal{M}_{Q,e^{\prime \prime}} \xrightarrow[]{\star} \mathcal{M}_{Q,e}.
\]
That $L_{Q,e}^{\mathfrak{W}_e}$ contains the image of the action map follows from the observation that $L_{Q,e}^{\mathfrak{W}_e}$ is $\mathbb{Q}$-linearly spanned by $\mathfrak{W}_e$-symmetrizations of elements of the form $
f g \mathcal{K}^{\sigma}_{d^{\prime}, e^{\prime \prime}}$ with $f \in X_{Q,d^{\prime}}$ and $g \in Z_{Q,e^{\prime \prime}}$. For the reverse inclusion, suppose that we are given an element of the form $f g \mathcal{K}^{\sigma}_{d^{\prime}, e^{\prime \prime}}$. By symmetrizing with respect to $\mathfrak{S}_{d^{\prime}}$ and $\mathfrak{W}_{e^{\prime \prime}}$, both of which are subgroups of $\mathfrak{W}_e$, we may assume that $f \in \mathcal{H}_{Q, d^{\prime}}$ and $g \in \mathcal{M}_{Q,e^{\prime \prime}}$. Then, up to a non-zero constant, the $\mathfrak{W}_e$-symmetrization of $f g \mathcal{K}^{\sigma}_{d^{\prime}, e^{\prime \prime}}$ is $f \star g$.

The theorem is thus equivalent to finite codimensionality of $L_{Q,e}^{\mathfrak{W}_e} \subset \mathcal{M}_{Q,e}$. Adding a loop at each node, with duality structure $\tau=-1$ for nodes in $Q_0^{\sigma}$, does not increase $L_{Q,e}$. We therefore assume that each node has at least one such loop. In this case $L_{Q,e} \subset Z_{Q,e}$ and we need not localize. Then $\mathcal{M}_{Q,e} \slash L_{Q,e}^{\mathfrak{W}_e} \hookrightarrow Z_{Q,e} \slash L_{Q,e}$ and it suffices to show that $L_{Q,e} \subset Z_{Q,e}$ has finite codimension. Interpret $Z_{Q,e}$ as the algebra of functions on the affine space $\mathbb{Q}^D$ of dimension
\[
D =\sum_{i \in Q_0^+} e_i + \sum_{i \in Q_0^{\sigma}} \lfloor \frac{e_i}{2} \rfloor
\]
and suppose that all elements of $L_{Q,e}$ vanish at $z \in \overline{\mathbb{Q}}^D$. We claim that $z=0$. Indeed, if $z \neq 0$, then by using the $\mathfrak{W}_e$-action we will write $z=\{z_i\}_{i \in Q_0^+ \sqcup Q_0^{\sigma}}$ as
\[
z_i = (x^{\prime}_{i,1}, \dots, x^{\prime}_{i, d_i^{\prime}}, z^{\prime \prime}_{i,1}, \dots, z^{\prime \prime}_{i, e_i^{\prime \prime}}, -x^{\prime}_{\sigma(i),1}, \dots, -x^{\prime}_{\sigma(i), d^{\prime}_{\sigma(i)}}), \qquad i \in Q_0^+
\]
and
\[
z_i = (x^{\prime}_{i,1}, \dots, x^{\prime}_{i, d_i^{\prime}}, z^{\prime \prime}_{i,1}, \dots, z^{\prime \prime}_{i, \lfloor \frac{e^{\prime \prime}_i}{2} \rfloor}), \qquad i \in Q_0^{\sigma}
\]
for some $d^{\prime} \neq 0$ so that $\mathcal{K}_{d^{\prime}, e^{\prime \prime}}^{\sigma}(x^{\prime}, z^{\prime \prime}) \neq 0$, yielding a contradiction. Let $z^{\prime \prime}$ be the set of vanishing coordinates of $z$ and let $x$ be what remains. By assumption $x \neq 0$. Up to the $\mathfrak{W}_e$-action, we need to write $x = \{(x_i^{\prime},-x_{\sigma(i)}^{\prime})\}_{i\in Q_0^+} \sqcup \{x^{\prime}_i\}_{i \in Q_0^{\sigma}}$ so that $\mathcal{K}_{d^{\prime}, e^{\prime \prime}}^{\sigma}(x^{\prime}, 0) \neq 0$. By Theorem \ref{thm:cohmLoc} this is equivalent to the following conditions:\footnote{Because signs are included in $x^{\prime}$, additional sign substitutions are not needed in these equations.}
\begin{enumerate}
\item $\prod_{m=1}^{d_{\sigma(i)}} \prod_{l=1}^{d_i}  (- x^{\prime}_{\sigma(i), m} - x^{\prime}_{i, l}) \neq 0$ if $i \in Q_0^+$.

\item $\prod_{l \leq k < l \leq d_i} (x^{\prime}_{i,k} + x^{\prime}_{i,l}) \neq 0$ if $i \in Q_0^{\sigma}$.

\item $\prod_{1 \leq j \leq k \leq d_{\sigma(i)}} (-x^{\prime}_{\sigma(i),j} - x^{\prime}_{\sigma(i),k}) \neq 0$ if $\sigma(i) \xrightarrow[]{\sigma} i \in Q_1^{\sigma}$.

\item $\prod_{m=1}^{d_{\sigma(j)}} \prod_{l=1}^{d_i} (-x^{\prime}_{\sigma(j),m} - x^{\prime}_{i,l}) \neq 0$ if $i \xrightarrow[]{\alpha} j \in Q_1^+$.
\end{enumerate}
These conditions can be satisfied as follows. Use the symmetric group at each $i \in Q_0^+$ to split any $\pm$ tuples, that is tuples which up to a permutation are of the form $(a, \dots, a,-a, \dots, -a)$ for some $a \in \overline{\mathbb{Q}}$, so that $(a, \dots,a)$ lies in the $x_i^{\prime}$ variable and $(-a, \dots, -a)$ lies in $-x_{\sigma(i)}^{\prime}$ variable. Then (1) holds. Similarly, act by the sign change subgroup at each $i \in Q_0^{\sigma}$ to ensure that $x_i^{\prime}$ contains no $\pm$ tuples. Then (2) holds. Condition (3) now also holds. Condition (4) breaks into three cases:
\begin{enumerate}[label=(\roman*)]
\item Both $i$ and $j$ are in $Q_0^{\sigma}$. Use the sign change subgroups to ensure that there are no $\pm$ tuples among all $Q_0^{\sigma}$ variables.

\item Neither $i$ nor $j$ is in $Q_0^{\sigma}$. Use the symmetric groups to ensure that there are no $\pm$ tuples among all $Q_0^+$ variables, and similarly for $Q_0^-$ variables.

\item Exactly one of $i,j$ is in $Q_0^{\sigma}$. Use the sign change subgroups to ensure that there are no $\pm$ tuples among the $Q_0^{\sigma}$ or $Q_0^+$ variables.
\end{enumerate}
This completes the proof.
\end{proof}

\begin{Def}
The motivic orientifold Donaldson-Thomas invariant of a $\sigma$-symmetric quiver $Q$ is the class of $W_Q^{\mathsf{prim}}$ in the Grothendieck group of $D^b(\mathsf{Vect}_{\mathbb{Z}})_{\Lambda^{\sigma,+}_Q}$,
\[
\Omega_Q^{\sigma} (q^{\frac{1}{2}}, \xi) = \sum_{(e,l) \in \Lambda_Q^{\sigma,+} \times \mathbb{Z}}  \dim_{\mathbb{Q}} W_{Q,(e,l)}^{\mathsf{prim}} (-q^{\frac{1}{2}})^l \xi^e \in \mathbb{Z}[q^{\frac{1}{2}}, q^{-\frac{1}{2}}] \pser{ \Lambda_Q^{\sigma,+}}.
\]
\end{Def}

The invariant $\Omega_Q^{\sigma}$, like $\Omega_Q$ of Section \ref{sec:symmQuiv}, is defined for the trivial stability. By Theorem \ref{thm:oriDTfinite} the numerical orientifold DT invariants can be defined as the $q^{\frac{1}{2}} \mapsto 1$ specialization of $\Omega_Q^{\sigma} (q^{\frac{1}{2}}, \xi)$. Note that we do not remove from $W_Q^{\mathsf{prim}}$ an infinite factor of the form $\mathbb{Q}[u]$. In the ordinary setting this factor compensates for the difference between the cohomologies of the moduli stack and moduli scheme of stable representations. In the orientifold setting the analogous cohomology groups are isomorphic; see Lemma \ref{lem:orbiHodge} below.

Our next goal is to formulate for $\mathcal{M}_Q$ an analogue of the freeness of the CoHA of a symmetric quiver. To begin, note that a duality structure on an arbitrary quiver induces linear isomorphisms $R_d \rightarrow R_{\sigma(d)}$ which are equivariant with respect to the group isomorphisms $\mathsf{GL}_d \rightarrow \mathsf{GL}_{\sigma(d)}$, $\{g_i\}_{i \in Q_0} \mapsto \{ (g_{\sigma(i)}^{-1})^t \}_{i \in Q_0}$. Since the functor $S: \mathsf{Rep}_{\mathbb{C}}(Q) \rightarrow \mathsf{Rep}_{\mathbb{C}}(Q)$ is contravariant, this defines an anti-involution $S_{\mathcal{H}}: \mathcal{H}_Q \rightarrow \mathcal{H}_Q$. Explicitly, using the first part of Lemma \ref{lem:equivCoRestr} we find
\begin{equation}
\label{eq:explicInv}
S_{\mathcal{H}}(f)(\{x_{i,j}\}_{i \in Q_0, \; 1 \leq j \leq d_{\sigma(i)}}) =  f(\{\tilde{x}_{i,j}\}_{i \in Q_0, \; 1 \leq j \leq d_i} )_{\vert \tilde{x}_{i,j} = - x_{\sigma(i),j}}
\end{equation}
for $f \in \mathcal{H}_{Q,d}$.

\begin{Prop}
\label{prop:moduleRelations}
Let $Q$ be a $\sigma$-symmetric quiver. The equality
\[
S_{\mathcal{H}}(f) \star g = (-1)^{\chi(e,d) + \mathcal{E}(d)} f \star g
\]
holds for all $f \in \mathcal{H}_{Q,d}$ and $g \in \mathcal{M}_{Q,e}$.
\end{Prop}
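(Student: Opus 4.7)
The plan is to derive the identity directly from the shuffle formula of Theorem~\ref{thm:cohmLoc} by a change of variables. Since $S_{\mathcal{H}}(f)$ lies in $\mathcal{H}_{Q, \sigma(d)}$, the theorem expresses $S_{\mathcal{H}}(f) \star g$ as a sum over $\mathfrak{sh}^{\sigma}_{\sigma(d), e}$ with variables $\{x^{\prime}_{i,j}\}$ indexed by $1 \leq j \leq d_{\sigma(i)}$. By \eqref{eq:explicInv}, the substitution $x^{\prime}_{i,j} \mapsto -y_{\sigma(i), j}$ converts $S_{\mathcal{H}}(f)(x^{\prime})$ into $f(y)$ with $y$-variables indexed as in the formula for $f \star g$. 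The bulk of the proof is to track how the kernel $\mathcal{K}^{\sigma}_{\sigma(d), e}(x^{\prime}, z^{\prime \prime})$ and the shuffle set $\mathfrak{sh}^{\sigma}_{\sigma(d), e}$ transform under this substitution.

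Next, I match the two shuffle sets. At nodes $i \in Q_0^+$ the block sizes $(d_{\sigma(i)}, e_i, d_i)$ of $\mathfrak{sh}^{\sigma}_{\sigma(d), e}$ and $(d_i, e_i, d_{\sigma(i)})$ of $\mathfrak{sh}^{\sigma}_{d, e}$ are related by a block-reversal bijection coming from conjugation by the longest element of $\mathfrak{S}_{d_i + e_i + d_{\sigma(i)}}$; after the substitution, the first $d_{\sigma(i)}$ variables at node $i$ become the negatives of the first $d_{\sigma(i)}$ variables at node $\sigma(i)$ in the $f \star g$ picture, which is precisely the relabeling effected by this bijection. At nodes $i \in Q_0^{\sigma}$ the two shuffle groups already coincide. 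This identifies both sides as sums over the same indexing set $\mathfrak{sh}^{\sigma}_{d, e}$.

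With the shuffle sets aligned, what remains is to compare $\mathcal{K}^{\sigma}_{\sigma(d), e}(-y_{\sigma(\cdot), \cdot}, z^{\prime \prime})$ with $\mathcal{K}^{\sigma}_{d, e}(y, z^{\prime \prime})$ factor by factor using Theorem~\ref{thm:cohmLoc}. Each factor in $D_i$ and $V_{\alpha}$ of the former equals the corresponding factor of the latter up to a manifest sign coming from rewrites of the form $(-a - b) = -(a + b)$ or $(-a)^{\varepsilon_i(e)} = (-1)^{\varepsilon_i(e)} a^{\varepsilon_i(e)}$; at $Q_0^{\sigma}$-nodes the structure polynomials $g_i$ additionally contribute $(-1)^{d_i}$ factors depending on the type of $\mathsf{G}^{s_i}_{2 d_i + e_i}$. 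The total sign is a product of terms of the form $(-1)^N$ with exponents $N$ that are polynomials in $\{d_i, e_i, \tau_\alpha, s_i\}$ modulo $2$.

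The main obstacle is the sign bookkeeping in this last step: the target sign $(-1)^{\chi(e, d) + \mathcal{E}(d)}$ must be recovered on the nose, not merely up to some a priori congruence. This requires careful tallying across $Q_0^+$, $Q_0^{\sigma}$, $Q_1^+$, and $Q_1^{\sigma}$ (with further subcases at arrows incident to $Q_0^{\sigma}$-nodes, where the exponents $\varepsilon_i(e)$ intervene, and at arrows in $Q_1^{\sigma}$ where the ordering $\leq_{s_i \tau_{\alpha}}$ matters). The $\sigma$-symmetric hypothesis enters essentially, via the equality $\sum_{\sigma(i) \to i} \tau_\alpha = \sum_{i \to \sigma(i)} \tau_\alpha$, which is precisely what makes the otherwise asymmetric $\tau$-dependent contributions assemble into $\mathcal{E}(d)$ modulo $2$ according to \eqref{eq:sdEulerForm}, while the remaining bilinear-in-dimension terms collapse to $\chi(e, d)$.
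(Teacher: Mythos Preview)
Your approach is essentially the same as the paper's: both arguments apply the shuffle description of Theorem~\ref{thm:cohmLoc}, set up a bijection between $\mathfrak{sh}^{\sigma}_{\sigma(d),e}$ and $\mathfrak{sh}^{\sigma}_{d,e}$, and then track signs in the kernel factor by factor to extract $(-1)^{\chi(e,d)+\mathcal{E}(d)}$. The paper packages your substitution $x^{\prime}_{i,j}\mapsto -y_{\sigma(i),j}$ as the action of a single Weyl group element $\varpi$ (block swap at $Q_0^+$, sign flip at $Q_0^{\sigma}$), proves the key identity $\varpi(\mathcal{K}^{\sigma}_{\sigma(d),e})=(-1)^{\chi(e,d)+\mathcal{E}(d)}\mathcal{K}^{\sigma}_{d,e}$, and then concludes in one line.

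Two small points of imprecision in your write-up are worth cleaning up. First, ``conjugation by the longest element'' is not quite the right bijection at $Q_0^+$: that would also reverse the order within each block, whereas what you actually need (and what your substitution implements) is the block swap $[d_{\sigma(i)}][e_i][d_i]\leftrightarrow[d_i][e_i][d_{\sigma(i)}]$ preserving order inside blocks. Second, at nodes $i\in Q_0^{\sigma}$ it is true that the two shuffle sets coincide as sets, but the bijection you need is not the identity: your substitution $x^{\prime}_{i,j}\mapsto -y_{i,j}$ is precisely the sign flip in the $\mathbb{Z}_2^{d_i}$ factor, and the $g_i$ and $\leq_{s_i\tau_\alpha}$ contributions you mention later come from this flip. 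Making both of these explicit (i.e.\ defining your $\varpi$ cleanly) tightens the argument and lets you state the kernel comparison as a single identity rather than a scattered tally.
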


\begin{proof}
Let $\varpi \in \mathfrak{sh}_{d,e}^{\sigma}$ be the signed shuffle defined by the maps of ordered sets
\[
[d_i] \sqcup [e_i ] \sqcup [d_{\sigma(i)} ] \mapsto [d_{\sigma(i)}] \sqcup [e_i ] \sqcup [d_i ], \qquad i \in Q_0^+
\]
and
\[
[d_i ] \sqcup \big[ \lfloor \frac{e_i}{2} \rfloor \big] \mapsto [- d_i ] \sqcup \big[ \lfloor \frac{e_i}{2} \rfloor \big], \qquad i \in Q_0^{\sigma}.
\]
Here $[n] = \{z_1, \dots, z_n \}$. Precomposition with $\varpi$ gives a bijection $\mathfrak{sh}_{\sigma(d),e}^{\sigma} \rightarrow \mathfrak{sh}_{d,e}^{\sigma}$. Equation \eqref{eq:explicInv} shows that, after identifying variables as in Section \ref{sec:shuffMod}, the polynomials $f$ and $S_{\mathcal{H}}(f)$ differ exactly by $\varpi$. Note that $\varpi$ fixes $g$.

Using the explicit form of $\mathcal{K}_{d,e}^{\sigma}$ from Theorem \ref{thm:cohmLoc}, we will show that
\begin{equation}
\label{eq:kernelSymm}
\varpi (\mathcal{K}_{\sigma(d),e}^{\sigma}) = (-1)^{\chi(e,d)+ \mathcal{E}(d)} \mathcal{K}_{d,e}^{\sigma}.
\end{equation}
Applying $\varpi$ to a factor $D_i$ of $\mathcal{K}_{\sigma(d),e}^{\sigma}$, $i \in Q_0^{\sigma}$, results in multiplication by $(-1)^{\frac{d_i(d_i+1)}{2}}$ in types $B$ and $C$ and $(-1)^{\frac{d_i(d_i-1)}{2}}$ in type $D$. If $i \in Q_0^+$, then the result is multiplication by $(-1)^{e_i d_i + d_i d_{\sigma(i)} + e_i d_{\sigma(i)}}$. The sign change of the denominator of $\mathcal{K}_{\sigma(d),e}^{\sigma}$ is thus $(-1)^{\chi_{Q_0}(\sigma(d),e) + \mathcal{E}_{Q_0}(\sigma(d))}$, the subscripts indicating that only summands of $\chi$ and $\mathcal{E}$ associated to $Q_0$ are included. Similarly, $\varpi$ acts on $V_{\alpha}$ by multiplication by $(-1)^{d_i d_{\sigma(j)} + e_i d_{\sigma(j)} + d_i e_j}$ for $i \xrightarrow[]{\alpha} j \in Q_1^+$ and by $(-1)^{e_i d_{\sigma(i)} + \frac{d_{\sigma(i)}(d_{\sigma(i)} + \tau_{\alpha} s_i)}{2}}$ for $\sigma(i) \xrightarrow[]{\alpha} i \in Q_1^{\sigma}$. 
The sign change of the numerator is thus $(-1)^{\chi_{Q_1}(\sigma(d),e) + \mathcal{E}_{Q_1}(\sigma(d))}$. Equation \eqref{eq:kernelSymm} now follows from $\sigma$-symmetry.

We now compute
\begin{eqnarray*}
S_{\mathcal{H}}(f) \star g &=& \sum_{\pi \in \mathfrak{sh}_{\sigma(d),e}^{\sigma}} \pi (S(f) g \mathcal{K}_{\sigma(d),e}^{\sigma}) \\
&=& \sum_{\pi \in \mathfrak{sh}_{\sigma(d),e}^{\sigma}} \pi (\varpi(f) g \mathcal{K}_{\sigma(d),e}^{\sigma}) \\
&=& (-1)^{\chi(e,d) + \mathcal{E}(d)} \sum_{\pi \in \mathfrak{sh}_{\sigma(d),e}^{\sigma}} \pi \circ \varpi (f g \mathcal{K}_{d,e}^{\sigma}) \\
&=& (-1)^{\chi(e,d) + \mathcal{E}(d)} \sum_{\pi^{\prime} \in \mathfrak{sh}_{d,e}^{\sigma}} \pi^{\prime} (f g \mathcal{K}_{d,e}^{\sigma}) \\
&=& (-1)^{\chi(e,d) + \mathcal{E}(d)} f\star g
\end{eqnarray*}
which is the desired result.
\end{proof}

Since $S_{\mathcal{H}}$ is an algebra anti-involution, the image of the multiplication map $\mathcal{H}_{Q,+} \boxtimes \mathcal{H}_{Q,+} \rightarrow \mathcal{H}_Q$, and hence $V_Q$, inherits the structure of a $\mathbb{Z}_2$-representation. Moreover $V_Q = V^{\mathsf{prim}}_Q \otimes \mathbb{Q}[u]$ as $\mathbb{Z}_2$-representations, with $S_{\mathcal{H}}$ sending $u^n$ to $(-u)^n$. Indeed, setting $\sigma_d = \sum_{i\in Q_0} \sum_{j=1}^{d_i} x_{i,j}$ in degree $(0,2)$, we have (see \cite[\S 3]{efimov2012})
\[
V_Q  \simeq \bigoplus_{d \in \Lambda_Q^+} \left( V^{\mathsf{prim}}_{Q,d} \otimes \mathbb{Q}[\sigma_d] \right).
\]
If $V_Q^{\mathsf{prim}}$ is interpreted geometrically as in Theorem \ref{thm:geomDTInterp} or \cite{meinhardt2014}, then its representation structure coincides with that induced by the $\mathbb{Z}_2$-action on $\bigsqcup_{d \in \Lambda_Q^+} \mathfrak{M}_d^{\mathsf{st}}$.

Motivated by Proposition \ref{prop:moduleRelations}, for each $e \in \Lambda_Q^{\sigma, +}$ define a twisted $\mathbb{Z}_2$-representation on $\mathcal{H}_Q$ by the formula
\[
f \mapsto (-1)^{\chi(e,d) + \mathcal{E}(d)} S_{\mathcal{H}} (f), \qquad f \in \mathcal{H}_{Q,d}.
\]
Consider $V_Q$ as a (twisted) $\Lambda_Q^{\sigma,+} \times \mathbb{Z}$-graded $\mathbb{Z}_2$-representation by redefining the grading of $V_Q^{\mathsf{prim}}$ by
\[
\widetilde{V}^{\mathsf{prim}}_{Q,e} = \bigoplus_{\substack{ d \in \Lambda_Q^+ \\ H(d) =e}} V^{\mathsf{prim}}_{Q,d}, \qquad  e \in \Lambda_Q^{\sigma,+}.
\]
Let $(\widetilde{V}_Q)_{(\mathbb{Z}_2, e)}$ be the space of coinvariants. Identifying invariants and coinvariants, we obtain a $\Lambda_Q^{\sigma,+} \times \mathbb{Z}$-graded subalgebra
\[
\Sym((\widetilde{V}_Q)_{(\mathbb{Z}_2, e)}) \subset \Sym(V_Q).
\]
When $\mathcal{H}_Q$ is supercommutative we denote by $\mathcal{H}_Q(e)$ the corresponding subalgebra of $\mathcal{H}_Q$. Proposition \ref{prop:moduleRelations} implies that the cyclic $\mathcal{H}_Q$-module $\mathcal{H}_Q \star g \subset \mathcal{M}_Q$ generated by $g \in \mathcal{M}_{Q,e}$ is naturally a $\mathcal{H}_Q(e)$-module.

\begin{Lem}
\label{lem:superModule}
Let $Q$ be $\sigma$-symmetric. Define a $\mathbb{Z}_2$-grading on $\mathcal{M}_Q$ by the reduction modulo two of its $\mathbb{Z}$-grading. Then $\mathcal{M}_Q$ is a super $\mathcal{H}_Q$-module.
\end{Lem}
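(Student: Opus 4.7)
The key observation is that, under the $\sigma$-symmetry hypothesis, the twist factor
\[
\gamma(d,e) = \chi(d,e) - \chi(e,d) + \mathcal{E}(\sigma(d)) - \mathcal{E}(d)
\]
appearing in the module monoidal product $\boxtimes^{S \mhyphen \mathsf{tw}}$ vanishes identically: the first two terms cancel by the symmetry of $\chi$, and the last two by the $\sigma$-invariance of $\mathcal{E}$. Consequently $\boxtimes^{S \mhyphen \mathsf{tw}}$ reduces to the untwisted monoidal action $\boxtimes$, and the action map $\star$ produced in Theorem~\ref{thm:CoHM} is a morphism in $D^{lb}(\mathsf{Vect}_{\mathbb{Z}})_{\Lambda_Q^{\sigma,+}}$ that strictly preserves the $\mathbb{Z}$-grading piece by piece, with no intervening shifts.

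From here, the plan is purely formal bookkeeping. Reducing the $\mathbb{Z}$-gradings on $\mathcal{H}_Q$ and $\mathcal{M}_Q$ modulo two produces the super gradings in question, and any $\mathbb{Z}$-graded map descends to a $\mathbb{Z}_2$-graded map. In particular, for $f \in \mathcal{H}_{Q,d}$ and $g \in \mathcal{M}_{Q,e}$ the parity of $f\star g$ equals the sum of the parities of $f$ and $g$; equivalently, using that the non-zero parts of $\mathcal{H}_{Q,d}$ and $\mathcal{M}_{Q,e}$ are concentrated in parities $\chi(d,d)$ and $\mathcal{E}(e)$ respectively, one checks
\[
\mathcal{E}(H(d)+e) \equiv \chi(d,d) + \mathcal{E}(e) \pmod 2,
\]
which follows from the identity $\mathcal{E}(d+d') = \mathcal{E}(d) + \mathcal{E}(d') + \chi(\sigma(d), d')$, the $\sigma$-symmetry assumption, and the fact that $\chi(H(d), e) = 2\chi(d,e)$ is even. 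Combining parity-preservation with the associativity of $\star$ already proved in Theorem~\ref{thm:CoHM} yields the super module structure.

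There is no genuine obstacle; the content of the lemma is the verification that $\sigma$-symmetry (i) eliminates the twist in $\boxtimes^{S \mhyphen \mathsf{tw}}$ and (ii) makes the parity grading on $\mathcal{M}_Q$ compatible with that on $\mathcal{H}_Q$. Both points are immediate from the definitions and the symmetry hypotheses.
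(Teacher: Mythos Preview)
Your proof is correct and follows essentially the same route as the paper's. Both arguments reduce to verifying the parity congruence $\mathcal{E}(H(d)+e) \equiv \chi(d,d) + \mathcal{E}(e) \pmod 2$ by iterating the identity \eqref{eq:sdEulerIdentity} and invoking $\sigma$-symmetry; you organize the computation via $\mathcal{E}(H(d)+e)=\mathcal{E}(H(d))+\mathcal{E}(e)+\chi(H(d),e)$, whereas the paper expands in one chain, but the content is identical. One small point: your assertion $\chi(H(d),e)=2\chi(d,e)$ implicitly uses $\chi(\sigma(d),e)=\chi(e,d)=\chi(d,e)$, which needs the identity $\chi(d,d')=\chi(\sigma(d'),\sigma(d))$ together with $\sigma(e)=e$ and symmetry of $\chi$ --- exactly what the paper records as equation \eqref{eq:eulerSymmetry}.
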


\begin{proof}
Observe that for an arbitrary quiver with involution the equality
\begin{equation}
\label{eq:eulerSymmetry}
\chi(d,d^{\prime} ) = \chi(\sigma(d^{\prime}), \sigma(d)), \qquad d, d^{\prime} \in \Lambda_Q
\end{equation}
holds. In the $\sigma$-symmetric case, the parity of elements of $\mathcal{H}_{Q,(d,k)} \star \mathcal{M}_{Q,(e,l)}$ is $\mathcal{E}(H(d) + e)$. Working modulo two, we compute
\begin{eqnarray*}
\mathcal{E}(H(d) + e) &\equiv& \mathcal{E}(d) + \mathcal{E}(\sigma(d)) + \chi(d,d) + \mathcal{E}(e) + \chi(d ,e ) + \chi(\sigma(d) ,e ) \\
&\equiv& \mathcal{E}(d) + \mathcal{E}(\sigma(d)) + \chi(d,d) + \mathcal{E}(e) + \chi(d ,e ) + \chi(d,e )\\
&\equiv& \mathcal{E}(d) + \mathcal{E}(\sigma(d)) + \chi(d,d) + \mathcal{E}(e)\\
&\equiv& \chi(d,d) + \mathcal{E}(e).
\end{eqnarray*}
The first equality follows from equation \eqref{eq:sdEulerIdentity}, the second from equation \eqref{eq:eulerSymmetry}, the third from symmetry of $Q$ and the last from $\sigma$-symmetry.
Since $\chi(d,d) + \mathcal{E}(e)$ is the sum of the parities of $\mathcal{H}_{Q,(d,k)}$ and $\mathcal{M}_{Q,(e,l)}$, the lemma follows.
\end{proof}

We can now state the main conjecture regarding the structure of $\mathcal{M}_Q$.

\begin{Conj}
\label{conj:freeCOHM}
Let $Q$ be a $\sigma$-symmetric quiver. Then the CoHA action map
\[
\bigoplus_{e \in \Lambda_Q^{\sigma,+}} \Sym((\widetilde{V}_Q)_{(\mathbb{Z}_2, e)}) \boxtimes W^{\mathsf{prim}}_{Q,e} \xrightarrow[]{\star} \mathcal{M}_Q
\]
is an isomorphism in $D^{lb}(\mathsf{Vect}_{\mathbb{Z}})_{\Lambda_Q^{\sigma,+}}$. Moreover, if $\mathcal{H}_Q$ is supercommutative, then for each $e \in \Lambda_Q^{\sigma,+}$ the restriction to the summand $\mathcal{H}_Q(e) \boxtimes W^{\mathsf{prim}}_{Q,e}$ is a $\mathcal{H}_Q(e)$-module isomorphism onto its image.
\end{Conj}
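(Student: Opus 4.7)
The plan is to mirror Efimov's proof of Theorem \ref{thm:freeCoHA} for the CoHA, using Proposition \ref{prop:moduleRelations} to account for the extra identifications produced by the duality $S_{\mathcal{H}}$. Surjectivity of the action map is essentially built in: choosing a $\Lambda_Q^{\sigma,+} \times \mathbb{Z}$-graded splitting $W_Q^{\mathsf{prim}} \hookrightarrow \mathcal{M}_Q$ and noting that both gradings are bounded below while $\mathcal{H}_{Q,+}\star(-)$ strictly increases the $\Lambda_Q^{\sigma,+}$-grading, a graded Nakayama argument gives $\mathcal{H}_Q \star W_Q^{\mathsf{prim}} = \mathcal{M}_Q$. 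Proposition \ref{prop:moduleRelations} shows that the action of $\mathcal{H}_Q$ on any $g \in \mathcal{M}_{Q,e}$ factors through the twisted $\mathbb{Z}_2$-coinvariants, and in the supercommutative setting identifies $\mathcal{H}_Q \star g$ with $\mathcal{H}_Q(e) \star g$. Thus the second clause reduces to showing that for each $e$ the map $\mathcal{H}_Q(e) \boxtimes W_{Q,e}^{\mathsf{prim}} \to \mathcal{M}_Q$ is injective and that the various images for different $e$ assemble into a direct sum decomposition.

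I would attack injectivity first at the level of Hilbert-Poincar\'{e} series and then upgrade to a basis-level statement via the shuffle algebra. On the one hand, the Grothendieck class of $\Sym((\widetilde{V}_Q)_{(\mathbb{Z}_2,e)})$ can be written as a twisted $q$-Pochhammer product of the $\mathbb{Z}_2$-equivariant refinement of $\Omega_Q$, using the decomposition $V_Q = V_Q^{\mathsf{prim}} \otimes \mathbb{Q}[u]$ of Theorem \ref{thm:freeCoHA} together with Lemma \ref{lem:superModule}. On the other hand, the explicit formula \eqref{eq:motivicOriDTSeries} for $A_Q^{\sigma}$ admits a plethystic logarithm expansion parallel to Corollary \ref{cor:factorization}. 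Comparing the two should yield a formula for $\Omega_Q^{\sigma}$ with non-negative integer coefficients, thereby confirming the Euler-characteristic version of the conjecture. The passage from Hilbert series to an actual module isomorphism then proceeds by the inductive filtration argument at the heart of Efimov's proof: order $\Lambda_Q^{\sigma,+}$ by total size, and at each stage use Theorem \ref{thm:oriDTfinite} together with the signed shuffle kernel of Theorem \ref{thm:cohmLoc} and the kernel-symmetry identity used in the proof of Proposition \ref{prop:moduleRelations} to produce PBW-type representatives of $W^{\mathsf{prim}}_{Q,e}$ whose shuffle products with bases of $\mathcal{H}_Q(e)$ are manifestly linearly independent.

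The principal obstacle is the module analogue of Efimov's ``no-overlaps'' lemma. In the CoHA case, supercommutativity together with an extremal bidegree bound on generators guarantees that distinct PBW monomials are linearly independent. In the module case, one must instead show that the cyclic submodules $\mathcal{H}_Q(e)\star w$, as $w$ ranges over a basis of $W_{Q,e}^{\mathsf{prim}}$ and $e$ ranges over $\Lambda_Q^{\sigma,+}$, are simultaneously free and non-overlapping; the twisted $\mathbb{Z}_2$-relations of Proposition \ref{prop:moduleRelations} are the only obvious syzygies, but there is no a priori reason they exhaust all module relations coming from the shuffle structure. This is presumably why the paper establishes the conjecture only case-by-case for $m$-loop quivers, disjoint unions and $\widetilde{A}_1$, each of which has a structural feature (single node, tensor factorization, or explicit self-dual indecomposable classification) that bypasses this obstacle. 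A uniform proof would plausibly require a sheaf-theoretic incarnation of $W_Q^{\mathsf{prim}}$ as a subobject of $\mathcal{M}_Q$ built on the stack of self-dual representations, in the spirit of the critical-CoHA approach of \cite{davison2016a}, which would make the PBW decomposition geometric rather than purely algebraic.
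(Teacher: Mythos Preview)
The statement you are asked to prove is labelled a \emph{Conjecture} in the paper, and the paper does not provide a proof of it in general. What the paper does prove are special cases: Theorem \ref{thm:loopFreeCoHM} for loop quivers, Theorem \ref{thm:disjointCoHM} for disjoint union quivers, and Theorem \ref{thm:symmA1CoHM} for the symmetric $\widetilde{A}_1$ quiver. Each of these arguments is bespoke and exploits a structural feature of the quiver in question (a single node, a tensor factorization, or an explicit stratification of the unstable locus), exactly as you surmise in your final paragraph. So there is no ``paper's own proof'' to compare your proposal against; your task was, in effect, to settle an open problem.

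That said, your outline is honest about where the difficulty lies, and your surjectivity argument via graded Nakayama is correct. The genuine gap is in the middle paragraph. You propose to establish injectivity first at the level of Hilbert--Poincar\'{e} series by ``comparing the two'' plethystic expansions, but this comparison \emph{is} the numerical content of the conjecture, namely the identity \eqref{eq:cohmNumericalFactorization}, and it is not an identity one can read off from the closed form \eqref{eq:motivicOriDTSeries} and the known $\Omega_Q$. Surjectivity gives the inequality $\sum_e A_Q(e)\cdot\Omega^{\sigma}_{Q,e}\geq A_Q^{\sigma}$ coefficient-wise; to get equality you need exactly the ``no-overlaps'' statement you flag as the principal obstacle. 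In the loop-quiver proof (Theorem \ref{thm:loopFreeCoHM}) the paper does not compute Hilbert series at all but instead shows linear independence directly, by restricting putative relations modulo carefully chosen $\mathfrak{W}_e$-stable ideals $L_{d^{\bullet},e^{\infty}}$ and exploiting that certain elements are non-zero-divisors in the quotient; this is the step that has no known uniform generalization. Your suggestion that a geometric realization of $W_Q^{\mathsf{prim}}$ in the spirit of \cite{davison2016a} might furnish such a generalization is reasonable, but that is a research programme, not a proof.
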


When $\mathcal{H}_Q$ is not supercommutative the above action map is defined via the $D^{lb}(\mathsf{Vect}_{\mathbb{Z}})_{\Lambda_Q^+}$-isomorphism $\Sym(V_Q) \simeq\mathcal{H}_Q$; see the comments after Theorem \ref{thm:freeCoHA}. Some instances of Conjecture \ref{conj:freeCOHM} will be proved in Section \ref{sec:examples}.

\begin{Rem}
A duality structure induces an involution of the stack $\mathbf{M}^{\mathsf{st}}$ of stable representations and $H^{\bullet}(\mathbf{M}^{\mathsf{st}} \slash \mathbb{Z}_2) \simeq H^{\bullet}(\mathbf{M}^{\mathsf{st}})^{\mathbb{Z}_2}$ as mixed Hodge structures. The algebra $\Sym((\widetilde{V}_Q)_{(\mathbb{Z}_2, e)})$ is not $\Sym(PH^{\bullet}(\mathbf{M}^{\mathsf{st}} \slash \mathbb{Z}_2))$ but is instead $\Sym(PH^{\bullet}(\mathbf{M}^{\mathsf{st}})^{(\mathbb{Z}_2,e)})$ where the non-geometric $e$-twisted $\mathbb{Z}_2$-action is used.
\end{Rem}

Conjecture \ref{conj:freeCOHM} implies a factorization of the orientifold DT series in terms of orientifold DT invariants and equivariantly refined DT invariants, analogous to Corollary \ref{cor:factorization}. To explain this we make the following definition.

\begin{Def}
Let $e^{\prime} \in \Lambda_Q^{\sigma,+}$. The $\mathbb{Z}_2$-equivariant motivic Donaldson-Thomas invariant is the class of $\widetilde{V}^{\mathsf{prim}}_Q$ in the Grothendieck ring of $D^b(\mathsf{Rep}_{\mathbb{Z}}(\mathbb{Z}_2))_{\Lambda^{\sigma,+}_Q}$,
\begin{align*}
\widetilde{\Omega}_Q = \sum_{(e,k) \in \Lambda_Q^{\sigma,+} \times \mathbb{Z}} \Big( \dim_{\mathbb{Q}} \, (\widetilde{V}^{\mathsf{prim}}_{Q,(e,k)})^+ +\dim_{\mathbb{Q}} \, (& \widetilde{V}^{\mathsf{prim}}_{Q,(e,k)}  )^-   \eta \Big)  (-q^{\frac{1}{2}})^k \xi^e \\ &  \in \mathbb{Z}(q^{\frac{1}{2}}) \pser{ \Lambda_Q^{\sigma,+}} [\eta] \slash (\eta^2 -1).
\end{align*}
Here $(-)^{\pm}$ denotes the subspace of (anti-)invariants.
\end{Def}

Note that, contrary to the notation, $\widetilde{\Omega}_Q$ depends on a fixed dimension vector $e^{\prime} \in \Lambda_Q^{\sigma,+}$. The graded character of $\mathbb{Q}[u]$ is $\frac{1+q \eta}{1-q^2}$. Using this we compute
\[
[(\widetilde{V}_Q)_{(\mathbb{Z}_2,e^{\prime})}] = \frac{1}{1-q^2} \sum_{(e,k) \in \Lambda_Q^{\sigma,+} \times \mathbb{Z}} (\widetilde{\Omega}_{Q,(e,k)}^+ + \widetilde{\Omega}_{Q,(e,k)}^- q) q^{\frac{k}{2}} \xi^e. 
\]
It follows that the parity-twisted Hilbert-Poincar\'{e} series of $\Sym((\widetilde{V}_Q)_{(\mathbb{Z}_2, e)})$ is
\[
A_Q(e^{\prime}) = \prod_{\substack{(e,k) \in \Lambda_Q^{\sigma,+} \times \mathbb{Z} \\ \lambda \in \{ \pm \}}} (q^{\frac{k}{2} + \delta_{-1,\lambda}} \xi^e ; q^2)_{\infty}^{- \widetilde{\Omega}^{\lambda}_{Q,(e,k)}} \in \mathbb{Z}(q^{\frac{1}{2}}) \pser{ \Lambda_Q^{\sigma,+} }.
\]
Passing to Grothendieck groups, Conjecture \ref{conj:freeCOHM} implies the factorization
\begin{equation}
\label{eq:cohmNumericalFactorization}
A_Q^{\sigma} \overset{\mbox{\tiny (Conj. \ref{conj:freeCOHM})}}{=} \sum_{e \in \Lambda_Q^{\sigma,+}} A_Q(e) \cdot \Omega_{Q,e}^{\sigma} \xi^e,
\end{equation}
interpreted as an equality in $\hat{\mathbb{S}}_Q$ with its commutative multiplication. Equation \eqref{eq:cohmNumericalFactorization} uniquely determines $\Omega^{\sigma}_Q$ from $A^{\sigma}_Q$ and the $\mathbb{Z}_2$-equivariant motivic DT invariants. In general, knowing only $\Omega_Q$ is insufficient to compute $\Omega^{\sigma}_Q$.

\subsection{Orientifold DT invariants and Hodge theory}
\label{sec:hodgeThy}

We continue to assume that $Q$ is $\sigma$-symmetric. In this section we describe a connection between $W_Q^{\mathsf{prim}}$ and the Hodge theory of $\bigsqcup_{e \in \Lambda_Q^{\sigma,+}} \mathfrak{M}_e^{\sigma, \mathsf{st}}$. We use the trivial stability, $\theta=0$.

We begin with the following basic lemma.

\begin{Lem}
\label{lem:orbiHodge}
Let $e \in \Lambda_Q^{\sigma, +}$.
\begin{enumerate}
\item The canonical map
\begin{equation}
\label{eq:dmIsom}
H^{\bullet}(\mathfrak{M}_e^{\sigma, \mathsf{st}}) \rightarrow H^{\bullet}_{\mathsf{G}_e^{\sigma}}(R_e^{\sigma, \mathsf{st}})
\end{equation}
is an isomorphism of mixed Hodge structures.

\item For each $k \geq 0$ the subspace $W_{k-1} H^k(\mathfrak{M}_e^{\sigma, \mathsf{st}}) \subset H^k(\mathfrak{M}_e^{\sigma, \mathsf{st}})$ is trivial and, dually, $W^{k+1} H^k_c(\mathfrak{M}_e^{\sigma, \mathsf{st}}) = H^k_c(\mathfrak{M}_e^{\sigma, \mathsf{st}})$.
\end{enumerate}
\end{Lem}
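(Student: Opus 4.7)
The plan is to exploit that the $\mathsf{G}_e^{\sigma}$-action on $R_e^{\sigma,\mathsf{st}}$ has finite stabilizers, so that the resulting quotient is a smooth orbifold whose stacky and coarse cohomologies coincide rationally and carry the mixed Hodge structure of a smooth variety. Concretely, this reduces both parts of the lemma to well-known Hodge-theoretic input for smooth complex varieties, once the finite-stabilizer assertion is verified.

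For part (1), I would first verify that the stabilizer of $M \in R_e^{\sigma,\mathsf{st}}$ under the $\mathsf{G}_e^{\sigma}$-action equals its isometry group $\Aut_S(M) \simeq \mathbb{Z}_2^k$, which is finite by Proposition 3.5 of \cite{mbyoung2015} recalled in Section \ref{sec:quiverReps}. Combined with the smoothness of $R_e^{\sigma,\mathsf{st}}$ as an open subscheme of the affine space $R_e^{\sigma}$, this shows that the quotient stack $\mathcal{X}_e := [R_e^{\sigma,\mathsf{st}} / \mathsf{G}_e^{\sigma}]$ is a smooth Deligne--Mumford stack whose coarse moduli space is $\mathfrak{M}_e^{\sigma,\mathsf{st}}$. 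By definition of equivariant cohomology via the Borel construction, the right-hand side of \eqref{eq:dmIsom} is $H^{\bullet}(\mathcal{X}_e)$. The coarse moduli map $\mathcal{X}_e \rightarrow \mathfrak{M}_e^{\sigma,\mathsf{st}}$ is \'etale-locally of the form $[U/F] \rightarrow U/F$ with $U$ smooth affine and $F$ finite, so $H^{\bullet}([U/F];\mathbb{Q}) = H^{\bullet}(U;\mathbb{Q})^F = H^{\bullet}(U/F;\mathbb{Q})$ canonically as mixed Hodge structures; globalizing via the Leray spectral sequence of the coarse moduli map yields the isomorphism \eqref{eq:dmIsom}.

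For part (2), I would use that the Borel approximations $R_e^{\sigma,\mathsf{st}} \times_{\mathsf{G}_e^{\sigma}} E_N$ are smooth complex quasi-projective varieties, since $R_e^{\sigma,\mathsf{st}}$ is smooth and $E_N$ carries a free $\mathsf{G}_e^{\sigma}$-action (Section \ref{sec:equivCohom}). Deligne's bound $W_{k-1}H^k(Y) = 0$ for smooth complex varieties $Y$ therefore applies to each approximation, and the bound is preserved by the inverse limit computing $H^{\bullet}_{\mathsf{G}_e^{\sigma}}(R_e^{\sigma,\mathsf{st}})$. Combined with part (1) this gives $W_{k-1}H^k(\mathfrak{M}_e^{\sigma,\mathsf{st}}) = 0$. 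For the dual statement, I would apply Poincar\'{e} duality for the rational cohomology of the orbifold $\mathfrak{M}_e^{\sigma,\mathsf{st}}$: the perfect pairing $H^k_c \otimes H^{2d-k} \rightarrow \mathbb{Q}(-d)$, where $d = \dim_{\mathbb{C}} \mathfrak{M}_e^{\sigma,\mathsf{st}}$, converts the weight lower bound on $H^{2d-k}$ just established into the weight upper bound $W^{k+1}H^k_c = H^k_c$.

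The main subtlety is ensuring that the rational comparison between the DM stack $\mathcal{X}_e$ and its coarse moduli $\mathfrak{M}_e^{\sigma,\mathsf{st}}$ is compatible with the weight filtration, and that Poincar\'{e} duality works with mixed Hodge structures on the non-smooth orbifold $\mathfrak{M}_e^{\sigma,\mathsf{st}}$. Both facts are standard but warrant an explicit local-to-global argument using \'etale charts $[U/F]$ and functoriality of the weight filtration; the rest of the argument is essentially bookkeeping.
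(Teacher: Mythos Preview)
Your proposal is correct and follows essentially the same route as the paper's proof: both establish part (1) by identifying the equivariant cohomology with the cohomology of the coarse moduli space via the finite-stabilizer/DM-stack observation and then check MHS compatibility through the finite-dimensional Borel approximations $R_e^{\sigma,\mathsf{st}}\times_{\mathsf{G}_e^{\sigma}} E_N \to \mathfrak{M}_e^{\sigma,\mathsf{st}}$, and both deduce part (2) from Deligne's weight bound for smooth varieties together with Poincar\'{e} duality. The only cosmetic difference is that the paper cites \cite[Theorem 4.40]{edidin2013} for the vector-space isomorphism and \cite[Th\'{e}or\`{e}me 8.2.4]{deligne1974} for the weight bound, whereas you sketch the \'etale-local argument directly.
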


\begin{proof}
Since $H^{\bullet}_{\mathsf{G}_e^{\sigma}}(R_e^{\sigma, \mathsf{st}}) \simeq H^{\bullet}(\mathbf{M}_e^{\sigma, \mathsf{st}})$ and $\mathbf{M}_e^{\sigma, \mathsf{st}} \rightarrow \mathfrak{M}_e^{\sigma, \mathsf{st}}$ is a coarse moduli scheme, \cite[Theorem 4.40]{edidin2013} implies that \eqref{eq:dmIsom} is a graded vector space isomorphism. In the notation of Section \ref{sec:equivCohom}, the morphisms $R_e^{\sigma, \mathsf{st}} \times_{\mathsf{G}_e^{\sigma}} E_N \rightarrow  \mathfrak{M}_e^{\sigma, \mathsf{st}}$ approximate $R_e^{\sigma, \mathsf{st}} \times_{\mathsf{G}_e^{\sigma}} E \mathsf{G}_e^{\sigma} \rightarrow \mathfrak{M}_e^{\sigma, \mathsf{st}}$ and respect mixed Hodge structures. Passing to the limit shows that \eqref{eq:dmIsom} is also an isomorphism of mixed Hodge structures.

The second statement follows from \cite[Th\'{e}or\`{e}m 8.2.4]{deligne1974} and Poincar\'{e} duality.
\end{proof}

The next result gives a partial analogue of Theorem \ref{thm:geomDTInterp}.

\begin{Prop}
\label{prop:hodgeBound}
Let $Q$ be a $\sigma$-symmetric quiver. For each $e \in \Lambda_Q^{\sigma, +}$ the restriction $H^{\bullet}_{\mathsf{G}_e^{\sigma}}(R_e^{\sigma}) \rightarrow H^{\bullet}_{\mathsf{G}_e^{\sigma}}(R_e^{\sigma, \mathsf{st}})$ factors through a surjective map $W^{\mathsf{prim}}_{Q,e} \rightarrow PH^{\bullet - \mathcal{E}(e)}(\mathfrak{M}_e^{\sigma, \mathsf{st}})$.
\end{Prop}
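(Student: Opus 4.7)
I would split the proof into three parts, of which only the last is substantive.

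First, I would show that for $d \in \Lambda_Q^+ \setminus \{0\}$, $e'' \in \Lambda_Q^{\sigma,+}$ with $H(d) + e'' = e$, $f \in \mathcal{H}_{Q,d}$ and $g \in \mathcal{M}_{Q,e''}$, the class $f \star g$ restricts to zero on $R_e^{\sigma, \mathsf{st}}$. By the construction in Section \ref{sec:cohmDef}, $f \star g$ is obtained as a pushforward along the proper $\mathsf{G}_e^{\sigma}$-equivariant map
\[
\mathsf{G}_e^{\sigma} \times_{\mathsf{G}_{d,e''}^{\sigma}} R_{d, e''}^{\sigma} \longrightarrow R_e^{\sigma},
\]
whose set-theoretic image is the closed subvariety of self-dual representations admitting a non-zero $d$-dimensional isotropic subrepresentation. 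Since we use the trivial stability $\theta = 0$, every point of $R_e^{\sigma, \mathsf{st}}$ has no non-trivial isotropic subrepresentation, so this image is disjoint from $R_e^{\sigma, \mathsf{st}}$ and the restriction of $f \star g$ vanishes. Consequently the restriction map kills $\mathcal{H}_{Q,+} \star \mathcal{M}_Q$ and factors through $W^{\mathsf{prim}}_{Q,e} = \mathcal{M}_{Q,e}/(\mathcal{H}_{Q,+}\star \mathcal{M}_Q)_e$, the cohomological degree shift by $\mathcal{E}(e)$ in the statement being the one built into $\mathcal{M}_Q$.

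Second, I would check that the image of the factored map lies in the pure part. Since $R_e^{\sigma}$ is $\mathsf{G}_e^{\sigma}$-equivariantly contractible, $H^{\bullet}_{\mathsf{G}_e^{\sigma}}(R_e^{\sigma}) \simeq H^{\bullet}_{\mathsf{G}_e^{\sigma}}$, and by \eqref{eq:classicalEquivCohom} this is a polynomial algebra on even-degree generators of weight equal to their cohomological degree, hence a pure mixed Hodge structure. The restriction map, followed by the isomorphism of Lemma \ref{lem:orbiHodge}(1), is a morphism of mixed Hodge structures, so its image in degree $k$ is contained in $W_k H^k(\mathfrak{M}_e^{\sigma, \mathsf{st}})$, which by Lemma \ref{lem:orbiHodge}(2) coincides with $PH^k(\mathfrak{M}_e^{\sigma, \mathsf{st}})$.

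The hard part will be the surjectivity onto the pure part. The analogous statement for ordinary quiver moduli is \cite[Theorem 2.2]{chen2014}, and the plan is to transport that argument to the self-dual setting. Concretely, one combines the purity of $H^{\bullet}_{\mathsf{G}_e^{\sigma}}$ with a $\sigma$-compatible Kirwan--Hesselink stratification of $R_e^{\sigma}$: the unstable complement of $R_e^{\sigma, \mathsf{st}}$ decomposes into finitely many locally closed strata, indexed by conjugacy classes of $\sigma$-compatible one-parameter subgroups, each of which is an equivariant affine bundle over a product of ordinary and self-dual stable loci of strictly smaller dimension vector. An inductive weight-by-weight analysis of the resulting equivariant Gysin sequences then shows that the kernel of the restriction map carries no classes of top weight, which forces the restriction map to surject onto $PH^{\bullet}(\mathfrak{M}_e^{\sigma, \mathsf{st}})$. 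The principal obstacle is the construction of the Hesselink stratification in the orientifold setting, since the parabolics that appear must now preserve an isotropic flag and the recursion involves moduli of self-dual rather than ordinary representations.
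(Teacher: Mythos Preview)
Your first two steps match the paper's argument: the factorization through $W^{\mathsf{prim}}_{Q,e}$ is exactly the self-dual analogue of \cite[Lemma 2.1]{chen2014} (the image of the Hall action is supported on the non-$\sigma$-stable locus), and the landing in the pure part is the purity of $H^{\bullet}_{\mathsf{G}_e^{\sigma}}$ together with Lemma~\ref{lem:orbiHodge}.

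For the surjectivity step, however, the paper takes a much shorter route than the Kirwan--Hesselink stratification you outline. No stratification or induction is needed. The paper passes via Poincar\'e duality to compactly supported cohomology: since $H^i_{\mathsf{G}_e^{\sigma}}(R_e^{\sigma})$ is pure of weight $i$, so is $H^i_{c,\mathsf{G}_e^{\sigma}}(R_e^{\sigma})$. In the long exact sequence of the open--closed pair $(R_e^{\sigma,\mathsf{st}}, R_e^{\sigma}\setminus R_e^{\sigma,\mathsf{st}})$ in $H^{\bullet}_c$, the connecting map has source $H^{i-1}_{c,\mathsf{G}_e^{\sigma}}(R_e^{\sigma}\setminus R_e^{\sigma,\mathsf{st}})$, which by Deligne's general bound has weights $\leq i-1$ regardless of any smoothness or stratification. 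Hence $PH^i_{c,\mathsf{G}_e^{\sigma}}(R_e^{\sigma,\mathsf{st}}) \hookrightarrow H^i_{c,\mathsf{G}_e^{\sigma}}(R_e^{\sigma})$, and dualizing gives the desired surjection. Your approach would presumably also succeed, since GIT stratifications for $\mathsf{G}_e^{\sigma}$ acting on $R_e^{\sigma}$ do exist, but it buys nothing here: the single weight bound on $H^{i-1}_c$ of an \emph{arbitrary} variety replaces the entire inductive machinery.

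One small slip in your sketch: you write that ``the kernel of the restriction map carries no classes of top weight'', but since the source $H^k_{\mathsf{G}_e^{\sigma}}(R_e^{\sigma})$ is already pure of weight $k$, any sub-MHS of it is pure of weight $k$ as well. What you need to control is the \emph{cokernel} of the restriction into $H^k(\mathfrak{M}_e^{\sigma,\mathsf{st}})$: showing that it has weights $>k$ is what forces surjectivity onto $PH^k$. In your stratification picture this amounts to bounding the weights of the equivariant Gysin groups from below, not the kernel from above.
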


\begin{proof}
As the argument is similar to \cite{chen2014}, we will be brief. Poincar\'{e} duality for smooth Artin stacks gives a perfect pairing
\[
H^{\bullet}_{\mathsf{G}_e^{\sigma}}(R_e^{\sigma}) \otimes H_{c,\mathsf{G}_e^{\sigma}}^{-2\mathcal{E}(e) - \bullet}(R_e^{\sigma}) \rightarrow \mathbb{Q}(-\mathcal{E}(e)).
\]
Here we use that $\dim_{\mathbb{C}} \mathbf{M}_e^{\sigma} = - \mathcal{E}(e)$. By \cite[Th\'{e}or\`{e}me 9.1.1]{deligne1974} the mixed Hodge structure on $H^i_{\mathsf{G}_e^{\sigma}}(R_e^{\sigma})$ is pure of weight $i$. Hence $H_{c,\mathsf{G}_e^{\sigma}}^i(R_e^{\sigma})$ is also pure of weight $i$. Consider the long exact sequence associated to the pair $(R_e^{\sigma, \mathsf{st}}, R_e^{\sigma} \backslash R_e^{\sigma, \mathsf{st}})$:
\[
\cdots \rightarrow 
H^{i-1}_{c, \mathsf{G}_e^{\sigma}}(R_e^{\sigma} \backslash R_e^{\sigma, \mathsf{st}}) \rightarrow H^i_{c, \mathsf{G}_e^{\sigma}}(R_e^{\sigma, \mathsf{st}}) \rightarrow H^i_{c, \mathsf{G}_e^{\sigma}}(R_e^{\sigma}) \rightarrow H^i_{c, \mathsf{G}_e^{\sigma}}(R_e^{\sigma} \backslash R_e^{\sigma, \mathsf{st}}) \rightarrow \cdots.
\]
Since the weights of $H^{i-1}_{c,\mathsf{G}_e^{\sigma}}(R_e^{\sigma} \backslash R_e^{\sigma, \mathsf{st}})$ are bounded above by $i-1$, the restriction $PH^i_{c, \mathsf{G}_e^{\sigma}}(R_e^{\sigma, \mathsf{st}}) \rightarrow H^i_{c, \mathsf{G}_e^{\sigma}}(R_e^{\sigma})$ is an injection and, dually, $H^i_{\mathsf{G}_e^{\sigma}}(R_e^{\sigma}) \rightarrow  PH^i_{\mathsf{G}_e^{\sigma}}(R_e^{\sigma, \mathsf{st}})$ is a surjection. Here we have used the second part of Lemma \ref{lem:orbiHodge}.

A straightforward modification of \cite[Lemma 2.1]{chen2014} shows that the composition of
\[
\bigoplus_{\substack{(d^{\prime}, e^{\prime \prime}) \in \Lambda_Q^+ \times \Lambda_Q^{\sigma,+} \\ H(d^{\prime}) + e^{\prime \prime} = e, \; d^{\prime} \neq 0}} \mathcal{H}_{Q,d^{\prime}} \boxtimes \mathcal{M}_{Q,e^{\prime  \prime}} \xrightarrow[]{\star} \mathcal{M}_{Q,e} = H^{\bullet - \mathcal{E}(e)}_{\mathsf{G}_e^{\sigma}}(R_e^{\sigma})
\]
with the restriction $H^{\bullet}_{\mathsf{G}_e^{\sigma}}(R_e^{\sigma}) \rightarrow H^{\bullet}_{\mathsf{G}_e^{\sigma}}(R_e^{\sigma,\mathsf{st}}) \simeq H^{\bullet} (\mathfrak{M}_e^{\sigma, \mathsf{st}})$ is zero. Combined with the previous paragraph, this shows that $W^{\mathsf{prim}}_{Q,e} \rightarrow PH^{\bullet - \mathcal{E}(e)} (\mathfrak{M}_e^{\sigma, \mathsf{st}})$ is surjective.
\end{proof}

The proof of injectivity in Theorem \ref{thm:geomDTInterp} relies on an interpretation of $\Omega_Q$ in terms of the cohomology of (smooth) Nakajima quiver varieties \cite{hausel2013}. Since smooth analogues of Nakajima varieties do not exist in the self-dual setting, it is not clear if the proof from \cite{chen2014} can be adapted to the present setting. In any case, it is natural to make the following conjecture.

\begin{Conj}
\label{conj:hodgeEqual}
The surjection $W^{\mathsf{prim}}_{Q,e} \twoheadrightarrow PH^{\bullet - \mathcal{E}(e)}(\mathfrak{M}_e^{\sigma, \mathsf{st}})$ is an isomorphism.
\end{Conj}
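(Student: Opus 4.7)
Since surjectivity is furnished by Proposition \ref{prop:hodgeBound}, my plan is to establish injectivity by matching graded dimensions. Assuming Conjecture \ref{conj:freeCOHM}, the factorization \eqref{eq:cohmNumericalFactorization} reduces the problem to an independent computation of the pure Poincar\'{e} series of $\bigsqcup_e \mathfrak{M}_e^{\sigma,\mathsf{st}}$ together with a verification that it obeys the same factorization in terms of $\Omega_Q^{\sigma}$ and the $\mathbb{Z}_2$-equivariant ordinary DT invariants $\widetilde{\Omega}_Q$. The combinatorial (CoHM) side is under control via the signed shuffle description of Theorem \ref{thm:cohmLoc} and the finiteness argument used to prove Theorem \ref{thm:oriDTfinite}, so the crux is a Hodge-theoretic input for $\mathfrak{M}_e^{\sigma,\mathsf{st}}$.

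For that input I would pursue two parallel strategies. The first is to mimic Hausel--Chen by constructing self-dual framed moduli schemes $\mathfrak{N}_{e,w}^{\sigma}$ parameterizing self-dual representations $(M,\psi_M)$ of dimension vector $e$ together with hyperbolic framing data controlled by a framing vector $w$. These play the role of orientifold Nakajima varieties; they need not be smooth, but I would expect hyperbolic framing to produce at worst symplectic-orbifold singularities coming from the discrete isometry groups $\mathbb{Z}_2^{k}$ of non-regularly $\sigma$-stable representations, so that their cohomology is still pure. A $\mathbb{C}^{\times}$-scaling action together with a Kirwan-type surjection should then bound $PH^{\bullet}(\mathfrak{M}_e^{\sigma,\mathsf{st}})$ from above by the quantity produced on the CoHM side. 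The second strategy is to exploit the $\mathbb{Z}_2$-action on $\mathfrak{M}_{H(d)}^{\mathsf{st}}$ induced by $S$, identify an open substack of its fixed locus with $\mathfrak{M}_{H(d)}^{\sigma,\mathsf{st}}$, and extract $PH^{\bullet}(\mathfrak{M}_{H(d)}^{\sigma,\mathsf{st}})$ as an $e$-twisted $\mathbb{Z}_2$-eigenspace of $PH^{\bullet}(\mathfrak{M}_{H(d)}^{\mathsf{st}})$. This fits naturally with the way $\widetilde{\Omega}_Q$ enters \eqref{eq:cohmNumericalFactorization} and would reduce injectivity to a statement purely within the ordinary $\mathbb{Z}_2$-equivariant setting, where Theorem \ref{thm:geomDTInterp} is available.

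The main obstacle is precisely the one flagged after Conjecture \ref{conj:hodgeEqual}: no smooth orientifold Nakajima variety is known, and smoothness is what forces purity of the equivariant cohomology in the ordinary argument. In the self-dual case, even granting rational Poincar\'{e} duality for the framed moduli, one has to carefully separate the pure-weight contributions of $\mathfrak{M}_e^{\sigma,\mathsf{st}}$ itself from those coming from strata indexed by orthogonal decomposition type, i.e.\ by the integer $k$ with $\Aut_S(M) \simeq \mathbb{Z}_2^k$, and show that the would-be excess cancels against a matching overcount on the CoHM side. In lieu of such a general argument, I would fall back to establishing Conjecture \ref{conj:hodgeEqual} case-by-case as in Sections \ref{sec:examples} and \ref{sec:finiteTypeQuivers}, by directly computing both sides for disjoint unions, loop quivers and the symmetric $\widetilde{A}_1$ orientation, where the shuffle formula and the explicit geometry of $\mathfrak{M}_e^{\sigma,\mathsf{st}}$ are both tractable.
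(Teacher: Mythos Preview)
The statement you are attempting to prove is labeled a \emph{Conjecture} in the paper, and the paper does not prove it in general. Immediately before stating it, the paper explicitly flags the obstruction you identify: the injectivity argument of Theorem~\ref{thm:geomDTInterp} goes through smooth Nakajima quiver varieties, and since smooth self-dual analogues are not available, it is unclear how to adapt that proof. The paper then simply records the conjecture and verifies it by hand in the examples of Section~\ref{sec:examples} (disjoint unions, $L_0$, $L_1$, and the symmetric $\widetilde{A}_1$ quiver), where both $W_{Q,e}^{\mathsf{prim}}$ and $\mathfrak{M}_e^{\sigma,\mathsf{st}}$ can be computed directly.

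Your proposal is therefore not being compared against a proof but against an open problem. The two strategies you outline (orientifold framed moduli with orbifold singularities, and extracting the self-dual locus as a twisted $\mathbb{Z}_2$-eigenspace of the ordinary stable moduli) are reasonable research directions, but as you yourself note, neither closes the gap: the first requires purity and a Kirwan-type surjectivity statement for spaces that are not known to be smooth, and the second needs an identification of the fixed locus with $\mathfrak{M}^{\sigma,\mathsf{st}}$ that is not established. Your fallback --- direct verification in the tractable cases --- is exactly what the paper does, so in that limited sense your proposal and the paper agree. But you should not present this as a proof of the conjecture; it remains open.
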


We verify Conjecture \ref{conj:hodgeEqual} in some examples in Section \ref{sec:examples}. In view of \cite{meinhardt2014} it is also natural to conjecture that $W_{Q,e}^{\mathsf{prim}}$ is isomorphic to the intersection cohomology $IC^{\bullet - \mathcal{E}(e)}(\overline{\mathfrak{M}_e^{\sigma,\mathsf{st}}})$ of the closure of $\mathfrak{M}_e^{\sigma,\mathsf{st}} \subset \mathfrak{M}_e^{\sigma, \mathsf{ss}}$. This can be verified in all examples in which Conjecture \ref{conj:hodgeEqual} is verified below.

\subsection{The critical semistable CoHM}
\label{sec:criticalCoHM}

We define the CoHM of a quiver with potential and stability, generalizing Section \ref{sec:cohmDef}.

Fix a stability $\theta$ and potential $W \in \mathbb{C}Q \slash [\mathbb{C}Q, \mathbb{C}Q]$. Let $d^{\prime}, d^{\prime \prime} \in \Lambda_Q^+$ and set $d = d^{\prime} + d^{\prime \prime}$. Let $R_d^{\theta \mhyphen \mathsf{ss}} \subset R_d$ be the open subvariety of semistable representations and define $R^{\theta \mhyphen \mathsf{ss}}_{d^{\prime},d^{\prime \prime}} = R_{d^{\prime},d^{\prime \prime}} \cap R^{\theta \mhyphen \mathsf{ss}}_d$. The trace maps $\tr(W)_d : R^{\theta \mhyphen \mathsf{ss}}_d \rightarrow \mathbb{C}$ and $\tr(W)_{d^{\prime}, d^{\prime \prime}} : R^{\theta \mhyphen \mathsf{ss}}_{d^{\prime}, d^{\prime \prime}} \rightarrow \mathbb{C}$ are invariant under $\mathsf{GL}_d$ and $\mathsf{GL}_{d^{\prime}, d^{\prime \prime}}$, respectively. Recall that the full subcategory of $\mathsf{Rep}_{\mathbb{C}}(Q)$ consisting of the zero object and all semistable representations of a fixed slope is abelian. If $\mu(d^{\prime}) = \mu(d^{\prime \prime})$, then by restriction of \eqref{eq:diagMaps} we get $R^{\theta \mhyphen \mathsf{ss}}_{d^{\prime}} \times R^{\theta \mhyphen \mathsf{ss}}_{d^{\prime \prime}}
\overset{\pi}{\twoheadleftarrow} R^{\theta \mhyphen \mathsf{ss}}_{d^{\prime},d^{\prime \prime}} \overset{i}{\hookrightarrow} R^{\theta \mhyphen \mathsf{ss}}_d$ along which the trace maps pull back according to
\[
\pi^* \left( \tr(W)_{d^{\prime}} \boxplus \tr(W)_{d^{\prime \prime}} \right) = \tr(W)_{d^{\prime}, d^{\prime \prime}}= i^* \tr(W)_d.
\]

Let $\varphi_{\tr(W)_d} \mathbb{Q}_{R^{\theta \mhyphen \mathsf{ss}}_d} \in D_c^b(R^{\theta \mhyphen \mathsf{ss}}_d)$ be the sheaf of vanishing cycles of $\tr(W)_d$, henceforth denoted by $\varphi_{\tr(W)_d}$. See \cite{kashiwara1994} for background. The slope $\mu$ semistable critical CoHA \cite[\S 7]{kontsevich2011} has underlying vector space
\[
\mathcal{H}^{\theta \mhyphen \mathsf{ss}}_{Q,W, \mu} = \bigoplus_{d \in \Lambda^+_{Q,\mu}} H^{\bullet}_{c, \mathsf{GL}_d}(R^{\theta \mhyphen \mathsf{ss}}_d, \varphi_{\tr(W)_d})^{\vee} \{ \chi(d,d) \slash 2 \}
\]
where $\Lambda^+_{Q,\mu} =\{ d \in \Lambda_Q^+ \mid \mu(d) = \mu \} \cup \{0 \}$. An associative product is defined on $\mathcal{H}^{\theta \mhyphen \mathsf{ss}}_{Q,W, \mu}$ analogously to Section \ref{sec:cohaDef}; see \cite[\S 7]{kontsevich2011}, \cite[\S 3.2]{davison2013b}. The inclusions $R_d^{\theta \mhyphen \mathsf{ss}} \hookrightarrow R_d$ induce an algebra homomorphism $\mathcal{H}^{\theta}_{Q,W, \mu} \rightarrow \mathcal{H}_{Q, W, \mu}^{\theta \mhyphen \mathsf{ss}}$. Here $\mathcal{H}^{\theta}_{Q,W, \mu} \subset \mathcal{H}_{Q,W}$ is the subalgebra associated to the submonoid $\Lambda^+_{Q,\mu} \subset \Lambda_Q^+$, with no semistability imposed.

Suppose now that $Q$ has an involution and duality structure and that $\theta$ is $\sigma$-compatible. We call a potential $S$-compatible if its associated trace maps are invariant under the isomorphisms $R_d \xrightarrow[]{\sim} R_{\sigma(d)}$. In this case, by restriction we obtain maps $\tr(W)^{\sigma}_e : R^{\sigma, \theta \mhyphen \mathsf{ss}}_e \rightarrow \mathbb{C}$ and $\tr(W)^{\sigma}_{d^{\prime},e^{\prime}}: R^{\sigma, \theta \mhyphen \mathsf{ss}}_{d^{\prime},e^{\prime}} \rightarrow \mathbb{C}$ which are invariant under $\mathsf{G}^{\sigma}_e$ and $\mathsf{G}^{\sigma}_{d^{\prime},e^{\prime}}$, respectively. We need the following observation.

\begin{Lem}
\label{lem:vanCycleScale}
Let $X$ be a complex manifold and $f: X \rightarrow \mathbb{C}$ a holomorphic function. For any $c \in \mathbb{R}_{>0}$ there is a canonical isomorphism of vanishing cycle functors $\varphi_f \simeq \varphi_{cf}$. In particular, $\varphi_f \mathbb{Q}_X \simeq\varphi_{cf} \mathbb{Q}_X$.
\end{Lem}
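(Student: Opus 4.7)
The strategy is to exploit the fact that $\mu_c : \mathbb{C} \to \mathbb{C}$, $z \mapsto cz$, is a biholomorphism sending $0$ to $0$, together with the observation that for $c \in \mathbb{R}_{>0}$ the logarithm $\log c \in \mathbb{R}$ is canonically defined. The first step will produce a canonical isomorphism of the nearby cycle functors $\psi_f \simeq \psi_{cf}$, and the second will deduce the vanishing cycle statement from the defining distinguished triangle.

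Let $X_0 = f^{-1}(0)$ and $X^{\ast} = X \setminus X_0$, with inclusions $i : X_0 \hookrightarrow X$ and $j : X^{\ast} \hookrightarrow X$. Since $\mu_c$ is a biholomorphism of $\mathbb{C}$ sending $0$ to $0$, we have $(cf)^{-1}(0) = f^{-1}(0)$, so $i$ and $j$ are the same for $f$ and $cf$. Following the standard construction, let $p_f : \widetilde{X^{\ast}}_f = X^{\ast} \times_{\mathbb{C}^{\ast}, \exp} \mathbb{C} \to X^{\ast}$ be the pullback of the universal cover $\exp : \mathbb{C} \to \mathbb{C}^{\ast}$ along $f|_{X^{\ast}}$, and likewise let $p_{cf}$ be the analogous map for $cf$. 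Then $\psi_f \mathcal{F} = i^{\ast} R(j p_f)_{\ast} (j p_f)^{\ast} \mathcal{F}$ and similarly for $\psi_{cf}$.

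The key step is to observe that the identity $\exp(z + \log c) = c \exp(z)$, valid for the canonical real-valued $\log c$ determined by $c \in \mathbb{R}_{>0}$, produces a canonical biholomorphism
\[
T_c : \widetilde{X^{\ast}}_{cf} \xrightarrow{\sim} \widetilde{X^{\ast}}_f, \qquad (x,z) \mapsto (x,\, z - \log c),
\]
commuting with the projections to $X^{\ast}$. Pullback along $T_c$ yields a canonical natural isomorphism $\psi_{cf} \simeq \psi_f$. For the vanishing cycle, recall that $\varphi_f \mathcal{F}$ is defined as the cone of the adjunction map $i^{\ast} \mathcal{F} \to \psi_f \mathcal{F}$, fitting in the distinguished triangle
\[
i^{\ast} \mathcal{F} \longrightarrow \psi_f \mathcal{F} \longrightarrow \varphi_f \mathcal{F} \xrightarrow{[1]}.
\]
Since $i^{\ast}$ and the adjunction map are intrinsic to the pair $(X, X_0)$, and our identification $\psi_f \simeq \psi_{cf}$ arises from an isomorphism over the identity on $X^{\ast}$, the triangle for $f$ is canonically compatible with the triangle for $cf$. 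Passing to cones yields the desired isomorphism $\varphi_f \simeq \varphi_{cf}$.

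The only substantive obstacle is canonicity of the lift $T_c$: for general $c \in \mathbb{C}^{\ast}$ one still obtains isomorphisms of nearby and vanishing cycle functors, but only after a choice of branch of $\log c$, and different branches differ by the monodromy automorphism of $\psi_f$. The hypothesis $c \in \mathbb{R}_{>0}$ singles out a unique $\log c \in \mathbb{R}$ and thereby removes this ambiguity, yielding the canonical isomorphism claimed. Applying the isomorphism of functors to $\mathcal{F} = \mathbb{Q}_X$ gives the final assertion $\varphi_f \mathbb{Q}_X \simeq \varphi_{cf} \mathbb{Q}_X$.
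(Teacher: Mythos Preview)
Your argument is correct. The paper itself does not supply a proof of this lemma; it is stated as an observation (``We need the following observation'') and left without justification. Your proof fills in the details via the standard universal-cover construction of nearby cycles, using the canonical real logarithm of $c \in \mathbb{R}_{>0}$ to build a biholomorphism $T_c$ of the fibre products over $X^{\ast}$, and then passing to vanishing cycles via the defining triangle. This is exactly the sort of argument one would expect and is presumably what the author had in mind; your remark about why $c \in \mathbb{R}_{>0}$ (rather than $c \in \mathbb{C}^{\ast}$) is needed for canonicity --- different branches of $\log c$ differ by the monodromy --- is the key point and is handled correctly.
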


\begin{Prop}
\label{prop:ssCoHM}
Let $\theta$ be a $\sigma$-compatible stability and let $W$ be a $S$-compatible potential. Then
\[
\mathcal{M}_{Q,W}^{\theta \mhyphen \mathsf{ss}} = \bigoplus_{ e \in \Lambda_Q^{\sigma, +} } H^{\bullet}_{c, \mathsf{G}^{\sigma}_e}(R^{\sigma, \theta \mhyphen \mathsf{ss}}_e, \varphi_{\tr(W)_e^{\sigma}})^{\vee}\{ \mathcal{E}(e) \slash 2 \}
\]
has a cohomological Hall module structure over $\mathcal{H}_{Q,W,\mu=0}^{\theta \mhyphen \mathsf{ss}}$. Moreover, the map $\mathcal{M}_{Q,W} \rightarrow \mathcal{M}_{Q,W}^{\theta \mhyphen \mathsf{ss}}$ induced by the $\mathsf{G}^{\sigma}_e$-equivariant open inclusions $R^{\sigma, \theta \mhyphen \mathsf{ss}}_e \hookrightarrow R^{\sigma}_e$ is a module homomorphism over $\mathcal{H}^{\theta}_{Q,W,\mu=0} \rightarrow \mathcal{H}_{Q,W,\mu=0}^{\theta \mhyphen \mathsf{ss}}$.
\end{Prop}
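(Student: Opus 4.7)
The plan is to mimic the construction of the semistable critical CoHA in \cite[\S 7]{kontsevich2011} while replacing the ordinary equivariant cohomology used in Theorem \ref{thm:CoHM} with equivariant cohomology of vanishing cycle sheaves. First I would restrict the correspondence defining $\star$ in Section \ref{sec:cohmDef} to the $\theta$-semistable loci, obtaining
\[
R_d^{\theta \mhyphen \mathsf{ss}} \times R_e^{\sigma,\theta \mhyphen \mathsf{ss}} \overset{\pi}{\twoheadleftarrow} R_{d,e}^{\sigma,\theta \mhyphen \mathsf{ss}} \overset{i}{\hookrightarrow} R_{H(d)+e}^{\sigma,\theta \mhyphen \mathsf{ss}}
\]
for $d \in \Lambda_{Q,\mu=0}^+$ and $e \in \Lambda_Q^{\sigma,+}$. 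The hypothesis $\mu(d)=0$ is essential: since $\theta$ is $\sigma$-compatible and every self-dual representation has slope zero, the Jordan--H\"older type analysis of \cite[\S 3]{mbyoung2015} shows that $R^{\sigma,\theta \mhyphen \mathsf{ss}}_{d,e}$ parameterizes exactly those $\sigma$-semistable $M$ admitting an isotropic subrepresentation $U$ of dimension vector $d$ with $U$ semistable and $M\git U$ $\sigma$-semistable, so the three loci assemble into an honest correspondence.

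Next I would establish the additive compatibility of trace maps. Writing the structure maps of $M \in R^{\sigma,\theta \mhyphen \mathsf{ss}}_{d,e}$ in block form with respect to the isotropic flag $U \subset U^\perp \subset M$,
\[
m_\alpha = \begin{pmatrix} u_\alpha & * & * \\ 0 & n_\alpha & * \\ 0 & 0 & S(u)_\alpha \end{pmatrix},
\]
the trace of any cyclic word in the $m_\alpha$ splits as the sum of the corresponding traces on $U$, on $M \git U$ and on $M/U^\perp \simeq S(U)$. Using $S$-compatibility of $W$ to identify the last trace with the first yields the identity
\[
i^* \tr(W)^{\sigma}_{H(d)+e} \;=\; \pi^*\bigl(\, 2\,\tr(W)_d \boxplus \tr(W)^{\sigma}_e \,\bigr).
\]
The factor of $2$ is absorbed by Lemma \ref{lem:vanCycleScale}, which provides a canonical isomorphism $\varphi_{2\tr(W)_d} \simeq \varphi_{\tr(W)_d}$; the Thom--Sebastiani theorem for vanishing cycles then produces a canonical isomorphism $i^* \varphi_{\tr(W)^{\sigma}_{H(d)+e}} \simeq \pi^*\bigl(\varphi_{\tr(W)_d} \boxtimes \varphi_{\tr(W)^{\sigma}_e}\bigr)$, up to the usual Tate twist.

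With this compatibility in hand, the action map $\star$ is defined by the same composition as in Section \ref{sec:cohmDef}, substituting vanishing cycle coefficients and dualizing in the style of \cite[\S 7]{kontsevich2011}: Thom--Sebastiani/K\"unneth, homotopy invariance along the vector bundle projection $\pi$, pullback along the closed embedding $i$, and equivariant pushforward along the fibration $B\mathsf{G}^{\sigma}_{d,e} \to B\mathsf{G}^{\sigma}_{H(d)+e}$. Associativity is verified by the same multi-step isotropic flag commutative diagram used in the proof of Theorem \ref{thm:CoHM}: the block-triangular calculation above, applied to a two-step flag, shows that the trace maps are additively compatible along both compositions (again with scaling factors absorbed by Lemma \ref{lem:vanCycleScale}), and functoriality of Thom--Sebastiani does the rest. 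Finally, the specialization map $\mathcal{M}_{Q,W} \to \mathcal{M}_{Q,W}^{\theta \mhyphen \mathsf{ss}}$ is a module homomorphism because every arrow in the correspondence above is obtained by open $\mathsf{G}^\sigma$-equivariant restriction from its counterpart over the full $R^{\sigma}_{d,e}$, and open pullback commutes with $i^*$, with the smooth pullback $\pi^*$ and, via proper base change along the fibre $\mathsf{G}^{\sigma}_{H(d)+e} \slash \mathsf{G}^{\sigma}_{d,e}$, with the equivariant pushforward.

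The only genuinely new ingredient beyond the combination of Theorem \ref{thm:CoHM} and \cite[\S 7]{kontsevich2011} is the factor of $2$ in the trace identity, whose handling is precisely the role of Lemma \ref{lem:vanCycleScale}; the remaining difficulty is essentially bookkeeping of Tate twists and cohomological shifts in the Thom--Sebastiani isomorphism to confirm that the action lands in the correct shifted summand of $\mathcal{M}^{\theta \mhyphen \mathsf{ss}}_{Q,W}$.
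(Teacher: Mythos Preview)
Your proposal is correct and follows essentially the same route as the paper: restrict the correspondence of Section~\ref{sec:cohmDef} to semistable loci, establish the trace identity $i^*\tr(W)^{\sigma}_{H(d)+e}=\pi^*(2\tr(W)_d\boxplus\tr(W)^{\sigma}_e)$, absorb the factor of $2$ via Lemma~\ref{lem:vanCycleScale}, apply Thom--Sebastiani, and invoke the isotropic-flag diagram for associativity. The only expository difference is that where you cite \cite[\S 3]{mbyoung2015} for the fact that the correspondence restricts to the semistable loci, the paper gives the short direct argument (if $U$ is semistable of slope zero and $N\git U$ is $\sigma$-semistable, then the two short exact sequences $0\to U\to U^\perp\to N\git U\to 0$ and $0\to U^\perp\to N\to S(U)\to 0$ force $N$ to be semistable of slope zero, hence $\sigma$-semistable); and where you phrase the module-homomorphism step as commutation of open pullback with the various operations, the paper packages the same proper base change as the Cartesianness of the square with $R^{\sigma,\theta\mhyphen\mathsf{ss}}_{d,e}\to R^{\theta\mhyphen\mathsf{ss}}_d\times R^{\sigma,\theta\mhyphen\mathsf{ss}}_e$ over $R^{\sigma}_{d,e}\to R_d\times R^{\sigma}_e$.
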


\begin{proof}
We use the following simple result. Let $U \subset N$ be an isotropic subrepresentation. If $U$ is semistable of slope zero and $N \git U$ is $\sigma$-semistable, then $N$ is $\sigma$-semistable. Indeed, in this situation we obtain a pair of short exact sequences,
\[
0 \rightarrow U \rightarrow U^{\perp} \rightarrow N \git U \rightarrow 0, \qquad \qquad 0 \rightarrow U^{\perp} \rightarrow N \rightarrow S(U) \rightarrow 0.
\]
Since $N \git U$ is $\sigma$-semistable, it is semistable \cite[Proposition 3.2]{mbyoung2015}. The exact sequences then imply that $U^{\perp}$ and $N$ are semistable of slope zero. Hence $N$ is $\sigma$-semistable.

Using the above result, for a pair $(d,e) \in \Lambda_{Q,\mu=0}^+ \times \Lambda_Q^{\sigma,+}$ we obtain morphisms $R^{\theta \mhyphen \mathsf{ss}}_d \times R^{\sigma, \theta \mhyphen \mathsf{ss}}_e
\overset{\pi}{\twoheadleftarrow} R^{\sigma, \theta \mhyphen \mathsf{ss}}_{d,e} \overset{i}{\hookrightarrow} R^{\sigma,\theta \mhyphen \mathsf{ss}}_{H(d) + e}$ for which
\[
i^* \tr(W)^{\sigma}_{H(d) + e} = \tr(W)^{\sigma}_{d, e} = \pi^* \left( 2 \, \tr( W)_d \boxplus \tr(W)^{\sigma}_e \right).
\]
Combining Lemma \ref{lem:vanCycleScale} with the Thom-Sebastiani isomorphism \cite{massey2001} gives
\begin{align*}
H^{\bullet}_{c, \mathsf{GL}_d}(R^{\theta \mhyphen \mathsf{ss}}_d,  \varphi_{\tr(W)_d})^{\vee}  \otimes  H^{\bullet}_{c, \mathsf{G}^{\sigma}_e}&(R^{\sigma, \theta \mhyphen \mathsf{ss}}_e , \varphi_{\tr(W)^{\sigma}_e})^{\vee} \xrightarrow[]{\sim} \\
& H^{\bullet}_{c, \mathsf{GL}_d \times \mathsf{G}^{\sigma}_e }(R^{\theta \mhyphen \mathsf{ss}}_d \times R^{\sigma, \theta \mhyphen \mathsf{ss}}_e, \varphi_{2 \tr( W)_d \boxplus \tr(W)^{\sigma}_e})^{\vee}.
\end{align*}
From this point on the construction of the $\mathcal{H}_{Q,W,\mu=0}^{\theta \mhyphen \mathsf{ss}}$-module structure of $\mathcal{M}_{Q,W}^{\theta \mhyphen \mathsf{ss}}$ is the natural common generalization of \cite[\S 7]{kontsevich2011} and Section \ref{sec:cohmDef}. We omit the details.

The second statement follows from the fact that the diagram
\[
\begin{tikzpicture}
  \matrix (m) [matrix of math nodes,nodes={anchor=center},row sep=1.6em,column sep=1.6em,minimum width=1.4em] {
R^{\sigma, \theta \mhyphen \mathsf{ss}}_{d,e} & R^{\sigma}_{d,e}  \\
R^{\theta \mhyphen \mathsf{ss}}_d \times R^{\sigma, \theta \mhyphen \mathsf{ss}}_e & R_d \times R_e^{\sigma} \\};
\draw  (m-1-1) edge[right hook->] (m-1-2);
\draw  (m-1-1) edge[->] (m-2-1);
\draw  (m-2-1) edge[right hook->] (m-2-2);
\draw  (m-1-2) edge[->] (m-2-2);\end{tikzpicture}
\]
is Cartesian, which in turn follows from the first paragraph of the proof.
\end{proof}

When $Q$ is $\sigma$-symmetric and $W=0$, define $W_Q^{\mathsf{prim}, \theta} = \mathcal{M}^{\theta \mhyphen \mathsf{ss}}_Q \slash (\mathcal{H}^{\theta \mhyphen \mathsf{ss}}_{Q,\mu=0,+} \star \mathcal{M}^{\theta \mhyphen \mathsf{ss}}_Q)$ with associated motivic invariant $\Omega_Q^{\sigma, \theta}$.
As in Section \ref{sec:symCOHM}, we expect $\mathcal{M}_Q^{\theta \mhyphen \mathsf{ss}}$ to be a direct sum of free modules over subalgebras of $\mathcal{H}^{\theta \mhyphen \mathsf{ss}}_{Q, \mu =0}$, leading to a factorization
\[
A^{\sigma, \theta \mhyphen \mathsf{ss}}_Q \overset{\mbox{\tiny (Conj.)}}{=} \sum_{e \in \Lambda_Q^{\sigma,+}} A_{Q, \mu=0}^{\theta \mhyphen \mathsf{ss}}(e) \cdot \Omega_{Q,e}^{\sigma, \theta} \xi^e.
\]
If such a factorization indeed exists, then $\Omega_Q^{\sigma, \theta}$ is independent of $\theta$. This follows from a short argument using the wall-crossing formula \cite[Theorem 4.5]{mbyoung2015}
\begin{equation}
\label{eq:oriWallCrossing}
A^{\sigma}_Q = \prod_{\mu \in \mathbb{Q}_{>0}} A^{\theta \mhyphen \mathsf{ss}}_{Q, \mu} \star A_Q^{\sigma, \theta \mhyphen \mathsf{ss}}.
\end{equation}

To end this section we outline the expected structure of $\mathcal{M}_{Q,W}$; Sections \ref{sec:examples} and \ref{sec:finiteTypeQuivers} give evidence for these expectations. Let $Q$, $W$ and $\theta$ be arbitrary. Motivated by the existence and uniqueness of Harder-Narasimhan filtrations, in \cite[\S 5.2]{kontsevich2011} (see also \cite[\S 8.1]{chuang2014}) it was asked if there exist algebra embeddings $\mathcal{H}_{Q,W, \mu}^{\theta \mhyphen \mathsf{ss}} \hookrightarrow \mathcal{H}_{Q,W}$ such that the slope ordered CoHA multiplication
\[
\overset{\leftarrow}{\boxtimes}^{\mathsf{tw}}_{\mu \in \mathbb{Q}} \,
\mathcal{H}^{\theta \mhyphen \mathsf{ss}}_{Q,W,\mu} \rightarrow \mathcal{H}_{Q,W}
\]
is an isomorphism in $D^{lb}(\mathsf{Vect}_{\mathbb{Z}})_{\Lambda_Q^+}$. If $\theta$ is generic, then $\mathcal{H}^{\theta \mhyphen \mathsf{ss}}_{Q,W,\mu}$ is conjectured to be the universal enveloping algebra of a Lie superalgebra structure on $V^{\mathsf{prim},\theta}_{Q,W, \mu} \otimes \mathbb{Q}[u]$. In this way $\mathcal{H}_{Q,W}$ obtains a Poincar\'{e}-Birkhoff-Witt (PBW) type basis. See \cite{davison2016a} for results in this direction. Conjecturally, $V_{Q,W}^{\mathsf{prim}, \theta}$ can be interpreted as a space of oriented single-particle BPS states.

In the orientifold setting, each self-dual representation $M$ has a unique $\sigma$-Harder-Narasimhan filtration \cite[Proposition 3.3]{mbyoung2015}, an isotropic filtration
\[
0 = U_0 \subset U_1 \subset \cdots \subset U_r \subset M
\]
such that $U_1 \slash U_0, \dots, U_r \slash U_{r-1}$ are semistable with strictly decreasing positive slopes and $M \git U_r$ is zero or $\sigma$-semistable. It is then natural to ask for a $\mathcal{H}_{Q,W,\mu=0}^{\theta \mhyphen \mathsf{ss}}$-module\footnote{As above, we should restrict to subalgebras of $\mathcal{H}_{Q,W,\mu=0}^{\theta \mhyphen \mathsf{ss}}$.} embedding $\mathcal{M}^{\theta \mhyphen \mathsf{ss}}_{Q,W} \hookrightarrow \mathcal{M}_{Q,W}$ such that the ordered CoHA action
\begin{equation}
\label{eq:moduleFactorization}
\overset{\leftarrow}{\boxtimes}^{\mathsf{tw}}_{\mu \in \mathbb{Q}_{>0}} \,
\mathcal{H}^{\theta \mhyphen \mathsf{ss}}_{Q,W,\mu} \boxtimes^{S \mhyphen \mathsf{tw}} \mathcal{M}^{\theta \mhyphen \mathsf{ss}}_{Q,W} \xrightarrow[]{\star} \mathcal{M}_{Q,W}
\end{equation}
is an isomorphism in $D^{lb}(\mathsf{Vect}_{\mathbb{Z}})_{\Lambda_Q^{\sigma,+}}$. Together with the natural extension of Conjecture \ref{conj:freeCOHM} to $\mathcal{M}_{Q,W}^{\theta \mhyphen \mathsf{ss}}$, an isomorphism of the form \eqref{eq:moduleFactorization} would determine a PBW type basis of $\mathcal{M}_{Q,W}$ in terms of $W_{Q,W}^{\mathsf{prim}, \theta}$ and the PBW bases of $\mathcal{H}_{Q,W, \mu \geq 0}^{\theta \mhyphen \mathsf{ss}}$. Conjecturally, $W_{Q,W}^{\mathsf{prim}, \theta}$ can be interpreted as a space of single-particle BPS states of the orientifolded theory. Decompositions similar to \eqref{eq:moduleFactorization} occur in physical definitions of unoriented BPS invariants \cite{sinha2000}, \cite{walcher2009}.

\section{\texorpdfstring{$\sigma$}{}-Symmetric examples}
\label{sec:examples}

\subsection{Disjoint union quivers}
\label{sec:disjointUnions}

Let $Q$ and $Q^{\prime}$ be arbitrary quivers. The disjoint union quiver $Q \sqcup Q^{\prime}$ has nodes $Q_0 \sqcup Q_0^{\prime}$ and arrows $Q_1 \sqcup Q_1^{\prime}$. The opposite quiver $Q^{\mathsf{op}}$ has nodes $Q_0$ and an arrow $j \xrightarrow[]{\alpha^{\mathsf{op}}} i$ for each arrow $i \xrightarrow[]{\alpha} j$ of $Q$.

\begin{Lem}
\label{lem:cohaFunctor}
There are canonical algebra isomorphisms
\[
\mathcal{H}_{Q \sqcup Q^{\prime}} \simeq \mathcal{H}_Q \otimes \mathcal{H}_{Q^{\prime}}, \qquad 
\mathcal{H}_{Q^{\mathsf{op}}} \simeq \mathcal{H}_Q^{\mathsf{op}}
\]
where $\mathcal{H}_Q^{\mathsf{op}}$ is the opposite algebra of $\mathcal{H}_Q$.
\end{Lem}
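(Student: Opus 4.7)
The first isomorphism follows from the fact that the representation data of $Q \sqcup Q'$ splits as a product. Explicitly, for $d \in \Lambda_Q^+$ and $d' \in \Lambda_{Q'}^+$ one has $R_{(d,d')}^{Q \sqcup Q'} = R_d^Q \times R_{d'}^{Q'}$ and $\mathsf{GL}_{(d,d')} = \mathsf{GL}_d \times \mathsf{GL}_{d'}$, so the equivariant K\"{u}nneth formula gives the bigraded isomorphism $\mathcal{H}_{Q \sqcup Q', (d,d')} \simeq \mathcal{H}_{Q,d} \otimes \mathcal{H}_{Q', d'}$. Additivity of the Euler form, $\chi_{Q \sqcup Q'}((d_1, d_1'), (d_2, d_2')) = \chi_Q(d_1, d_2) + \chi_{Q'}(d_1', d_2')$, ensures that both the summand shifts $\{\chi(d,d)/2\}$ and the twist in $\boxtimes^{\mathsf{tw}}$ decompose compatibly. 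For multiplicativity, I would check directly via the shuffle formula of Theorem \ref{thm:cohaLoc}: the set of shuffles splits as $\mathfrak{sh}^{Q \sqcup Q'}_{(d_1, d_1'), (d_2, d_2')} = \mathfrak{sh}^Q_{d_1, d_2} \times \mathfrak{sh}^{Q'}_{d_1', d_2'}$, the denominator is nodewise, and since $(Q \sqcup Q')_1 = Q_1 \sqcup Q_1'$ the numerator factors into a $Q$-piece purely in the $Q$-variables and a $Q'$-piece purely in the $Q'$-variables. The symmetrized kernel therefore coincides with the tensor product of the two shuffle products.

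For the second isomorphism, the geometric input is linear duality. For each $d \in \Lambda_Q^+$, taking transposes defines a linear map
\[
\mathsf{d}_d : R_d^Q \xrightarrow{\sim} R_d^{Q^{\mathsf{op}}}, \qquad \{u_\alpha\}_{\alpha : i \to j \in Q_1} \mapsto \{u_\alpha^t\}_{\alpha^{\mathsf{op}}: j \to i \in Q^{\mathsf{op}}_1},
\]
which is equivariant with respect to the automorphism $\phi(g) = (g^{-1})^t$ of $\mathsf{GL}_d$ appearing in Lemma \ref{lem:equivCoRestr}(1). Because $R_d^Q$ is $\mathsf{GL}_d$-equivariantly contractible to the origin, which is fixed by $\mathsf{d}_d$, pullback along $\mathsf{d}_d$ reduces to $(B\phi)^\ast$, so by the first part of Lemma \ref{lem:equivCoRestr} it is the ring isomorphism $\Phi_d : \mathcal{H}_{Q^{\mathsf{op}}, d} \xrightarrow{\sim} \mathcal{H}_{Q, d}$ given by $f(x_{i,j}) \mapsto f(-x_{i,j})$. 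The symmetry $\chi_{Q^{\mathsf{op}}}(d,d) = \chi_Q(d,d)$ guarantees that the internal degree shifts $\{\chi(d,d)/2\}$ match under $\Phi_d$.

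It remains to show that $\Phi = \bigoplus_d \Phi_d$ reverses multiplication. Conceptually this is immediate from the fact that the duality functor $S : \mathsf{Rep}_{\mathbb{C}}(Q) \to \mathsf{Rep}_{\mathbb{C}}(Q^{\mathsf{op}})$ is a contravariant equivalence of abelian categories: it sends a short exact sequence $0 \to U \to M \to M/U \to 0$ with $\Dim U = d'$ and $\Dim M/U = d''$ to $0 \to (M/U)^\vee \to M^\vee \to U^\vee \to 0$ with the dimension vectors interchanged. Hence $\mathsf{d}$ intertwines the convolution correspondence defining the $(d',d'')$-component of the $Q^{\mathsf{op}}$-product with the $(d'',d')$-component of the $Q$-product, which is precisely the relation defining $\cdot^{\mathsf{op}}_Q$. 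I expect the main obstacle to be the bookkeeping needed to convert this statement into the formalism of Theorem \ref{thm:cohaLoc}: one must show that after the sign substitution $x \mapsto -x$ and the relabelling of variables induced by the length-maximal element of $\mathfrak{sh}_{d',d''} \subset \mathfrak{S}_{d'+d''}$, the kernel $K_{Q^{\mathsf{op}}}$ becomes the kernel $K_Q$ with its two groups of variables exchanged, with all spurious signs cancelled against the $\mathfrak{S}_{d'+d''}$-symmetrization and against the antisymmetric twist $\chi(d',d'') - \chi(d'',d')$ in $\boxtimes^{\mathsf{tw}}$.
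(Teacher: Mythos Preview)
Your approach is correct and matches the paper's: both isomorphisms are pullbacks along the evident isomorphisms of representation varieties (the product decomposition $R_d(Q)\times R_{d'}(Q')\simeq R_{(d,d')}(Q\sqcup Q')$ for the first, the transpose map $R_d(Q)\xrightarrow{\sim} R_d(Q^{\mathsf{op}})$ for the second). The paper's proof is a one-sentence assertion of exactly these maps; your elaboration via the shuffle formula and the contravariance-of-correspondences argument supplies detail that the paper leaves implicit.
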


\begin{proof}

The isomorphism $\mathcal{H}_{Q \sqcup Q^{\prime}} \xrightarrow[]{\sim} \mathcal{H}_Q \otimes \mathcal{H}_{Q^{\prime}}$ is the pullback along the isomorphisms $R_d (Q) \times R_{d^{\prime}}(Q^{\prime}) \xrightarrow[]{\sim} R_{(d,d^{\prime})} (Q \sqcup Q^{\prime})$ while $\mathcal{H}_{Q^{\mathsf{op}}} \xrightarrow[]{\sim} \mathcal{H}_Q^{\mathsf{op}}$ is the pullback along the isomorphisms $R_d(Q) \xrightarrow[]{\sim} R_d(Q^{\mathsf{op}})$ sending a representation to its transpose.
\end{proof}

The quiver $Q^{\sqcup} = Q \sqcup Q^{\mathsf{op}}$ has a canonical involution $\sigma$ which swaps the nodes and arrows of $Q$ and $Q^{\mathsf{op}}$. Representations of $Q^{\sqcup}$ are of the form $U_1 \oplus S(U_2)$ for $U_1, U_2 \in \mathsf{Rep}_{\mathbb{C}}(Q)$. Self-dual representations are hyperbolics on representations of $Q$. This gives isomorphisms $R_d \xrightarrow[]{\sim} R_{H(d)}^{\sigma}$ which induce a vector space isomorphism $\mathcal{M}_{Q^{\sqcup}} \rightarrow \mathcal{H}_Q$. Lemma \ref{lem:cohaFunctor} implies that $\mathcal{M}_{Q^{\sqcup}}$ is a $\mathcal{H}_Q \otimes \mathcal{H}_Q^{\mathsf{op}}$-module. Similarly, $\mathcal{H}_Q$ is the regular left $\mathcal{H}_Q$-bimodule.

\begin{Thm}
\label{thm:disjointCoHM}
The map $\mathcal{M}_{Q^{\sqcup}} \rightarrow \mathcal{H}_Q$ is an isomorphism of $\mathcal{H}_Q \otimes \mathcal{H}_Q^{\mathsf{op}}$-modules.
\end{Thm}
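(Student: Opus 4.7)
The plan is to combine the identification of underlying vector spaces (already essentially described in the paragraph preceding the theorem) with a direct translation of the shuffle formula of Theorem \ref{thm:cohmLoc} into the iterated shuffle formula for the three-fold CoHA product of Theorem \ref{thm:cohaLoc}. A parallel geometric argument via a three-step flag correspondence is also available, but the combinatorics here is cleaner.

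First, I would observe that the $\sigma$-invariant submonoid $\Lambda_{Q^{\sqcup}}^{\sigma,+}$ equals $H(\Lambda_Q^+)$, so $\mathcal{M}_{Q^{\sqcup}}$ is supported on dimension vectors of the form $H(d)$, $d \in \Lambda_Q^+$. Because $Q^{\sqcup}$ has no $\sigma$-fixed nodes, $\mathsf{G}^{\sigma}_{H(d)} \simeq \mathsf{GL}_d$, and the self-duality constraint forces a self-dual representation of $Q^{\sqcup}$ of dimension $H(d)$ to be the hyperbolic $(u, S(u))$ on a representation $u$ of $Q$ of dimension $d$. This gives a $\mathsf{GL}_d$-equivariant isomorphism $R_d(Q) \xrightarrow[]{\sim} R^{\sigma}_{H(d)}(Q^{\sqcup})$. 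Evaluating equation \eqref{eq:sdEulerForm} on $Q^{\sqcup}$ with $Q_0^{\sigma} = Q_1^{\sigma} = \emptyset$ yields $\mathcal{E}_{Q^{\sqcup}}(H(d)) = \chi_Q(d,d)$, so the degree shifts in the definitions of $\mathcal{M}_{Q^{\sqcup}}$ and $\mathcal{H}_Q$ match and the map $\mathcal{M}_{Q^{\sqcup}} \to \mathcal{H}_Q$ is an isomorphism of $\Lambda_Q^+ \times \mathbb{Z}$-graded vector spaces.

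To upgrade this to a module isomorphism I would specialize Theorem \ref{thm:cohmLoc} to $Q^{\sqcup}$ with source $(d_1; d_2) \in \Lambda_Q^+ \oplus \Lambda_{Q^{\mathsf{op}}}^+$ and target $H(d)$. The absence of $\sigma$-fixed nodes and arrows reduces the shuffle set to $\prod_{i \in Q_0} \mathfrak{sh}_{d_{1,i}, d_i, d_{2,i}}$, which is precisely the set of three-shuffles indexing the iterated CoHA product $f_1 \cdot c \cdot f_2$ in $\mathcal{H}_Q$. Applying the variable substitution of Section \ref{sec:shuffMod}, the surviving factors $D_i$ and $V_\alpha$ in the CoHM kernel become, term by term, the denominator and numerator products $(z_{j,b} - z_{i,a})$ obtained from iterating the formula of Theorem \ref{thm:cohaLoc}. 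The sign that appears on $x_{\sigma(i),j}'$, originating in Lemma \ref{lem:equivCoRestr}(1), is exactly the sign implementing the isomorphism $\mathcal{H}_{Q^{\mathsf{op}}} \xrightarrow[]{\sim} \mathcal{H}_Q^{\mathsf{op}}$ of Lemma \ref{lem:cohaFunctor}, so that $f_2 \in \mathcal{H}_{Q^{\mathsf{op}}, d_2}$ enters the three-fold product as its transpose, that is, as a right action factor on the regular bimodule.

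The main obstacle will be the careful bookkeeping of signs and degree shifts described above. In particular, the shift $\delta_1 + \delta_2 = -\chi_{Q^{\sqcup}}((d_1;d_2), H(d)) - \mathcal{E}_{Q^{\sqcup}}(\sigma(d_1;d_2))$ must be reconciled with the shift accompanying the three-fold iteration of the CoHA product on $\mathcal{H}_Q$, but both expressions decompose along the two subquivers of $Q^{\sqcup}$ via $\chi_{Q^{\mathsf{op}}}(a,b) = \chi_Q(b,a)$ and agree. Once this is in place, associativity of $\mathcal{H}_Q$ identifies the resulting three-shuffle expression with $f_1 \cdot c \cdot f_2^{\mathsf{op}}$, which is by definition the regular bimodule action of $\mathcal{H}_Q \otimes \mathcal{H}_Q^{\mathsf{op}}$ on $\mathcal{H}_Q$, completing the proof.
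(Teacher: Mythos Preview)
Your proposal is correct. The paper, however, takes the geometric route you allude to rather than the shuffle computation: it directly identifies the isotropic correspondence $R^{\sigma}_{d_1+\sigma(d_2),\,H(d_3)} \subset R^{\sigma}_{H(d_1+d_2+d_3)}$ and its stabilizer $\mathsf{G}^{\sigma}_{d_1+\sigma(d_2),\,H(d_3)}$ with the three-step flag correspondence $R_{d_1,d_3,d_2} \subset R_{d_1+d_2+d_3}$ and $\mathsf{GL}_{d_1,d_3,d_2}$, so that the CoHM action map becomes, on the nose, the composition defining $f_1 \cdot f_2 \cdot f_3$ in $\mathcal{H}_Q$. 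This avoids invoking Theorems~\ref{thm:cohaLoc} and~\ref{thm:cohmLoc} and the attendant sign and kernel bookkeeping; the grading check is done via the identity $\mathcal{E}_{Q^{\sqcup}}(U_1 \oplus S(U_2)) = \chi_Q(U_2,U_1)$. Your approach has the advantage of being entirely explicit and of exhibiting the match at the level of localization formulas, which makes the role of the sign convention from Lemma~\ref{lem:equivCoRestr}(1) transparent; the paper's approach is shorter and more conceptual but hides that mechanism inside the identification of correspondence spaces.
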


\begin{proof}
The action of $f_1 \otimes f_3 \in\mathcal{H}_Q \otimes \mathcal{H}_Q^{\mathsf{op}}$ on $f_2 \in \mathcal{H}_Q$ is $f_1 \cdot f_2 \cdot f_3 \in \mathcal{H}_Q$, which is in turn the image of $f_1 \otimes f_2 \otimes f_3$ under the composition (omitting degree shifts)
\[
\bigotimes_{i=k}^3 
H^{\bullet}_{\mathsf{GL}_{d_k}}(R_{d_k})  \xrightarrow[]{\sim} H^{\bullet}_{\mathsf{GL}_{d_1, d_2, d_3}}(R_{d_1, d_2, d_3}) \rightarrow H^{\bullet}_{\mathsf{GL}_{d_1 + d_2 +d_3}}(R_{d_1 + d_2 + d_3}).
\]
The isomorphism $R_d \simeq R^{\sigma}_{H(d)}$ identifies $R_{d_1+ \sigma(d_2), H(d_3)}^{\sigma} \subset R_{H(d_1 + d_2 + d_3)}^{\sigma}$ with the subspace $R_{d_1, d_3, d_2} \subset R_{d_1 +d_2 + d_3}$ preserving the $Q_0$-graded flag
\[
\mathbb{C}^{d_1} \subset (\mathbb{C}^{\sigma(d_2)})^{\perp} \cap \mathbb{C}^{d_1 +d_2 + d_3} \subset  \mathbb{C}^{d_1 +d_2 + d_3}
\]
and identifies $\mathsf{G}_{d_1+ \sigma(d_2), H(d_3)}^{\sigma} \subset \mathsf{G}_{H(d_1 +d_2 + d_3)}^{\sigma}$ with $\mathsf{GL}_{d_1, d_3, d_2} \subset \mathsf{GL}_{d_1 + d_2 + d_3}$. Using this we find that $(f_1 \otimes f_3) \star f_2$ is equal to $f_1 \cdot f_2 \cdot f_3$. That the isomorphism $\mathcal{M}_{Q^{\sqcup}} \xrightarrow[]{\sim} \mathcal{H}_Q$ respects the gradings follows from the equality
\begin{equation}
\label{eqn:sdEulerDisjoint}
\mathcal{E}_{Q^{\sqcup}}(U_1 \oplus S(U_2)) = \chi_Q(U_2, U_1),
\end{equation}
which holds for all $U_1, U_2 \in \mathsf{Rep}_{\mathbb{C}}(Q)$.
\end{proof}

\begin{Cor}
Conjectures \ref{conj:freeCOHM} and \ref{conj:hodgeEqual} hold for $Q^{\sqcup}$.
\end{Cor}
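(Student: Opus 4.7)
The first step is to note that $Q^{\sqcup}$ is $\sigma$-symmetric precisely when $Q$ is symmetric: the identity $\mathcal{E}_{Q^{\sqcup}}(d_1, d_2) = \chi_Q(d_2, d_1)$ from \eqref{eqn:sdEulerDisjoint} makes the equality $\mathcal{E} \circ \sigma = \mathcal{E}$ equivalent to symmetry of $\chi_Q$. Under this assumption, Theorem \ref{thm:disjointCoHM} identifies $\mathcal{M}_{Q^{\sqcup}}$ with the regular left bimodule $\mathcal{H}_Q$ over $\mathcal{H}_{Q^{\sqcup}} \simeq \mathcal{H}_Q \otimes \mathcal{H}_Q^{\mathsf{op}}$, cyclic on $1 \in \mathcal{H}_Q$. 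The plan is to deduce that the action of the augmentation ideal on $1$ sweeps out $\mathcal{H}_{Q,+}$, so that
\[
W^{\mathsf{prim}}_{Q^{\sqcup}} \simeq \mathcal{H}_Q / \mathcal{H}_{Q,+} \simeq \mathbb{Q},
\]
supported at $e = 0 \in \Lambda_{Q^{\sqcup}}^{\sigma,+}$ and represented by the unit.

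Conjecture \ref{conj:hodgeEqual} will then follow from a direct stability argument. Under the iso $R_d \simeq R^{\sigma}_{H(d)}$ used in the proof of Theorem \ref{thm:disjointCoHM}, every self-dual representation of $Q^{\sqcup}$ is a hyperbolic $H(U)$; the tautological isotropic subrepresentation $U \subset H(U)$ obstructs $\sigma$-stability unless $U = 0$. Thus $\mathfrak{M}^{\sigma, \mathsf{st}}_{(d,d)}$ is empty for $d \neq 0$ and is $\Spec(\mathbb{C})$ for $d = 0$, so $PH^{\bullet}(\mathfrak{M}^{\sigma, \mathsf{st}}_e)$ matches $W^{\mathsf{prim}}_{Q^{\sqcup}, e}$ and the surjection of Proposition \ref{prop:hodgeBound} is an isomorphism for every $e \in \Lambda_{Q^{\sqcup}}^{\sigma,+}$.

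For Conjecture \ref{conj:freeCOHM}, the collapse of $W^{\mathsf{prim}}_{Q^{\sqcup}}$ to a single summand reduces the action map to $\mathcal{H}_{Q^{\sqcup}}(0) \boxtimes \mathbb{Q} \to \mathcal{H}_Q$, and the plan is to identify $\mathcal{H}_{Q^{\sqcup}}(0) \simeq \mathcal{H}_Q$ via multiplication on $1$. Here the twist $(-1)^{\mathcal{E}(d)}$ is trivial on the two primitive summands of $V_{Q^{\sqcup}} = V_Q \oplus V_{Q^{\mathsf{op}}}$, since $\mathcal{E}(d,0) = \chi_Q(0,d) = 0$, so the $e = 0$ twisted $\mathbb{Z}_2$-action reduces to the $\sigma$-swap composed with $S_{\mathcal{H}}$, which on Efimov generators acts by $x \mapsto -x$ and in particular $u \mapsto -u$. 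Translating to the tensor-product picture via Lemma \ref{lem:cohaFunctor}, whose identification $\mathcal{H}_{Q^{\mathsf{op}}} \simeq \mathcal{H}_Q^{\mathsf{op}}$ itself incorporates an $x \mapsto -x$ sign by Lemma \ref{lem:equivCoRestr}(1), these sign contributions should cancel and the coinvariants $(\widetilde{V}_{Q^{\sqcup}})_{(\mathbb{Z}_2, 0)}$ correspond to genuine diagonal elements $v \otimes 1 + 1 \otimes v \in \mathcal{H}_Q \otimes \mathcal{H}_Q^{\mathsf{op}}$ for $v \in V_Q$. Applying $\Sym$ then yields $\mathcal{H}_{Q^{\sqcup}}(0) \simeq \Sym(V_Q) = \mathcal{H}_Q$ as $\Lambda_{Q^{\sqcup}}^+ \times \mathbb{Z}$-graded vector spaces, and the multiplication map sends $v \otimes 1 + 1 \otimes v$ to $2v$, extending to an algebra isomorphism $\mathcal{H}_{Q^{\sqcup}}(0) \xrightarrow{\sim} \mathcal{H}_Q$ that a Hilbert series comparison confirms has no missed factors; the module-theoretic refinement of the conjecture is automatic since $W^{\mathsf{prim}}_{Q^{\sqcup}, 0} \simeq \mathbb{Q}$ is supported at a single dimension vector. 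The main technical obstacle will be to coordinate the three sign sources — variable negation in $S_{\mathcal{H}}$, the sign $u \mapsto -u$ on Efimov generators, and the hidden $x \mapsto -x$ built into Lemma \ref{lem:cohaFunctor} — and to verify that they conspire to produce honest diagonal coinvariants whose multiplication image nontrivially generates $\mathcal{H}_Q$.
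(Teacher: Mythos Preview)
Your approach is essentially the same as the paper's, and your intuition about the sign cancellation is correct. Both proofs use Theorem~\ref{thm:disjointCoHM} to identify $\mathcal{M}_{Q^{\sqcup}}$ with the regular bimodule, deduce that $W^{\mathsf{prim}}_{Q^{\sqcup}}$ collapses to $\mathbb{Q}$ in degree zero, identify the $\mathbb{Z}_2$-coinvariants of $V_{Q^{\sqcup}}$ with a diagonal copy of $V_Q$, and conclude that the action on $\mathbf{1}_0^{\sigma}$ is a free rank-one module map. The Hodge-theoretic conjecture is dispatched identically in both by noting that $\mathfrak{M}_e^{\sigma,\mathsf{st}}(Q^{\sqcup}) = \varnothing$ for $e \neq 0$.

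Two differences are worth flagging. First, the paper explicitly lifts the supercommutative twist of $\mathcal{H}_Q$ to $\mathcal{M}_{Q^{\sqcup}}$ by declaring the latter to be the regular \emph{super} bimodule; this is what makes $\mathcal{H}_{Q^{\sqcup}}(0)$ a genuine subalgebra and the module-theoretic refinement meaningful. You gloss over this, but it is a one-line addition. Second, you frame the sign coordination as an unresolved ``main technical obstacle,'' whereas the paper simply writes the diagonal embedding as $v \mapsto v + S_{\mathcal{H}}(v)$ without comment. Your anticipated cancellation does hold: the transpose map $R_d(Q) \to R_d(Q^{\mathsf{op}})$ underlying Lemma~\ref{lem:cohaFunctor} is equivariant for the automorphism $g \mapsto (g^{-1})^t$, so by Lemma~\ref{lem:equivCoRestr}(1) the identification $\mathcal{H}_{Q^{\mathsf{op}}} \simeq \mathcal{H}_Q^{\mathsf{op}}$ already incorporates an $x \mapsto -x$, which cancels the one in $S_{\mathcal{H}}$ from \eqref{eq:explicInv}. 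Thus $S_{\mathcal{H}}(v)$, viewed in $\mathcal{H}_Q^{\mathsf{op}}$, is literally $v$, and $(v+S_{\mathcal{H}}(v)) \star \mathbf{1}_0^{\sigma} = 2v$ as you predicted. Once you state this rather than leave it open, your argument is complete and matches the paper's.
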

\begin{proof}
By equation \eqref{eqn:sdEulerDisjoint}, $Q^{\sqcup}$ is $\sigma$-symmetric if and only if $Q$ is symmetric. Assume then that $Q$ is symmetric and consider $\mathcal{H}_Q$ with its twisted supercommutative multiplication. Theorems \ref{thm:freeCoHA} and \ref{thm:disjointCoHM} give algebra isomorphisms
\[
\mathcal{H}_{Q^{\sqcup}} \simeq \mathcal{H}_Q\otimes \mathcal{H}_Q^{\mathsf{op}} \simeq \Sym \big( \big(V_Q^{\mathsf{prim}} \oplus S_{\mathcal{H}}(V_Q^{\mathsf{prim}}) \big) \otimes \mathbb{Q}[u] \big).
\]
Lift the supercommutative twist of $\mathcal{H}_Q$ by taking $\mathcal{M}_{Q^{\sqcup}}$ to be the regular super $\mathcal{H}_Q$-bimodule. Then $\mathcal{M}_{Q^{\sqcup}}$ is a rank one free module with basis $\mathbf{1}_0^{\sigma} \in \mathcal{M}_{Q^{\sqcup},0}$ over the subalgebra of $\mathcal{H}_{Q^{\sqcup}}$ generated by the image of
\[
V_Q \hookrightarrow V_Q \oplus S_{\mathcal{H}}(V_Q) \simeq \big(V_Q^{\mathsf{prim}} \oplus S_{\mathcal{H}}(V_Q^{\mathsf{prim}}) \big) \otimes \mathbb{Q}[u], \qquad v \mapsto v + S_{\mathcal{H}}(v).
\]
Conjecture \ref{conj:freeCOHM} follows. Conjecture \ref{conj:hodgeEqual} holds as $\mathfrak{M}_e^{\sigma, \mathsf{st}}(Q^{\sqcup}) = \varnothing$ if $e \neq 0$.
\end{proof}

Similarly, $\mathcal{M}_{Q^{\sqcup}}$ is a rank one free $\mathcal{H}_Q$-module. This is the PBW factorization \eqref{eq:moduleFactorization} associated to a $\sigma$-compatible stability $\theta$ whose restriction to $\Lambda_Q^+ \subset \Lambda_{Q^{\sqcup}}^+$ is positive. Again, we have $\mathfrak{M}_e^{\sigma, \theta \mhyphen \mathsf{st}}(Q^{\sqcup}) = \varnothing$ if $e \neq 0$.

\subsection{Loop quivers}
\label{sec:quiverWithOneNode}

Let $L_m$ be the quiver with one node and $m \geq 0$ loops. It is symmetric and $\mathcal{H}_{L_m}$ is supercommutative. If $f_1 \in \mathcal{H}_{L_m,d^{\prime}}$ and $f_2 \in \mathcal{H}_{L_m,d^{\prime \prime}}$, then
\[
f_1 \cdot f_2 = \sum_{\pi \in \mathfrak{sh}_{d^{\prime}, d^{\prime \prime}}} \pi \Big( f_1(x^{\prime}_1, \dots, x^{\prime}_{d^{\prime}})  f_2(x^{\prime \prime}_1, \dots,  x^{\prime \prime}_{d^{\prime \prime}})  \prod_{l=1}^{d^{\prime \prime}}  \prod_{k=1}^{d^{\prime}} ( x^{\prime \prime}_l - x^{\prime}_k )^{m-1} \Big).
\]

The (unique) involution of $L_m$ fixes the node and arrows. A duality structure is determined by a sign $s$ and signs $\tau_1, \dots, \tau_m$. Suppose that $\tau_+$ of the latter are positive and $\tau_- = m - \tau_+$ are negative. Note that $L_m$ is $\sigma$-symmetric. When $s=1$ Proposition \ref{prop:cohmDecomp} gives $\mathcal{M}_{L_m} = \mathcal{M}_{L_m}^D \oplus \mathcal{M}_{L_m}^B$, the summands associated to even and odd dimensional self-dual representations, respectively. When $s=-1$ we write $\mathcal{M}_{L_m}^C$ for $\mathcal{M}_{L_m}$. Given $f \in \mathcal{H}_{L_m,d}$ and $g \in \mathcal{M}_{L_m,e}$, we have
\begin{align*}
f \star g = 2^{(\tau_s - \frac{1-s}{2})d}  \sum_{ \pi \in \mathfrak{sh}_{d,e}^{\sigma}} & \pi  \Bigg(   f   (x_1, \dots, x_d) g(z_1, \dots, z_{\lfloor \frac{e}{2} \rfloor}) \times \\ & \prod_{i=1}^d (-x_i)^{N(s,\tau)}  \Big( \prod_{1 \leq i < j \leq d} (- x_i - x_j)  \prod_{i=1}^d \prod_{j=1}^{\lfloor \frac{e}{2} \rfloor} (x_i^2 - z_j^2) \Big)^{m-1} \Bigg)
\end{align*}
where
\[
N(s,\tau) = \begin{cases}  m + \tau_+ -1 & \mbox{in type } B, \\  \tau_- -1 & \mbox{in type } C, \\ \tau_+ & \mbox{in type } D.  \end{cases}
\]
Since the cases $m=0,1$ serve as building blocks for more complicated examples, we will describe them in detail.

\subsubsection{Zero loops}
\label{sec:zeroLoopQuiver}
The algebra $\mathcal{H}_{L_0}$ is free supercommutative on the odd variables $x^i \in \mathcal{H}_{L_0,1}$, $i \geq 0$, of degree $(1, 2i+1)$ \cite[\S 2.5]{kontsevich2011}. Explicitly, if $\mathbf{i}=(i_d, \dots, i_1)$ is strictly decreasing, then $x^{i_1} \cdots  x^{i_d} = s_{\mathbf{i} - \delta_d}$. Here $s_{\lambda}$ is the Schur polynomial of a partition $\lambda$ and $\delta_r = (r-1, \dots, 1, 0)$. Hence $V_{L_0}^{\mathsf{prim}} = \mathbb{Q} \cdot \mathbf{1}_1 = \mathbb{Q}_{(1,1)}$.

Let $\phi : \mathcal{H}_{L_0} \rightarrow \mathcal{H}_{L_0}$ be the unital algebra automorphism determined by $\phi(x^i) = 2 x^i$ and let $(\mathcal{M}_{L_m}^B)_{\phi}$ be the corresponding twisted $\mathcal{H}_{L_m}$-module. Using the explicit form of $\star$ we see that $(\mathcal{M}_{L_m}^B)_{\phi} \simeq \mathcal{M}_{L_m}^C[1]$ as graded $\mathcal{H}_{L_0}$-modules, where $[1]$ denotes $\Lambda_Q^{\sigma,+}$-degree shift by one. We therefore consider only $\mathcal{M}_{L_0}^B$ and $\mathcal{M}_{L_0}^D$.

Given $f \in \mathbb{Q}[x_1, \dots, x_d]$ set $\tilde{f}(x_1, \dots, x_d) = f(x_1^2, \dots, x_d^2)$. Let $\mathbf{i}$ be a strictly decreasing partition of length $d$. Short induction arguments show the following:
\begin{enumerate}
\item Type $B$: If all $i_j$ are odd, then $s_{\mathbf{i} - \delta_d} \star \mathbf{1}^{\sigma}_0 = (-2)^{d} \tilde{s}_{\frac{\mathbf{i} -\mathbf{1}}{2} - \delta_d}$.

\item Type $D$: If all $i_j$ are even, then $s_{\mathbf{i} - \delta_d} \star \mathbf{1}^{\sigma}_0 = 2^d \tilde{s}_{\frac{\mathbf{i}}{2} - \delta_d}$.
\end{enumerate}
Let $\mathcal{H}_{L_0}^{\mathsf{even}}$ and $\mathcal{H}_{L_0}^{\mathsf{odd}}$ be the algebras generated by $\{ x^{2i} \}_{i \geq 0}$ and $\{ x^{2i+1} \}_{i \geq 0}$, respectively. Equivalently, $\mathcal{H}_{L_0}^{\mathsf{even}} = \Sym(\mathbb{Q}_{(1,1)} \otimes \mathbb{Q}[u^2])$ and $\mathcal{H}_{L_0}^{\mathsf{odd}} = \Sym(\mathbb{Q}_{(1,1)} \otimes u\mathbb{Q}[u^2])$. These are the subalgebras of the CoHA introduced above Lemma \ref{lem:superModule}. 

\begin{Prop}
\label{prop:zeroLoopCoHM}
\leavevmode
\begin{enumerate}[leftmargin=0cm,itemindent=.6cm,labelwidth=\itemindent,labelsep=0cm,align=left]

\item $\mathcal{M}_{L_0}^B$ is a free $\mathcal{H}_{L_0}^{\mathsf{odd}}$-module with basis $\mathbf{1}^{\sigma}_1 \in \mathcal{M}_{L_0,1}^B$.

\item $\mathcal{M}_{L_0}^D$ is a free $\mathcal{H}_{L_0}^{\mathsf{even}}$-module with basis $\mathbf{1}^{\sigma}_0 \in \mathcal{M}_{L_0,0}^D$.
\end{enumerate}
In particular, $\Omega_{L_0}^B =\xi$, $\Omega_{L_0}^C = \xi^0$ and $\Omega_{L_0}^D = \xi^0$ and Conjectures \ref{conj:freeCOHM} and \ref{conj:hodgeEqual} hold.
\end{Prop}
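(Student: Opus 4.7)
The plan is to establish formulas (1) and (2) stated immediately before the proposition by a direct computation of the action map on an explicit basis, and then to deduce everything else as formal corollaries. First I would fix the identifications $\mathcal{M}^D_{L_0,2n}\simeq\mathcal{M}^C_{L_0,2n}\simeq\mathcal{M}^B_{L_0,2n+1}\simeq\mathbb{Q}[z_1^2,\ldots,z_n^2]^{\mathfrak{S}_n}$ coming from the ring isomorphisms \eqref{eq:classicalEquivCohom}, so that $\{\tilde{s}_\lambda:\ell(\lambda)\le n\}$ forms a $\mathbb{Q}$-basis of each graded piece. Specialising the shuffle formula for $L_m$ displayed just before the proposition at $m=0$ kills all arrow contributions, leaving only the Weyl-type denominator $D_i$ from Theorem \ref{thm:cohmLoc} and the signed sum over $\mathfrak{sh}^\sigma_{d,e}$. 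Starting from the identity $x^{i_1}\cdots x^{i_d}=s_{\mathbf{i}-\delta_d}$ in $\mathcal{H}_{L_0}$, I would prove (1) and (2) by induction on $d$: the $d=1$ base cases reduce to a one-line sign-change calculation, and the inductive step invokes associativity of the CoHM action together with the Pieri-type multiplication formula for $x^i$ against a Schur polynomial in the $x_j$. The restriction to odd (resp.\ even) $i_j$ in type $B$ (resp.\ $D$) will emerge automatically: the $\mathbb{Z}_2^d$ sign-change factor of $\mathfrak{sh}^\sigma_{d,e}$ annihilates any term with an $x_l$ of the wrong parity once the type-specific prefactor $g_l$ inside $D_l$ is accounted for.

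Granted (1) and (2), freeness is immediate. A $\mathbb{Q}$-basis of $\mathcal{H}_{L_0}^{\mathsf{odd}}$ (resp.\ $\mathcal{H}_{L_0}^{\mathsf{even}}$) is given by the $s_{\mathbf{i}-\delta_d}$ with $\mathbf{i}$ strictly decreasing and of the prescribed parity. The map $\mathbf{i}\mapsto(\mathbf{i}-\mathbf{1})/2$ in the odd case and $\mathbf{i}\mapsto\mathbf{i}/2$ in the even case is a bijection from such sequences onto strictly decreasing sequences of non-negative integers of length $d$, hence onto partitions $\mu$ with $\ell(\mu)\le d$. The action maps $\mathcal{H}_{L_0}^{\mathsf{odd}}\otimes\mathbf{1}^\sigma_1\to\mathcal{M}_{L_0}^B$ and $\mathcal{H}_{L_0}^{\mathsf{even}}\otimes\mathbf{1}^\sigma_0\to\mathcal{M}_{L_0}^D$ therefore send these bases bijectively onto the Schur bases of the target graded components, up to the nonzero scalars $(-2)^d$ and $2^d$; both are thus graded vector space isomorphisms. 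The type $C$ assertion is reduced to type $B$ via the isomorphism $(\mathcal{M}_{L_0}^B)_\phi\simeq\mathcal{M}_{L_0}^C[1]$ of graded $\mathcal{H}_{L_0}$-modules noted earlier, since the rescaling automorphism $\phi$ preserves $\mathcal{H}_{L_0}^{\mathsf{odd}}$ setwise.

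Freeness then delivers the orientifold DT invariants $\Omega_{L_0}^D=\Omega_{L_0}^C=\xi^0$ and $\Omega_{L_0}^B=\xi$ directly. To confirm Conjecture \ref{conj:freeCOHM} I would identify the subalgebra $\mathcal{H}_{L_0}(e)$ of Section \ref{sec:symCOHM} explicitly: the $e$-twisted $\mathbb{Z}_2$-action on $\mathcal{H}_{L_0,d}$ rescales $x^i$ by $(-1)^{e+\mathcal{E}(1)+i}$, and its subalgebra of invariants equals $\mathcal{H}_{L_0}^{\mathsf{even}}$ in type $D$ and $\mathcal{H}_{L_0}^{\mathsf{odd}}$ in types $B$ and $C$, matching the subalgebras that act freely above. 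For Conjecture \ref{conj:hodgeEqual}, the absence of arrows forces a self-dual representation of $L_0$ to be a vector space equipped with a nondegenerate bilinear form; such an object is $\sigma$-stable iff it admits no proper isotropic subrepresentation, which by classical linear algebra occurs only in dimension $0$ (types $C$ and $D$) or $1$ (type $B$). Hence $\mathfrak{M}^{\sigma,\mathsf{st}}_e$ is a point in these exceptional dimensions and empty otherwise, so $PH^{\bullet-\mathcal{E}(e)}(\mathfrak{M}^{\sigma,\mathsf{st}}_e)$ coincides with $W^{\mathsf{prim}}_{L_0,e}$.

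The main obstacle is the induction in the first paragraph: one must verify that the rational integrands produced by the $m=0$ specialisation of the loop-quiver shuffle formula symmetrise to honest Schur polynomials in the $z_i^2$ after application of the full $\mathfrak{sh}^\sigma_{d,e}$-action, with the precise scalar prefactors $(\pm 2)^d$ and the correct index shifts. Everything after that point is routine bookkeeping on Schur polynomials and elementary stability arguments for self-dual representations of $L_0$.
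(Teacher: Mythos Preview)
Your proposal is correct and follows essentially the same route as the paper: the paper also states formulas (1) and (2) as the output of ``short induction arguments'', then invokes exactly the bijections $\mathbf{i}\mapsto(\mathbf{i}-\mathbf{1})/2$ and $\mathbf{i}\mapsto\mathbf{i}/2$ on strictly decreasing sequences to match bases, and verifies Conjecture~\ref{conj:hodgeEqual} by the same elementary classification of $\sigma$-stable self-dual representations of $L_0$. Your treatment is somewhat more explicit than the paper's, particularly in spelling out the identification of $\mathcal{H}_{L_0}(e)$ with $\mathcal{H}_{L_0}^{\mathsf{odd}}$ or $\mathcal{H}_{L_0}^{\mathsf{even}}$ needed for Conjecture~\ref{conj:freeCOHM}, which the paper leaves implicit.
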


\begin{proof}
The map $\mathbf{i} \mapsto \frac{\mathbf{i}-\mathbf{1}}{2}$ is a bijection between the set of strictly decreasing purely odd partitions of length $d$ and the set of strictly decreasing partitions of length $d$. Since the Schur functions $\tilde{s}_{\mathbf{i}^{\prime} - \delta_d}$ parameterized by the former set are a basis of $\mathcal{M}_{L_0,2d+1}^B$, the first statement follows. In type $D$ we use instead the bijection $\mathbf{i} \mapsto \frac{\mathbf{i}}{2}$ between the sets of strictly decreasing purely even and strictly decreasing partitions. That Conjecture \ref{conj:hodgeEqual} holds follows from the observations
\[
\mathfrak{M}_{2e}^{\mathfrak{sp} , \mathsf{st}} = \varnothing, \;\; e \geq 1, \qquad
\mathfrak{M}_e^{\mathfrak{o} , \mathsf{st}} = \begin{cases}
\Spec(\mathbb{C}) & \mbox{ if } e =1, \\
\varnothing & \mbox{ if } e \geq 2,
\end{cases}
\]
with $\mathfrak{sp}$ and $\mathfrak{o}$ indicating type $C$ or types $B$ or $D$, respectively.
\end{proof}

\subsubsection{One loop}
The algebra $\mathcal{H}_{L_1}$ is free supercommutative on the even variables $x^i \in \mathcal{H}_{L_1,1}$, $i \geq 0$, of degree $(1, 2i)$ \cite[\S 2.5]{kontsevich2011}. Explicitly, $x^{i_1} \cdots x^{i_d} = N(\mathbf{i}) m_{\mathbf{i}}$ where $N(\mathbf{i}) = \prod_{ k \geq 0}  \# \{ j \geq 1\; \vert \; i_j =k \} !$ and $m_{\mathbf{i}}$ is the monomial symmetric polynomial. Hence $V_{L_1}^{\mathsf{prim}}=\mathbb{Q} \cdot \mathbf{1}_1 = \mathbb{Q}_{(1,0)}$.

Similar to the case $m=0$, we have module isomorphisms $\mathcal{M}_{L_1}^B \simeq \mathcal{M}_{L_1}^D[1]$ if $\tau=1$ and $(\mathcal{M}_{L_1}^B)_{\phi} \simeq \mathcal{M}_{L_1}^C[1] \simeq (\mathcal{M}_{L_1}^D)_{\phi}[1]$ if $\tau =-1$. So we consider only $\mathcal{M}_{L_1}^B$ if $\tau=-1$ and $\mathcal{M}_{L_1}^{C,D}$ if $\tau=1$. Given a partition $\mathbf{i}$ of length $d$, we find:
\begin{enumerate} 
\item Type $B$, $\tau=-1$: If $\mathbf{i}$ is purely even, then $m_{\mathbf{i}} \star \mathbf{1}^{\sigma}_0 = 2^d \tilde{m}_{\frac{\mathbf{i}}{2}}$.

\item Type C, $\tau=1$: If $\mathbf{i}$ is purely odd, then $m_{\mathbf{i}} \star \mathbf{1}^{\sigma}_0 = (-2)^d  \tilde{m}_{\frac{\mathbf{i} - \mathbf{1}}{2}}$.

\item Type D, $\tau=1$: If $\mathbf{i}$ is purely odd, then $m_{\mathbf{i}} \star \mathbf{1}^{\sigma}_{2e} = (-2)^d \tilde{m}_{(\frac{\mathbf{i}+\mathbf{1}}{2}, \mathbf{0}^e)}$, where $(\mathbf{i}, \mathbf{0}^e)$ is the length $d+e$ partition obtained by appending $e$ zeros to $\mathbf{i}$.
\end{enumerate}
Let $\mathcal{H}^{\mathsf{even}}_{L_1} = \Sym(\mathbb{Q}_{(1,0)} \otimes \mathbb{Q}[u^2])$ and $\mathcal{H}_{L_1}^{\mathsf{odd}} = \Sym(\mathbb{Q}_{(1,0)} \otimes u\mathbb{Q}[u^2])$.

\begin{Prop}
\label{prop:oneLoopCoHM}
\leavevmode
\begin{enumerate}
\item If $\tau =-1$, then $\mathcal{M}^B_{L_1}$ is a free $\mathcal{H}_{L_1}^{\mathsf{even}}$-module with basis $\mathbf{1}^{\sigma}_0 \in \mathcal{M}^B_{L_1,1}$.

\item If $\tau =1$, then $\mathcal{M}_{L_1}^C$ is a free $\mathcal{H}_{L_1}^{\mathsf{odd}}$-module with basis $\mathbf{1}^{\sigma}_0 \in \mathcal{M}^C_{L_1,0}$.

\item If $\tau =1$, then $\mathcal{M}_{L_1}^D$ is a free $\mathcal{H}_{L_1}^{\mathsf{odd}}$-module with basis $\mathbf{1}^{\sigma}_{2e} \in \mathcal{M}^D_{L_1,2e}$, $e \geq 0$.
\end{enumerate}
In particular, if $\tau =-1$, then $\Omega_{L_1}^B =\xi$, $\Omega_{L_1}^C = \xi^0$ and $\Omega_{L_1}^D = \xi^0$ while if $\tau=1$, then
\[
\Omega_{L_1}^B =\frac{q^{-\frac{1}{2}} \xi}{1-q^{-1} \xi^2}, \qquad    \Omega_{L_1}^C = \xi^0, \qquad \Omega_{L_1}^D = \frac{1}{1-q^{-1} \xi^2}.
\]
Conjectures \ref{conj:freeCOHM} and \ref{conj:hodgeEqual} hold for $L_1$.\end{Prop}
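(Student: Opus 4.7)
The proof plan naturally splits into four coordinated steps, and I would carry them out in the following order.

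First, I would derive the three explicit action formulas stated just before the proposition. Applying Theorem \ref{thm:cohmLoc} to $L_1$ with $m=1$, the factor $(\prod_{i<j}(-x_i-x_j)\prod_{i,k}(x_i^2-z_k^2))^{m-1}$ collapses to $1$, so up to the overall prefactor $2^{(\tau_s-\frac{1-s}{2})d}$ the action is just the sign-shuffle symmetrization of $f(x)g(z)\prod_i(-x_i)^{N(s,\tau)}$. The arithmetic lemma
\[
\sum_{\varepsilon\in\{\pm 1\}^d}\varepsilon_1^{i_1}\cdots\varepsilon_d^{i_d}=\begin{cases} 2^d & \text{if all }i_j\text{ are even,} \\ 0 & \text{otherwise,}\end{cases}
\]
together with the shift introduced by $(-x_i)^{N(s,\tau)}$, yields the three stated identities in the purely even (type $B$, $\tau=-1$) or purely odd (types $C,D$, $\tau=1$) cases, and shows that a monomial symmetric polynomial $m_{\mathbf i}$ of the ``wrong'' parity kills the chosen generator.

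Second, I would read off freeness by matching bases. The subalgebra $\mathcal H_{L_1}^{\mathsf{even}}$ is spanned in dimension $d$ by $\{m_{\mathbf i}\}$ for $\mathbf i$ purely even of length $d$ and similarly for $\mathcal H_{L_1}^{\mathsf{odd}}$; the module $\mathcal M_{L_1,e}^\sigma$ has basis $\{\tilde m_{\mathbf j}\}$ for partitions of length $\lfloor e/2\rfloor$. In type $B$, $\tau=-1$ the bijection $\mathbf i\mapsto\mathbf i/2$ from purely even length-$d$ partitions to arbitrary length-$d$ partitions, combined with $m_{\mathbf i}\star\mathbf 1_0^\sigma=2^d\tilde m_{\mathbf i/2}$, shows that the action map $\mathcal H_{L_1,d}^{\mathsf{even}}\otimes\mathbb Q\cdot\mathbf 1_0^\sigma\to\mathcal M_{L_1,2d+1}^B$ sends basis to basis up to a nonzero scalar. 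In type $C$, $\tau=1$ one uses $\mathbf i\mapsto(\mathbf i-\mathbf 1)/2$. In type $D$, $\tau=1$ one stratifies length-$n$ partitions by the number $d=n-e$ of strictly positive parts: partitions with exactly $d$ positive parts biject with purely odd length-$d$ partitions via $\mathbf j\mapsto 2\mathbf j-\mathbf 1$, so $\bigsqcup_{e=0}^{n}\mathcal H_{L_1,n-e}^{\mathsf{odd}}\star\mathbf 1_{2e}^\sigma$ fills out $\mathcal M_{L_1,2n}^D$ bijectively. The $\mathbb Z$-grading matches in each case because $\mathcal E\equiv 0$ in type $B$ with $\tau=-1$, $\mathcal E(d)=d$ in type $C$ with $\tau=1$, and $\mathcal E(d)=-d$ in type $D$ with $\tau=1$, as follows from equation \eqref{eq:sdEulerForm}.

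Third, I would extract the orientifold DT invariants and verify Conjecture \ref{conj:freeCOHM}. The sign-averaging lemma, combined with associativity $h\star(h'\star g)=(h\cdot h')\star g$ and the fact that $\mathcal H_{L_1,+}\cdot\mathcal H_{L_1}^{\mathsf{even/odd}}=\mathcal H_{L_1,+}$, forces $\mathcal H_{L_1,+}\star\mathcal M_{L_1}^\bullet=\mathcal H_{L_1,+}^{\mathsf{even/odd}}\star\mathcal M_{L_1}^\bullet$, so $W_{L_1}^{\mathsf{prim}}$ is spanned by the generators; reading off their bidegrees using the values of $\mathcal E$ yields the claimed $\Omega_{L_1}^{B,C,D}$ in the directly computed cases, and the cases left over are handled via the module isomorphisms $\mathcal M_{L_1}^B\simeq\mathcal M_{L_1}^D[1]$ (for $\tau=1$) and $(\mathcal M_{L_1}^B)_\phi\simeq\mathcal M_{L_1}^C[1]\simeq(\mathcal M_{L_1}^D)_\phi[1]$ (for $\tau=-1$). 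For Conjecture \ref{conj:freeCOHM}, I would note that $\chi_{L_1}\equiv 0$ and compute the twisted $\mathbb Z_2$-action on $V_{L_1}\simeq\mathbb Q_{(1,0)}\otimes\mathbb Q[u]$: the anti-involution $S_{\mathcal H}$ acts by $x\mapsto -x$, so the $(-1)^{\chi(e,d)+\mathcal E(d)}$-twisted action is $u\mapsto\pm u$ with sign determined by $e$, and a direct inspection of the coinvariants identifies $\mathcal H_{L_1}(e)$ with exactly $\mathcal H_{L_1}^{\mathsf{even}}$ in type $B$ ($\tau=-1$) and $\mathcal H_{L_1}^{\mathsf{odd}}$ in types $C,D$ ($\tau=1$), matching the freeness just established.

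Fourth, I would verify Conjecture \ref{conj:hodgeEqual} by classifying $\sigma$-stable self-dual representations $(V,T)$, where $T$ satisfies $\langle Tv,w\rangle=\tau\langle v,Tw\rangle$, and $\sigma$-stability amounts to the absence of non-zero $T$-invariant isotropic subspaces. In types $B,C$ with $\tau=-1$ and type $C$ with $\tau=1$ one checks that every non-zero eigenvector of $T$ is isotropic, so $\sigma$-stable representations are forced to be trivial (only the single point $\{T=0\}$ in dimension $1$ for type $B$, $\tau=-1$), in agreement with the generators identified above. In type $D$ with $\tau=1$, $T$ is symmetric with respect to the orthogonal form, eigenspaces for distinct eigenvalues are non-isotropic and pairwise orthogonal, and the $\sigma$-stable locus consists of $T$'s with $2e$ distinct eigenvalues; invariant theory identifies the GIT quotient $\mathfrak M_{2e}^{D,\mathsf{ss}}$ with $\mathbb A^{2e}$ via elementary symmetric functions, so $\mathfrak M_{2e}^{D,\mathsf{st}}\simeq\mathbb A^{2e}\setminus\Delta$ where $\Delta$ is the discriminant. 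The hardest step is establishing that the pure cohomology $PH^\bullet(\mathbb A^{2e}\setminus\Delta)$ is one-dimensional in degree zero and lands in the correct bidegree after the shift by $\mathcal E(2e)/2=-e$; this I would prove by combining the long exact sequence for $(\mathbb A^{2e},\Delta)$ with Deligne's weight bound (using that $H^k(\mathbb A^{2e}\setminus\Delta)$ has weights strictly greater than $k$ for $k\geq 1$, since $\mathbb A^{2e}$ is affine with purely top-weight compactly supported cohomology). This pure-cohomology calculation for the configuration-space moduli is the main obstacle of the entire argument.
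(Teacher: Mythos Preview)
Your proposal is correct and follows essentially the same route as the paper. The freeness arguments via partition bijections, the moduli identifications, and the reduction of the remaining duality structures via the module isomorphisms $\mathcal M^B_{L_1}\simeq\mathcal M^D_{L_1}[1]$ etc.\ all match the paper's approach (which simply refers back to Proposition~\ref{prop:zeroLoopCoHM}). You also spell out the verification of Conjecture~\ref{conj:freeCOHM} more explicitly than the paper does, by computing the twisted $\mathbb Z_2$-action on $V_{L_1}$ and identifying $\mathcal H_{L_1}(e)$ with $\mathcal H_{L_1}^{\mathsf{even}}$ or $\mathcal H_{L_1}^{\mathsf{odd}}$ directly.

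The one substantive difference is in the Hodge-theoretic step for type $D$, $\tau=1$. The paper quotes the isomorphism $H^\bullet(\Sym^e\mathbb C\setminus\Delta)\simeq H^\bullet(\mathbb C\setminus\{0\})$ (induced by the discriminant map), which gives $PH^k=0$ for $k\geq 1$ once one knows the generator of $H^1$ has weight $2$. Your weight argument is an equally valid alternative and arguably more self-contained: since $\mathbb P^n$ is a smooth compactification of $\Sym^n\mathbb C\setminus\Delta$ and the restriction $H^k(\mathbb P^n)\to H^k(\Sym^n\mathbb C\setminus\Delta)$ factors through $H^k(\mathbb A^n)=0$ for $k\geq 1$, one gets $W_kH^k=0$ immediately. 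One small caution in your moduli classification: for skew-adjoint $T$ it is eigenvectors for \emph{nonzero} eigenvalues that are automatically isotropic, so the nilpotent case needs a separate (easy) check; the conclusion is unaffected.
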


\begin{proof}
Freeness is proved as in Proposition \ref{prop:zeroLoopCoHM}. When $\tau =1$ we have
\[
\mathfrak{M}_{2e}^{\mathfrak{sp} , \mathsf{st}} = \varnothing, \;\; e \geq 1, \qquad \mathfrak{M}_e^{\mathfrak{o} , \mathsf{st}} =\begin{cases}
\Spec(\mathbb{C}) & \mbox{ if } e =1, \\
\varnothing & \mbox{ if } e \geq 2
\end{cases}
\]
while for $\tau=-1$ we have $\mathfrak{M}_{2e}^{\mathfrak{sp} , \mathsf{st}} = \varnothing$ and
\[
\mathfrak{M}_e^{\mathfrak{o}} = \mathsf{Symm}_{e \times e} \git \mathsf{O}_e \simeq  \Sym^e\, \mathbb{C}, \qquad \mathfrak{M}_e^{\mathfrak{o} , \mathsf{st}} \simeq \Sym^e \, \mathbb{C} \backslash \Delta, \qquad e \geq 1.
\]
Here $\mathsf{Symm}_{e \times e}$ is the variety of symmetric $e \times e$ matrices and $\Delta$ is the big diagonal. Conjecture \ref{conj:hodgeEqual} is now immediate except in type $D$ with $\tau=1$ where it reads
\[
PH^0(\mathfrak{M}_e^{\mathfrak{o}, \mathsf{st}}) \simeq \mathbb{Q}(0), \qquad PH^k(\mathfrak{M}_e^{\mathfrak{o}, \mathsf{st}}) = 0, \qquad e, k \geq 1
\]
and follows from the isomorphism $H^{\bullet}(\Sym^e \, \mathbb{C} \backslash \Delta) \simeq H^{\bullet}(\mathbb{C} \backslash \{0\})$.
\end{proof}

\subsubsection{Higher loops} 
\label{sec:higerLoopCalcs}

If $m \geq 2$, then neither $\mathcal{H}_{L_m}$ nor $\mathcal{M}_{L_m}$ is finitely generated. The twisting factor $(-1)^{\chi(e,d) + \mathcal{E}(d)}$ (see Section \ref{sec:symCOHM}) depends on $e$ only through the type $B,C$ or $D$. We therefore write $\widetilde{\mathcal{H}}_Q$ for the subalgebra $\mathcal{H}_Q(e) \subset \mathcal{H}_Q$. Each homogeneous summand $\mathcal{H}_{L_m,(d,k)}$ is isotypical as a twisted $\mathbb{Z}_2$-representation and the equivariant DT invariants are simply
\[
\widetilde{\Omega}_{L_m,(2d,k)}^+ = \begin{cases} \Omega_{L_m,(d,k)} & \mbox{if } \chi(e,d) + \mathcal{E}(d) + \frac{k -\chi(d,d)}{2} \equiv 0 \mod 2, \\
0 & \mbox{if } \chi(e,d) + \mathcal{E}(d) + \frac{k -\chi(d,d)}{2} \equiv 1 \mod 2
\end{cases}
\]
and
\[
\widetilde{\Omega}_{L_m,(2d,k)}^- = \begin{cases} 0 & \mbox{if } \chi(e,d) + \mathcal{E}(d) + \frac{k -\chi(d,d)}{2} \equiv 0 \mod 2, \\
\Omega_{L_m, (d,k)} & \mbox{if } \chi(e,d) + \mathcal{E}(d) + \frac{k -\chi(d,d)}{2} \equiv 1 \mod 2. 
\end{cases}
\]

Since $Q$ is $L_m$ in what follows, we sometimes omit it from the notation. Let $\{v_{d,\beta}\}_{1 \leq \beta \leq \dim V^{\mathsf{prim}}_d}$ be an ordered homogeneous basis of $V^{\mathsf{prim}}_d$. Then $\{v_{d,\beta} \sigma_d^m \}_{d, \beta, m}$ is a basis of $V_Q$ with a natural lexicographic order $\geq$. Let also $\{w_{e, \beta}\}_{1 \leq \beta \leq \dim W^{\mathsf{prim}}_e}$ be an ordered homogeneous basis of $W^{\mathsf{prim}}_e$. For each $e \in \Lambda_Q^{\sigma,+}$ let $\mathsf{Seq}_e^{\sigma}$ be the set of all sequences of the form
\[
(v_{d^1, \beta_1} \sigma_{d^1}^{m_1}, \dots, v_{d^l, \beta_l} \sigma_{d^l}^{m_l}; w_{e^{\infty}, \beta_{\infty}})
\]
which have the following properties:
\begin{enumerate}
\item Each $d^p$ is non-zero and $e^{\infty} \geq 0$.

\item We have $e = \sum_{p=1}^l H(d^p) + e^{\infty}$.

\item We have $v_{d^1, \beta_1} \sigma_{d^1}^{m_1} \geq \cdots \geq v_{d^l, \beta_l} \sigma_{d^l}^{m_l}$.

\item If $(d^p, \beta_p, m_p) = (d^{p+1}, \beta_{p+1}, m_{p+1})$, then $\chi(d^p, d^p) \equiv 0 \mod 2$.

\item If $v_{d^p, \beta_p} \in \mathcal{H}_{(d^p, k)}$, then $\chi(e^{\infty}, d^p) + \mathcal{E}(d^p) \equiv \frac{k +2m_p - \chi(d^p, d^p)}{2} \mod 2$.
\end{enumerate}
A lexicographic order $\geq$ is defined on $\mathsf{Seq}_e^{\sigma}$ by first comparing the CoHM components of the sequence and then comparing the remaining CoHM sequences.

\begin{Thm}
\label{thm:loopFreeCoHM}
Conjecture \ref{conj:freeCOHM} holds for $m$-loop quivers.
\end{Thm}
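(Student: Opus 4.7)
The plan is to show that the CoHA action sends the sequences in $\mathsf{Seq}^{\sigma}_e$ to a $\mathbb{Q}$-basis of $\mathcal{M}_{L_m, e}$, which is enough to prove both parts of Conjecture \ref{conj:freeCOHM}. First I would match $\mathsf{Seq}^{\sigma}_e$ with a natural basis on the domain side $\bigoplus_{H(d^1)+\cdots+H(d^l)+e^{\infty}=e} \mathcal{H}_{L_m}(e)\boxtimes W^{\mathsf{prim}}_{L_m, e^{\infty}}$. Conditions (1)--(4) pick out an ordered PBW-type basis of the supercommutative algebra $\Sym(V_{L_m})=\mathcal{H}_{L_m}$ in which each $v_{d,\beta}\sigma_d^m$ may be repeated only if it has even cohomological parity, while condition (5) is exactly the requirement that such an ordered monomial lie in the $e$-twisted $\mathbb{Z}_2$-invariant subalgebra $\mathcal{H}_{L_m}(e) \subset \mathcal{H}_{L_m}$ defined before Lemma \ref{lem:superModule}. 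Under the identification $\widetilde{V}_{L_m}\simeq V_{L_m}$ coming from $H: \Lambda_{L_m}^+ \to \Lambda_{L_m}^{\sigma,+}$, tensoring with the bases $\{w_{e^{\infty},\beta_{\infty}}\}$ of $W^{\mathsf{prim}}_{e^{\infty}}$ yields a basis indexed precisely by $\mathsf{Seq}^{\sigma}_e$.

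Second, I would prove injectivity of the action map by a leading-monomial argument based on the signed shuffle formula of Theorem \ref{thm:cohmLoc}. For $L_m$ the shuffle kernel $\mathcal{K}^{\sigma}_{d,e}$ collapses to products of $(-x_i-x_j)^{m-1}$, $(x_i^2-z_j^2)^{m-1}$ and the type $B/C/D$ factors $(-x_i)^{N(s,\tau)}$, so iterated actions are explicit polynomials in the $z$-variables. Equipped with the lexicographic ordering $\geq$ on $\mathsf{Seq}^{\sigma}_e$, I would choose a compatible monomial order on $\mathcal{M}_{L_m,e}$ (viewed as an invariant polynomial ring in the $z$'s as in Section \ref{sec:shuffMod}) and show that the leading monomial of $(v_{d^1,\beta_1}\sigma_{d^1}^{m_1})\cdots(v_{d^l,\beta_l}\sigma_{d^l}^{m_l})\star w_{e^{\infty},\beta_{\infty}}$ is concentrated in the ``identity shuffle'' of the symmetrization and is obtained by concatenating the leading monomials of the factors. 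Since distinct sequences produce distinct leading monomials, the images are linearly independent; this mirrors Efimov's upper-triangularity argument for $\mathcal{H}_Q$ but over a module rather than an algebra.

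Third, surjectivity follows from a Hilbert--Poincar\'{e} series comparison. The series $A^{\sigma}_{L_m}$ is given by \eqref{eq:motivicOriDTSeries}, while the series of $\bigoplus_e \mathcal{H}_{L_m}(e)\boxtimes W^{\mathsf{prim}}_{L_m,e}$ factors as $\sum_e A_{L_m}(e)\cdot \Omega^{\sigma}_{L_m,e}\xi^e$ via the $\mathbb{Z}_2$-equivariant primitive invariants; the one-dimensionality of $V^{\mathsf{prim}}_{L_m}$ makes the twisted $\mathbb{Z}_2$-action and the subalgebras $\mathcal{H}_{L_m}(e)$ fully explicit, so this factorization can be compared with the wall-crossing identity \eqref{eq:oriWallCrossing} (at trivial stability, hence vacuously) together with the base cases $m=0,1$ of Propositions \ref{prop:zeroLoopCoHM} and \ref{prop:oneLoopCoHM}. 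The dimension of the span of the $\mathsf{Seq}^{\sigma}_e$ then equals $\dim \mathcal{M}_{L_m, e}$ in each bidegree, forcing bijectivity. The main obstacle will be the leading-term analysis in step two: one must track simultaneously the $\sigma_d$-symmetrization powers (which contribute elementary symmetric functions via $\sigma_d=\sum x_{i,j}$), the primitive classes $v_{d,\beta}$, and the high-degree kernel factors $(x_i^2-z_j^2)^{m-1}$, and verify that no cancellations among different sequences destroy the upper-triangular structure; the conditions (4)--(5) are precisely what prevent such collisions.
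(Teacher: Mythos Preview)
Your plan has two substantive problems.

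First, Step~3 rests on the claim that $V^{\mathsf{prim}}_{L_m}$ is one-dimensional. That is true only for $m\le 1$; for $m\ge 2$ the primitive space is infinite-dimensional (the paper says this explicitly at the start of \S\ref{sec:higerLoopCalcs}), so your series argument collapses. In any case surjectivity needs no series comparison at all: by definition $W^{\mathsf{prim}}_Q$ is a lift of $\mathcal{M}_Q/(\mathcal{H}_{Q,+}\star\mathcal{M}_Q)$, so $\mathcal{H}_Q\star W^{\mathsf{prim}}_Q=\mathcal{M}_Q$, and for loop quivers each homogeneous piece of $\mathcal{H}_{L_m}$ is isotypical under the twisted $\mathbb{Z}_2$-action, so any basis element violating condition~(5) acts as zero on $g\in W^{\mathsf{prim}}_{e^\infty}$ by Proposition~\ref{prop:moduleRelations}. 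Hence the restricted map from $\widetilde{\mathcal{H}}_{L_m}\boxtimes W^{\mathsf{prim}}$ already surjects. The entire content of the theorem is linear independence.

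Second, your Step~2 sketch --- a direct leading-monomial argument on $\mathcal{M}_{L_m,e}$ --- is not how the paper (or Efimov, on which it is modelled) proceeds, and you have not indicated how you would overcome the obvious obstruction: the $\sigma$-shuffle sum permutes variables, so many shuffles contribute the same monomial with different signs, and nothing in your outline prevents cancellation of the ``identity shuffle'' term. The paper's argument is structurally different. For the maximal sequence $t_1$ with underlying $(d^1,\dots,d^l;e^\infty)$ one embeds $\mathcal{M}_e$ into a tensor product $\mathcal{M}_{d^\bullet,e^\infty}=\mathcal{H}_{d^1}\otimes\cdots\otimes\mathcal{H}_{d^l}\otimes\mathcal{M}_{e^\infty}$ and quotients by an explicit $\mathfrak{W}_{d^\bullet,e^\infty}$-stable ideal $L_{d^\bullet,e^\infty}$ built from Efimov's ideals $J_{d^p}$ and the self-dual analogue $L_{e^\infty}$. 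In this quotient one shows (i) all $M_{t_i}$ with $\Dim t_i<\Dim t_1$ die, (ii) the surviving $M_{t_i}$ are computed by a sum over a small residual symmetry group $\widetilde{\mathfrak{S}}_l$, yielding a factored expression $F^\sigma_{d^\bullet,e^\infty}\cdot(\text{symmetrized tensor})$, and (iii) the prefactor is a non-zero-divisor and the tensor is nonzero modulo $L$. Step~(ii) is where condition~(5) enters: it is what makes the $\mathbb{Z}_2^{d^p}$ sign-change sum collapse to $2^l$ rather than to zero. None of this is visible from a monomial-order argument, and in particular your plan does not yet explain the role of condition~(5) in preventing cancellation.
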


\begin{proof}
We have seen that the theorem holds if $m \leq 1$, so assume that $m \geq 2$. The general structure of the proof is similar to \cite[\S 3]{efimov2012}; we focus on the differences.

For $t \in \mathsf{Seq}_e^{\sigma}$ let $M_t \in \mathcal{M}_e$ be the corresponding ordered product. We need to show that for each strictly decreasing sequence $t_1 > \cdots > t_n$ in $\mathsf{Seq}_e^{\sigma}$ and each tuple $(\lambda_1, \dots, \lambda_n) \in (\mathbb{Q}^{\times})^n$ we have $\sum_{i=1}^n \lambda_i M_{t_i} \neq 0$ in $\mathcal{M}_e$.

Denote by $\Dim t_1 = (d^1, \dots, d^l; e^{\infty})$ the underlying sequence of dimension vectors of $t_1$. It will sometimes be convenient to replace $e^{\infty}$ with its reduction $d^{\infty} =\lfloor \frac{e^{\infty}}{2} \rfloor$. Using notation from the proof of Theorem \ref{thm:oriDTfinite}, we have an algebra isomorphism
\[
Z_e \simeq X_{d^1} \otimes \cdots \otimes X_{d^l} \otimes Z_{e^{\infty}} = Z_{d^{\bullet}, e^{\infty}}
\]
which induces an algebra embedding
\[
\mathcal{M}_e \hookrightarrow   \mathcal{H}_{d^1} \otimes \cdots \otimes \mathcal{H}_{d^l} \otimes \mathcal{M}_{e^{\infty}} =  
\mathcal{M}_{d^{\bullet}, e^{\infty}}.
\]
Setting $\mathfrak{W}_{d^{\bullet}, e^{\infty}} = \prod_{p=1}^l \mathfrak{S}_{d^p} \times \mathfrak{W}_{e^{\infty}}$ we have $\mathcal{M}_{d^{\bullet}, e^{\infty}} \simeq Z_{d^{\bullet}, e^{\infty}}^{\mathfrak{W}_{d^{\bullet}, e^{\infty}}}$.

As in \cite{efimov2012}, for each $d \in \Lambda_{L_m}^+$ define a subalgebra of $X_d$ by
\[
X^{\mathsf{prim}}_d = \mathbb{Q}[(x_j - x_k) \mid 1 \leq j < k \leq d]
\]
and let $J_d$ be the minimal $\mathfrak{S}_d$-stable $X^{\mathsf{prim}}_d$-submodule of $X_d$ which contains, for each non-trivial decomposition $d = d^{\prime} + d^{\prime \prime}$, the CoHA kernel
\[
\mathcal{K}_{d^{\prime}, d^{\prime \prime}}(x^{\prime}, x^{\prime \prime}) = \prod_{k=1}^{d^{\prime \prime}} \prod_{j=1}^{d^{\prime}} (x^{\prime \prime}_k - x^{\prime}_j)^{m-1}.
\]
Define also a $\mathfrak{W}_{d^{\bullet}, e^{\infty}}$-stable ideal of $Z_{d^{\bullet}, e^{\infty}}$ by
\[
L_{d^{\bullet}, e^{\infty}} = J_{d^1} Z_{d^{\bullet}, e^{\infty}} + \cdots + J_{d^l} Z_{d^{\bullet}, e^{\infty}} + (L_{e^{\infty}}  \cap Z_{e^{\infty}} ) Z_{d^{\bullet}, e^{\infty}}.
\]
Note that $L_{e^{\infty}} \subset Z_{e^{\infty}}$ except in type $C$ with $\tau_-=0$.

Write $z_i^{(p)} \in X_{d^p}$, $1 \leq p \leq l$ and $z_i^{(\infty)} \in Z_{e^{\infty}}$ for the standard algebra generators considered as elements of $Z_{d^{\bullet}, e^{\infty}}$. Then $(z_i^{(q)})^2 - (z_j^{(p)})^2$, $1 \leq p < q \leq \infty$, is not a zero divisor in $Z_{d^{\bullet}, e^{\infty}} / L_{d^{\bullet}, e^{\infty}}$. This can be verified in the same way as the corresponding statement from \cite{efimov2012}.

Consider the composition
\[
\rho: \mathcal{M}_e \hookrightarrow \mathcal{M}_{d^{\bullet}, e^{\infty}} \twoheadrightarrow \mathcal{M}_{d^{\bullet}, e^{\infty}} \slash L_{d^{\bullet}, e^{\infty}}^{\mathfrak{W}_{d^{\bullet}, e^{\infty}}}.
\]
We claim that $\rho (M_{t_i})=0$ whenever $\Dim t_1 > \Dim t_i$; here we view the reduced self-dual component as an ordinary dimension vector and $\Dim t_1$ is ordered so as to be non-increasing, and similarly for $\Dim t_i$.  Indeed, if $\Dim t_1 > \Dim t_i$, then for each $\pi \in \mathfrak{sh}_{\Dim t_i}$ there exists a component of $\Dim t_1$, say $d^*$, which is partitioned by $\pi$ into at least two components of $\Dim t_i$. The summand $M_{t_i}(\pi)$ of $M_{t_i}$ obtained by summing over all lifts of $\pi$ to $\mathfrak{sh}_{\Dim t_i}^{\sigma}$ lies in $(L_{e^{\infty}} \cap Z_{e^{\infty}}) Z_{d^{\bullet}, e^{\infty}}$ if $d^*$ is the reduced self-dual component of $\Dim t_1$ and lies in $J_{d^*} Z_{d^{\bullet}, e^{\infty}}$ otherwise. Summing over $\mathfrak{sh}_{\Dim t_i}$ establishes the claim.

So assume that $\Dim t_1 =  \cdots = \Dim t_n$, again viewing the reduced self-dual component as an ordinary dimension vector. We claim that $\rho(M_{t_i})=0$ unless the self-dual component of $\Dim t_i$ is $e^{\infty}$. Arguing as in the previous paragraph, the only shuffles $\pi \in\mathfrak{sh}_{\Dim t_i}$ for which the contribution of $M_{t_i}(\pi)$ to $\rho(M_{t_i})$ may be non-zero lie in the subgroup
\[
\widetilde{\mathfrak{S}}_{l+1} = \{ \pi \in \mathfrak{S}_{l+1} \mid d^p = d^{\pi(p)} \mbox{ for } 1 \leq p \leq \infty\}.
\]
However, if $e^{\infty}$ is greater than the self-dual component of $\Dim t_i$, then for each $\pi \in \widetilde{\mathfrak{S}}_{l+1}$ we have $M_{t_i}(\pi) \in (L_{e^{\infty}} \cap Z_{e^{\infty}})Z_{d^{\bullet},e^{\infty}}$. Hence we can assume that $\Dim t_1= \cdots = \Dim t_r$ with equal self-dual components.

Suppose that we are not in type $C$ with $\tau_-=0$. The shuffle description gives
\begin{align*}
\rho (& v_{d^1, \beta_1} \sigma_{d^1}^{m_1}  \cdots v_{d^l, \beta_l} \sigma_{d^l}^{m_l} \star w_{e^{\infty}, \beta_{\infty}}) =  \\
  & 2^l F^{\sigma}_{d^{\bullet},e^{\infty}} \sum_{\pi \in \widetilde{\mathfrak{S}}_l} \mathsf{s}(\pi) v_{d^1, \beta_{\pi(1)}} \sigma_{d^1}^{m_{\pi(1)}} \mathcal{K}_{d^1}^{\sigma} \otimes \cdots \otimes v_{d^k, \beta_{\pi(l)}} \sigma_{d^l}^{m_{\pi(l)}} \mathcal{K}_{d^l}^{\sigma} \otimes w_{e^{\infty}, \beta_{\infty}}  \numberthis \label{eq:bigSum}
\end{align*}
with $\mathsf{s}(\pi)$ the Koszul sign associated to $\pi$ and $\mathcal{K}_{d^p}^{\sigma}= \mathcal{K}_{d^p,0}^{\sigma}$. \textit{A priori} the sum is over $\widetilde{\mathfrak{S}}_{l+1}$, but if $\pi \in \widetilde{\mathfrak{S}}_{l+1}$ with $\pi(p) = \infty$ for some $1 \leq p \leq l$, then the $Z_{e^{\infty}}$ factor of $M_{t_i}(\pi)$ is
\[
\sum_{\tilde{\pi} \in \mathbb{Z}_2^{d^p}} \tilde{\pi} (v_{d^p, \beta_p} \sigma_{d^p}^{m_p} \mathcal{K}^{\sigma}_{d^p}) = v_{d^p, \beta_p} \sigma_{d^p}^{m_p} \star \mathbf{1}_0^{\sigma} \in (L_{e^{\infty}} \cap Z_{e^{\infty}})^{\mathfrak{W}_{e^{\infty}}}.
\]
Similarly, that most sign changes in $\mathfrak{sh}^{\sigma}_{d^{\bullet}, e^{\infty}}$ do not contribute to \eqref{eq:bigSum} can be seen as follows. Write $f_p$ for $v_{d^p, \beta_p} \sigma_{d^p}^{m_p}$ and observe that if $\pm 1 \neq \tilde{\pi} \in \mathbb{Z}_2^{d^p}$, then up to a permutation there is a non-trivial decomposition $d^p = d^{p \prime} + d^{p \prime \prime}$ such that
\[
\tilde{\pi} (f_p\mathcal{K}_{d^p}^{\sigma})=
f_p(x^{\prime}_1, \dots, x^{\prime}_{d^{p \prime}}, -x_1^{\prime}, \dots, -x_{d^{p \prime \prime}}^{\prime \prime}) \mathcal{K}_{d^{p \prime}}^{\sigma} (x^{\prime}) \mathcal{K}_{d^{p \prime \prime}}^{\sigma} (-x^{\prime \prime})  \mathcal{K}_{d^{p \prime}, d^{p \prime \prime}}(x^{\prime}, x^{\prime \prime}),
\]
which is an element of $J_{d^p} Z_{d^{\bullet},e^{\infty}}$. On the other hand, if $\tilde{\pi} = -1$, then the fifth defining condition of $\mathsf{Seq}_e^{\sigma}$ implies that $\tilde{\pi} (f_p \mathcal{K}_{d^p}^{\sigma})=f_p \mathcal{K}_{d^p}^{\sigma}$, leading to the factor $2^l$. The factor $F^{\sigma}_{d^{\bullet}, e^{\infty}}$ is a product of terms of the form $(z_i^{(q)})^2 - (z_j^{(p)})^2$, $1 \leq p < q \leq \infty$, and so is not a zero-divisor. To finish the proof it remains to show that the sum in equation \eqref{eq:bigSum} is not an element of $L_{d^{\bullet}, e^{\infty}}^{\mathfrak{W}_{d^{\bullet}, e^{\infty}}}$. This is the case because $v_{d^p, \beta_p} \sigma_{d^p}^{m_p} \notin J_{d^p} Z_{d^{\bullet}, e^{\infty}}$ and $w_{e^{\infty}, \beta_{\infty}} \notin L_{e^{\infty}} Z_{d^{\bullet}, e^{\infty}}$ by definition and, moreover, the explicit form of $\mathcal{K}_{d^p}^{\sigma}$ implies that $v_{d^p, \beta_p} \sigma_{d^p}^{m_p} \mathcal{K}_{d^p}^{\sigma} \notin J_{d^p} Z_{d^{\bullet}, e^{\infty}}$.

A slight modification is required in type $C$ with $\tau_-=0$ as the kernel $\mathcal{K}^{\sigma}_d$ has a denominator, namely $x_1 \cdots x_d$. Note that $x^{(p)}_i$, $1 \leq p \leq l$, are not zero divisors in $Z_{d^{\bullet}, e^{\infty}} \slash L_{d^{\bullet}, e^{\infty}}$. Define
\[
Z_{d^{\bullet}, e^{\infty}}^{\prime} = Z_{d^{\bullet}, e^{\infty}}[(z_i^{(p)})^{-1} \mid 1 \leq p \leq l]
\]
and put $\mathcal{M}_{d^{\bullet}, e^{\infty}}^{\prime} = Z_{d^{\bullet}, e^{\infty}}^{\prime \mathfrak{W}_{d^{\bullet}, e^{\infty}}}$. Let $\eta : Z_{d^{\bullet}, e^{\infty}} \rightarrow Z_{d^{\bullet}, e^{\infty}}^{\prime}$ be the canonical map and define $L^{\prime}_{d^{\bullet}, e^{\infty}} = \eta(L_{d^{\bullet}, e^{\infty}}) Z^{\prime}_{d^{\bullet}, e^{\infty}}$. We get an inclusion
\[
\overline{\eta} : \mathcal{M}_{d^{\bullet}, e^{\infty}} \slash L_{d^{\bullet}, e^{\infty}}^{\mathfrak{W}_{d^{\bullet}, e^{\infty}}} \hookrightarrow \mathcal{M}^{\prime}_{d^{\bullet}, e^{\infty}} \slash L^{\prime \mathfrak{W}_{d^{\bullet}, e^{\infty}}}_{d^{\bullet}, e^{\infty}}.
\]
The expression \eqref{eq:bigSum} now computes $\overline{\eta} \rho (M_{t_i})$. After collecting the denominators of $\mathcal{K}^{\sigma}_{d^p}$, $1 \leq p \leq l$, in $F_{d^{\bullet}, e^{\infty}}^{\sigma}$ the remainder of the proof can applied without change.
\end{proof}

\begin{Cor}
\label{cor:numericalDTLoops}
The orientifold DT invariants $\Omega_{L_m}^{\sigma}$ are uniquely determined by the orientifold DT series $A_{L_m}^{\sigma}$ and the DT invariants $\Omega_{L_m}$ via the equation
\[
A^{\sigma}_{L_m} = \widetilde{A}_{L_m} \Omega_{L_m}^{\sigma}.
\]
\end{Cor}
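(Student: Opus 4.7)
The plan is to deduce the corollary from Theorem \ref{thm:loopFreeCoHM} together with the parity observations made at the beginning of Section \ref{sec:higerLoopCalcs}. First I would pass Conjecture \ref{conj:freeCOHM}, now established for $L_m$, through the Grothendieck group, which yields the factorization \eqref{eq:cohmNumericalFactorization}:
\[
A^{\sigma}_{L_m} = \sum_{e \in \Lambda_{L_m}^{\sigma,+}} A_{L_m}(e) \cdot \Omega^{\sigma}_{L_m,e} \, \xi^e,
\]
viewed inside $\hat{\mathbb{S}}_{L_m}$. The problem thus reduces to analysing the dependence of the factor $A_{L_m}(e)$ on $e$.

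Next I would invoke the key simplification for loop quivers recorded in Section \ref{sec:higerLoopCalcs}: since $L_m$ has a single node, the twisting sign $(-1)^{\chi(e,d)+\mathcal{E}(d)}$ depends on $e$ only via the Witt type (and hence only via the connected component of $\mathcal{M}_{L_m}$ in which $e$ lies). Consequently the twisted $\mathbb{Z}_2$-representation on $\widetilde{V}^{\mathsf{prim}}_{L_m,d}$, and therefore the subalgebra $\mathcal{H}_{L_m}(e) \subset \mathcal{H}_{L_m}$, is independent of $e$ within a fixed type; call this common subalgebra $\widetilde{\mathcal{H}}_{L_m}$ and its parity-twisted Hilbert--Poincar\'e series $\widetilde{A}_{L_m}$. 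Working within one type-summand at a time, the factor $A_{L_m}(e) = \widetilde{A}_{L_m}$ may be pulled out of the sum, producing
\[
A^{\sigma}_{L_m} = \widetilde{A}_{L_m} \sum_{e} \Omega^{\sigma}_{L_m,e} \, \xi^e = \widetilde{A}_{L_m} \, \Omega^{\sigma}_{L_m}.
\]

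For the uniqueness assertion I would observe that $\widetilde{A}_{L_m}$ has invertible constant term (a single $q$-Pochhammer factor from $e=0$ does not occur in the product since $d \neq 0$ on the $\mathcal{H}$-side; more concretely, one reads off a unit constant term from the explicit product formula preceding \eqref{eq:cohmNumericalFactorization}), hence is invertible in the power series ring, so $\Omega^{\sigma}_{L_m}$ is recovered as $\widetilde{A}_{L_m}^{-1} A^{\sigma}_{L_m}$. Finally, the explicit product formula
\[
\widetilde{A}_{L_m} = \prod_{(d,k),\lambda} (q^{\frac{k}{2}+\delta_{-1,\lambda}} \xi^{H(d)};q^2)_\infty^{-\widetilde{\Omega}^\lambda_{L_m,(d,k)}}
\]
reduces the determination of $\widetilde{A}_{L_m}$ from $\Omega_{L_m}$ to the isotypicality of each homogeneous summand of $V^{\mathsf{prim}}_{L_m}$ as a twisted $\mathbb{Z}_2$-representation, which is precisely the content of the two displayed formulas for $\widetilde{\Omega}^{\pm}_{L_m,(2d,k)}$ in Section \ref{sec:higerLoopCalcs}. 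This completes the plan.

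The only substantive point to verify carefully is the type-by-type constancy of $A_{L_m}(e)$: one must check that for $L_m$ the sign $(-1)^{(1-m)ed + \mathcal{E}(d)}$ really is constant in $e$ once $\nu(e) \in \mathsf{W}(L_m) \simeq \mathbb{Z}_2$ (and the sign $s$) are fixed. This is a direct parity check using $\chi(d,d^{\prime}) = (1-m)dd^{\prime}$ and the closed form \eqref{eq:sdEulerForm} for $\mathcal{E}(d)$, and is the only real calculation required.
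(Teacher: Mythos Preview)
Your proposal is correct and follows essentially the same approach as the paper: the paper's proof simply cites Theorem \ref{thm:loopFreeCoHM} and equation \eqref{eq:cohmNumericalFactorization}, and your argument unpacks exactly why that reference suffices, namely that for $L_m$ the subalgebra $\mathcal{H}_{L_m}(e)$ and hence $A_{L_m}(e)$ is constant within a fixed type (this is precisely the observation recorded at the start of Section \ref{sec:higerLoopCalcs}), allowing the factor $\widetilde{A}_{L_m}$ to be pulled out of the sum.
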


\begin{proof}
This follows immediately from Theorem \ref{thm:loopFreeCoHM} and equation \eqref{eq:cohmNumericalFactorization}.
\end{proof}

The invariants $\Omega_{L_m}$ have been computed by Reineke \cite[Theorem 6.8]{reineke2012}. Since $A_{L_m}^{\sigma}$ is given explicitly by equation \eqref {eq:motivicOriDTSeries}, Corollary \ref{cor:numericalDTLoops} gives a way to compute $\Omega_{L_m}^{\sigma}$ without finding a minimal system of generators of $\mathcal{M}_{L_m}$

\begin{Ex}
Let $m=2$ with duality structure $(s, \tau)= (1, -1)$. The ordinary and $\mathbb{Z}_2$-equivariant DT invariants are
\[
\Omega_{L_2}= -q^{-\frac{1}{2}} t + q^{-2} t^2 -q^{-\frac{9}{2}} t^3 + q^{-8}(1+q^2)t^4 + O(t^5).
\]
and
\[
\widetilde{\Omega}^+_{L_2} = -q^{-\frac{9}{2}} \xi^6 + q^{-8}(1+q^2)\xi^8 + O(\xi^{10}) , \qquad
\widetilde{\Omega}^-_{L_2} = -q^{-\frac{1}{2}} \xi^2 + q^{-2} \xi^4 + O(\xi^{10}).
\]
Using Corollary \ref{cor:numericalDTLoops} we compute
\begin{multline*}
\Omega_{L_2}^B= \xi - q^{-\frac{3}{2}}\xi^3 + q^{-5}(1+ q^2)\xi^5 - q^{-\frac{21}{2}}(1 + q^2 + 2q^4+q^6)\xi^7 + \\
q^{-18}(1+ q^2 +2q^4 + 3 q^6 + 4 q^8 + 3 q^{10} + q^{12}) \xi^9 + O(\xi^{11}).
\end{multline*}
Up to $\Lambda_Q^{\sigma,+}$-degree five, minimal generators of $\mathcal{M}_{L_2}^B$ are $\mathbf{1}^{\sigma}_1 ,\mathbf{1}^{\sigma}_3$, $\mathbf{1}^{\sigma}_5$ and $z_1^2 + z_2^2$.
\end{Ex}

\begin{Ex}
Let $m=3$ with duality structure $(s, \tau)=(1,1)$. Then
\begin{multline*}
\Omega_{L_3} = q^{-1} t + q^{-4} t^2 + q^{-9} (1+ q^2 + q^3) t^3 + \\
q^{-16}(1 + q^2 + q^3 +2 q^4 + q^5 + 2 q^6 + q^7 + q^8) t^4 + O(t^5)
\end{multline*}
from which we compute
\begin{multline*}
\Omega_{L_3}^D= \xi^0 + q^{-4}(1+ q^2)\xi^2 + q^{-12}(1+q^2 +2q^4 + 2q^6 + q^8)\xi^4 + \\
q^{-24}(1+q^2 + 2q^4 + 3q^6 + 4q^8 + 5q^{10} + 6 q^{12} + 4q^{16} +q^{18})\xi^6 + \\ q^{-40}(1+q^2 + 2q^4 + 3q^6 + 5q^8 + 6q^{10} + 9q^{12} + 11 q^{14} +  14q^{16} + 16q^{18} \\ + 19q^{20} + 20q^{22} + 21q^{24} + 19q^{26} + 14q^{28} + 6q^{30} + q^{32})\xi^8 + O(\xi^{10}).
\end{multline*}
\end{Ex}

Finally, we give some results for arbitrary $m$ and small dimension vector. Define the quantum integers by $[0]_q=0$ and $
[n]_q = \frac{q^n-1}{q-1}$ if $n \in \mathbb{Z}_{\geq 1}$. Then
\[
\Omega_{L_m}= (-q^{\frac{1}{2}})^{1-m}t + q^{2(1-m)} \left[\lfloor \frac{m}{2} \rfloor \right]_{q^2} t^2 + O(t^3).
\]
Restrict attention to duality structures with $\tau=-1$. Using Corollary \ref{cor:numericalDTLoops} we find
\begin{multline*}
\Omega^C_{L_m}= \xi^0 +  (-q^{\frac{1}{2}})^{3(1-m)} \left[ \lfloor \frac{m}{2} \rfloor \right]_{q^2} \xi^2 + \\
q^{5(1-m)} \left\{ \begin{matrix} 
	\left[ \frac{m}{4} \right]_{q^4} ( \left[ \frac{3m-2}{2} \right]_{q^2} + q^m \left[ \frac{2m-2}{2} \right]_{q^2} + q^{2m} \left[ \frac{m}{2} \right]_{q^2} ) \\
      \left[ \frac{m-1}{4} \right]_{q^4} ( \left[ \frac{3m-1}{2} \right]_{q^2} + q^{m-1} \left[ m \right]_{q^2} + q^{2m-2} \left[ \frac{m-1}{2} \right]_{q^2} ) \\
	\left[ \frac{m}{2} \right]_{q^2} ( \left[ \frac{3m-2}{4} \right]_{q^4} + q^m \left[ \frac{2m}{4} \right]_{q^4} + q^{2m} \left[ \frac{m-2}{4} \right]_{q^4} )  \\
	\left[ \frac{m-1}{2} \right]_{q^2} ( \left[ \frac{3m-1}{4} \right]_{q^4} + q^{m-1} \left[ \frac{2m-2}{4} \right]_{q^4} + q^{2m-2} \left[ \frac{m+1}{4} \right]_{q^4} )
\end{matrix} \right\} \xi^4 + O(\xi^6).
\end{multline*}
The rows of the braces correspond to the congruence class of $m$ modulo four, with $m \equiv 0 \mod 4$ in the top row increasing to $m \equiv 3 \mod 4$ in the bottom row. In the same way, we find
\[
\Omega^D_{L_m}= \xi^0 + q^{3(1-m)} \left[ 2 \lfloor \frac{m}{4} \rfloor +1 \right]_{q^2} \left[\lfloor \frac{m+2}{4} \rfloor \right]_{q^4} \xi^4 + O(\xi^6).
\]
Note that $\mathfrak{M}_{L_m,2}^{D, \mathsf{st}} = \varnothing$, consistent with the vanishing $\Omega_{L_m,2}^D=0$.

\subsection{Symmetric \texorpdfstring{$\widetilde{A}_1$}{} quiver}
\label{sec:symmA1}

Let $Q$ be the following affine Dynkin quiver:
\[
\begin{tikzpicture}[thick,scale=.8,decoration={markings,mark=at position 0.53 with {\arrow{>}}}]
\draw[postaction={decorate}] (0,0) [out=30,in=150]  to  (2,0);
\draw[postaction={decorate}] (2,0) [out=210,in=330]  to  (0,0);

\draw (0,-.4) node {\small $1$};
\draw (2,-.4) node {\small $2$};

\draw (1,0.6) node {\small $\alpha$};
\draw (1,-0.65) node {\small $\beta$};

\fill (0,0) circle (2pt);
\fill (2,0) circle (2pt);
\end{tikzpicture}
\]
A representation of dimension vector $(d_1, d_2)$ consists of a pair of complex matrices $(A,B) \in \mathsf{Mat}_{d_2 \times d_1} \times \mathsf{Mat}_{d_1 \times d_2}$. For $\theta = (1,-1)$ the semistable representations are
\begin{enumerate}[label=(\roman*)]
\item the direct sums of simple representations $S_1^{\oplus k}$, $k \geq 1$, having slope $1$,

\item the direct sums of simple representations $S_2^{\oplus k}$, $k \geq 1$, having slope $-1$, and

\item the pairs $(A,B) \in \mathsf{GL}_d(\mathbb{C}) \times \mathsf{Mat}_{d\times d}$, $d \geq 1$, having slope $0$.
\end{enumerate}
The algebra $\mathcal{H}_Q$ is supercommutative. For $f_1 \in \mathcal{H}_{Q,d^{\prime}}$ and $f_2 \in \mathcal{H}_{Q,d^{\prime \prime}}$ we have
\begin{align*}
f_1 \cdot f_2 = \sum_{\pi \in \mathfrak{sh}_{d^{\prime},d^{\prime \prime}}} \pi \Big( f_1(x^{\prime}_1, \dots,  x^{\prime}_{d^{\prime}_1}, y^{\prime}_1 &, \dots, y^{\prime}_{d^{\prime}_2})  f_2(x^{\prime \prime}_1, \dots, x^{\prime \prime}_{d_1^{\prime \prime}}, y^{\prime \prime}_1, \dots, y^{\prime \prime}_{d_2^{\prime \prime}}) \times  \\ &\frac{\prod_{j=1}^{d_2^{\prime \prime}} \prod_{i=1}^{d^{\prime}_1} (y_j^{\prime \prime} - x^{\prime}_i) \prod_{j=1}^{d_1^{\prime \prime}} \prod_{i=1}^{d^{\prime}_2} (x_j^{\prime \prime} - y^{\prime}_i)}{\prod_{i=1}^{d_1^{\prime \prime}} \prod_{j=1}^{d^{\prime}_1} (x_i^{\prime \prime} - x^{\prime}_j) \prod_{i=1}^{d_2^{\prime \prime}} \prod_{j=1}^{d^{\prime}_2} (y_i^{\prime \prime} - y^{\prime}_j)}  \Big).
\end{align*}
The semistable algebras $\mathcal{H}_{Q, \mu=1}^{\theta \mhyphen \mathsf{ss}}$ and $\mathcal{H}_{Q, \mu=-1}^{\theta \mhyphen \mathsf{ss}}$ are both isomorphic to $\mathcal{H}_{L_0}$ and embed canonically as subalgebras of $\mathcal{H}_Q$. On the other hand, the inclusion
\[
\mathsf{Mat}_{d \times d} \hookrightarrow \mathsf{GL}_d(\mathbb{C}) \times \mathsf{Mat}_{d \times d}, \qquad B \mapsto (\mathbb{I}_{d \times d}, B)
\]
descends to an isomorphism from the stack of $d$-dimensional representations of the one loop quiver to the stack of $(d,d)$-dimensional semistable representations of $Q$. This induces an algebra isomorphism $\mathcal{H}_{Q, \mu=0}^{\theta \mhyphen \mathsf{ss}} \simeq \mathcal{H}_{L_1}$. The map
\[
\Psi_0: \mathcal{H}_{L_1} \rightarrow \mathcal{H}_Q, \qquad x^i \mapsto x^i y^0
\]
extends to an algebra embedding. In \cite[Proposition 2.4]{franzen2015} Franzen proved that the slope ordered CoHA multiplication
\[
\mathcal{H}_{Q, \mu=1}^{\theta \mhyphen \mathsf{ss}} \boxtimes \mathcal{H}_{Q, \mu=0}^{\theta \mhyphen \mathsf{ss}} \boxtimes \mathcal{H}_{Q, \mu=-1}^{\theta \mhyphen \mathsf{ss}}  \rightarrow \mathcal{H}_Q, \qquad a \otimes b \otimes c \mapsto a \Psi_0(b) c
\]
is an algebra isomorphism. In particular,
\[
V_Q^{\mathsf{prim}} = \mathbb{Q} \cdot \mathbf{1}_{(1,0)} \oplus \mathbb{Q} \cdot \mathbf{1}_{(1,1)} \oplus \mathbb{Q} \cdot \mathbf{1}_{(0,1)}.
\]

Let $\sigma$ be the involution of $Q$ that swaps the nodes and fixes the arrows. Then
\[
\mathcal{E}(d_1, d_2) = d_1 d_2 - \frac{d_1(d_1 + s \tau_{\alpha})}{2} - \frac{d_2 (d_2 + s \tau_{\beta})}{2}.
\]
It follows that $Q$ has two inequivalent $\sigma$-symmetric duality structures, say $s = 1$ and $\tau = \pm 1$. The structure maps $(A, B)$ of a self-dual representation are symmetric matrices if $\tau = 1$ and are skew-symmetric matrices if $\tau = -1$. For $f \in \mathcal{H}_{Q,(d_1,d_2)}$ and $g \in \mathcal{M}_{Q,(e,e)}$ we have
\begin{align*}
f  \star g & =  \sum_{\pi \in \mathfrak{sh}_{d,e}^{\sigma}}  \pi \Big( f(x_1, \dots, x_{d_1},  y_1, \dots, y_{d_2})  g(z_1, \dots, z_e) \times  \\ & \frac{ \displaystyle \prod_{1 \leq j \leq_{\tau} l \leq d_1} (-x_j - x_l) \prod_{l=1}^{d_1} \prod_{k=1}^e (-z_k - x_l) \prod_{1 \leq j \leq_{\tau} m \leq d_2} (-y_j - y_m) \prod_{m=1}^{d_2} \prod_{k=1}^e (z_k - y_m)}{\displaystyle \prod_{l=1}^{d_1} \prod_{k=1}^e (z_k - x_l) \prod_{k=1}^e \prod_{m=1}^{d_2} (-y_m -z_k) \prod_{l=1}^{d_1} \prod_{m=1}^{d_2} (-y_m - x_l)} \Big).
\end{align*}

The subvarieties of semistable self-dual representations are
\[
\tau = 1: \;\;\; R_{(e,e)}^{\sigma, \theta \mhyphen \mathsf{ss}} = (\mathsf{Symm}_{e \times e}  \cap \mathsf{GL}_e(\mathbb{C}) ) \times \mathsf{Symm}_{e \times e}
\]
and
\[
\tau = -1: \;\;\; R_{(e,e)}^{\sigma, \theta \mhyphen \mathsf{ss}} = (\mathsf{Skew}_{e \times e} \cap \mathsf{GL}_e(\mathbb{C}) ) \times \mathsf{Skew}_{e \times e},
\]
where $e$ is even if $\tau=-1$. The group $\mathsf{G}_{(e,e)}^{\sigma} = \mathsf{GL}_e(\mathbb{C})$ acts on $R_{(e,e)}^{\sigma, \theta \mhyphen \mathsf{ss}}$ in the canonical way. We see that the stack of semistable self-dual representations of $Q$ is isomorphic to the stack of self-dual representations of $L_1$ with duality structure $(s_{L_1}= \tau, \tau_{L_1} = +1)$. The induced map $\mathcal{M}_Q^{\theta \mhyphen \mathsf{ss}} \xrightarrow[]{\sim} \mathcal{M}_{L_1}$ is an isomorphism over $\mathcal{H}_{Q, \mu =0}^{\theta \mhyphen \mathsf{ss}} \xrightarrow[]{\sim} \mathcal{H}_{L_1}$.

\begin{Lem}
\label{lem:restrKernel}
The kernel of the restriction $\mathcal{M}_Q \rightarrow \mathcal{M}_Q^{\theta \mhyphen \mathsf{ss}}$ in dimension vector $(e,e) \in \Lambda_Q^{\sigma, +}$ is the image of the CoHA action map
\[
\bigoplus_{d=1}^e \mathcal{H}_{Q,(d,0)} \boxtimes \mathcal{M}_{Q,(e-d,e-d)} \xrightarrow[]{\star} \mathcal{M}_{Q,(e,e)}.
\]
\end{Lem}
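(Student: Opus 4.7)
The plan is to prove the two inclusions separately, using the $\sigma$-Harder-Narasimhan stratification of $R^{\sigma}_{(e,e)}$ as the principal geometric input.

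The inclusion from right to left is the more transparent one. Given $f \in \mathcal{H}_{Q,(d,0)}$ and $g \in \mathcal{M}_{Q,(e-d,e-d)}$ with $d \geq 1$, the class $f \star g$ is, by the definition in Section \ref{sec:cohmDef}, obtained by pushing forward a class on $R^{\sigma}_{(d,0),(e-d,e-d)}$ along the $\mathsf{G}^{\sigma}$-equivariant inclusion $i : R^{\sigma}_{(d,0),(e-d,e-d)} \hookrightarrow R^{\sigma}_{(e,e)}$. The image of $i$ consists exactly of self-dual representations admitting an isotropic subrepresentation of dimension vector $(d,0)$. Any such subrepresentation is isomorphic to $S_1^{\oplus d}$ and has slope $\theta \cdot (d,0) / d = 1 > 0$, hence destabilizes its ambient self-dual representation. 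The image of $i$ is therefore disjoint from $R^{\sigma, \theta \mhyphen \mathsf{ss}}_{(e,e)}$, so the restriction of $f \star g$ to the semistable locus vanishes.

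For the reverse inclusion, note that the only semistable representations of positive slope are direct sums of $S_1$, all of slope $+1$. Consequently the $\sigma$-HN filtration of any self-dual representation of dimension $(e,e)$ has the form $0 \subset U \subset U^{\perp} \subset M$ with $U \cong S_1^{\oplus d}$ for some $0 \leq d \leq e$ and $M \git U$ either zero or $\sigma$-semistable. Letting $Z^d \subset R^{\sigma}_{(e,e)}$ be the locally closed $\sigma$-HN stratum indexed by $d$, we obtain $Z^0 = R^{\sigma, \theta \mhyphen \mathsf{ss}}_{(e,e)}$ and a stratification $R^{\sigma}_{(e,e)} = \bigsqcup_{d=0}^e Z^d$ for which $\bigsqcup_{d' \geq d} Z^{d'}$ is closed. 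By the uniqueness of the $\sigma$-HN filtration, the forgetful map
\[
\bigl[R^{\sigma, \mathrm{HN}}_{(d,0),(e-d,e-d)} \slash \mathsf{G}^{\sigma}_{(d,0),(e-d,e-d)} \bigr] \xrightarrow{\sim} \bigl[Z^d \slash \mathsf{G}^{\sigma}_{(e,e)} \bigr]
\]
is an isomorphism of stacks, where $R^{\sigma, \mathrm{HN}}_{(d,0),(e-d,e-d)}$ is the locus in $R^{\sigma}_{(d,0),(e-d,e-d)}$ whose quotient part lies in $R^{\sigma, \theta \mhyphen \mathsf{ss}}_{(e-d,e-d)}$; the flag part is automatic since $R_{(d,0)}$ is a point. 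An inductive application of the equivariant Thom-Gysin long exact sequence to this stratification shows that the kernel of restriction to the semistable locus is generated by the Gysin pushforwards from $Z^d$, $d \geq 1$. A direct comparison with the construction of $\star$ in Section \ref{sec:cohmDef} identifies each such Gysin pushforward with the CoHA action map $\mathcal{H}_{Q,(d,0)} \boxtimes \mathcal{M}_{Q,(e-d,e-d)} \xrightarrow{\star} \mathcal{M}_{Q,(e,e)}$ precomposed with the restriction $\mathcal{M}_{Q,(e-d,e-d)} \to \mathcal{M}^{\theta \mhyphen \mathsf{ss}}_{Q,(e-d,e-d)}$. Since the latter restriction is surjective (by induction on $e$, or by Kirwan-type surjectivity applied in the self-dual setting), every element of the kernel is expressible in the required form.

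The main technical obstacle is the precise matching, in the Thom-Gysin sequence, of the Gysin pushforward from $Z^d$ with the CoHA action. This amounts to reconciling the symplectic reduction used in the definition of $\star$ with the parabolic reduction implicit in the HN stratification, and identifying the equivariant Euler class of the normal bundle to $Z^d$ with the kernel factor $\mathcal{K}^{\sigma}_{(d,0),(e-d,e-d)}$ appearing in the localization formula of Theorem \ref{thm:cohmLoc}. Once these geometric data are matched, the identification is a direct, if bookkeeping-heavy, computation using the explicit form of $\mathcal{G}^{\sigma}_{d,e}$ and its Lie algebra decomposition.
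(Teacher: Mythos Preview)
Your approach is essentially the same as the paper's: both identify the $\sigma$-Harder--Narasimhan stratification of $R^{\sigma}_{(e,e)}$ by $\dim_{\mathbb{C}}\ker A$ and use it to decompose the kernel of restriction. The paper, however, does not run the Thom--Gysin argument directly in cohomology. Instead it invokes Franzen's result \cite[Lemma~2.1]{franzen2015}, which is proved for the Chow-theoretic Hall algebra, and then observes that in this particular example the Chow and cohomological Hall modules coincide. The point of this detour is precisely the step you flag as needing surjectivity of $\mathcal{M}_{Q,(e-d,e-d)} \to \mathcal{M}^{\theta\mhyphen\mathsf{ss}}_{Q,(e-d,e-d)}$: in equivariant Chow theory the localization sequence $A^{\mathsf{G}}_*(Z) \to A^{\mathsf{G}}_*(X) \to A^{\mathsf{G}}_*(U) \to 0$ is right exact, so surjectivity of restriction to any open is automatic, and the inductive identification of the kernel with the image of the Hall action goes through without further input. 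In cohomology the Thom--Gysin sequence is long exact, so surjectivity is an additional claim that your appeal to ``induction or Kirwan-type surjectivity'' does not actually establish (the induction you describe assumes what is to be proved, and Kirwan surjectivity in this affine, possibly disconnected-group setting is not a theorem one can cite off the shelf). This is not a fatal gap --- surjectivity does hold here, and the paper later verifies it by direct calculation in the proof of Theorem~\ref{thm:symmA1CoHM} --- but the Chow route is what lets one bypass it cleanly.
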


\begin{proof}
Let $M$ be a self-dual representation determined by matrices $(A,B)$. Then $0 \subset \ker A \subset M$ is the $\sigma$-HN filtration of $M$. The $\sigma$-HN strata of $R_e^{\sigma}$ are therefore the locally closed subsets consisting of self-dual representations with fixed $\dim_{\mathbb{C}} \ker A$ and the closure of a stratum is a union of strata. Using this observation, \cite[Lemma 2.1]{franzen2015} can be applied with only straightforward modifications to prove the lemma. In slightly more detail, the methods of \cite{franzen2015} can be used to prove the lemma for the Chow theoretic Hall module, defined similarly to $\mathcal{M}_Q$ but using equivariant Chow groups instead of equivariant cohomology. In the case at hand the (semistable) cohomological and Chow theoretic Hall modules are isomorphic, as can be verified directly. Hence the lemma also holds in the cohomological case.
\end{proof}

Let $\widetilde{\mathcal{H}}_Q \subset \mathcal{H}_Q$ be the subalgebra generated by
\[
\left(\mathbb{Q} \cdot  \mathbf{1}_{(1,0)} \otimes \mathbb{Q}[u] \right) \oplus \left( \mathbb{Q} \cdot  \mathbf{1}_{(1,1)} \otimes u\mathbb{Q}[u^2] \right) \subset V_Q.
\]
Then $\widetilde{\mathcal{H}}_Q \simeq \mathcal{H}_{Q, \mu=1}^{\theta \mhyphen \mathsf{ss}} \otimes \mathcal{H}_{Q, \mu=0}^{\theta \mhyphen \mathsf{ss}, \mathsf{odd}}$ as algebras, the second factor being the image of $\mathcal{H}_{L_1}^{\mathsf{odd}}$. The map sending $\mathbf{1}_0^{\sigma} \in \mathcal{M}_{L_1,0}$ to $\mathbf{1}_{(0,0)}^{\sigma} \in \mathcal{M}_{Q,(0,0)}$ extends to a $\mathcal{H}_{Q,\mu=0}^{\theta \mhyphen \mathsf{ss}, \mathsf{odd}}$-module embedding $\mathcal{M}_Q^{\theta \mhyphen \mathsf{ss}} \hookrightarrow \mathcal{M}_Q$.

\begin{Thm}
\label{thm:symmA1CoHM}
The module $\mathcal{M}_Q^{\theta \mhyphen \mathsf{ss}}$ is free over $\mathcal{H}_{Q, \mu=0}^{\theta \mhyphen \mathsf{ss}, \mathsf{odd}}$ with basis
\begin{enumerate}
\item $\mathbf{1}_0^{\sigma} \in \mathcal{M}^{\theta \mhyphen \mathsf{ss}}_{Q,(0,0)}$ if $\tau = -1$, and
\item $\mathbf{1}_{(e,e)}^{\sigma} \in \mathcal{M}^{\theta \mhyphen \mathsf{ss}}_{Q,(e,e)}$, $e \geq 0$, if $\tau =1$. 
\end{enumerate}
Moreover, the action map $\mathcal{H}_{Q,\mu=1}^{\theta \mhyphen \mathsf{ss}} \boxtimes \mathcal{M}_Q^{\theta \mhyphen \mathsf{ss}} \xrightarrow[]{\star} \mathcal{M}_Q$ is an isomorphism of $\widetilde{\mathcal{H}}_Q$-modules. In particular, $\mathcal{M}_Q$ is a free $\widetilde{\mathcal{H}}_Q$-module.
\end{Thm}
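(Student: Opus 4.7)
The plan is to establish the theorem in two stages matching the two assertions. The first stage, freeness of $\mathcal{M}_Q^{\theta\mhyphen\mathsf{ss}}$ over $\mathcal{H}_{Q,\mu=0}^{\theta\mhyphen\mathsf{ss},\mathsf{odd}}$, is a direct consequence of Proposition~\ref{prop:oneLoopCoHM} via the stack isomorphism recorded in \S\ref{sec:symmA1}. That isomorphism identifies $\mathcal{M}_Q^{\theta\mhyphen\mathsf{ss}}$ with $\mathcal{M}_{L_1}$ as modules over $\mathcal{H}_{Q,\mu=0}^{\theta\mhyphen\mathsf{ss}}\simeq\mathcal{H}_{L_1}$, where $L_1$ carries the duality structure $(s_{L_1},\tau_{L_1})=(\tau,1)$. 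When $\tau=-1$ this places us in the type $C$ case (2) of Proposition~\ref{prop:oneLoopCoHM} and produces the rank-one basis $\mathbf{1}^{\sigma}_{(0,0)}$. When $\tau=1$, case (3) gives $\mathcal{M}_{L_1}^D$ free with basis $\{\mathbf{1}^{\sigma}_{2e}\}_{e\geq 0}$, and combining this with the isomorphism $\mathcal{M}_{L_1}^B\simeq\mathcal{M}_{L_1}^D[1]$ recorded in \S\ref{sec:quiverWithOneNode} interpolates across the two parities and delivers the basis $\{\mathbf{1}^{\sigma}_{(e,e)}\}_{e\geq 0}$.

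For the second stage I would first observe that $\widetilde{\mathcal{H}}_Q\simeq \mathcal{H}_{Q,\mu=1}^{\theta\mhyphen\mathsf{ss}}\otimes \mathcal{H}_{Q,\mu=0}^{\theta\mhyphen\mathsf{ss},\mathsf{odd}}$ as $\Lambda_Q^+\times\mathbb{Z}$-graded algebras, because these two subalgebras sit in complementary $\Lambda_Q^+$-gradings and $\mathcal{H}_Q$ is supercommutative. By the first stage this makes the source of the action map a free $\widetilde{\mathcal{H}}_Q$-module on the same basis. Both sides then decompose along $(e,e)\in \Lambda_Q^{\sigma,+}$, so it suffices to prove the action map is bijective in each such degree; I would argue this by induction on $e$, with the trivial base $e=0$.

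Surjectivity in degree $(e,e)$ follows from Lemma~\ref{lem:restrKernel}: given $m\in \mathcal{M}_{Q,(e,e)}$, the restriction of $m$ to $\mathcal{M}^{\theta\mhyphen\mathsf{ss}}_{Q,(e,e)}$ lifts through the embedding $\mathcal{M}_Q^{\theta\mhyphen\mathsf{ss}}\hookrightarrow \mathcal{M}_Q$, and the difference lies in $\sum_{d\geq 1}\mathcal{H}_{Q,(d,0)}\star \mathcal{M}_{Q,(e-d,e-d)}$; the inductive hypothesis rewrites each factor $\mathcal{M}_{Q,(e-d,e-d)}$ in the image of the action map, and multiplying by elements of $\mathcal{H}_{Q,(d,0)}=\mathcal{H}_{Q,\mu=1}^{\theta\mhyphen\mathsf{ss},(d,0)}$ preserves membership in the image. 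For injectivity I would compare graded Hilbert--Poincar\'e series: the wall-crossing formula \eqref{eq:oriWallCrossing} collapses for this quiver to $A^{\sigma}_Q = A^{\theta\mhyphen\mathsf{ss}}_{Q,\mu=1}\star A^{\sigma,\theta\mhyphen\mathsf{ss}}_Q$, since $\mu=1$ is the only positive slope, and this is exactly the equality of graded characters of the two sides of the action map. Combined with surjectivity and finite-dimensionality of each homogeneous component, this forces bijectivity in every degree.

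The main obstacle I anticipate is the first stage in the $\tau=1$ case: one must match the ambient generators $\mathbf{1}^{\sigma}_{(e,e)}$ correctly across the $B/D$ parity decomposition of $\mathcal{M}_{L_1}$ and verify that the $\mathcal{H}_{Q,\mu=0}^{\theta\mhyphen\mathsf{ss},\mathsf{odd}}$-action intertwines with the shift isomorphism $\mathcal{M}_{L_1}^B\simeq \mathcal{M}_{L_1}^D[1]$. Once this bookkeeping is settled, the inductive argument of the second stage is essentially formal given Lemma~\ref{lem:restrKernel} and the wall-crossing identity.
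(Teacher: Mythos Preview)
Your proposal is correct and follows essentially the same approach as the paper. The paper's proof invokes Proposition~\ref{prop:oneLoopCoHM} via the $\mathcal{H}_{L_1}$-module isomorphism $\mathcal{M}_Q^{\theta\mhyphen\mathsf{ss}}\simeq\mathcal{M}_{L_1}$ for the first assertion, then combines Lemma~\ref{lem:restrKernel} with surjectivity of the restriction $\mathcal{M}_Q\to\mathcal{M}_Q^{\theta\mhyphen\mathsf{ss}}$ to get surjectivity of the action map, and finishes with the wall-crossing identity $A^{\sigma}_Q = A_{Q,\mu=1}^{\theta\mhyphen\mathsf{ss}}\star A^{\sigma,\theta\mhyphen\mathsf{ss}}_Q$ to match Hilbert--Poincar\'e series---exactly your strategy, with your inductive phrasing of the surjectivity step being a slightly more explicit version of the paper's one-line deduction.
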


\begin{proof}
The first statement follows from Proposition \ref{prop:oneLoopCoHM} and the $\mathcal{H}_{L_1}$-module isomorphism $\mathcal{M}_Q^{\theta \mhyphen \mathsf{ss}} \simeq \mathcal{M}_{L_1}$. By direct calculation the restriction $\mathcal{M}_Q \rightarrow \mathcal{M}_Q^{\theta \mhyphen \mathsf{ss}}$ is surjective. From this and Lemma \ref{lem:restrKernel} we conclude that the CoHA action map is surjective. The wall-crossing formula \eqref{eq:oriWallCrossing} for $Q$ reads $A_{Q,\mu= 1}^{\theta \mhyphen \mathsf{ss}}\star A^{\sigma, \theta \mhyphen \mathsf{ss}}_Q =A^{\sigma}_Q$. This implies that the Hilbert-Poincar\'{e} series of $\mathcal{H}_{Q,\mu=1}^{\theta \mhyphen \mathsf{ss}} \boxtimes \mathcal{M}_Q^{\theta \mhyphen \mathsf{ss}}$ and $\mathcal{M}_Q$ are equal. Hence the action map is a $\Lambda_Q^{\sigma,+} \times \mathbb{Z}$-graded vector space isomorphism. That this map respects $\widetilde{\mathcal{H}}_Q$-module structures is clear.
\end{proof}

\begin{Cor}
\label{cor:oriDTAffA1}
The motivic orientifold DT invariants are $\Omega^{\sigma}_Q = 1$ if $\tau = -1$ and $\Omega^{\sigma}_Q =  (1-q^{-\frac{1}{2}} \xi^{(1,1)})^{-1}$ if $\tau = 1$. Conjecture \ref{conj:hodgeEqual} holds for $Q$.
\end{Cor}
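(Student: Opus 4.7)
The plan is to extract $W_Q^{\mathsf{prim}}$ from the free $\widetilde{\mathcal{H}}_Q$-module description provided by Theorem~\ref{thm:symmA1CoHM}, controlling the action of the generators of $\mathcal{H}_Q$ lying outside $\widetilde{\mathcal{H}}_Q$ via Proposition~\ref{prop:moduleRelations}, and then to match the result with the pure cohomology of the stable self-dual moduli.

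First I would identify the complement of $\widetilde{\mathcal{H}}_Q$ inside $\mathcal{H}_Q$. Under Franzen's slope decomposition $\mathcal{H}_Q \simeq \mathcal{H}_{Q,\mu=1}^{\theta \mhyphen \mathsf{ss}} \otimes \mathcal{H}_{Q,\mu=0}^{\theta \mhyphen \mathsf{ss}} \otimes \mathcal{H}_{Q,\mu=-1}^{\theta \mhyphen \mathsf{ss}}$, this complement consists of $\mathcal{H}_{Q,\mu=-1}^{\theta \mhyphen \mathsf{ss}}$ together with the even $u$-powers of the slope-zero generator $\mathbf{1}_{(1,1)}$. Proposition~\ref{prop:moduleRelations} shows that neither contributes to $\mathcal{H}_{Q,+} \star \mathcal{M}_Q$: the anti-involution $S_{\mathcal{H}}$ swaps slopes $\pm 1$, so generators of $\mathcal{H}_{Q,\mu=-1}^{\theta \mhyphen \mathsf{ss}}$ act as signed multiples of generators of $\mathcal{H}_{Q,\mu=1}^{\theta \mhyphen \mathsf{ss}}$ already lying in $\widetilde{\mathcal{H}}_Q$, while each even $u$-power $u^{2k}\mathbf{1}_{(1,1)}$ is $S_{\mathcal{H}}$-fixed and the sign $(-1)^{\chi((e,e),(1,1)) + \mathcal{E}((1,1))} = (-1)^{-\tau}$ equals $-1$ in both cases, forcing $f \star g = -(f\star g)$ and hence $f \star g = 0$. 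Therefore $\mathcal{H}_{Q,+} \star \mathcal{M}_Q = \widetilde{\mathcal{H}}_{Q,+} \star \mathcal{M}_Q$, and $W_Q^{\mathsf{prim}}$ is exactly spanned by the basis listed in Theorem~\ref{thm:symmA1CoHM}.

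Reading off the invariant is then direct. Since $\mathbf{1}_{(e,e)}^\sigma$ is the unit class in $H^0$, after the $\{\mathcal{E}((e,e))/2\}$-shift it sits in $\mathbb{Z}$-degree $\mathcal{E}((e,e)) = -\tau e$. When $\tau = -1$ only $\mathbf{1}_0^\sigma$ appears, since the skew-symmetry constraint forces the basis elements at positive $e$ to be absent, giving $\Omega_Q^\sigma = 1$. When $\tau = 1$, summing the resulting geometric series $\sum_{e \geq 0} (-q^{1/2})^{-e}\xi^{(e,e)}$ yields the closed form $(1 - q^{-1/2}\xi^{(1,1)})^{-1}$.

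The verification of Conjecture~\ref{conj:hodgeEqual} proceeds by describing $\mathfrak{M}_{(e,e)}^{\sigma,\mathsf{st}}$ explicitly. At trivial stability, $\sigma$-stability requires the absence of non-zero isotropic subrepresentations, which translates into a concrete matrix-theoretic condition on the pair $(A,B)$ of structure maps: invertibility of both $A$ and $B$, plus the non-existence of a line $U_1 \subset V_1$ which is simultaneously isotropic for the quadratic form $A$ and invariant under $BA$. For $\tau = -1$ this forces emptiness of the stable moduli for $e > 0$, matching the one-dimensional $W_Q^{\mathsf{prim}}$ supported at $e = 0$. For $\tau = 1$, the main obstacle will be computing $PH^\bullet(\mathfrak{M}_{(e,e)}^{\sigma,\mathsf{st}})$ explicitly, checking that it is concentrated in a single cohomological degree with a one-dimensional contribution, so that the canonical surjection of Proposition~\ref{prop:hodgeBound} matches the one-dimensional weight-$(-e)$ part of $W_{Q,(e,e)}^{\mathsf{prim}}$ computed above.
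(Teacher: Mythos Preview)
Your computation of $\Omega_Q^{\sigma}$ is correct and in fact slightly cleaner than the paper's. The paper argues by sandwich: Theorem~\ref{thm:symmA1CoHM} gives the upper bound (since $W_Q^{\mathsf{prim}}$ is a quotient of $\mathcal{M}_Q / (\widetilde{\mathcal{H}}_{Q,+}\star\mathcal{M}_Q)$), and a degree count gives the lower bound, namely that the unshifted cohomological degree of any element of $\mathcal{H}_{Q,d}\star\mathcal{M}_{Q,e}$ with $d\neq 0$ is at least $-2\mathcal{E}(d)=(d_1-d_2)^2+d_1+d_2>0$, so each $\mathbf{1}_{(e,e)}^{\sigma}$ survives in $W_Q^{\mathsf{prim}}$. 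Your route via Proposition~\ref{prop:moduleRelations}, showing directly that $\mathcal{H}_{Q,+}\star\mathcal{M}_Q=\widetilde{\mathcal{H}}_{Q,+}\star\mathcal{M}_Q$, establishes equality in one stroke and explains structurally \emph{why} $\widetilde{\mathcal{H}}_Q$ is the right subalgebra; the paper's degree argument is more elementary but less informative.

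Your treatment of Conjecture~\ref{conj:hodgeEqual}, however, has a genuine gap. For $\tau=1$ you stop at a matrix-theoretic description of $\sigma$-stability and declare that computing $PH^{\bullet}(\mathfrak{M}_{(e,e)}^{\sigma,\mathsf{st}})$ is ``the main obstacle''---but you do not clear it. Your stability criterion is also incomplete: ruling out isotropic \emph{lines} does not obviously rule out higher-dimensional isotropic subrepresentations, and more to the point it does not lead to a tractable description of the moduli space. The paper proceeds differently. Rather than analysing isotropic subobjects of a general self-dual representation, it uses the structure theorem for $\sigma$-stable objects: any such $M$ is an orthogonal sum of pairwise non-isometric regularly $\sigma$-stable summands. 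Since the only regularly $\sigma$-stable self-dual representations occur in dimension $(1,1)$ and form a copy of $\mathbb{C}^{\times}$, one obtains $\mathfrak{M}_{(e,e)}^{\sigma,\mathsf{st}}\simeq \Sym^e\,\mathbb{C}^{\times}\setminus\Delta$. The pure cohomology is then computed by a mixed-Hodge-theoretic comparison: $\Sym^e\,\mathbb{P}^1$ is a smooth compactification of both $\Sym^e\,\mathbb{C}\setminus\Delta$ and $\Sym^e\,\mathbb{C}^{\times}\setminus\Delta$, so the restriction $PH^{\bullet}(\Sym^e\,\mathbb{C}\setminus\Delta)\to PH^{\bullet}(\Sym^e\,\mathbb{C}^{\times}\setminus\Delta)$ is surjective, and since the source is $\mathbb{Q}(0)$ the target is as well. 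This identification of the moduli space and the compactification trick are the missing ingredients in your outline.
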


\begin{proof}
The statement for $\tau=-1$ follows from Theorem \ref{thm:symmA1CoHM}, so let $\tau =1$. Theorem \ref{thm:symmA1CoHM} implies that $(1-q^{-\frac{1}{2}} \xi^{(1,1)})^{-1}$ is a coefficient-wise upper bound for $\Omega^{\sigma}_Q$. To see that it is also a lower bound observe that since the unshifted cohomological degree of   elements of $\mathcal{H}_{Q,d} \star \mathcal{M}_{Q,e}$ is at least
\[
-2 \mathcal{E}(d) = (d_1-d_2)^2 + d_1 + d_2 > 0,
\]
the element $\mathbf{1}_{(e,e)}^{\sigma}$ is nonzero in $W^{\mathsf{prim}}_{Q,e}$. Hence $\Omega^{\sigma}_Q$ is as stated. To verify Conjecture \ref{conj:hodgeEqual} we must prove that  $PH^{\bullet}(\mathfrak{M}_{(e,e)}^{\sigma, \mathsf{st}}) \simeq \mathbb{Q}(0)$. Note that we use the trivial stability. We have $\mathfrak{M}_{(1,1)}^{\sigma, \mathsf{st}} \simeq \mathbb{C}^{\times}$, consisting of regularly $\sigma$-stable representations. Moreover these are the only regularly $\sigma$-stable self-dual representations, from which it follows that $\mathfrak{M}_{(e,e)}^{\sigma, \mathsf{st}} \simeq \Sym^e \, \mathbb{C}^{\times} \backslash \Delta$. Consider the open inclusions
\[
\Sym^e \, \mathbb{C}^{\times} \backslash \Delta \overset{i}{\hookrightarrow} \Sym^e \, \mathbb{C} \backslash \Delta \hookrightarrow \Sym^e \, \mathbb{P}^1.
\]
Since $\Sym^e \, \mathbb{P}^1$ is a smooth compactification of both $\Sym^e \, \mathbb{C}^{\times}$ and $\Sym^e \, \mathbb{C}$, we obtain a commutative diagram
\[
\begin{tikzpicture}[description/.style={fill=white,inner sep=2pt},baseline=(current  bounding  box.center)]
    \matrix (m) [matrix of math nodes, row sep=2.8em,
    column sep=2.5em, text height=1.9ex, text depth=0.25ex]
    { 
H^{\bullet}(\Sym^e \, \mathbb{P}^1) & H^{\bullet}(\Sym^e \, \mathbb{C} \backslash \Delta) & H^{\bullet}(\Sym^e \, \mathbb{C}^{\times} \backslash \Delta)  \\
& PH^{\bullet}(\Sym^e \, \mathbb{C} \backslash \Delta) & PH^{\bullet}(\Sym^e \, \mathbb{C}^{\times} \backslash \Delta)  \\ };
\draw[->] (m-1-1) edge (m-1-2);
\draw[->] (m-1-2) edge (m-1-3);
\draw[right hook-latex] (m-2-3) edge (m-1-3);
\draw[right hook-latex] (m-2-2) edge (m-1-2);
\draw[->>] (m-1-1) edge (m-2-2);
\draw[->] (m-2-2) edge node[above]{$i^*$} (m-2-3);
\draw[->>] (m-1-1) edge [in=200, out=280] (m-2-3);
\end{tikzpicture}
\]
the surjections following from \cite[Proposition 6.29]{peters2008}. Hence $i^*$ is also surjective. Since $PH^{\bullet}(\Sym^e \, \mathbb{C} \backslash \Delta) \simeq \mathbb{Q}(0)$ we also have $PH^{\bullet}(\Sym^e \, \mathbb{C}^{\times} \backslash \Delta) \simeq \mathbb{Q}(0)$.
\end{proof}

\begin{Rems}
\leavevmode
\begin{enumerate}
\item The isomorphism $\mathcal{H}_{Q,\mu=1}^{\theta \mhyphen \mathsf{ss}} \boxtimes \mathcal{M}_Q^{\theta \mhyphen \mathsf{ss}}  \xrightarrow[]{\sim} \mathcal{M}_Q$ is an instance of the PBW factorization \eqref{eq:moduleFactorization}.

\item Let $\theta=(1,-1)$. If $\tau=1$, then $\mathfrak{M}_e^{\sigma, \theta \mhyphen \mathsf{st}} \simeq \Sym^e \, \mathbb{C}\backslash \Delta$ and the proof of Corollary \ref{cor:oriDTAffA1} shows that $i^*: PH^{\bullet}(\mathfrak{M}_e^{\sigma, \theta \mhyphen \mathsf{st}}) \xrightarrow[]{\sim} P H^{\bullet}(\mathfrak{M}_e^{\sigma, \mathsf{st}})$. This is an example of the lack of wall-crossing for $\sigma$-symmetric quivers.
\end{enumerate}
\end{Rems}

\section{Cohomological Hall modules of finite type quivers}
\label{sec:finiteTypeQuivers}

A quiver is called finite type if it has only finitely many indecomposable representations up to isomorphism. Gabriel \cite{gabriel1972} proved that a quiver is finite type if and only if it is a disjoint union of quivers whose underlying graphs are Dynkin diagrams of $ADE$ type. Note that a finite type quiver is symmetric if and only if it is a finite collection of points. The only connected finite type quivers with involution are of type $A$. All other finite type quivers with involution are disjoint unions of these and quivers of the form $ADE^{\sqcup}$, in the notation of Section \ref{sec:disjointUnions}. By Theorem \ref{thm:disjointCoHM} the CoHM of a $ADE^{\sqcup}$ quiver can be described entirely in terms of the CoHA of the corresponding $ADE$ quiver, whose structure we recall in Section \ref{sec:finiteTypeCoHA}. The main task is therefore to describe the CoHM of a type $A$ Dynkin quiver.

\subsection{Finite type CoHA}
\label{sec:finiteTypeCoHA}

Let $Q$ be a connected finite type quiver. We assume that $Q$ is not of type $E_8$; for this case see \cite[Remark 11.3]{rimanyi2013}. The sets $\Pi$ of positive simple roots and $\Delta$ of positive roots of $Q$ are in bijection with the sets of isomorphism classes of simple and indecomposable representations of $Q$, respectively \cite{gabriel1972}. Identify $\Delta$ with a subset of $\Lambda_Q^+$ using the dimension vector map and write $I_{\beta}$ for the indecomposable representation of dimension vector $\beta \in \Delta$. Fix a total order $\beta_1 < \dots < \beta_N$ on $\Delta$ such that $\Hom(I_{\beta_i}, I_{\beta_j}) = 0 = \Ext^1(I_{\beta_j}, I_{\beta_i})$ if $i <j $. Such an order exists because the Auslander-Reiten quiver $\Gamma_Q$ of $Q$ is acyclic.

Fix a positive root $\beta \in \Delta$. Consider
\[
\mathcal{H}_Q^{\langle \beta \rangle} = \bigoplus_{n \geq 0} H^{\bullet}_{\mathsf{GL}_{n \beta}}(R_{n\beta}) \{  \chi( n \beta, n \beta) / 2 \}
\]
and
\[
\mathcal{H}_Q^{\langle \beta \rangle, \simeq} = \bigoplus_{n \geq 0} H^{\bullet}_{\mathsf{GL}_{n \beta}} (\eta_{I_{\beta}^{\oplus n}})\{  \chi( n\beta, n\beta) / 2 \}
\]
where $\eta_{I_{\beta}^{\oplus n}} \subset R_{n\beta}$ is the $\mathsf{GL}_{n\beta}$-orbit of representations which are isomorphic to $I_{\beta}^{\oplus n}$. Then $\mathcal{H}_Q^{\langle \beta \rangle}$ is a subalgebra of $\mathcal{H}_Q$ and the natural associative Hall product on $\mathcal{H}_Q^{\langle \beta \rangle, \simeq}$ is such that the restriction $\rho: \mathcal{H}_Q^{\langle \beta \rangle} \rightarrow \mathcal{H}_Q^{\langle \beta \rangle, \simeq}$ a surjective algebra homomorphism. Moreover, $\mathcal{H}_Q^{\langle \beta \rangle, \simeq} \simeq \mathcal{H}_{L_0}$ as algebras. Let $\{ \tilde{x}^j \}_{j \geq 0}$ be the corresponding algebra generators of $\mathcal{H}_Q^{\langle \beta \rangle, \simeq}$, as defined in Section \ref{sec:zeroLoopQuiver}. Choose $i(\beta) \in Q_0$ such that $\dim_{\mathbb{C}} (I_{\beta})_{i(\beta)} = 1$; such a choice cannot be made in type $E_8$. Define a section $\psi$ of $\rho$ by $\psi(\tilde{x}^j) = x_{i(\beta)}^j$. Write $\mathcal{H}_Q^{(\beta)} \subset \mathcal{H}_Q$ for the isomorphic image of $\psi$. Elements of $\mathcal{H}_Q^{(\beta)}$ depend only on the variables associated to the node $i(\beta)$.

The following result is due to Rim\'{a}nyi. It was stated by Kontsevich and Soibelman for $Q$ of type $A_2$ as  \cite[Proposition 2.1]{kontsevich2011}.

\begin{Thm}[{\cite[Theorem 11.2]{rimanyi2013}}]
\label{thm:finiteTypeCoHA}
The ordered CoHA multiplication maps
\[
\overset{\longleftarrow}{\boxtimes}_{\alpha \in \Pi}^{\mathsf{tw}} \,\mathcal{H}_Q^{(\alpha)} \rightarrow \mathcal{H}_Q, \qquad \qquad\overset{\longrightarrow}{\boxtimes}_{\beta \in \Delta}^{\mathsf{tw}} \,\mathcal{H}_Q^{(\beta)} \rightarrow \mathcal{H}_Q
\]
are isomorphisms in $D^{lb}(\mathsf{Vect}_{\mathbb{Z}})_{\Lambda^+_Q}$.
\end{Thm}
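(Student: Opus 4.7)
The plan is to prove both ordered product isomorphisms by analyzing how the CoHA multiplication interacts with the Gabriel stratification of $R_d$. Recall that for each $d \in \Lambda_Q^+$, Gabriel's theorem provides a bijection between $\mathsf{GL}_d$-orbits in $R_d$ and the set of Kostant partitions $K(d) = \{(n_\beta)_{\beta \in \Delta} : \sum_\beta n_\beta \beta = d\}$, where the orbit $\Omega_\lambda$ corresponding to $\lambda = (n_\beta)$ parameterizes representations isomorphic to $\bigoplus_\beta I_\beta^{\oplus n_\beta}$. Since $R_d$ has finitely many orbits, $\mathcal{H}_{Q,d}$ admits a filtration by support on orbit closures, whose associated graded pieces are the equivariant cohomologies $H^\bullet_{\mathsf{GL}_d}(\Omega_\lambda)$.

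First I would address the second map, indexed by $\Delta$. Given $\lambda \in K(d)$ and the order $\beta_1 < \cdots < \beta_N$, take the iterated partial-flag variety $R_{n_{\beta_1}\beta_1, n_{\beta_2}\beta_2, \dots, n_{\beta_N}\beta_N} \subset R_d$ of filtrations whose successive quotients have dimension vectors $n_{\beta_i}\beta_i$. By the Ext-vanishing $\Ext^1(I_{\beta_j}, I_{\beta_i})=0$ for $i<j$, the image of this flag variety under the forgetful map to $R_d$ has closure exactly $\overline{\Omega_\lambda}$, and the map is generically one-to-one onto $\Omega_\lambda$. Taking an element $a_i \in \mathcal{H}_Q^{(\beta_i)}$ supported on the (point) orbit corresponding to $I_{\beta_i}^{\oplus n_{\beta_i}}$ and forming the ordered CoHA product $a_1 \cdots a_N$ recovers, up to the prescribed degree shift, the pushforward of $1$ from the flag variety; by the previous remark this pushforward has support $\overline{\Omega_\lambda}$ and maps to the fundamental class $[\overline{\Omega_\lambda}] \in H^\bullet_{\mathsf{GL}_d}(R_d)$ in the leading filtration step. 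Allowing $a_i$ to run over a basis of $\mathcal{H}_Q^{(\beta_i)} \simeq H^\bullet_{\mathsf{GL}_{n_{\beta_i}}}$ then produces, for each $\lambda$, a spanning set for the associated graded piece on $\Omega_\lambda$, and counting dimensions by Hilbert--Poincar\'{e} series (using the equivariant contractibility of each $\Omega_\lambda$ to $B\mathsf{Aut}(I_\lambda)$) shows this set is in fact a basis. Injectivity of $\overset{\longrightarrow}{\boxtimes} \mathcal{H}_Q^{(\beta)} \to \mathcal{H}_Q$ then follows from compatibility with the filtration by orbit closure order; surjectivity follows from the resulting dimension count.

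For the first map, I would use the simple root generation by reducing to the indecomposable case. Fix $\alpha \in \Pi$. The representation $I_\beta$ admits a composition series whose simple factors can be listed in an order compatible with a chosen refinement of the partial order on $\Pi$; using this, the generator of $\mathcal{H}_Q^{(\beta)}$ associated to the vertex $i(\beta) \in Q_0$ with $(I_\beta)_{i(\beta)} = \mathbb{C}$ can be expressed, up to lower-order terms in the filtration, as an ordered product of generators of the subalgebras $\mathcal{H}_Q^{(\alpha)}$. Iterating, any ordered product indexed by $\Delta$ can be rewritten as a combination of ordered products indexed by $\Pi$. Hence $\overset{\longleftarrow}{\boxtimes}_{\alpha \in \Pi} \mathcal{H}_Q^{(\alpha)} \to \mathcal{H}_Q$ is surjective, and equality of parity-twisted Hilbert--Poincar\'{e} series (both sides compute $A_Q$ via the explicit formula of Section~\ref{sec:cohaDef} and the fact that the Weyl denominator identity for $Q$ factors over positive roots) forces an isomorphism.

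The main obstacle is the geometric step identifying the ordered CoHA product with the fundamental class of the orbit closure $\overline{\Omega_\lambda}$. Two routes are possible: a direct calculation via the shuffle formula of Theorem~\ref{thm:cohaLoc}, where one must verify that the rational function produced by summing over $\mathfrak{sh}_{n_{\beta_1}\beta_1,\dots,n_{\beta_N}\beta_N}$ equals the Thom polynomial of $\overline{\Omega_\lambda}$; or a geometric argument using Rim\'{a}nyi's interpolation characterization of Thom polynomials, where one verifies the vanishing and normalization axioms at the $\mathsf{T}$-fixed points of each stratum. Handling the vanishing condition at smaller orbits $\Omega_\mu \subset \overline{\Omega_\lambda}$ requires the ordering hypothesis on $\Delta$ and careful bookkeeping of degree shifts, and this is where the bulk of the work lies.
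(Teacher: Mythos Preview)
The paper does not give its own proof of this statement; it is quoted as \cite[Theorem 11.2]{rimanyi2013}. However, the paper's proof of the self-dual analogue, Theorem~\ref{thm:finiteTypeCoHM}, closely follows Rim\'{a}nyi's argument, so one can read off the intended strategy from there. Measured against that, your proposal has the right shape for the $\Delta$-indexed map but goes astray on the $\Pi$-indexed one.

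For the map indexed by the simple roots $\Pi$, you propose to deduce surjectivity from the $\Delta$-result by rewriting each $\mathcal{H}_Q^{(\beta)}$-generator as an ordered product of simple ones modulo lower filtration terms. This is backward and unnecessary. With the ordering on $\Pi$ chosen so that there are no arrows from later simples to earlier ones, the CoHA kernel in Theorem~\ref{thm:cohaLoc} has trivial numerator and the shuffle sum collapses: the ordered product $\prod_{\alpha \in \Pi} f_\alpha$ is literally the ordinary product of polynomials in disjoint sets of variables. The map $\overset{\longleftarrow}{\boxtimes}_{\alpha \in \Pi}^{\mathsf{tw}} \mathcal{H}_Q^{(\alpha)} \to \mathcal{H}_Q$ is then visibly the canonical identification $\bigotimes_{i \in Q_0} \mathbb{Q}[x_{i,1},\dots,x_{i,d_i}]^{\mathfrak{S}_{d_i}} \simeq \mathcal{H}_{Q,d}$, and is an isomorphism on the nose. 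This is exactly how the paper handles the simple-root side in the proof of Theorem~\ref{thm:finiteTypeCoHM}.

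For the $\Delta$-indexed map your outline is essentially Rim\'{a}nyi's. Two points of comparison. First, the identification of the ordered product $\mathbf{1}_{n_{\beta_1}\beta_1} \cdots \mathbf{1}_{n_{\beta_N}\beta_N}$ with $[\overline{\Omega_\lambda}]$ is precisely \cite[Theorem 10.1]{rimanyi2013}; the paper reproves this in the self-dual setting as Corollary~\ref{cor:fundClassStructConst}, via Reineke's resolution rather than the interpolation axioms you mention. Second, your injectivity argument via the orbit-closure filtration is a little vague; Rim\'{a}nyi (and the paper, in the passage following equation~\eqref{eq:restricted}) instead restricts the putative relation to the open orbit $\eta_M$ of a maximal $M$ appearing in it, where only one summand survives and equals a nonzero multiple of $\mathsf{Eu}_{\Aut(M)}(N_{R_d/\eta_M})$. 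This Euler class is nonzero because each $\mathsf{T}$-weight of the normal space is nonzero, which is what the ordering hypothesis on $\Delta$ actually buys. Surjectivity then follows from the Hilbert--Poincar\'{e} series equality, which here is Reineke's quantum dilogarithm identity for Dynkin quivers rather than the Weyl denominator formula.
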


\subsection{Preliminary results for the self-dual case}
\label{sec:rimMod}

Let $(Q,\sigma)$ be of Dynkin type $A$. Then $Q$ has two inequivalent duality structures which, for concreteness, we take to be $\tau = -1$ and $s= 1$ or $\tau=-1$ and $s = -1$, giving orthogonal or symplectic representations in the language of \cite{derksen2002}, respectively. In type $A_{2n}$ (respectively, $A_{2n+1}$) all orthogonal (symplectic) representations are hyperbolic. In the remaining two cases, henceforth referred to as non-hyperbolic, each $\sigma$-invariant positive root admits a unique self-dual structure.

To describe $\mathcal{M}_Q$ we will modify Rim\'{a}nyi's approach to the study of $\mathcal{H}_Q$. Fix $d^{\bullet}=(d^1, \dots, d^r) \in (\Lambda_Q^+)^r$, $e^{\infty} \in \Lambda_Q^{\sigma, +}$ and put $e=\sum_{i=1}^r H(d^i) + e^{\infty}$. Let $\mathsf{G}_{d^{\bullet}, e^{\infty}}^{\sigma} \subset \mathsf{G}_e^{\sigma}$ be the stabilizer of a $Q_0$-graded isotropic flag of $\mathbb{C}^e$ of the form
\[
0 = U_0 \subset U_1 \subset \cdots \subset U_r \subset \mathbb{C}^e
\]
with
\[
\mathbf{dim}\, U_k \slash U_{k-1} = d^k, \qquad \mathbf{dim}\, \mathbb{C}^e \git U_r = e^{\infty}.
\]
Let $\mathsf{Fl}^{\sigma}_{d^{\bullet}, e^{\infty}} \simeq  \mathsf{G}_e^{\sigma} \slash \mathsf{G}_{d^{\bullet}, e^{\infty}}^{\sigma}$ be the corresponding isotropic flag variety.  Each flag $U_{\bullet}$ can be extended to a flag of length $2r+1$ by setting $U_{2r-k+1}=U_k^{\perp}$ for $k=0, \dots, r$. 

For $k=1, \dots, 2r+1$, let $\mathcal{V}_{i, k}$ be the tautological vector bundle over $\mathsf{Fl}^{\sigma}_{d^{\bullet}, e^{\infty}}$ parameterizing the $k$th subspace of $\mathbb{C}^e$ at the node $i$. The quotient bundle $\mathcal{F}_{i,k} = \mathcal{V}_{i,k} \slash \mathcal{V}_{i,k-1}$ has rank $d_i^k$. The self-dual structure on $\mathbb{C}^e$ induces isomorphisms $\mathcal{F}_{i,k} \simeq \mathcal{F}_{\sigma(i), 2r+1-k}^{\vee}$. By duality this gives a chain of vector bundle isomorphisms
\[
\Hom(\mathcal{F}_{i,k}, \mathcal{F}_{j,l}) \simeq \Hom(\mathcal{F}_{j,l}^{\vee}, \mathcal{F}_{i,k}^{\vee}) \simeq \Hom(\mathcal{F}_{\sigma(j), 2r+1-l}, \mathcal{F}_{\sigma(i), 2r+1 - k})
\]
which induces a linear $\mathbb{Z}_2$-action on
\[
\mathcal{G}=\bigoplus_{i \xrightarrow[]{\alpha} j \in Q_1} \bigoplus_{1 \leq k < l \leq 2r+1} \Hom(\mathcal{F}_{i,k}, \mathcal{F}_{j,l}) .
\]
Denote by $\mathcal{G}^{\sigma}$ the subbundle of anti-fixed points.

The following result is motivated by \cite[Lemmas 8.1 and 8.2]{rimanyi2013}.

\begin{Lem}
\label{lem:geomMult}
Let $f_k \in \mathcal{H}_{Q,d^k}$, $k=1, \dots, r$, and $g \in \mathcal{M}_{Q,e^{\infty}}$. Then
\[
(f_1 \cdots  f_r) \star g = \pi_*^{\sigma} \left[ \left( \prod_{k=1}^r f_k (\mathcal{F}_{\bullet, k}) \right) g( \mathcal{F}_{\bullet, 0} ) \mathsf{Eu}_{\mathsf{G}_e^{\sigma}}(\mathcal{G}^{\sigma}) \right]
\]
where $\pi^{\sigma}: \mathsf{Fl}^{\sigma}_{d^{\bullet}, e^{\infty}} \rightarrow \Spec(\mathbb{C})$ is the structure map and $\mathsf{Eu}_{\mathsf{G}_e^{\sigma}}(\mathcal{G}^{\sigma})$ is the $\mathsf{G}_e^{\sigma}$-equivariant Euler class of $\mathcal{G}^{\sigma} \rightarrow \mathsf{Fl}^{\sigma}_{d^{\bullet}, e^{\infty}}$.
\end{Lem}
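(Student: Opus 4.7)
The plan is to iterate the definition of the CoHM action map $r$ times and recognize the resulting composition as an equivariant pushforward from the isotropic flag variety $\mathsf{Fl}^{\sigma}_{d^{\bullet}, e^{\infty}}$. By the associativity of the $\mathcal{H}_Q$-module structure on $\mathcal{M}_Q$ (Theorem~\ref{thm:CoHM}, whose proof already uses the natural multi-step isotropic flag diagram), the product $(f_1 \cdots f_r) \star g$ is computed from the single correspondence
\[
R_{d^1} \times \cdots \times R_{d^r} \times R^{\sigma}_{e^{\infty}} \twoheadleftarrow R^{\sigma}_{d^{\bullet}, e^{\infty}} \hookrightarrow R^{\sigma}_e,
\]
equivariant with respect to the chain $\mathsf{GL}_{d^1} \times \cdots \times \mathsf{GL}_{d^r} \times \mathsf{G}^{\sigma}_{e^{\infty}} \hookrightarrow \mathsf{G}^{\sigma}_{d^{\bullet}, e^{\infty}} \hookrightarrow \mathsf{G}^{\sigma}_e$, where $R^{\sigma}_{d^{\bullet}, e^{\infty}} \subset R^{\sigma}_e$ denotes the subspace of structure maps preserving the fixed isotropic flag $U_{\bullet}$.

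First I would rewrite the iterated action as a composition of (i) K\"unneth and restriction to pass from the Levi to the parabolic $\mathsf{G}^{\sigma}_{d^{\bullet}, e^{\infty}}$, (ii) pushforward along the closed inclusion $R^{\sigma}_{d^{\bullet}, e^{\infty}} \hookrightarrow R^{\sigma}_e$, and (iii) Gysin pushforward along the fibration $B\mathsf{G}^{\sigma}_{d^{\bullet}, e^{\infty}} \to B\mathsf{G}^{\sigma}_e$ with fibre $\mathsf{Fl}^{\sigma}_{d^{\bullet}, e^{\infty}}$. Since the relevant representation spaces are $\mathsf{G}$-equivariantly contractible, they may be replaced by points, and the universal vector bundles on $B\mathsf{G}^{\sigma}_{d^{\bullet}, e^{\infty}}$ attached to the Levi factors pull back precisely to the tautological quotient bundles $\mathcal{F}_{\bullet, k}$ on $\mathsf{Fl}^{\sigma}_{d^{\bullet}, e^{\infty}}$. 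Under this translation, $f_1 \otimes \cdots \otimes f_r \otimes g$ is sent to $\prod_k f_k(\mathcal{F}_{\bullet, k}) \cdot g(\mathcal{F}_{\bullet, 0})$, where $\mathcal{F}_{\bullet, 0}$ denotes the self-dual middle quotient $\mathcal{V}_{\bullet, r+1}/\mathcal{V}_{\bullet, r}$; this is then multiplied by the equivariant Euler class of the normal bundle $N = N_{R^{\sigma}_{d^{\bullet}, e^{\infty}}/R^{\sigma}_e}$ and pushed forward to a point via $\pi^{\sigma}_*$.

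The decisive geometric input is the identification $N \simeq \mathcal{G}^{\sigma}$ of $\mathsf{G}^{\sigma}_e$-equivariant bundles. Decomposing $R^{\sigma}_e$ with respect to the full length-$(2r+1)$ flag obtained by splicing perpendiculars into $U_{\bullet}$, the filtration-preserving locus $R^{\sigma}_{d^{\bullet}, e^{\infty}}$ consists of the block-upper-triangular arrow data ($k \leq l$), so the fibre-wise normal directions correspond to strict lower-triangular Hom's ($k > l$). The duality isomorphisms $\mathcal{F}_{i,k} \simeq \mathcal{F}_{\sigma(i), \ast}^{\vee}$ pair each strict lower-triangular Hom at an arrow $\alpha$ with a strict upper-triangular Hom at $\sigma(\alpha)$, and the self-dual relation~\eqref{eq:strSymm} cuts out the normal directions as the anti-invariants of the resulting $\mathbb{Z}_2$-action on $\mathcal{G}$, namely $\mathcal{G}^{\sigma}$. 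The hard part will be carrying out this identification cleanly, especially accounting for the signs $\tau_{\alpha}$ and the diagonal contribution from arrows $i \to \sigma(i) \in Q_1^{\sigma}$ where $\alpha$ and $\sigma(\alpha)$ coincide (so that $\mathcal{G}^{\sigma}$ acquires a symmetric or skew-symmetric summand depending on $s_i \tau_{\alpha}$). Once $\mathsf{Eu}_{\mathsf{G}^{\sigma}_e}(N) = \mathsf{Eu}_{\mathsf{G}^{\sigma}_e}(\mathcal{G}^{\sigma})$ is established, the projection formula produces the claimed formula.
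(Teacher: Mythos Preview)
Your approach is correct and close in spirit to the paper's, though organized differently. You aim to identify the normal bundle $N_{R^{\sigma}_{d^{\bullet},e^{\infty}}/R^{\sigma}_e}$ with $\mathcal{G}^{\sigma}$ as $\mathsf{G}^{\sigma}_e$-equivariant bundles on the flag variety and then apply the projection formula; the paper instead computes both sides by equivariant localization with respect to $\mathsf{T}_e$, observing that the $\mathsf{T}_e$-fixed points of $\mathsf{Fl}^{\sigma}_{d^{\bullet},e^{\infty}}$ are exactly those appearing in the $r$-fold iterate of Theorem~\ref{thm:cohmLoc} and that the weights of $\mathsf{Eu}(\mathcal{G}^{\sigma})$ and $\mathsf{Eu}(N)$ agree at each fixed point. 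Your route is more conceptual and yields the bundle isomorphism itself; the paper's route is quicker because the localization formula for the left-hand side has already been worked out, so the identity reduces to a weight-by-weight check.

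One small slip: you have the triangularity reversed. A map $m_{\alpha}$ preserving the flag $U_{\bullet}$ has components $\mathcal{F}_{i,k} \to \mathcal{F}_{j,l}$ only for $l \leq k$ (upper-triangular in the standard matrix convention), so the normal directions are those with $k < l$, which is exactly how $\mathcal{G}$ is indexed in the paper. This does not affect your argument, only the labeling.
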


\begin{proof}
Like the left-hand side, the right-hand side of the claimed equality can be computed by equivariant localization with respect to $\mathsf{T}_e \subset \mathsf{G}_e^{\sigma}$. The $\mathsf{T}_e$-fixed points of $\mathsf{Fl}^{\sigma}_{d^{\bullet}, e^{\infty}}$ are those appearing in the proof of (the $r$-fold iteration of) Theorem \ref{thm:cohmLoc}. Since the weights of $\mathsf{Eu}_{\mathsf{G}_e^{\sigma}}(\mathcal{G}^{\sigma})$ and $\mathsf{Eu}_{\mathsf{G}_{d^{\bullet},e^{\infty}}}(N_{R^{\sigma}_{e} \slash R_{d^{\bullet},e^{\infty}}^{\sigma}})$ at a $\mathsf{T}_e$-fixed point agree, the lemma follows.
\end{proof}

Define a $\mathsf{G}_e^{\sigma}$-stable closed subvariety of $\mathsf{Fl}^{\sigma}_{d^{\bullet}, e^{\infty}} \times R_e^{\sigma}$ by
\[
\Sigma^{\sigma} = \{(U_{\bullet} , m) \in \mathsf{Fl}_{d^{\bullet}, e^{\infty}}^{\sigma} \times R_e^{\sigma} \mid m_{\alpha} (U_{i, k}) \subset U_{j, k}, \;\;\; \forall \, i \xrightarrow[]{\alpha} j \in Q_1, \;\; k=1, \dots, r \}.
\]
It has a $\mathsf{G}_e^{\sigma}$-equivariant fundamental class
\[
[\Sigma^{\sigma}] \in H^{\bullet}_{\mathsf{G}_e^{\sigma}}(\mathsf{Fl}^{\sigma}_{d^{\bullet}, e^{\infty}} \times R_e^{\sigma}) \simeq H^{\bullet}_{\mathsf{G}_e^{\sigma}}(\mathsf{Fl}^{\sigma}_{d^{\bullet}, e^{\infty}}).
\]

\begin{Lem}
\label{lem:eulerIden}
The equality $\mathsf{Eu}_{\mathsf{G}_e^{\sigma}}(\mathcal{G}^{\sigma}) = [\Sigma^{\sigma}]$ holds in $H^{\bullet}_{\mathsf{G}_e^{\sigma}}(\mathsf{Fl}^{\sigma}_{d^{\bullet}, e^{\infty}})$.
\end{Lem}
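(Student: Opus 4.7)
The plan is to realize $\Sigma^{\sigma}$ as a $\mathsf{G}_e^{\sigma}$-equivariant subvector bundle of the trivial bundle $\mathsf{Fl}^{\sigma}_{d^{\bullet}, e^{\infty}} \times R_e^{\sigma}$ whose quotient bundle is canonically $\mathcal{G}^{\sigma}$, and then invoke the standard fact that the equivariant fundamental class of a subvector bundle coincides with the equivariant Euler class of the normal bundle.

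First, I would extend the flag $U_0 \subset U_1 \subset \cdots \subset U_r$ to the full flag of length $2r+1$ using $U_{2r+1-k} = U_k^{\perp}$. The key observation is that for $m \in R_e^{\sigma}$, preservation of the isotropic sub-flag is equivalent to preservation of the full extended flag: the relation \eqref{eq:strSymm} implies that if $m$ preserves $U_k$, then it preserves $U_k^{\perp}$. With respect to the extended flag, each structure map $m_{\alpha} : \mathbb{C}^e_i \to \mathbb{C}^e_j$ decomposes into block components with values in $\Hom(\mathcal{F}_{i,k}, \mathcal{F}_{j,l})$ for $1 \leq k, l \leq 2r+1$, and $m$ preserves the flag if and only if all components with $k < l$ vanish. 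These "obstruction" components are precisely the summands appearing in the definition of $\mathcal{G}$.

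Next, I would assemble the obstruction components into a $\mathsf{G}_e^{\sigma}$-equivariant vector bundle morphism
\[
\phi : \mathsf{Fl}^{\sigma}_{d^{\bullet}, e^{\infty}} \times R_e^{\sigma} \longrightarrow \mathcal{G}^{\sigma}
\]
whose zero locus is exactly $\Sigma^{\sigma}$. The image lands in the anti-fixed subbundle $\mathcal{G}^{\sigma}$ (rather than all of $\mathcal{G}$) because the self-duality relation \eqref{eq:strSymm} pairs the components of $m_{\alpha}$ with those of $m_{\sigma(\alpha)}$ in exactly the way defining the $\mathbb{Z}_2$-action on $\mathcal{G}$. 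Surjectivity of $\phi$ on fibres is immediate by construction: given any prescribed anti-fixed obstruction vector, one produces a representative $m \in R_e^{\sigma}$ by extending by zero on the flag-preserving components.

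Finally, the short exact sequence
\[
0 \to \Sigma^{\sigma} \to \mathsf{Fl}^{\sigma}_{d^{\bullet}, e^{\infty}} \times R_e^{\sigma} \xrightarrow{\phi} \mathcal{G}^{\sigma} \to 0
\]
of $\mathsf{G}_e^{\sigma}$-equivariant vector bundles over $\mathsf{Fl}^{\sigma}_{d^{\bullet}, e^{\infty}}$ exhibits $\Sigma^{\sigma}$ as a subvector bundle with normal bundle $\mathcal{G}^{\sigma}$. Under the $\mathsf{G}_e^{\sigma}$-equivariant homotopy equivalence $\mathsf{Fl}^{\sigma}_{d^{\bullet}, e^{\infty}} \times R_e^{\sigma} \simeq \mathsf{Fl}^{\sigma}_{d^{\bullet}, e^{\infty}}$ given by the zero section, $[\Sigma^{\sigma}]$ is then the equivariant Euler class of the pullback of $\mathcal{G}^{\sigma}$, which is $\mathsf{Eu}_{\mathsf{G}_e^{\sigma}}(\mathcal{G}^{\sigma})$ as claimed. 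The main obstacle is bookkeeping rather than mathematical: one must carefully track how the duality structure $(s, \tau)$ propagates through the isomorphisms $\mathcal{F}_{i,k} \simeq \mathcal{F}_{\sigma(i), 2r+1-k}^{\vee}$ to confirm that the image of $\phi$ indeed lies in the anti-fixed subbundle and not its complement.
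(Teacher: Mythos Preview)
Your proposal is correct and matches the paper's intended argument: the paper's own proof simply reads ``This can be proved in the same way as \cite[Lemma 8.3]{rimanyi2013}'', and the Rim\'{a}nyi argument is exactly the one you have spelled out---realize $\Sigma^{\sigma}$ as a $\mathsf{G}_e^{\sigma}$-equivariant subvector bundle of the trivial bundle with quotient (equivalently, normal bundle) $\mathcal{G}^{\sigma}$, then identify the fundamental class with the Euler class. You have in fact supplied the self-dual bookkeeping that the paper omits by citation.
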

\begin{proof}
This can be proved in the same way as \cite[Lemma 8.3]{rimanyi2013}.
\end{proof}

The duality structure on $\mathsf{Rep}_{\mathbb{C}} (Q)$ defines an involution of the Auslander-Reiten quiver $\Gamma_Q$, sending an indecomposable  representation $I$ to $S(I)$. This involution preserves the levels of $\Gamma_Q$ which, being in type $A$, are exactly the orbits of the Auslander-Reiten translation. Fix a partition $\Delta = \Delta^- \sqcup \Delta^{\sigma} \sqcup \Delta^+$ such that $\Delta^{\sigma}$ is fixed pointwise by $S$ and $S(\Delta^-) = \Delta^+$. Without loss of generality, we can assume that $\beta_u < S(\beta_u)$ for all $\beta_u \in \Delta^-$.
 Write $\Delta^- = \{ \beta_{u_1} < \dots < \beta_{u_r} \}$.

\begin{Lem}
\label{lem:sdFilt}
Every self-dual representation $M$ has a unique isotropic filtration
\[
0 = U_0 \subset U_1 \subset \cdots \subset U_{r} \subset M
\]
such that $U_j \slash U_{j-1} \simeq I_{\beta_{u_j}}^{ \oplus m_{u_j}}$, $j = 1, \dots, r$, and $M \git U_{r} \simeq \bigoplus_{\beta_u \in \Delta^{\sigma}} I_{\beta_u}^{\oplus m_u}$.
\end{Lem}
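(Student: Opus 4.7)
Plan.

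I would begin by applying Krull--Schmidt to write $M \simeq \bigoplus_{\beta \in \Delta} I_\beta^{\oplus m_\beta}$; self-duality $M \simeq S(M)$ then forces $m_\beta = m_{S(\beta)}$ for all $\beta \in \Delta$. Next I would invoke the Witt-type orthogonal decomposition for self-dual objects in a finite type hereditary category \cite[Proposition 3.5]{mbyoung2015}: because every $I_\beta$ in type $A$ is exceptional, there is an isometry
\[
M \simeq \bigoplus_{\beta_u \in \Delta^-} H(I_{\beta_u})^{\oplus m_u} \; \oplus \; \bigoplus_{\beta_u \in \Delta^\sigma} I_{\beta_u}^{\oplus m_u},
\]
where each $I_{\beta_u}$ in the $\Delta^\sigma$-sum carries its (unique up to isometry, in the non-hyperbolic cases) self-dual form, and all summands are mutually orthogonal.

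For existence, I would fix such a decomposition and set
\[
U_j = \bigoplus_{k \leq j} I_{\beta_{u_k}}^{\oplus m_{u_k}} \subset M,
\]
where each $I_{\beta_{u_k}}$ denotes the canonical Lagrangian summand of $H(I_{\beta_{u_k}}) = I_{\beta_{u_k}} \oplus I_{S(\beta_{u_k})}$. Isotropy of $U_r$, the subquotient identifications $U_j/U_{j-1} \simeq I_{\beta_{u_j}}^{\oplus m_{u_j}}$, and the computation $M \git U_r = U_r^\perp/U_r \simeq \bigoplus_{\beta_u \in \Delta^\sigma} I_{\beta_u}^{\oplus m_u}$ all follow directly, because different hyperbolic summands are orthogonal and the Lagrangian half of each hyperbolic summand is isotropic within it.

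For uniqueness, I would argue by induction on $r$. It suffices to show that $U_1$ is intrinsically determined; then $U_1^\perp/U_1$ is a self-dual representation with $\Delta^-$-data $(\beta_{u_2}, \ldots, \beta_{u_r})$, and the induction hypothesis finishes the job. The intrinsic characterization of $U_1$ rests on the minimality of $\beta_{u_1}$ in $\Delta^-$: by the Hom-vanishing condition $\Hom(I_{\beta_{u_1}}, I_{\beta_i}) = 0$ for $u_1 < i$, the trace of $I_{\beta_{u_1}}$ in $M$ splits off as a canonical direct summand whose $I_{\beta_{u_1}}$-isotypic part has multiplicity $m_{u_1}$. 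The constraint $\beta_{u_1} < S(\beta_{u_1})$, combined with $\Hom(I_{\beta_{u_1}}, I_{S(\beta_{u_1})}) = 0$, shows that this $I_{\beta_{u_1}}$-isotypic summand is automatically isotropic and that any isotropic subrepresentation of $M$ of type $I_{\beta_{u_1}}^{\oplus m_{u_1}}$ must coincide with it.

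The main obstacle will be making the uniqueness step fully rigorous in the presence of elements of $\Delta^\sigma$ that may be smaller than $\beta_{u_1}$ in the chosen total order: one needs to show the $I_{\beta_{u_1}}$-isotypic subspace is a \emph{direct summand} of the relevant canonical subrepresentation, not merely a subspace, and that isotropy singles it out uniquely. This should follow from $\End(I_\beta) = \mathbb{C}$ and $\Ext^1(I_\beta, I_\beta) = 0$ in Dynkin type, which rigidify the Krull--Schmidt decomposition enough that the Hom-vanishing plus the inequality $\beta_{u_1} < S(\beta_{u_1})$ suffice to pin down $U_1$ canonically.
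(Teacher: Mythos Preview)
Your outline matches the paper's proof closely: existence via the orthogonal Krull--Schmidt decomposition of $M$ and uniqueness by showing each $U_j$ is forced. The difference is in how the ``main obstacle'' you flag is resolved. Your proposed fix, via $\End(I_\beta)=\mathbb{C}$ and $\Ext^1(I_\beta,I_\beta)=0$, does not address the issue: exceptionality of $I_\beta$ says nothing about $\Hom(I_{\beta_{u_1}}, I_{\beta'})$ for $\beta' \in \Delta^\sigma$ lying below $\beta_{u_1}$ in the chosen order, and without this vanishing you cannot conclude that an arbitrary isotropic copy of $I_{\beta_{u_1}}^{\oplus m_{u_1}}$ inside $M$ avoids the $\Delta^\sigma$ summands.

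The missing ingredient is a short duality argument giving $\Hom(I_\beta, I_{\beta'}) = 0$ for \emph{every} $\beta \in \Delta^-$ and $\beta' \in \Delta^\sigma$, regardless of their relative position in the total order. Namely, if $\Hom(I_\beta, I_{\beta'}) \neq 0$ then, applying the contravariant functor $S$ and using $S(I_{\beta'}) \simeq I_{\beta'}$, one obtains $\Hom(I_{\beta'}, I_{S(\beta)}) \neq 0$; the defining property of the order then forces $\beta > \beta'$ and $\beta' > S(\beta)$, hence $\beta > S(\beta)$, contradicting $\beta \in \Delta^-$. With this in hand, any embedding $I_{\beta_{u_1}}^{\oplus m_{u_1}} \hookrightarrow M$ lands in the $\Delta^- \cup \Delta^+$ summands; minimality of $\beta_{u_1}$ in $\Delta^-$ together with $\beta_{u_1} < \beta_{u_k} < S(\beta_{u_k})$ then pins it down to the canonical $I_{\beta_{u_1}}^{\oplus m_{u_1}}$, and your isotropy-based induction proceeds exactly as you describe.

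A minor point: \cite[Proposition~3.5]{mbyoung2015} is about $\sigma$-stable representations, not the orthogonal Krull--Schmidt decomposition you want here; the relevant reference is \cite{derksen2002}.
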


\begin{proof}
Any self-dual representation can be written uniquely as an orthogonal direct sum of indecomposable self-dual representations. Explicitly, we have
\begin{equation}
\label{eq:sdKrullSchmidt}
M = \bigoplus_{l=1}^r H(I_{\beta_{u_l}})^{\oplus m_{u_l}} \oplus \bigoplus_{\beta_u \in \Delta^{\sigma}} I_{\beta_u}^{\oplus m_u}
\end{equation}
for some $m_u \in \mathbb{Z}_{\geq 0}$. Setting $U_j =  \bigoplus_{l=1}^j I_{\beta_{u_l}}^{\oplus m_{u_l}}$ gives a filtration with the desired properties.

Suppose that $U_{\bullet}^{\prime} \subset M$ is another filtration with the stated properties. The ordering assumption $\beta_{u_1} < \cdots < \beta_{u_r}$ implies that $U_{\bullet}^{\prime} = U_{\bullet}$. So it suffices to show that there is a unique isotropic embedding $U_r \hookrightarrow M$. To do so, first note that $\Hom(I_{\beta}, I_{\beta^{\prime}})=0$ for all $\beta \in \Delta^-$ and $\beta^{\prime} \in \Delta^{\sigma}$. Indeed, if $\Hom(I_{\beta}, I_{\beta^{\prime}}) \neq 0$, then $\Hom(I_{\beta^{\prime}}, S(I_{\beta}) ) \neq 0$. Hence $\beta > \beta^{\prime}$ and $\beta^{\prime} > S(\beta)$ so that $\beta > S(\beta)$, a contradiction. It follows that the summand $U_1 \subset U_r$ maps isomorphically onto $I_{\beta_1}^{\oplus m_1}$. While $U_2 \subset U_r$ could potentially map non-trivially to $S(I_{\beta_{u_1}})$, this would contradict the condition that $U_2$ be isotropic. Hence $U_2$ maps isomorphically onto $I_{\beta_{u_1}}^{\oplus m_{u_1}} \oplus I_{\beta_{u_2}}^{\oplus m_{u_2}}$. Continuing in this way we see that $U_r \hookrightarrow M$ is the canonical isotropic embedding.
\end{proof}

We derive two results using Lemma \ref{lem:sdFilt}. The first is a self-dual extension of a theorem of Reineke \cite[Theorem 2.2]{reineke2003b} and appears in the unpublished thesis of Lovett \cite{lovett2003}. For $M \in R_e^{\sigma}$ let $\eta^{\sigma}_M \subset R_e^{\sigma}$ be the $\mathsf{G}_e^{\sigma}$-orbit $M$ and let $\overline{\eta}_M^{\sigma}\subset R_e^{\sigma}$ be the closure of $\eta^{\sigma}_M$. We call elements of $\overline{\eta}_M^{\sigma}$ self-dual degenerations of $M$.

\begin{Thm}[\cite{lovett2003}]
\label{thm:sdResolution}
Let $M$ be a self-dual representation. Keeping the notation of Lemma \ref{lem:sdFilt}, set $d^j = m_j \beta_j$, $j =1, \dots, r$, and $e^{\infty} = \mathbf{dim}\, M \git U_{r}$. Then the canonical morphism $\pi_M^{\sigma}: \Sigma^{\sigma} \rightarrow R_e^{\sigma}$ is a $\mathsf{G}_e^{\sigma}$-equivariant resolution of $\overline{\eta}^{\sigma}_M$.
\end{Thm}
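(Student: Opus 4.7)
The plan is to verify the four defining properties of a $\mathsf{G}_e^{\sigma}$-equivariant resolution: (a) $\Sigma^{\sigma}$ is smooth, (b) $\pi_M^{\sigma}$ is proper, (c) the image equals $\overline{\eta}_M^{\sigma}$, and (d) $\pi_M^{\sigma}$ restricts to an isomorphism over the orbit $\eta_M^{\sigma}$. Equivariance is built into the construction.

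For (a) and (b), I would argue that the first projection $\Sigma^{\sigma} \to \mathsf{Fl}^{\sigma}_{d^{\bullet},e^{\infty}}$ realizes $\Sigma^{\sigma}$ as the total space of a $\mathsf{G}_e^{\sigma}$-equivariant subbundle of the trivial bundle with fibre $R_e^{\sigma}$: the fibre over a flag $U_{\bullet}$ consists of those $m \in R_e^{\sigma}$ whose structure maps preserve every $U_k$, and isotropy of $U_{\bullet}$ forces preservation of the extended flag $U_{2r+1-k} = U_k^{\perp}$ as well. Since $\mathsf{Fl}^{\sigma}_{d^{\bullet},e^{\infty}} \simeq \mathsf{G}_e^{\sigma}\slash \mathsf{G}_{d^{\bullet},e^{\infty}}^{\sigma}$ is a smooth projective homogeneous variety for the classical group $\mathsf{G}_e^{\sigma}$, this shows $\Sigma^{\sigma}$ is smooth. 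Properness of $\pi_M^{\sigma}$ then follows from its factoring through the second projection $\mathsf{Fl}^{\sigma}_{d^{\bullet},e^{\infty}} \times R_e^{\sigma} \to R_e^{\sigma}$, which is proper by projectivity of the flag variety.

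For (d), the crucial input is Lemma \ref{lem:sdFilt}: a self-dual representation $N$ lies in the image of $\pi_M^{\sigma}$ exactly when it admits an isotropic filtration whose graded pieces have dimension vectors $d^1,\dots,d^r$ and whose symplectic reduction has dimension vector $e^{\infty}$; moreover, on the orbit $\eta_M^{\sigma}$ the uniqueness statement of Lemma \ref{lem:sdFilt} forces $(\pi_M^{\sigma})^{-1}(N)$ to be a single (reduced) point for every $N \in \eta_M^{\sigma}$. The resulting $\mathsf{G}_e^{\sigma}$-equivariant bijection $(\pi_M^{\sigma})^{-1}(\eta_M^{\sigma}) \to \eta_M^{\sigma}$ of smooth varieties, both of which are homogeneous, is then an isomorphism. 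In particular $\pi_M^{\sigma}$ is birational onto its image.

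For (c), the image is closed (by (b)) and $\mathsf{G}_e^{\sigma}$-stable, and contains $\eta_M^{\sigma}$ by construction together with Lemma \ref{lem:sdFilt} applied to $M$; hence it contains $\overline{\eta}_M^{\sigma}$. The opposite inclusion is the main obstacle: one must show that every $N$ admitting an isotropic filtration of the prescribed type is a self-dual degeneration of $M$. My strategy is to mimic Reineke's one-parameter argument \cite[Theorem 2.2]{reineke2003b} in the self-dual setting. Given such $N$ with isotropic filtration $0 = U_0 \subset \cdots \subset U_r \subset N$, I would construct an explicit $\mathbb{C}^{\times}$-action on $N$ compatible with the bilinear form and the extended flag $U_k \subset U_k^{\perp}$, with $U_k$ of weight $k$ and $U_k^{\perp}\slash U_{k-1}^{\perp}$ of weight $-k+1$, so that the limit as the parameter tends to $0$ splits the filtration orthogonally and produces the self-dual representation on the right hand side of \eqref{eq:sdKrullSchmidt}, namely $M$. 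Since the $\mathbb{C}^{\times}$-action preserves $R_e^{\sigma}$, this exhibits $N$ as a self-dual degeneration of $M$, completing the proof.
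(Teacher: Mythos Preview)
Your argument for (c) has the direction of degeneration backwards. A $\mathbb{C}^{\times}$-action on $N$ through $\mathsf{G}_e^{\sigma}$ whose limit as $t\to 0$ is the associated graded shows that the associated graded lies in $\overline{\eta}_N^{\sigma}$, i.e.\ that the graded object is a degeneration of $N$, not that $N$ is a degeneration of $M$. What is needed is the opposite inclusion $N\in\overline{\eta}_M^{\sigma}$. Reineke's argument in \cite{reineke2003b} is not a one-parameter degeneration of $N$; his Lemma 2.3 shows that if $N$ admits a filtration whose subquotients are degenerations of prescribed objects with suitable $\Ext^1$-vanishing, then $N$ is a degeneration of the direct sum of those objects. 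The paper applies exactly this: since $\Ext^1(I_\beta,I_\beta)=0$, each $V_j/V_{j-1}$ is a degeneration of $I_{\beta_{u_j}}^{\oplus m_{u_j}}$ by Voigt's lemma, and Reineke's Lemma 2.3 then yields that $N$ is an ordinary degeneration of $M$; the passage to self-dual degeneration uses \cite[Theorem 2.6]{derksen2002}. There is a second problem with your one-parameter approach: even if the direction were right, the associated graded of an arbitrary isotropic filtration of $N$ with the correct dimension vectors need not be $M$, since the subquotients are only constrained dimensionally.

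This same issue creates a gap in your (d). Points of the fibre over $N\in\eta_M^{\sigma}$ are isotropic filtrations of $N$ whose subquotients have the prescribed \emph{dimension vectors}; Lemma \ref{lem:sdFilt}, however, asserts uniqueness only among filtrations with the prescribed subquotient \emph{isomorphism types}. The paper closes this gap with the same circle of ideas: Voigt plus Reineke's Lemma 2.3, now together with $\Hom(I_{\beta_i},I_{\beta_j})=0$ for $i<j$, force $U_j/U_{j-1}\simeq I_{\beta_{u_j}}^{\oplus m_{u_j}}$ and $M\git U_r\simeq\bigoplus_{\beta_u\in\Delta^{\sigma}}I_{\beta_u}^{\oplus m_u}$, after which Lemma \ref{lem:sdFilt} does give uniqueness of the flag.
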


\begin{proof}
This is proved for $Q$ of type $A_3$ in \cite[Proposition 2.3]{lovett2005}. We will prove the general case using a self-dual version of Reineke's argument.

The variety $\Sigma^{\sigma}$ is smooth, being the total space of a vector bundle over $\mathsf{Fl}^{\sigma}_{d^{\bullet}, e^{\infty}}$, and the morphism $\pi_M^{\sigma}$ is proper and equivariant. We prove that $\pi_M^{\sigma} (\Sigma^{\sigma})= \overline{\eta}_M^{\sigma}$. If $N \in \pi_M^{\sigma} (\Sigma^{\sigma})$, then there exsists an isotropic filtration
\[
0 = V_0 \subset V_1 \subset \cdots \subset V_{r} \subset N, \qquad \mathbf{dim} \, V_j \slash  V_{j-1} = d^j.
\]
Since $\Ext^1(I_{\beta},I_{\beta}) =0$ for all $\beta \in \Delta$, Voigt's lemma implies that $V_j \slash  V_{j-1}$ is a degeneration of $I_{\beta_{u_j}}^{\oplus m_{u_j}}$. Similarly, $\Ext^1(I_\beta, I_{\beta^{\prime}})=0$ for all $\beta, \beta^{\prime} \in \Delta^{\sigma}$ and $N \git V_r$ is a degeneration of $\oplus_{\beta_u \in \Delta^{\sigma}} I_{\beta_u}^{\oplus m_u}$. Applying \cite[Lemma 2.3]{reineke2003b} we conclude that $N$ is a degeneration of $M$. It is proved in \cite[Theorem 2.6]{derksen2002} that two self-dual representations are isometric if and only if they are isomorphic. Using this we see that $N$ is in fact a self-dual degeneration of $M$. Hence $\eta_M^{\sigma} \subset \pi_M^{\sigma} (\Sigma^{\sigma}) \subset  \overline{\eta}_M^{\sigma}$, implying $\pi_M^{\sigma} (\Sigma^{\sigma})= \overline{\eta}_M^{\sigma}$.

It remains to show that $\pi_M^{\sigma}$ restricts to a bijection over $\eta_M^{\sigma}$. Consider an arbitrary isotropic filtration
\[
0 = U_0 \subset U_1 \subset \cdots \subset U_{r} \subset M, \qquad \mathbf{dim} \, U_j \slash  U_{j-1} = d^j.
\]
Arguing as above, $U_j \slash  U_{j-1}$ and $M \git U_r$ are degenerations of $I_{\beta_{u_j}}^{\oplus m_{u_j}}$ and $\oplus_{\beta_u \in \Delta^{\sigma}} I_{\beta_u}^{\oplus m_u}$, respectively. Since $\Hom(I_{\beta_i}, I_{\beta_j}) = 0$ if $i <j$, we can apply \cite[Lemma 2.3]{reineke2003b} to conclude that $U_j \slash  U_{j-1} \simeq I_{\beta_{u_j}}^{\oplus m_{u_j}}$ and $M \git U_r \simeq\oplus_{\beta_u \in \Delta^{\sigma}} I_{\beta_u}^{\oplus m_u}$. Lemma \ref{lem:sdFilt} now implies that $U_{\bullet} \subset M$ is the canonical filtration.
\end{proof}

We can now prove an analogue of \cite[Theorem 10.1]{rimanyi2013}.

\begin{Cor}
\label{cor:fundClassStructConst}
Keeping the above notation, the equality
\[
[\overline{\eta}^{\sigma}_M]= (\mathbf{1}_{m_{u_1} \beta_{u_1}} \cdots \mathbf{1}_{m_{u_r} \beta_{u_r}} ) \star \mathbf{1}^{\sigma}_{\sum_{\beta u \in \Delta^{\sigma}} m_u \beta_u}
\]
holds in $\mathcal{M}_Q$.
\end{Cor}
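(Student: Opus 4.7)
\medskip

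\noindent\textbf{Proof proposal.} The plan is to read off both sides of the asserted equality as proper pushforwards from the same resolution and then invoke the resolution theorem to identify them. Concretely, I will use Lemma~\ref{lem:geomMult} to rewrite the right-hand side as a pushforward from the isotropic flag variety $\mathsf{Fl}^{\sigma}_{d^{\bullet},e^{\infty}}$, then use Lemma~\ref{lem:eulerIden} to replace the Euler class $\mathsf{Eu}_{\mathsf{G}_e^{\sigma}}(\mathcal{G}^{\sigma})$ by the fundamental class $[\Sigma^{\sigma}]$, and finally invoke Theorem~\ref{thm:sdResolution} to identify the resulting pushforward with $[\overline{\eta}^{\sigma}_M]$.

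First I would set $d^k = m_{u_k}\beta_{u_k}$ for $k=1,\dots,r$ and $e^{\infty} = \sum_{\beta_u \in \Delta^{\sigma}} m_u \beta_u$, so that $e = \sum_{k=1}^r H(d^k) + e^{\infty} = \Dim M$ by the decomposition \eqref{eq:sdKrullSchmidt}. The fundamental classes $f_k = \mathbf{1}_{m_{u_k}\beta_{u_k}} \in \mathcal{H}_{Q,d^k}$ and $g = \mathbf{1}^{\sigma}_{e^{\infty}} \in \mathcal{M}_{Q,e^{\infty}}$ are the unit classes in the corresponding equivariant cohomology rings, so $f_k(\mathcal{F}_{\bullet,k}) = 1$ and $g(\mathcal{F}_{\bullet,0}) = 1$ as classes on $\mathsf{Fl}^{\sigma}_{d^{\bullet},e^{\infty}}$. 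Applying Lemma~\ref{lem:geomMult} and then Lemma~\ref{lem:eulerIden} therefore yields
\[
(f_1 \cdots f_r) \star g \;=\; \pi^{\sigma}_*\bigl(\mathsf{Eu}_{\mathsf{G}_e^{\sigma}}(\mathcal{G}^{\sigma})\bigr) \;=\; \pi^{\sigma}_*[\Sigma^{\sigma}]
\]
in $H^{\bullet}_{\mathsf{G}_e^{\sigma}} \simeq H^{\bullet}_{\mathsf{G}_e^{\sigma}}(R_e^{\sigma}) = \mathcal{M}_{Q,e}$ (up to the standard degree shifts built into $\mathcal{H}_Q$, $\mathcal{M}_Q$ and $\boxtimes^{S\mhyphen\mathsf{tw}}$, which match because the rank of $\mathcal{G}^{\sigma}$ equals $-\mathcal{E}(e) + \sum_k \chi(d^k,d^k) + \mathcal{E}(e^{\infty})$ plus the contributions from $\gamma$).

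Next I would factor $\pi^{\sigma}$ through the second projection $p: \mathsf{Fl}^{\sigma}_{d^{\bullet},e^{\infty}} \times R_e^{\sigma} \to R_e^{\sigma}$, noting that $p$ restricted to $\Sigma^{\sigma}$ is precisely the morphism $\pi^{\sigma}_M$ of Theorem~\ref{thm:sdResolution}. Hence $p_*[\Sigma^{\sigma}] = (\pi^{\sigma}_M)_*[\Sigma^{\sigma}]$ inside $H^{\bullet}_{\mathsf{G}_e^{\sigma}}(R_e^{\sigma})$. By Theorem~\ref{thm:sdResolution}, $\pi^{\sigma}_M$ is a proper, $\mathsf{G}_e^{\sigma}$-equivariant birational morphism from the smooth variety $\Sigma^{\sigma}$ onto $\overline{\eta}^{\sigma}_M$; the standard identity for the pushforward of the fundamental class along a birational proper map gives $(\pi^{\sigma}_M)_*[\Sigma^{\sigma}] = [\overline{\eta}^{\sigma}_M]$. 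Composing with the contraction $R_e^{\sigma} \to \Spec(\mathbb{C})$ then gives the equality in $\mathcal{M}_{Q,e}$.

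The main obstacle is bookkeeping rather than conceptual: verifying that the degree shifts produced by Lemma~\ref{lem:geomMult} (the cohomological shifts in $\mathcal{H}_Q$, $\mathcal{M}_Q$ and $\boxtimes^{S\mhyphen\mathsf{tw}}$, governed by $\chi$, $\mathcal{E}$ and $\gamma$) conspire exactly with the complex rank of $\mathcal{G}^{\sigma}$ and the codimension of $\overline{\eta}^{\sigma}_M$ in $R_e^{\sigma}$ so that the two sides land in the same graded piece. This is a direct check using equation \eqref{eq:sdEulerIdentity} together with the isotropy-compatibility of the flag, and is completely parallel to the ordinary case handled by Rim\'{a}nyi \cite[Theorem 10.1]{rimanyi2013}; no self-dual-specific complication arises here because the $\mathbb{Z}_2$-fixed/anti-fixed decomposition of the bundle $\mathcal{G}$ is precisely what is recorded by the shift $\mathcal{E}(e)\slash 2$ in the definition of $\mathcal{M}_Q$.
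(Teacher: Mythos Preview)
Your proposal is correct and follows essentially the same approach as the paper's proof: apply Lemma~\ref{lem:geomMult} with the unit classes to rewrite the right-hand side as $\pi^{\sigma}_*\bigl(\mathsf{Eu}_{\mathsf{G}_e^{\sigma}}(\mathcal{G}^{\sigma})\bigr)$, use Lemma~\ref{lem:eulerIden} to identify this with $\pi^{\sigma}_*[\Sigma^{\sigma}]$, and then invoke Theorem~\ref{thm:sdResolution} to get $[\overline{\eta}^{\sigma}_M]$. The paper's proof is a two-sentence compression of exactly these steps; your added detail on the degree bookkeeping is correct but not strictly required, since the maps in Lemmas~\ref{lem:geomMult} and~\ref{lem:eulerIden} are already graded.
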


\begin{proof}
Theorem \ref{thm:sdResolution} implies that $\pi_*^{\sigma} [\Sigma^{\sigma}]= [\overline{\eta}_M^{\sigma}]$. The desired equality then follows from Lemmas \ref{lem:geomMult} and \ref{lem:eulerIden}.
\end{proof}

The class $[\overline{\eta}_M^{\sigma}] \in H^{\bullet}_{\mathsf{G}_e^{\sigma}}(R_e^{\sigma})$ is the Thom polynomial of the orbit $\eta_M^{\sigma} \subset R_e^{\sigma}$. These classes play the role of quiver polynomials \cite{buch1999} in the self-dual setting.

\begin{Ex}
Let $Q$ be the $A_2$ quiver $\begin{tikzpicture}[thick,scale=.33,decoration={markings,mark=at position 0.6 with {\arrow{>}}}]
\draw[postaction={decorate}] (0,0) to  (4,0);
\fill (0,0) circle (4pt);
\fill (4,0) circle (4pt);
\draw (0,+.6) node {\tiny $1$};
\draw (4,+.6) node {\tiny $2$};
\end{tikzpicture}$ and set $Q_0^+ = \{ 1 \}$. If $f \in \mathcal{H}_{Q,(d_1, d_2)}$ and $g \in \mathcal{M}_{Q,(e,e)}$, then
\begin{align*}
f \star g = \sum_{\pi \in \mathfrak{sh}_{d_1, e, d_2}}   \pi \cdot \Big(   f( x_1, & \dots, x_{d_1}, y_1, \dots, y_{d_2}) g( z_1, \dots, z_e ) \times  \\ &
\frac{  \prod_{1 \leq i \leq_{-s} j \leq d_1} (-x_i - x_j) }{\prod_{l=1}^{d_2} \prod_{i=1}^{d_1} (-y_l - x_i) \prod_{m=1}^{d_2} \prod_{k=1}^e (-y_l - z_k)} \Big).
\end{align*}
For orthogonal representations this gives
\[
\mathbf{1}_{(d,0)} \star \mathbf{1}_{(e,e)}^{\sigma} = \sum_{\pi \in \mathfrak{sh}_{d,e}} \pi \cdot \Big( \prod_{1 \leq i < j \leq d} (- x_i - x_j) \Big) = s_{(d-1, \dots, 1,0,\mathbf{0}^{e})}(-z)
\]
while for symplectic representations
\[
\mathbf{1}_{(d,0)} \star \mathbf{1}_{(e,e)}^{\sigma} = \sum_{\pi \in \mathfrak{sh}_{d,e}} \pi \cdot \Big( \prod_{1 \leq i \leq j \leq d} (- x_i - x_j) \Big) = 2^d s_{(d, \dots, 2,1,\mathbf{0}^e)}(-z).
\]
Corollary \ref{cor:fundClassStructConst} implies that $\mathbf{1}_{(d,0)} \star \mathbf{1}_{(e,e)}^{\sigma}$ is the Thom polynomial of the orbit of matrices having rank $e$ in the $\mathsf{GL}_{d+e}(\mathbb{C})$ representation $\bigwedge\nolimits^2 \mathbb{C}^{d+e}$ or $\Sym^2 \mathbb{C}^{d+e}$, respectively. These Thom polynomials were computed using different methods in \cite{jozefiak1981}, \cite{harris1984}, \cite{feher2004}.
\end{Ex}

Turning to the second application of Lemma \ref{lem:sdFilt}, define putative orientifold DT invariants $\Omega_{Q,e}^{\sigma, \mathsf{indec}}$ to be one if $e \in \Lambda_Q^{\sigma,+}$ is a sum of pairwise distinct positive roots, each of which is the dimension vector of an indecomposable representation which admits a self-dual structure. Otherwise, set $\Omega_{Q,e}^{\sigma, \mathsf{indec}} =0$. Define $\Omega_{Q,e}^{\sigma, \mathsf{simp}}$ similarly.  Put $\Omega_{Q,0}^{\sigma} =1$ in both cases. Set also $\Pi^+ =\Pi \cap \Delta^+$ and $\Pi^{\sigma} =\Pi \cap \Delta^{\sigma}$. Let $h=0$ in the hyperbolic case and $h=1$ otherwise.

Recall that $A_{L_0}(q^{\frac{1}{2}}, t) = (q^{\frac{1}{2}} t; q)_{\infty} = \mathbb{E}_q(t)$ is the quantum dilogarithm.

\begin{Thm}
\label{thm:oriDilog}
The identity
\begin{align*}
\overset{\longleftarrow}{\prod_{\alpha \in \Pi^+}} & \mathbb{E}_q (t^{\alpha})   \star  \sum_{\varnothing \subseteq \pi \subseteq \Pi^{\sigma}} \prod_{\alpha \in \pi} \mathbb{E}_{q^2}(q^{-\frac{1}{2} +h} t^{\alpha}) \star \Omega_{Q,\pi}^{\sigma, \mathsf{simp}} \xi^{\pi} =  
\\ &
\overset{\longrightarrow}{\prod_{\beta \in \Delta^-}} \mathbb{E}_q  (t^{\beta}) \star 
\sum_{ \varnothing \subseteq \pi \subseteq \Delta^{\sigma}} \Big(\prod_{\beta \in \pi} \mathbb{E}_{q^2}(q^{-\frac{1}{2} +h } t^{\beta}) \cdot\prod_{\beta \not \in \pi} \mathbb{E}_{q^2}(q^{-\frac{1}{2}} t^{\beta}) \Big) \star \Omega_{Q,\pi}^{\sigma, \mathsf{indec}} \xi^{\pi}
\end{align*}
holds in $\hat{\mathbb{S}}_Q$. Here we have written $\xi^{\pi}$ for $\xi^{\sum_{\beta \in \pi} \beta}
$ and similarly for $\Omega^{\sigma,\cdots}_{Q,\pi}$.
\end{Thm}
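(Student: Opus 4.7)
The plan is to identify both sides of the identity with the motivic orientifold Donaldson--Thomas series $A_Q^{\sigma}$, viewed as an element of the $\hat{\mathbb{T}}_Q$-module $\hat{\mathbb{S}}_Q$. This will be done by applying the parity-twisted Hilbert--Poincar\'{e} series functor to two Poincar\'{e}--Birkhoff--Witt type factorizations of $\mathcal{M}_Q$, one indexed by $\Pi^+ \sqcup \Pi^{\sigma}$ and one indexed by $\Delta^- \sqcup \Delta^{\sigma}$, giving respectively the left- and right-hand sides of the desired identity.

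First I would establish ordered CoHA action isomorphisms of the form
\[
\overset{\longleftarrow}{\boxtimes}_{\alpha \in \Pi^+}^{\mathsf{tw}} \mathcal{H}_Q^{(\alpha)} \boxtimes^{S \mhyphen \mathsf{tw}} \mathcal{M}_Q^{\mathsf{simp}} \xrightarrow{\sim} \mathcal{M}_Q, \qquad \overset{\longrightarrow}{\boxtimes}_{\beta \in \Delta^-}^{\mathsf{tw}} \mathcal{H}_Q^{(\beta)} \boxtimes^{S \mhyphen \mathsf{tw}} \mathcal{M}_Q^{\mathsf{indec}} \xrightarrow{\sim} \mathcal{M}_Q
\]
in $D^{lb}(\mathsf{Vect}_{\mathbb{Z}})_{\Lambda_Q^{\sigma,+}}$, where $\mathcal{M}_Q^{\mathsf{simp}}$ (resp. $\mathcal{M}_Q^{\mathsf{indec}}$) is the submodule spanned by equivariant cohomology of the strata of self-dual representations whose canonical decomposition from Lemma \ref{lem:sdFilt} involves only simple (resp. indecomposable) self-dual objects. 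The existence of such factorizations is the content of the forthcoming Theorem \ref{thm:finiteTypeCoHM}; its proof combines the canonical $\sigma$-filtration of Lemma \ref{lem:sdFilt}, the self-dual resolution $\pi_M^{\sigma}$ of Theorem \ref{thm:sdResolution}, the geometric formula of Lemma \ref{lem:geomMult} and the Thom-polynomial interpretation of Corollary \ref{cor:fundClassStructConst}, together with a self-dual adaptation of Rim\'{a}nyi's argument in \cite[Theorem 11.2]{rimanyi2013} establishing Theorem \ref{thm:finiteTypeCoHA} for the CoHA.

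Next I would compute the parity-twisted Hilbert--Poincar\'{e} series of each tensor factor. Each CoHA factor $\mathcal{H}_Q^{(\beta)}$ is isomorphic as a $\mathbb{Z}$-graded algebra to $\mathcal{H}_{L_0}$, and its series is the quantum dilogarithm $\mathbb{E}_q(t^{\beta})$. For a self-dual root $\alpha \in \Pi^{\sigma}$, after reducing to the zero-loop calculation, the corresponding cyclic component of $\mathcal{M}_Q^{\mathsf{simp}}$ is a free module over the subalgebra $\mathcal{H}_{L_0}^{\mathsf{odd}}$ or $\mathcal{H}_{L_0}^{\mathsf{even}}$ identified in Proposition \ref{prop:zeroLoopCoHM}, with free generators enumerated precisely by whether $I_{\alpha}$ admits a self-dual structure; this is what the indicator $\Omega_{Q,\pi}^{\sigma, \mathsf{simp}}$ records. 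The resulting factor in the series is $\mathbb{E}_{q^2}(q^{-\frac{1}{2}+h} t^{\alpha})$, with the shift $q^{-\frac{1}{2}+h}$ coming from $\mathcal{E}(\alpha) \slash 2$ together with the cohomological degree of the generators, which depends on whether the local isometry group is of type $B$/$D$ (accounting for $h$) versus type $C$. For the indecomposable decomposition the calculation is the same, except that at each $\beta \in \Delta^{\sigma}$ one must additionally account for the contribution of the hyperbolics $H(I_\beta)$ when $\beta \notin \pi$, which after an analogous loop-quiver computation produces the extra factor $\mathbb{E}_{q^2}(q^{-\frac{1}{2}}t^{\beta})$ present on the right-hand side but absent on the left (where only simple-root hyperbolics occur, and these are already absorbed into the $\Pi^+$-product).

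The main obstacle will be verifying that the two PBW-type decompositions of $\mathcal{M}_Q$ exist in the required $\boxtimes^{S \mhyphen \mathsf{tw}}$-linear form, since the self-dual submodules $\mathcal{M}_Q^{\mathsf{simp}}$ and $\mathcal{M}_Q^{\mathsf{indec}}$ are not CoHA-cyclic and their compatibility with the ordering must be proved via a careful analysis of the stratification by $\sigma$-composition series, using Theorem \ref{thm:sdResolution} to control closures. A secondary subtlety will be tracking the exponent shifts $q^{-\frac{1}{2}+h}$ and $q^{-\frac{1}{2}}$ on the nose, including the discrepancy between the hyperbolic ($h=0$) and non-hyperbolic ($h=1$) duality structures in type $A$. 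Once the two factorizations and their Hilbert--Poincar\'{e} series are in place, the identity becomes a purely formal manipulation in $\hat{\mathbb{S}}_Q$, because the $\hat{\mathbb{T}}_Q$-module structure on $\hat{\mathbb{S}}_Q$ was defined exactly to reproduce the degree shift $\gamma(d,e) \slash 2$ of $\boxtimes^{S \mhyphen \mathsf{tw}}$.
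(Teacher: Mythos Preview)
Your proposal has a genuine circularity. You plan to derive the identity by taking Hilbert--Poincar\'{e} series of the two PBW factorizations of $\mathcal{M}_Q$ supplied by Theorem \ref{thm:finiteTypeCoHM}. But in the paper's logical order the \emph{surjectivity} of the indecomposable factorization \eqref{eq:indecompProd} is proved by a dimension count, and that dimension count is precisely Theorem \ref{thm:oriDilog} (together with the already-established simple-root isomorphism \eqref{eq:simpleProd}). The ingredients you list --- Lemma \ref{lem:sdFilt}, Theorem \ref{thm:sdResolution}, Lemma \ref{lem:geomMult}, Corollary \ref{cor:fundClassStructConst}, and the Rim\'{a}nyi-style restriction argument --- yield only \emph{injectivity} of \eqref{eq:indecompProd}. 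So invoking Theorem \ref{thm:finiteTypeCoHM} to prove Theorem \ref{thm:oriDilog} is circular unless you supply an independent argument for surjectivity of \eqref{eq:indecompProd}, which you have not.

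The paper avoids this by proving Theorem \ref{thm:oriDilog} with methods that do not touch the CoHM at all: one chooses a $\sigma$-compatible stability $\theta_{\mathsf{simp}}$ whose stable objects are the simples, so that the $\sigma$-Harder--Narasimhan filtration gives one factorization of the identity characteristic function in the finite field Hall module of $Q$, while Lemma \ref{lem:sdFilt} gives a second factorization. Applying the Hall module integration map of \cite[Theorem 4.1]{mbyoung2015} to both yields the two sides of the identity directly in $\hat{\mathbb{S}}_Q$. (The paper also notes an alternative via Kazarian spectral sequences and a self-dual Voigt lemma.) Your simple-root half is fine, since \eqref{eq:simpleProd} is proved by a bare polynomial computation independent of Theorem \ref{thm:oriDilog}; it is the indecomposable half that needs an argument external to Theorem \ref{thm:finiteTypeCoHM}.
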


\begin{proof}
It is straightforward to construct a $\sigma$-compatible stability $\theta_{\mathsf{simp}}$ whose stable representations are the simple representations and whose order by increasing slope agrees with $<$. Existence and uniqueness of $\sigma$-Harder-Narasimhan filtrations implies a factorization of the identity characteristic function in the finite field Hall module of $Q$. Applying the Hall module integration map \cite[Theorem 4.1]{mbyoung2015} to this factorization gives the left-hand side of the desired equality. Lemma \ref{lem:sdFilt} leads to a second factorization of identity characteristic function, the integral of which gives the right-hand side.
\end{proof}

Alternatively, Theorem \ref{thm:oriDilog} can be proved using Kazarian spectral sequences as in \cite[\S 6]{rimanyi2013}. The new ingredient is a self-dual version of Voigt's lemma, stating that the codimension of $\eta_M^{\sigma} \subset R_{\Dim M}^{\sigma}$ is $\dim_{\mathbb{C}} \Ext^1(M,M)^S$. This can be verified using the chain level description of $\Ext^1(M,M)^S$ given in \cite[Proposition 3.3]{mbyoung2016}.

\subsection{CoHM of type \texorpdfstring{$A$}{} quivers}

We begin with a calculation in rank two.

\begin{Ex}
Consider orthogonal representations of the $A_2$ quiver. Then $x^i \star \mathbf{1}_0^{\sigma} = z^i$ and $\{ x^i \star \mathbf{1}_0^{\sigma} \}_{i \geq 0}$ spans $\mathcal{M}_{Q,(1,1)}$. Let $\beta_2$ be the non-simple indecomposable representation and let $\nu_i = y^i \in \mathcal{H}_{Q,(1,1)}$ be a  generator of $\mathcal{H}_Q^{(\beta_2)}$. We compute
\[
(x^i \cdot x^j) \star \mathbf{1}^{\sigma}_0 = -(z_1+z_2) \frac{z_1^i z_2^j - z_1^j z_2^i}{z_1 - z_2}, \qquad \nu_i \star \mathbf{1}^{\sigma}_0 = (-1)^i \frac{z^i_1 - z_2^i}{z_1 - z_2}
\]
so that $\{(x^i \cdot x^j) \star \mathbf{1}^{\sigma}_0 \}_{i > j}$ spans $(z_1+z_2) \mathbb{Q}[z_1, z_2]^{\mathfrak{S}_2}$. To generate the remainder of $\mathcal{M}_{Q,(2,2)} \simeq \mathbb{Q}[z_1, z_2]^{\mathfrak{S}_2}$ it suffices to include $\{ \nu_{2i+1} \star \mathbf{1}^{\sigma}_0\}_{i \geq 0}$. In three variables
\[
(x^i \cdot x^j \cdot x^k) \star \mathbf{1}^{\sigma}_0 = - (z_1 +z_2) (z_1 + z_3) (z_2 + z_3) s_{(i,j,k) - \delta_3}
\]
which freely generate $(z_1 +z_2) (z_1 + z_3) (z_2 + z_3) \mathbb{Q}[z_1, z_2, z_3]^{\mathfrak{S}_3}$. We also have
\begin{multline*}
(x^i \cdot \nu_j) \star \mathbf{1}^{\sigma}_0 =  \frac{(-1)^j}{(z_1 - z_2)(z_1-z_3)(z_2-z_3)}
\left[ z_1^i (z_2^j - z_3^j)(z_1+z_2)(z_1+z_3) - \right. \\ \left. z_2^i (z_1^j - z_3^j)(z_1+z_2)(z_3+z_2) + z_3^i (z_1^j - z_1^j)(z_1+z_3)(z_2+z_3) \right].
\end{multline*}
Writing $\beta_1$ for the simple root associated to $1 \in Q_0$, we conclude that
\[
\mathcal{H}_Q^{(\beta_1)} \boxtimes^{\mathsf{tw}}\mathcal{H}_Q^{ (\beta_2), \mathsf{odd}} \boxtimes^{S \mhyphen \mathsf{tw}} \mathbf{1}^{\sigma}_0  \xrightarrow[]{\star} \mathcal{M}_Q
\]
is a $D^{lb}(\mathsf{Vect}_{\mathbb{Z}})_{\Lambda^{\sigma,+}_Q}$-isomorphism up to $\Lambda_Q^{\sigma, +}$-degree $(3,3)$.
\end{Ex}

These calculations can be generalized as follows. For each $\beta \in \Delta^{\sigma}$ let
\[
\mathcal{M}_Q^{\langle \beta \rangle} = \bigoplus_{n \geq 0} H^{\bullet}_{\mathsf{G}^{\sigma}_{n \beta}}(R^{\sigma}_{n\beta})\{ \mathcal{E}(n \beta) / 2 \}
\]
and
\[
\mathcal{M}_Q^{\langle \beta \rangle, \simeq} = \bigoplus_{n \geq 0} H^{\bullet}_{\mathsf{G}^{\sigma}_{n \beta}} (\eta^{\sigma}_{I_{\beta}^{\oplus n}}) \{ \mathcal{E}(n \beta) / 2 \}
\]
considered as modules over $\mathcal{H}_Q^{\langle \beta \rangle}$ and $\mathcal{H}_Q^{\langle \beta \rangle, \simeq}$, respectively. If $I_{\beta}$ does not admit a self-dual structure, then $n$ must be even. We have $\mathcal{M}_Q^{\langle \beta \rangle, \simeq} \simeq \mathcal{M}_{L_0}$ compatibly with $\mathcal{H}_Q^{\langle \beta \rangle, \simeq} \simeq \mathcal{H}_{L_0}$, where the duality structure on $L_0$ is $s_{L_0} = -1$ in the hyperbolic case and $s_{L_0}=1$ in the non-hyperbolic case. The structure of $\mathcal{M}_Q^{\langle \beta \rangle, \simeq}$ is therefore determined by Proposition \ref{prop:zeroLoopCoHM}. The restriction $\rho^{\sigma} : \mathcal{M}_Q^{\langle \beta \rangle} \twoheadrightarrow \mathcal{M}_Q^{\langle \beta \rangle, \simeq}$ is a surjective module homomorphism over $\rho : \mathcal{H}_Q^{\langle \beta \rangle} \rightarrow \mathcal{H}_Q^{\langle \beta \rangle, \simeq}$. Using the identification $\mathcal{M}_Q^{\langle \beta \rangle, \simeq} \simeq \mathcal{M}_{L_0}$, define a section of $\rho^{\sigma}$ by
\[
\psi^{\sigma}: 
\begin{cases}
\tilde{x}^{2 i_1+1} \cdots \tilde{x}^{2 i_d + 1} \star \mathbf{1}_1^{\sigma} \mapsto \psi (\tilde{x}^{2 i_1 +1} \cdots \tilde{x}^{2 i_d + 1}) \star \mathbf{1}_{\beta}^{\sigma} & \mbox{ in type } B, \\
\tilde{x}^{2i_1+1} \cdots \tilde{x}^{2i_d+1} \star \mathbf{1}_0^{\sigma} \mapsto \psi (\tilde{x}^{2i_1+1} \cdots \tilde{x}^{2i_d+1}) \star \mathbf{1}_0^{\sigma}
& \mbox{ in type } C, \\
\tilde{x}^{2i_1} \cdots \tilde{x}^{2i_d} \star \mathbf{1}_0^{\sigma} \mapsto \psi (\tilde{x}^{2i_1} \cdots \tilde{x}^{2i_d}) \star \mathbf{1}_0^{\sigma}
& \mbox{ in type } D.
\end{cases} 
\]
The map $\psi^{\sigma}$ is a module embedding over the restriction of $\psi$ to the appropriate even/odd subalgebra of $\mathcal{H}_Q^{\langle \beta \rangle, \simeq}$. Write $\mathcal{M}_Q^{(\beta)}$ for the image of $\psi^{\sigma}$ in types $C$ or $D$ and $\mathcal{M}_Q^{(\beta), +}$ for the image of $\psi^{\sigma}$ in type $B$.

In the non-hyperbolic case, for each subset $\varnothing \subseteq \pi \subseteq \Delta^{\sigma}$ define
\[
\mathcal{M}_Q^{(\pi),\mathsf{indec}} = \bigotimes_{\beta \in \Delta^{\sigma} \backslash \pi} \mathcal{M}_Q^{(\beta)} \otimes \bigotimes_{\beta \in \pi} \mathcal{M}_Q^{(\beta), +}.
\]
This is a rank one free $\bigotimes_{\beta \in \Delta^{\sigma} \backslash \pi} \mathcal{H}_Q^{(\beta), \mathsf{even}} \otimes \bigotimes_{\beta \in \pi} \mathcal{H}_Q^{(\beta), \mathsf{odd}}$-module. If $\varnothing \subseteq \pi \subseteq \Pi^{\sigma}$, then $\mathcal{M}_Q^{(\pi),\mathsf{simp}}$ is defined similarly. In the hyperbolic case $\mathcal{M}_Q^{(\varnothing)}$ is still defined.

The following is the main result of this section.

\begin{Thm}
\label{thm:finiteTypeCoHM}
Let $Q$ be of Dynkin type $A$. The ordered CoHA action maps
\begin{equation}
\label{eq:simpleProd}
\overset{\longleftarrow}{\boxtimes}_{ \alpha \in \Pi^+}^{\mathsf{tw}} \mathcal{H}_Q^{(\alpha)} \boxtimes^{S \mhyphen \mathsf{tw}} \bigoplus _{\varnothing \subseteq \pi \subseteq \Pi^{\sigma}} \Omega^{\sigma, \mathsf{simp}}_{Q,\pi} \cdot \mathcal{M}_Q^{(\pi),\mathsf{simp}} \longrightarrow \mathcal{M}_Q
\end{equation}
and
\begin{equation}
\label{eq:indecompProd}
\overset{\longrightarrow}{\boxtimes}_{\beta \in \Delta^-}^{\mathsf{tw}} \mathcal{H}_Q^{(\beta)} \boxtimes^{S \mhyphen \mathsf{tw}} \bigoplus _{\varnothing \subseteq \pi \subseteq \Delta^{\sigma}} \Omega^{\sigma,\mathsf{indec}}_{Q,\pi} \cdot \mathcal{M}_Q^{(\pi),\mathsf{indec}} \longrightarrow \mathcal{M}_Q
\end{equation}
are isomorphisms in $D^{lb}(\mathsf{Vect}_{\mathbb{Z}})_{\Lambda_Q^{\sigma,+}}$
\end{Thm}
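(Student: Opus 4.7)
The plan is to adapt Rim\'{a}nyi's proof of Theorem \ref{thm:finiteTypeCoHA} to the self-dual setting, using the geometric and numerical tools developed in Section \ref{sec:rimMod}. I would treat the indecomposable factorization \eqref{eq:indecompProd} first; the simple factorization \eqref{eq:simpleProd} then follows by applying Theorem \ref{thm:finiteTypeCoHA} on the CoHA factor (to rewrite indecomposable generators $\mathcal{H}_Q^{(\beta)}$ for $\beta \in \Delta^-$ in terms of the simple generators $\mathcal{H}_Q^{(\alpha)}$ for $\alpha \in \Pi^+$) and using Proposition \ref{prop:zeroLoopCoHM} to match the two descriptions of the self-dual generators $\mathcal{M}_Q^{(\pi),\mathsf{indec}}$ and $\mathcal{M}_Q^{(\pi),\mathsf{simp}}$. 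Well-definedness of both maps in $D^{lb}(\mathsf{Vect}_{\mathbb{Z}})_{\Lambda_Q^{\sigma,+}}$ is a bookkeeping check using the twisted product $\boxtimes^{S \mhyphen \mathsf{tw}}$ and the identity \eqref{eq:sdEulerIdentity}.

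For surjectivity of \eqref{eq:indecompProd}, Lemma \ref{lem:sdFilt} and the self-dual Krull--Schmidt decomposition \eqref{eq:sdKrullSchmidt} attach to each $\mathsf{G}_e^{\sigma}$-orbit in $R_e^{\sigma}$ a canonical isotropic filtration type, stratifying $R_e^{\sigma}$ by the indecomposable multiplicity vector $(m_{\beta})_{\beta \in \Delta^- \sqcup \Delta^{\sigma}}$. Induction on this partial order, combined with the long exact sequence of each pair $(\overline{\eta}_M^{\sigma}, \overline{\eta}_M^{\sigma} \setminus \eta_M^{\sigma})$, reduces the problem to producing every polynomial multiple of the Thom class $[\overline{\eta}_M^{\sigma}]$ in the image. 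Corollary \ref{cor:fundClassStructConst} supplies the fundamental class itself; to get arbitrary polynomial multiples, I would use Lemma \ref{lem:geomMult} to identify the ordered action $(f_1 \cdots f_r) \star g$ with the pushforward from $\mathsf{Fl}_{d^{\bullet}, e^{\infty}}^{\sigma}$ of $(\prod_k f_k(\mathcal{F}_{\bullet, k})) g(\mathcal{F}_{\bullet, 0}) \mathsf{Eu}_{\mathsf{G}_e^{\sigma}}(\mathcal{G}^{\sigma})$. Since elements of $\mathcal{H}_Q^{(\beta)}$ only involve Chern classes at the distinguished node $i(\beta)$, letting $f_k$ range over $\mathcal{H}_Q^{(\beta_{u_k})}$ and $g$ range over the rank-one free module $\mathcal{M}_Q^{(\pi),\mathsf{indec}}$ produces precisely the polynomial multiples of $[\overline{\eta}_M^{\sigma}]$ needed to generate the associated graded of the stratification.

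For injectivity I would compare Hilbert--Poincar\'{e} series. Each $\mathcal{H}_Q^{(\beta)}$ is a graded shift of $\mathcal{H}_{L_0}$, and by Proposition \ref{prop:zeroLoopCoHM} each $\mathcal{M}_Q^{(\pi),\mathsf{indec}}$ is a rank-one free module over the parity-restricted subalgebra prescribed by the $B$/$C$/$D$ types of its self-dual factors. The generating series of the source then matches the right-hand side of Theorem \ref{thm:oriDilog}, while \eqref{eq:motivicOriDTSeries} identifies the generating series of $\mathcal{M}_Q$ with the left-hand side. Theorem \ref{thm:oriDilog} then forces equality of graded dimensions in every bidegree, upgrading the surjection to an isomorphism.

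The main obstacle I anticipate is the geometric surjectivity step, specifically making rigorous the claim that Lemma \ref{lem:geomMult} in combination with Corollary \ref{cor:fundClassStructConst} produces \emph{every} polynomial multiple of the Thom classes. In the ordinary setting this follows from a clean description of the equivariant cohomology of partial flag varieties by Chern classes of tautological subquotients. In the isotropic case one must work over the disconnected classical groups $\mathsf{G}_e^{\sigma}$ and carefully track the sign-change subgroups and the type $B$/$C$/$D$ constraints recorded in Theorem \ref{thm:cohmLoc} so that the Thom classes together with their products by the images of the generators of $\mathcal{H}_Q^{(\beta)}$ and $\mathcal{M}_Q^{(\pi),\mathsf{indec}}$ indeed span all of $H^{\bullet}_{\mathsf{G}_e^{\sigma}}(R_e^{\sigma})$.
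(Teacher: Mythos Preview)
Your architecture---one direction geometrically, the other from the Hilbert--Poincar\'{e} series identity of Theorem \ref{thm:oriDilog}---matches the paper's, but you have inverted both choices. The paper proves \eqref{eq:simpleProd} \emph{directly} and almost trivially: with the simples ordered as in $\Pi^+$, the shuffle kernel of Theorem \ref{thm:cohmLoc} contributes no nontrivial factors, so the action map literally reduces to ordinary polynomial multiplication in the variables attached to each node. Your plan to deduce \eqref{eq:simpleProd} from \eqref{eq:indecompProd} via Theorem \ref{thm:finiteTypeCoHA} is circuitous and not obviously formal: Theorem \ref{thm:finiteTypeCoHA} decomposes $\mathcal{H}_Q$ along all of $\Pi$ or all of $\Delta$, whereas \eqref{eq:simpleProd} and \eqref{eq:indecompProd} use only the halves $\Pi^+$, $\Delta^-$, with the remaining root contributions absorbed into the self-dual factors $\mathcal{M}_Q^{(\pi),\mathsf{simp}}$ and $\mathcal{M}_Q^{(\pi),\mathsf{indec}}$; matching these two packagings requires a separate argument.

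For \eqref{eq:indecompProd} the paper proves \emph{injectivity} geometrically rather than surjectivity. Given a putative linear relation among ordered products, choose a term whose associated self-dual representation $M$ has $\eta_M^{\sigma}$ not contained in the closure of any other term's orbit, and apply the restriction $\rho_M^{\sigma}: H^{\bullet}_{\mathsf{G}_e^{\sigma}}(R_e^{\sigma}) \to H^{\bullet}_{\mathsf{G}_e^{\sigma}}(\eta_M^{\sigma})$. By Theorem \ref{thm:sdResolution} the resolution $\pi_M^{\sigma}$ has a unique torus-fixed point over $\Phi^{\sigma} \in \eta_M^{\sigma}$, so the localization sum collapses to a single term: the Euler class of the normal bundle (nonzero, by a self-dual Feh\'{e}r--Rim\'{a}nyi argument) times the chosen polynomials $f_k(\theta), g_u(\theta)$. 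This forces the coefficient to vanish, and iteration gives injectivity. Surjectivity then follows from Theorem \ref{thm:oriDilog} together with the already-established isomorphism \eqref{eq:simpleProd}, which identifies the series of $\mathcal{M}_Q$ with the left-hand side. This sidesteps exactly the stratification and associated-graded bookkeeping you flag as the main obstacle; your route is not wrong, but the paper's ordering makes both halves substantially easier.
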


\begin{proof}
Let $f_{\alpha} \in \mathcal{H}_Q^{(\alpha)}$ and $g \in \bigoplus _{\varnothing \subseteq \pi \subseteq \Pi^{\sigma}} \Omega^{\sigma, \mathsf{simp}}_{Q,\pi} \cdot \mathcal{M}_Q^{(\pi),\mathsf{simp}}$. Concretely, $g$ is constant if $\Pi^{\sigma} = \varnothing$ and is a symmetric polynomial in the variables associated to the node $Q_0^{\sigma}$ otherwise. Taking into account the ordering of the roots, Theorem \ref{thm:cohmLoc} gives
\[
(\overset{\longleftarrow}{\prod_{\alpha \in \Pi^+}} f_{\alpha}) \star g = (\prod_{\alpha \in \Pi^+} f_{\alpha} ) g,
\]
the products on the right-hand side being ordinary polynomial multiplication. It follows that the map \eqref{eq:simpleProd} is an isomorphism.

To show that the map \eqref{eq:indecompProd} is an isomorphism we use an argument similar to \cite[Theorem 11.2]{rimanyi2013}. Fix non-negative integers $\{m_u\}_{\beta_u \in \Delta}$ such that $m_{S(u)}= m_u$ for all $\beta_u \in \Delta^+$. Let $M$ be the self-dual representation determined by equation \eqref{eq:sdKrullSchmidt} and let $e = \Dim M$. The isometry group of $M$ is homotopy equivalent to $\prod_{\beta_u \in \Delta^-} \mathsf{GL}_{m_u} \times \prod_{\beta_u \in \Delta^{\sigma}} \mathsf{G}_{m_u}^{s_u}$, where $\mathsf{G}^{s_u}_{m_u}$ is a symplectic group in the hyperbolic case and is an orthogonal group otherwise.

Define sets $\mathcal{T}_{i,k,v} \subset \{1, \dots, e_i\}$ for $ i \in Q_0$, $k=1, \dots, \vert \Delta \vert$ and $v = 1, \dots, m_{u_k}$ by requiring $\vert \mathcal{T}_{i,k,v} \vert =1$ if $\dim_{\mathbb{C}} (I_{\beta_{u_k}})_i=1$ and $\mathcal{T}_{i,k,v} = \varnothing$ otherwise, and
\[
\mathcal{T}_{i,1,1} \sqcup \cdots \sqcup \mathcal{T}_{i,\vert \Delta \vert,m_{u_{\vert \Delta \vert}}} = \{1, \dots, e_i\}
\]
as ordered sets. Write $\{\epsilon_{i,1}, \dots, \epsilon_{i,e_i} \}$ for the standard basis of $\mathbb{C}^{e_i}$. Let $A_{k,v}$ be the indecomposable representation of type $\beta_{u_k}$ with basis $\{\epsilon_{i,j}\}_{i \in Q_0, j \in \mathcal{T}_{i,k,v}}$ and put
\[
\Phi^{\sigma} =\bigoplus_{k=1}^{\vert \Delta \vert} \bigoplus_{v=1}^{m_{u_k}} A_{k,v}.
\]
Define a self-dual structure on $\Phi^{\sigma}$ by requiring that
\begin{enumerate}[label=(\roman*)]
\item $A_{k,v} \oplus A_{S(k), v}$ be hyperbolic if $\beta_{u_k} \in \Delta^-$ and $v= 1, \dots, m_{u_k}$,

\item $A_{k,v} \oplus A_{k, \lfloor \frac{m_{u_k}}{2} \rfloor +v}$ be hyperbolic if $\beta_{u_k} \in \Delta^{\sigma}$ and $v = 1, \dots, \lfloor \frac{m_{u_k}}{2} \rfloor$, and

\item $A_{k,m_{u_k}}$ have its canonical self-dual structure if $\beta_{u_k} \in \Delta^{\sigma}$ and $m_{u_k}$ is odd.
\end{enumerate}
Then $\Phi^{\sigma}$ and $M$ are isometric self-dual representations. The restriction
\[
\rho^{\sigma}_{M} : H^{\bullet}_{\mathsf{G}_e^{\sigma}} (R_e^{\sigma}) \rightarrow H^{\bullet}_{\mathsf{G}_e^{\sigma}} (\eta_M^{\sigma}) \simeq H^{\bullet}(B\Aut_S(\Phi^{\sigma}))
\]
can be computed by identifying $H^{\bullet}_{\mathsf{G}_e^{\sigma}}(R_e^{\sigma})$ and $H^{\bullet}(B\Aut_S(\Phi^{\sigma}))$ with appropriately symmetric polynomials in variables $\{z_{i,j}\}$ and $\{ \theta_{k,v}\}$, respectively, and using Lemma \ref{lem:equivCoRestr}. We find that
\begin{enumerate}[label=(\roman*)]
\item if $i \in Q_0^+$ and $j \in \mathcal{T}_{i,k,v}$, then
\[
\rho^{\sigma}_{M} (z_{i,j}) =
\begin{cases}
 \theta_{k,v} & \mbox{ if } \beta_{u_k} \in \Delta^-, \\
 -\theta_{S(k),v} & \mbox{ if } \beta_{u_k} \in \Delta^+, \\
\theta_{k,v} & \mbox{ if } \beta_{u_k} \in \Delta^{\sigma} \mbox{ and } 1 \leq j \leq \lfloor \frac{m_{u_k}}{2} \rfloor, \\
-\theta_{k,v} & \mbox{ if } \beta_{u_k} \in \Delta^{\sigma} \mbox{ and } \lfloor \frac{m_{u_k}}{2} \rfloor + 1 \leq j \leq 2 \lfloor \frac{m_{u_k}}{2} \rfloor, \\
0 & \mbox{ if } \beta_{u_k} \in \Delta^{\sigma} \mbox{ and } j=m_{u_k} \mbox{ is odd, and}
\end{cases}
\]
\item if $i \in Q_0^{\sigma}$ and $j \in \mathcal{T}_{i,k,v}$, then
\[
\rho^{\sigma}_{M} (z_{i,j}) =
\begin{cases}
\theta_{k,v} & \mbox{ if } \beta_{u_k} \in \Delta^-, \\
- \theta_{S(k),v} & \mbox{ if } \beta_{u_k} \in \Delta^+, \\
\theta_{k,v} & \mbox{ if } \beta_{u_k} \in \Delta^{\sigma}.
\end{cases}
\]
\end{enumerate}

Let $f_k \in \mathcal{H}_{Q,m_{u_k} \beta_{u_k}}^{(\beta_{u_k})}$ and let $g_u$ be an element of $\mathcal{M}_{Q, m_u\beta_u}^{(\beta_u)}$ or $\mathcal{M}_{Q, m_u\beta_u}^{(\beta_u),+}$, depending on the parity of $m_u$. We will show that the image of
\begin{equation}
\label{eq:tensElement}
\overset{\longrightarrow}{\boxtimes}_{ \beta_{u_k} \in \Delta^-}^{\mathsf{tw}} f_k \boxtimes^{S \mhyphen \mathsf{tw}} \otimes_{\beta_u \in \Delta^{\sigma}} g_u
\end{equation}
under the map \eqref{eq:indecompProd} is non-zero by showing that its image under $\rho^{\sigma}_M$ is non-zero. Since $\pi_M^{\sigma}: \Sigma^{\sigma} \rightarrow R_e^{\sigma}$ is an equivariant resolution of $\overline{\eta}_M^{\sigma}$ (Theorem \ref{thm:sdResolution}), there is a single $\mathsf{T}_e$-fixed point above the $\mathsf{T}_e$-fixed point $\Phi^{\sigma} \in \eta^{\sigma}_M$. Hence the image of \eqref{eq:tensElement} under $\rho^{\sigma}_M$ consists of the single term
\begin{equation}
\label{eq:restricted}
 \prod_{\beta_{u_k} \in \Delta^-} f_k(\theta_{u_k,1}, \dots, \theta_{u_k,m_{u_k}}) \Big( \prod_{u \in \Delta^{\sigma}} g_u(z) \Big)  \mathcal{K}^{(r),\sigma} (z)_{\vert z \mapsto \theta}.
\end{equation}
Here $\mathcal{K}^{(r),\sigma}(z)$ is the $r$-fold iteration of the CoHM kernel. By assumption $g_u$ is of the form $f_u \star \mathbf{1}_{0 \slash \beta_u}^{\sigma}$ for some
\[
f_u(x_{i(\beta_u),1}, \dots, x_{i(\beta_u),\lfloor \frac{m_u}{2} \rfloor}) \in \mathcal{H}_{Q,\lfloor \frac{m_u}{2} \rfloor \beta_u}^{(\beta_u), \mathsf{even} \slash \mathsf{odd}}.
\]
Then $g_u(z)_{z \mapsto \theta} = f_u(\theta_{u,1}, \dots, \theta_{u,\lfloor \frac{m_u}{2} \rfloor}) \star \mathbf{1}_{0 \slash 1}^{\sigma}$, computed in $\mathcal{M}_{L_0}$ with the appropriate type. Each $g_u(z)_{z \mapsto \theta}$ is thus non-zero and the non-vanishing of \eqref{eq:restricted} is equivalent to the non-vanishing of $\mathcal{K}^{(r),\sigma}(z)_{\vert z \mapsto \theta}$. By Corollary \ref{cor:fundClassStructConst} we have
\[
\mathcal{K}^{(r),\sigma}(z)_{\vert z \mapsto \theta}= \rho^{\sigma}_M ( [\overline{\eta}_M^{\sigma}]) =\mathsf{Eu}_{\mbox{\tiny Aut}_S(M)}(N_{R_e^{\sigma} \slash \eta_M^{\sigma} }).
\]
That $\mathsf{Eu}_{\mbox{\tiny Aut}_S(M)}(N_{R_e^{\sigma} \slash \eta_M^{\sigma} })$ is non-zero can be shown by a straightforward modification of the argument in the ordinary case \cite[Corollary 3.15]{feher2002}.

We have proved that the restriction of the map \eqref{eq:indecompProd} to the subspace spanned by elements of the form \eqref{eq:tensElement} is injective. To see that \eqref{eq:indecompProd} itself is injective, let $m_{\beta}^{(t)} \in \mathbb{Z}_{\geq 0}$, $t=1, \dots, N$, satisfy $\sum_{\beta \in \Delta} m^{(t)}_{\beta} \beta = e$ and consider a relation of the form
\[
\sum_{t=1}^N a_t \left( \overset{\longrightarrow}{\prod_{\beta \in \Delta^-}} f_{\beta}^{(t)} \star g^{(t)} \right)=0
\]
for some $a_t \in \mathbb{Q}$ and non-zero $f_{\beta}^{(t)}$ and $g^{(t)}$ as above. Let $M^{(t)}$ be the self-dual representation associated to $m_{\bullet}^{(t)}$ via equation \eqref{eq:sdKrullSchmidt}. It suffices to consider the case in which the $M^{(t)}$ are pairwise non-isometric. Relabelling if necessary, assume that $\eta^{\sigma}_{M^{(1)}} \not\subset \overline{\eta}^{\sigma}_{M^{(t)}}$, or equivalently $\eta^{\sigma}_{M^{(1)}} \cap \overline{\eta}^{\sigma}_{M^{(t)}} = \varnothing$, for $t \geq 2$. Each summand in the localization sum presentation of $[\overline{\eta}^{\sigma}_{M^{(t)}}]$ (Corollary \ref{cor:fundClassStructConst}) is then annihilated by the restriction $\rho^{\sigma}_{M^{(1)}}$, giving
\[
0= \rho^{\sigma}_{M^{(1)}} \sum_{t=1}^N a_t \left( \overset{\longrightarrow}{\prod_{\beta \in \Delta^-}} f_{\beta}^{(t)} \star g^{(t)} \right) = a_1 \rho^{\sigma}_{M^{(1)}}\left( \overset{\longrightarrow}{\prod_{\beta \in \Delta^-}} f_{\beta}^{(1)} \star  g^{(1)} \right).
\]
The previous paragraph implies that $a_1=0$. Repeating this argument we find that $a_1=\cdots = a_N=0$, proving injectivity of \eqref{eq:indecompProd}.

To complete the proof note that, together with the isomorphism \eqref{eq:simpleProd}, Theorem \ref{thm:oriDilog} implies that the Hilbert-Poincar\'{e} series of the domain and codomain of \eqref{eq:indecompProd} are equal. Since the map \eqref{eq:indecompProd} is injective, it is also surjective.
\end{proof}

The isomorphism \eqref{eq:simpleProd} is the PBW factorization \eqref{eq:moduleFactorization} associated to the stability $\theta_{\mathsf{simp}}$ described in the proof of Theorem \ref{thm:oriDilog}. We expect an analogous statement to hold for the isomorphism \eqref{eq:indecompProd}, with $\theta_{\mathsf{simp}}$ replaced by a $\sigma$-compatible stability $\theta_{\mathsf{indec}}$ whose stable objects are the indecomposable representations and whose order by increasing slope is opposite to $<$. Without the assumption of $\sigma$-compatibility such a stability is known to exist and in many examples, such as equioriented quivers, we can check directly that it may be chosen $\sigma$-compatibly.

\setcounter{secnumdepth}{0}

\footnotesize 

\bibliographystyle{plain}
\bibliography{mybib}

\end{document}